\newtheorem{theorem}{Theorem}[section]
\newtheorem{lemma}[theorem]{Lemma}
\newtheorem{proposition}[theorem]{Proposition}
\newtheorem*{theorem*}{Theorem}
\newtheorem*{proposition*}{Proposition}
\theoremstyle{definition}
\newtheorem{definition}[theorem]{Definition}
\theoremstyle{remark}\newtheorem{remark}{Remark}[section]
\numberwithin{equation}{section}
\begin{document}
\title[Minimal mass blow-up for mBO]{Construction of a minimal mass blow up solution of the modified Benjamin-Ono equation}

\author[Y. Martel]{Yvan Martel}
\address{CMLS, \'Ecole Polytechnique, CNRS, Universit\'e Paris-Saclay, 91128 Palaiseau, France}
\email{yvan.martel@polytechnique.edu}
\author[D. Pilod]{Didier Pilod}
\address{Instituto de Matem\'atica, Universidade Federal do Rio de Janeiro, Caixa Postal 68530, CEP: 21945-970, Rio de Janeiro, RJ, Brazil}
\email{didierpilod@gmail.com}

\begin{abstract} 
We construct a minimal mass blow up solution  of the modified Benjamin-Ono equation (mBO)
\[
u_{t}+(u^3-D^1 u)_{x}=0, \eqno{\rm (mBO)}
\] 
which is a standard mass critical dispersive model.
Let $Q\in H^{\frac 12}$, $Q>0$, be the unique ground state solution of $D^1 Q +Q=Q^3$, constructed using variational arguments by Weinstein (Comm. PDE, 12 (1987), J. Diff. Eq., 69 (1987)) and Albert, Bona and Saut (Proc. Royal London Soc., 453 (1997)), and whose uniqueness was recently proved   by Frank and Lenzmann (Acta Math., 210 (2013)).

We show the existence of a solution $S$ of (mBO) satisfying
$\|S \|_{L^2}=\|Q\|_{L^2}$  and
\[
S(t)-\frac1{\lambda^{\frac12}(t)} Q\left(\frac{\cdot - x(t)}{\lambda(t)}\right)\to 0\quad \mbox{ in }\ H^{\frac 12}(\mathbb R) \mbox{ as }\ t\downarrow 0,
\]
where 
\[
\lambda(t)\sim t,\quad x(t) \sim -|\ln t| \quad \hbox{and}\quad
\|S(t)\|_{\dot H^{\frac 12}} \sim  t^{-\frac 12}\|Q\|_{\dot H^{\frac 12}} \quad \hbox{as}\ t\downarrow 0.
\]
This existence result is analogous to the one obtained  by Martel, Merle and Rapha\"el (J. Eur. Math. Soc., 17 (2015)) for the mass critical generalized Korteweg-de Vries equation. However, in contrast with the (gKdV) equation, for which the blow up problem is now well-understood   in a neighborhood of the ground state,   $S$ is the first example of blow up solution for (mBO).

The proof involves the construction of  a blow up profile, energy estimates as well as refined localization arguments, developed in the context of Benjamin-Ono type equations by Kenig, Martel and Robbiano  (Ann. Inst. H. Poincaré, Anal. Non Lin., 28 (2011)).
Due to the lack of information on the (mBO) flow around the ground state,  the energy estimates have to be considerably sharpened in the present paper.
 \end{abstract}

\maketitle

\section{Introduction}
\subsection{Main result}
We consider the modified Benjamin-Ono  equation (mBO)
\begin{equation}  \label{mBO}
u_t+\big(u^3-\mathcal{H}u_x\big)_x=0 \, ,\quad    t \in \mathbb R\,,\ x \in \mathbb R\,,
\end{equation}
where $u(t,x)$ is a real-valued function  and $\mathcal{H}$ denotes the Hilbert transform, defined by
\begin{displaymath} 
\mathcal{H}f(x)=  \frac1\pi \, \text{p.v.}\int_{\mathbb
R}\frac{f(y)}{x-y}dy \, .
\end{displaymath}
Observe that with this convention $\mathcal{H}\partial_x=D^1$, where  $D^\alpha$ is the Riesz potential of order $-\alpha$, defined via Fourier transform by $(D^\alpha f)^{\wedge}(\xi)=|\xi|^\alpha\widehat{f}(\xi)$, for any $\alpha \in \mathbb R$.
We see equation (mBO) as a natural generalization of the classical quadratic Benjamin-Ono equation
\begin{equation}  \label{BO}
u_t+\big(u^2-\mathcal{H}u_x\big)_x=0 \, ,\quad    t \in \mathbb R\,,\ x \in \mathbb R\,,
\end{equation}
 introduced by Benjamin \cite{Ben} and Ono~\cite{On} and intensively studied since then, both mathematically and numerically,  as a model for one-dimensional waves in deep water.
The cubic nonlinearity for the Benjamin-Ono model is also relevant as a long wave model, see e.g. Abdelouhab, Bona,  Felland and~Saut \cite{AbBoFeSa} and Bona and Kalisch~\cite{BoKa}. At first sight, the relation between \eqref{mBO} and \eqref{BO}   seems similar to the one between the (cubic) modified KdV equation and the   Korteweg-de Vries equation, but
     \eqref{mBO} is not completely integrable and no algebraic relation relates these two models. Another difference is that with  dispersion of the Benjamin-Ono type,  a cubic nonlinearity  leads  to instable waves.
  More generally, nonlinear one dimensional models with weak dispersion seem of great physical interest, see e.g. Klein and Saut~\cite{KlSa} and Linares, Pilod and Saut~\cite{LiPiSa}. Equation (mBO) is a typical   model with interesting mathematical properties, which can be seen as an intermediate step between the well-studied generalized (KdV) equations and other relevant models with    weak dispersion.

\medskip

The following quantities are formally invariant by the flow associated to (mBO)
\begin{equation} \label{mass}
M(u)=\frac12 \int_{\mathbb R} u^2dx \quad \hbox{and}\quad 
E(u)=\frac12 \int_{\mathbb R} |D^{\frac12}u|^2dx-\frac14\int_{\mathbb R}u^4dx \, .
\end{equation}
Note the scaling symmetry: if $u(t,x)$ is solution then $u_{\lambda}(t,x)$ defined by
\begin{equation} \label{ulambda}
u_{\lambda}(t,x)=\lambda^{-\frac12}u(\lambda^{-2} t, \lambda^{-1} x)
\end{equation} 
is also solution. Since this transformation leaves
  the $L^2$ norm invariant,   the problem is \emph{mass critical}.
Recall that the Cauchy problem for \eqref{mBO} is locally well-posed in the energy space $H^{\frac12}(\mathbb R)$ by the work of Kenig and Takaoka  \cite{KeTa}: for any $u_0 \in H^{\frac12}(\mathbb R)$, there exists a unique (in a certain sense) maximal solution of \eqref{mBO} in $C([0,T^\star):H^{\frac12}(\mathbb R))$ satisfying $u(0,\cdot)=u_0$.  
Moreover, the flow map data-solution is locally Lipschitz. (See also Tao \cite{Tao}, respectively Molinet and Ribaud \cite{MoRi,MoRi1} and Kenig, Ponce and Vega \cite{KPV1}, for previous related works on the Benjamin-Ono equation, respectively the modified Benjamin-Ono equation.)  For such solutions, the quantities $M(u(t))$ and $E(u(t))$ are   conserved.  Moreover, if $T^\star<+\infty$ then $\lim_{t \uparrow T^\star} \| D^{\frac12}u(t)\|_{L^2}=+\infty$ and more precisely, by a scaling argument, 
$\|D^{\frac12}u(t)\|_{L^2}\gtrsim (T^\star-t)^{-\frac 14}$, for $t<T^\star$ close to $T^\star$.
We refer to Sect.~\ref{S.2.4} for more details.

\medskip

From works of Weinstein \cite{We1, We2} and Albert, Bona and Saut \cite{AlBoSa}, there exists an even \textit{ground state} solution $ Q\in H^{\frac 12}(\mathbb R),$ $Q>0$ of the stationary problem
\begin{equation} \label{E}
D^1Q+Q-Q^3=0 ,  
\end{equation}
related to the best constant in the following  Gagliardo-Nirenberg inequality 
\begin{equation}\label{gn}
\hbox{for all $v\in H^{\frac 12}(\mathbb R)$,}\quad 
\int v^4 \leq 2 \int |D^{\frac 12} v|^2 \left(\frac {\int v^2}{\int Q^2}\right).
\end{equation}
Frank and Lenzmann \cite{FrLe} proved very general uniqueness results of nonlinear ground states for fractional Laplacians in $\mathbb R$ that include the model \eqref{E}. As a consequence, $Q$ is the \emph{unique} ground state solution of \eqref{E} up to the symmetries of the equation. Their work also includes a decisive description of the spectrum of the linearized operator around $Q$. We refer to Sect.~\ref{SW} for more details.

\medskip

Following a classical observation due to Weinstein \cite{We82}, the conservation laws \eqref{mass}, the inequality \eqref{gn}  and the Cauchy theory \cite{KeTa}  imply that any initial data $u_{0}\in H^{\frac 12}(\mathbb R)$ with subcritical mass, i.e. satisfying  $\|u_0\|_{L^2}<\|Q\|_{L^2}$ generates a \emph{global and  bounded} solution in $H^{\frac 12}$.
In this paper, we show that this condition is sharp by constructing a \emph{minimal mass blow up} solution, i.e. 
a solution  of  (mBO) which blows up in finite time in $H^{\frac 12}$ with the threshold mass $\|Q\|_{L^2}$.
Actually, this solution is  the first example of blow up solution for (mBO).

\begin{theorem}\label{th1}
There exist  $T_{0}>0$ and a solution $S\in C((0,T_{0}]\, :\, H^{\frac 12}(\mathbb R))$ to~{\rm (mBO)}
such that 
\begin{equation}\label{mm}
\|S(t)\|_{L^2}=\|Q\|_{L^2}\quad \mbox{for all $t\in (0,T_{0}]$,}
\end{equation}
and
\begin{equation}\label{d:S}
S(t)-\frac1{\lambda^{\frac12}(t)} Q\left(\frac{\cdot - x(t)}{\lambda(t)}\right)\to 0\quad \mbox{in $H^{\frac 12}(\mathbb R)$   as $t\downarrow 0$},
\end{equation}
where  the functions $t\mapsto\lambda(t)$ and $t\mapsto x(t)$ satisfy
\begin{equation}\label{th:param}
\lambda(t)\sim t,\quad x(t) \sim -|\ln t| \quad \hbox{as}\ t\downarrow 0.
\end{equation}
In particular, 
\begin{equation}\label{th:rate}
\|D^{\frac12} S(t)\|_{L^2} \sim   {t^{-\frac 12}} {\|D^{\frac12} Q\|_{L^2}} \quad \mbox{as $t\downarrow 0$.}
\end{equation}
\end{theorem}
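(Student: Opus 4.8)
The plan is to construct $S$ as a limit of approximate solutions $u_n$ defined on a sequence of time intervals, following the strategy of Martel--Merle--Rapha\"el for (gKdV), but with the energy estimates sharpened to compensate for the lack of a local-well-posedness/stability theory near $Q$ in the critical space. First I would build a \emph{blow up profile}: seeking an approximate solution of the form $u(t,x)\approx \lambda^{-1/2}(t)P(t,(x-x(t))/\lambda(t))$ with the modulation ansatz $\lambda(t)\sim t$, $x_t/\lambda \sim \lambda^{-2}$ (which forces $x(t)\sim -|\ln t|$), one plugs this into (mBO) and, after rescaling, is led to an equation of the form $D^1 P + P - P^3 + (\text{slowly varying terms in }b=-\lambda\lambda_t)\cdot(\Lambda P)=\text{error}$, where $\Lambda=\tfrac12+y\partial_y$ is the scaling generator. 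Expanding $P=Q+bP_1+b^2P_2+\cdots$ and solving the resulting linear equations $\mathcal L P_1 = \Lambda Q$, etc., using invertibility of the linearized operator $\mathcal L=D^1+1-3Q^2$ on the orthogonal complement of its kernel (here Frank--Lenzmann's spectral description is essential), yields a profile with formally cubic-in-$b$ error; one must check the $P_j$ inherit enough decay (the slow algebraic decay $Q\sim |y|^{-2}$ of Benjamin--Ono ground states makes this delicate).

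Second, I would set up the \emph{bootstrap / a priori estimates}. Decompose a solution near the profile as $u(t) = \lambda^{-1/2}(t)(P_{b(t)}+\varepsilon)(t,(x-x(t))/\lambda(t))$, impose orthogonality conditions on $\varepsilon$ (against $\Lambda Q$, $\partial_y Q$, and one more direction) to fix the modulation parameters $(\lambda,x,b)$, and derive the modulation equations $|\lambda_t/\lambda + b/\lambda^2|+|b_t/\lambda^2|+\cdots \lesssim \|\varepsilon\| + b^2/\lambda^2$. The heart is a \emph{monotonicity/energy functional} $\mathcal F$, roughly $\mathcal F(t)\sim \int |D^{1/2}\varepsilon|^2 + \int \varepsilon^2 - \int(\text{cubic in }\varepsilon) + (\text{localized virial-type corrections})$, which should be coercive, $\mathcal F\gtrsim \|\varepsilon\|_{H^{1/2}}^2$ modulo the orthogonality directions (coercivity again from the spectral properties of $\mathcal L$), and almost monotone: $\frac{d}{dt}\{\lambda \mathcal F\}\lesssim \frac{b^4}{\lambda^4}$ or similar, using the conservation of mass and energy together with the profile equation. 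Because $D^1$ is a nonlocal operator with a weak smoothing effect, localizing these estimates in space — cutting off around $x(t)$, controlling the commutator $[D^1,\chi]$ and the tail interactions — is exactly where the refined techniques of Kenig--Martel--Robbiano come in; one typically needs weighted estimates and careful pseudodifferential commutator bounds, and I expect \textbf{this localized energy/virial estimate to be the main obstacle}, since the margin available in the critical regularity is extremely thin.

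Third, with uniform bounds in hand I would run the \emph{compactness and limiting argument}: construct $u_n$ solving (mBO) with $u_n(t_n) = \lambda_n^{-1/2}(t_n)Q((\cdot-x_n)/\lambda_n(t_n))$ for a sequence $t_n\downarrow 0$, show via the a priori estimates that each $u_n$ exists on a fixed interval $(0,T_0]$ with the modulation bounds, extract a weak-$H^{1/2}$ limit $S$ on $(0,T_0]$, and pass to the limit in the equation; the key point is that the bounds are \emph{uniform} in $n$ so the limit inherits $\|\varepsilon(t)\|_{H^{1/2}}\to 0$ as $t\downarrow 0$, giving \eqref{d:S}, \eqref{th:param}, and hence \eqref{th:rate} from $\|D^{1/2}S(t)\|_{L^2}\sim \lambda(t)^{-1/2}\|D^{1/2}Q\|_{L^2}$. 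Finally the minimal mass identity \eqref{mm} follows because each $u_n(t_n)$ has mass exactly $\|Q\|_{L^2}$ and mass is conserved, so the weak limit $S$ has mass $\le \|Q\|_{L^2}$; combined with the profile description forcing mass $\ge \|Q\|_{L^2}$ (or by direct bookkeeping of the mass of $\varepsilon$), one gets equality, and subcriticality/Weinstein's argument would otherwise prevent blow up — so the threshold is genuinely attained. Throughout, the novelty relative to (gKdV) is that every step must be done with estimates tight enough to survive without a perturbative flow theory around $Q$.
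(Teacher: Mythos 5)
Your outline follows the same architecture as the paper --- profile construction, modulated decomposition with bootstrap, localized energy/virial monotonicity, weak-limit compactness --- and you correctly identify the localized energy estimate for the nonlocal operator $D^1$ as the main difficulty. But two of your steps, as written, do not go through, and a third essential ingredient is missing.

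First, the higher-order expansion $P=Q+bP_1+b^2P_2+\cdots$ with ``formally cubic-in-$b$ error'' is precisely what fails for (mBO). The first corrector solves $(\mathcal L P)'=\Lambda Q$ (note the derivative, as for KdV-type flows), and because $Q\in\mathcal Y_2$ decays only like $y^{-2}$, this $P$ already decays only like $1/y$ as $y\to+\infty$ and tends to a nonzero constant as $y\to-\infty$; iterating produces correctors that are not square integrable even on $y>0$. The construction therefore stops at first order, with a cut-off $P_b=\chi(|b|y)P$, and accepts an error $\Psi_b$ of size only $|b|^{3/2}$ in $L^2$. This is why the bootstrap norm $\mathcal N(\varepsilon)$ is only $O(|s|^{-1+\theta/2})$ rather than something much smaller, and why the quadratic and cubic terms in $\varepsilon$ in the energy identity are no longer negligible. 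The whole point of the auxiliary functionals $K=\int\varepsilon\,\mathcal L_\varphi P_b$ and $Z$, of the combination $H=(1-K/p_0)F/\lambda+G+K^2/(2p_0)-KZ/(p_0\lambda)$, and of the non-standard choice $b=-E(u_0)\lambda/p_0$ in place of a third orthogonality condition, is to cancel algebraically the terms that a single ``coercive and almost monotone $\mathcal F$'' of the kind you describe would leave uncontrolled. That Lyapunov-functional scheme does not close at this level of error.

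Second, you cannot ``show via the a priori estimates that each $u_n$ exists on a fixed interval'' without handling the instability of the scaling direction. The orthogonality $(\varepsilon,\Lambda Q)=0$ only yields $|\lambda_s/\lambda+b|\lesssim\mathcal N(\varepsilon)\sim|s|^{-1+\theta/2}$, which is far too weak to propagate $\lambda(s)\sim|s|^{-1}$ over a rescaled time interval of length $\sim n$; one needs the corrected parameter $\mu=|1-J|^{1/p_0}\lambda$, built from the functional $J=\int\varepsilon\,\rho\,\chi(-y|s|^{-2/3})$ exploiting the cancellation $(\Lambda Q,Q)=0$, to obtain $|\mu_s/\mu+b|\lesssim|s|^{-4/3+\theta/2}$, and even then the final datum $\lambda_n^{in}$ must be adjusted by a continuity/shooting argument (a map $\mu^\sharp\mapsto\Phi(\mu^\sharp)\in\{-1,1\}$ cannot be continuous on $[-1,1]$) to close the bootstrap up to a time $S_0$ independent of $n$. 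Without this topological step there is no uniform interval $(0,T_0]$. Your compactness and minimal-mass arguments are otherwise sound, modulo the small point that the data are taken slightly subcritical, $(1-a_n^{in})$ times a rescaled $Q$ tuned so that the energy equals $p_0$; the identity \eqref{mm} then follows from weak lower semicontinuity of the mass together with the fact that strictly subcritical mass would preclude blow up.
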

Note that Theorem~\ref{th1} implies readily the orbital instability (in $L^2(\mathbb R)$ and thus also  in $H^{\frac 12}(\mathbb R)$) of the   solution $u(t,x)=Q(x-t)$, which also seems to be new for (mBO). Indeed, for any $n>1/T_{0}$, let $t_{n}=1/n$, $T_{n}=(T_{0}-t_{n}) \lambda(t_{n})^{-2}>0$,  and 
\[
u_{0,n}(x) = \lambda^{\frac 12} (t_{n}) S(t_n,\lambda(t_{n}) x + x(t_{n}))\quad \hbox{so that}\quad
\lim_{n\to +\infty}\|u_{0,n}-Q\|_{H^{\frac 12}}=0.
\]
Then, the corresponding solution $u_{n}$ of \eqref{mBO} writes, for $t\in [0,T_{n}]$,  \[
u_{n}(t,x) = \lambda^{\frac 12} (t_{n}) S(t_n+\lambda(t_{n})^2 t,\lambda(t_{n}) x + x(t_{n}))
\]
and thus,
\begin{multline}
\inf\{ \|u_{n}(T_{n}) - \lambda_{1}^{\frac 12} Q(\lambda_{1} \cdot + x_{1})\|_{L^2}\, : \, {x_{1}\in \mathbb R,\lambda_{1}>0}\} \\
=  \inf\{ \|S(T_{0})- \lambda_{1}^{\frac 12} Q(\lambda_{1} \cdot + x_{1})\|_{L^2}\, : \, {x_{1}\in \mathbb R,\lambda_{1}>0}\}
=c_{0} > 0.
\end{multline}
It is also clear that the blow up behavior displayed by the solution $S$ is   unstable  since for any initial data with mass less that $\|Q\|_{L^2}$, the corresponding solution is global and bounded.
From \eqref{th:rate}, we see that $S(t)$ blows up as $t\downarrow 0$ in $H^{\frac 12}(\mathbb R)$ twice as fast as the lower bound given by the Cauchy theory. 

\subsection{Comments and references}
Historically, blow up results for nonlinear dispersive  PDE were first obtained  by global obstruction arguments, such as  the Virial identity for the nonlinear Schr\"odinger equations (NLS) and related models. More rarely,   explicit blow up solutions (the most famous one for (NLS) is reproduced in \eqref{Snls}) would give a description of some special forms of blow up. 
In the 80's and 90's, variational arguments and a refined understanding of the linearized operator  around the ground state led to original blow up constructions and classification results related to rescaled solitary waves, see in particular Weinstein \cite{We,We86}, Merle and Tsutsumi~\cite{MeTs}, Merle \cite{Me,Meduke} and Bourgain-Wang \cite{BW}.
 Numerical experiments were also used to try to predict blow up rates.
 We refer to Cazenave \cite{Ca03} and references therein.
Such directions were more recently systematically studied, in particular for the mass critical generalized KdV equation, for mass critical NLS equations and several other related models. A few sample  results will be reviewed below
 (mainly from \cite{MaMejmpa,MaMe,MaMeRa1,Mejams,MeRagafa,MeRainvent,Pe}). It became clear that a refined study of the flow of the evolution equation around the ground state family was the key to the understanding of the blow up dynamics with one bubble, both for  \emph{stable} and \emph{unstable}  forms of blow up.
Theorem~\ref{th1} above belongs to this category of results and methods, providing a quite explicit   blow up solution. In a situation where very few is known on the flow around the ground state,   considering a ``doubly critical'' situation (both critical exponent and critical mass) is a way to enjoy a lot of structure and rigidity, idea which goes back to \cite{We86} and \cite{Meduke}.

\medskip

Now, we give more details on previous related results, starting with the closest models.

\medskip

For the mass critical generalized Korteweg-de Vries equation (gKdV)
\begin{equation}\label{kdv}
u_t + (u_{xx} + u^5)_x =0, \quad x\in \mathbb R,\ t\in \mathbb R,
\end{equation}
(the energy space for \eqref{kdv} is $H^1(\mathbb R)$),
an existence result similar to Theorem~\ref{th1} was proved by Martel, Merle and Rapha\"el \cite{MaMeRa2}, and then sharpened by
Combet and Martel \cite{CoMa}.
More precisely, let $Q_{\rm KdV}\in H^1(\mathbb R)$, $Q_{\rm KdV}>0$ be the ground state for \eqref{kdv}, i.e. the unique even solution of 
$Q_{\rm KdV}''+Q_{\rm KdV}^5=Q_{\rm KdV}$.
It follows from \cite{MaMeRa2} and \cite{CoMa} that there exists a solution $S_{\rm KdV}$ 
on $(0,+\infty)$ such that $\|S_{\rm KdV}(t)\|_{L^2}=\|Q_{\rm KdV}\|_{L^2}$ and 
\begin{equation}\label{Skdv}\begin{aligned}
& S_{\rm KdV}(t) - \frac 1{t^{\frac 12}} Q_{\rm KdV}\left(\frac{\cdot+\frac1t}t+c_0\right) \to 0 
\quad \mbox{in   $H^1(\mathbb R)$},\\
& \|(S_{\rm KdV})_{x}(t)\|_{L^2}\sim t^{-1} \|Q'\|_{L^2}, \quad  \mbox{ as $t\downarrow 0$},
\end{aligned}\end{equation}
for some constant $c_{0}$. We see that the singularity has the form of a blow up bubble with the same scaling $\lambda(t)\sim t$, as in \eqref{d:S}.
 A main qualitative difference  is the speed of the bubble as $t\downarrow 0$, since for (gKdV), $x_{\rm KdV}(t)\sim -\frac 1t$, whereas $x(t)\sim -|\ln t| $ for (mBO). In this respect, (mBO) seems to be a threshold  case in the family of critical equations \eqref{gBO}.
 
\smallskip

For (mBO), the information obtained in the present paper on the parameters $\lambda(t)$ and $x(t)$ as $t\downarrow 0$ is not sufficient to replace them by their explicit asymptotics in the convergence result \eqref{d:S}; see Remark~\ref{rk.precise} for more details. 
The result \eqref{Skdv}   for (gKdV) is thus more precise. In fact, for (gKdV), the minimal mass blow up is   quite well understood, at least close to the blow up time: in addition to \eqref{Skdv}, sharp asymptotics, both in time (as $t\downarrow 0$) and in space (as $x\to \pm\infty$) were derived in \cite{CoMa}, for any level of derivative of $S_{\rm KdV}$. Importantly, $S_{\rm KdV}$ is also known to be global for $t>0$ and to be the \emph{unique} minimal mass solution of (gKdV), up to the symmetries of the equation (scaling, translations and sign change),  see \cite{MaMeRa2}.
For (mBO), such properties are open problems.

\smallskip

Recall that the existence  and uniqueness of the minimal mass solution $S_{\rm KdV}$ is only a part of the results obtained in \cite{MaMejmpa,MaMe,MaMeNaRa,MaMeRa1,MaMeRa2,MaMeRa3,Mejams} on the description of the blow up phenomenon around the ground state (or soliton) for (gKdV) and, more generally, on the classification of the long time behavior of solutions close to the soliton.
Those works focus on  the case of  \emph{slightly supercritical mass} $H^1$ initial data
\begin{equation} \label{in:1}
\|Q_{\rm KdV}\|_{L^2} \leq \|u_0\|_{L^2} < (1+\delta_0) \|Q_{\rm KdV}\|_{L^2} \quad \mbox{where }\ 0<\delta_0\ll 1.
\end{equation}
In this context, the  main results  can be summarized as follows:
(1) The ground state $Q_{\rm KdV}$ is a universal blow up profile;
(2) General $H^1$ initial data with negative energy lead to blow up in finite or infinite time;
(3) For initial data close to $Q_{\rm KdV}$ in a topology stronger than $H^1$ (based on $L^2$ weighted norm), only three   behaviors are possible: (Blowup) with speed $(T^\star-t)^{-1}$,  (Soliton)  and (Exit). 
It is also proved in~\cite{MaMeNaRa} that the (Soliton) case (solutions that converge in a local sense to a bounded soliton) corresponds
to a codimension one manifold of initial data which separates the (Blowup) and (Exit) cases.
The (Exit) case refers to solutions that eventually leave  any small neighborhood of the soliton. It is expected (but yet an open problem) that such solutions behave as a solution of the linear problem as $t\to +\infty$.
Finally, blow up solutions with various blow up rates (in finite or infinite time) are constructed in~\cite{MaMeRa3}
for initial data arbitrarily close to~$Q$ in the energy space. Concerning the critical and supercritical gKdV equations, we also refer to Klein and Peter \cite{KlPe} and references therein for detailed numerical studies.

\smallskip

In contrast, we recall that Theorem \ref{th1} is the first blow up result for the (mBO) equation.
The difficulties in extending techniques and results from (gKdV) to (mBO) are multiple. First, the slow decay of the soliton $Q(x)$ as $x\to \infty$ (see Proposition~\ref{pr:decay}) creates serious difficulties when trying to construct a relevant blow up profile (see Remark~\ref{rk:profile}), and, more technically, when estimating error terms far from the bubble. Second, an important aspect of the analysis in KdV-type equations consists in considering   localized versions of  basic quantities, such as the energy and the mass. Standard commutator estimates are not enough and suitable localization arguments were developed in this context by Kenig and Martel~\cite{KeMa} and Kenig, Martel and Robbiano~\cite{KeMaRo}. They are decisively used in the present paper (see Sect.~\ref{s.2.5}), but being by nature much more limited than the corresponding ones for the (gKdV) equation, they create error terms that are difficult to handle.
Finally, a decisive point in studying the flow of the critical (gKdV) equation around the soliton is a suitable Virial-type identity, roughly speaking a Liapounov functional on the linearized equation around $Q_{\rm KdV}$. It was first introduced by Martel and Merle~\cite{MaMejmpa} and used intensively in all subsequent works on (gKdV) mentionned above. Such a Virial identity is not available for the linearization of (mBO) around the soliton. In the present paper, to get around the lack of such Liapounov functional, we introduce a new  refined algebra related to the energy functional, extending the approach of Rapha\"el and Szeftel in \cite{RaSz} (see below for more comments). Such approach happens to be successful for the construction of the minimal mass solution, which corresponds to a precise, rigid  regime, but it should not be  sufficient to   study extensively the   blow up around the soliton, in particular the stable blow up.

\medskip

The results mentionned above on (gKdV) and Theorem~\ref{th1} are, to our knowledge, the only available rigorous results on   blow up for KdV-type or BO-type equations. Nevertheless, the history of minimal mass blow up solutions for $L^2$ critical nonlinear dispersive equations is much longer, especially for NLS-type equations. It started with the early derivation of the \emph{explicit minimal mass blow up solution}
for the mass critical nonlinear Schr\"odinger (NLS) in $\mathbb R^d$, $d\geq 1$,
\[
i\partial_t u + \Delta u + |u|^{\frac 4d} u = 0, \quad x\in \mathbb R^d,
\]
using the so-called pseudo-conformal symmetry.
Let, for $t>0$,
\begin{equation}\label{Snls}\begin{aligned}
&S_{\rm NLS}(t,x)=\frac1 {t^{\frac N2}} e^{- i\frac{|x|^2}{4t}- \frac i t} Q_\mathrm{NLS}\left(\frac x t\right), \\ &
\|S_{\rm NLS}(t)\|_{L^2}= \|Q_{\rm NLS}\|_{L^2}, \quad \|\nabla S_{\rm NLS}(t)\|_{L^2}\mathop{\sim}_{t\downarrow 0}\frac C{t},
\end{aligned}\end{equation}
where $Q_{\rm NLS}>0$ is the unique ground state of (NLS). Then $S_{\rm NLS}$ is solution of (NLS); see Weinstein~\cite{We82}, Cazenave~\cite{Ca03} for references.
Also using the  pseudo-conformal symmetry, Merle~\cite{Meduke} proved  that $S_\mathrm{NLS}$ is the unique
(up to the symmetries of the equation) minimal mass blow up solution in the energy space (see also Banica~\cite{Ba} and Hmidi and Keraani~\cite{HmKe}).
We refer to Merle and Rapha\"el~\cite{MeRagafa,MeRainvent} (and references therein) for more recent results notably on the stable ``log-log'' blow up for (NLS) equation.

For the inhomogeneous mass critical (NLS) in dimension~2,
\[
i \partial_t u+\Delta u +k(x)|u|^2 u=0,\quad k(0)=1,\quad \nabla k(0)=0,
\]
while Merle~\cite{MeIHP} derived sufficient conditions on the function $k(x)>0$ to ensure
the \emph{nonexistence} of minimal elements, Rapha\"el and Szeftel~\cite{RaSz} introduced a new approach to obtain existence and uniqueness
of a minimal blow up solution under a necessary and sufficient condition on $k(x)$, in the absence of pseudo-conformal 
transformation. For other constructions of minimal mass solutions for NLS-type equations, by various methods, we refer to~\cite{BCD,BW,CG,KLR,LeMaRa}.
In particular, Krieger, Lenzmann and Rapha\"el \cite{KLR} addressed the case of the half-wave equation in one space dimension, which also involves the nonlocal operator $D^1$, and requires the use of commutator estimates. 
However, as pointed out in \cite{MaMeRa2,CoMa}, the minimal mass blow up for KdV-type equations is specific, in some sense less compact that for NLS or wave-type  equations, and requires the use of local norms, instead of global norms. A main difficulty in the present paper is to combine such local norms and nonlocal operators. A hint of the specificity of KdV-type blow up is given by the asymptotics found in \cite{CoMa}, showing the existence of a fixed tail for $S_{\rm KdV}(t)$. See also Remark~\ref{rk:BS} for more details.

\medskip

We expect that the strategy of this paper also applies to the following family of mass critical dispersion generalized Benjamin-Ono equations,  considered e.g. by  Angulo, Bona, Linares and Scialom \cite{AnBoLiSc}
 and by Kenig, Martel and Robbiano \cite{KeMaRo},
\begin{equation}\label{gBO}
 u_t +  ( |u|^{2\alpha}   u - D^{\alpha} u )_x = 0, \quad t\in  \mathbb{R},\ 
    x\in  \mathbb{R},
\end{equation}
for $\alpha\in (1,2)$. 
Recall that \cite{KeMaRo} extends perturbatively the analysis of \cite{MaMejmpa,Mejams} to the case where the model \eqref{gBO} is close to (gKdV), i.e. for $\alpha<2$ close to $2$. In particular, blow up in finite or infinite time for negative energy solutions is obtained in this context. The main obstruction  to extend such results to $\alpha \in [1,2)$ is the absence of suitable  Virial-type identity as mentionned before. In the mass subcritical situation, recall that the asymptotic stability of the  soliton of the   Benjamin-Ono equation \eqref{BO} was proved in \cite{KeMa}, extending previous results on (gKdV) (see \cite{MaMesub} and references therein), by using a specific algebra related to the explicit form of the soliton.

\medskip 

For $\alpha \in [\frac 12,1)$, the blow up problem for \eqref{gBO} is also relevant and important in physics. In particular, the dispersion in the case $\alpha=\frac12$, for which the nonlinearity is quadratic, is somehow reminiscent  of the linear dispersion of finite depth water waves with surface tension.  The corresponding Whitham equation with surface tension writes 
$$u_t +  ( u^2 - w(D)u )_x = 0, \quad t\in  \mathbb{R},\  x\in  \mathbb{R},
$$
where $w(D)$ is  the Fourier multiplier of symbol $w(\xi)=\left( \frac{\tanh (|\xi|)}{|\xi|} \right)^{\frac12}\left(1+\tau\xi^2\right)^{\frac12}$ and $\tau$ is a positive parameter related to the surface tension. Note that for high frequencies $w(\xi) \sim |\xi|^{\frac12}$,  which corresponds to the dispersion of \eqref{gBO} in the case $\alpha=\frac12$. 
We refer to Linares,  Pilod and~Saut \cite{LiPiSa} for a detailled discussion  and some progress on the local theory for the Cauchy problem and to Klein and Saut~\cite{KlSa} for numerical simulations. Obviously, weaker dispersion can only complicate the problem and it seems quite challenging to address the full range $\alpha \in [\frac 12,1)$.

\subsection{Notation} 
For any positive  $a$ and $b$, the notation $a \lesssim b$ means that $a \le c \,b$ holds for  a   universal constant $c>0$. 
Let $(\cdot,\cdot)$ denote the scalar product on $L^2(\mathbb R)$,
\begin{displaymath} 
(f,g)=\int_{\mathbb R} f(x) g(x) dx, 
\end{displaymath}
for $f, \, g$ two real valued functions in $L^2(\mathbb R)$.
For simplicity of notation, we often write $\int$ for $\int_{\mathbb R}$ and omit $dx$.
If $I$ is an interval of $\mathbb R$, then ${\bf 1}_I$   denotes the characteristic function of $I$.

\noindent Let $\chi \in C^{\infty}(\mathbb R)$ be a cut-off function such that 
\begin{equation}\label{def:chi}
0 \le \chi \le 1, \quad \chi'\ge 0 \ \text{on} \ \mathbb R, \quad \chi_{|_{(-\infty,-2)}} \equiv 0 \quad \text{and}  \quad \chi_{|_{(-1,+\infty)}} \equiv 1 \, .
\end{equation}
Let $\Lambda$ denote the generator of the $L^2$ scaling, 
\begin{equation} \label{lambda}
\Lambda f:=\frac12f+xf' \, .
\end{equation}
Let $\mathcal{L}$ be the linearized operator around $Q$, i.e. (see also Sect.~\ref{SW})
\begin{equation} \label{L}
\mathcal{L}:=D^1+1-3Q^2 \, .
\end{equation}
We  introduce the spaces $\mathcal{Y}_\ell$, for  $\ell \in \mathbb N$,
\begin{displaymath} 
\mathcal{Y}_\ell:=\big\{f \in C^{\infty}(\mathbb R) \,  : \, \forall \, k \in \mathbb N,  
\ \forall \, x \in \mathbb R,\  |f^{(k)}(x)| \lesssim{(1+|x|)^{-(\ell+k)}} \big\} \, ,
\end{displaymath}
and the space $\mathcal{Z}$
\begin{equation*} 
\mathcal{Z}:=\big\{   f \in C^{\infty}(\mathbb R) \cap L^{\infty}(\mathbb R) \, : \, f' \in \mathcal{Y}_2 \ \text{and} \ \forall \, x >0, \   |f(x)| \lesssim{(1+|x|)^{-1}}  \big\} \, .
\end{equation*}

\subsection{Outline of the proof} The general strategy of the proof is to adapt the robust arguments developed in \cite{RaSz} 
(see also the previous papers \cite{Me} and \cite{Ma}) to construct minimal mass solutions in contexts where few is known on the flow of the equation around the soliton, i.e. in the absence of a general Virial functional for the linearized flow around the ground state. 
In particular, we combine the control of an energy-type functional, suitably localized (see below), with a mixed Morawetz-Virial functional.
The coercivity of the energy functional uses in a crucial way the complete understanding of the kernel of $\mathcal L$, proved in \cite{FrLe}. 

However, the strategy of \cite{RaSz} has to be adapted to   one of the specificities of KdV and BO-type equations which requires the use of local estimates on the residual terms, as in \cite{MaMeRa1,MaMeRa2}, and to the nonlocal nature of the operator $D^1$, which requires specific localization arguments introduced in \cite{KeMa,KeMaRo}.
The proof of Theorem~\ref{th1} thus needs the combination of all existing techniques in similar contexts,
but this is still not enough. Indeed, because of the slow decay of the ground state $Q$ ($0<Q(x)\lesssim (1+|x|^2)^{-1}$ - see Proposition \ref{pr:decay}), one cannot satisfactorily improve the ansatz to a sufficient order as was done in \cite{CoMa}. This means that the error term (denoted by  $ \varepsilon$) cannot be too small, even in local norms around the soliton (in contrast, for (NLS) type equation, it can be taken arbitrarily small in global norms  - see e.g. \cite{LeMaRa} - and in local norms for (gKdV) - see \cite{CoMa}). 
The lack of good estimates on the error terms creates important difficulties to control the cubic terms that   are usually easily controlled. One of the main novelty of this paper is to push forward the algebra of \cite{RaSz} and \cite{MaMeRa2} to cancel out these cubic terms.

\medskip

In Sect.~\ref{sect2}, we recall known facts on the ground state $Q$, we construct the blow up profile (subsequently denoted by $Q_{b}$) and we introduce a suitable decomposition of any solution around the blow up profile. In Sect.~\ref{sect3}, we introduce a particular sequence of \emph{backwards in time solutions} of~\eqref{mBO} related to the special minimal mass regime of Theorem~\ref{th1} and we claim suitable uniform bootstrap estimates of the residual term $ \varepsilon$ and on the geometrical parameters.
In Sect.~\ref{S4}, we close the estimates on $ \varepsilon$ using mainly a  localized energy functional, but also several other functionals to cancel out diverging terms. In Sect.~\ref{S5}, we close the estimates on the parameters, adjusting carefully the \emph{final data} of the sequence of solutions. In Sect.~\ref{S6}, we use the weak convergence of the flow (from \cite{CuKe}) to obtain the solution $S(t)$ of Theorem~\ref{th1} by passing to the limit in the sequence of solutions of \eqref{mBO} uniformly controled in Sect.~\ref{sect3}-Sect.~\ref{S5}.

\subsection*{Acknowledgements}
The authors would like to thank Carlos Kenig for drawing their attention to the blow up problem for (mBO) and Jean-Claude Saut for   encouraging and helpful discussions.
This material is based upon work supported by the National Science Foundation under Grant No.~0932078 000
while Y.M.~was in residence at the Mathematical Sciences Research Institute in Berkeley, California, during the Fall 2015 semester. D.P. would like to thank the \'Ecole polytechnique for the kind hospitality during the elaboration of part of this work. This work was also partially supported by CNPq/Brazil, grant 302632/2013-1  and by the project ERC 291214 BLOWDISOL.

\section{Blow up profile}\label{sect2}

\subsection{Solitary waves} \label{SW}
The existence of nontrivial solutions to \eqref{E} is well-known from the works of Weinstein \cite{We1,We2} and Albert, Bona and Saut \cite{AlBoSa}. We recall here the main results.
  
\begin{proposition}[\cite{AlBoSa,We1,We2}] \label{existenceE} 
For $u \in H^{\frac12}(\mathbb R) \setminus \{0\}$, let
\begin{displaymath} 
W(u)=\frac{\Big(\int |D^{\frac12}u|^2 \Big)\Big(\int |u|^2   \Big)}{\int |u|^4 } \, .
\end{displaymath}
There exists a solution $Q \in H^{\frac12}(\mathbb R) \cap C^{\infty}(\mathbb R)$  of \eqref{E} that solves the minimization problem
	\begin{equation} \label{WeinsteinF}
	 \inf \big\{ W(u) : u \in H^{\frac12}(\mathbb R) \setminus \{0\} \big\}= W(Q) \, .
	\end{equation}
Moreover, by translation invariance, $Q$ is chosen to be even, positive on $\mathbb R$ and satisfying $Q'<0$ on $(0,+\infty)$.
\end{proposition}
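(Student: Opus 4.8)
The plan is to carry out the classical variational argument for $L^2$-critical Gagliardo--Nirenberg functionals: bound $W$ from below, extract a minimizing sequence normalized by the invariances of $W$, establish compactness modulo those invariances to produce a minimizer, derive the Euler--Lagrange equation and rescale it to \eqref{E}, and finally bootstrap regularity and read off positivity and monotonicity from the resolvent $(D^1+1)^{-1}$. First I would observe that $\inf W$ is positive: this is exactly the assertion that $\int v^4\lesssim \|D^{\frac12}v\|_{L^2}^2\,\|v\|_{L^2}^2$, which follows from $H^{\frac12}(\mathbb R)\hookrightarrow L^4(\mathbb R)$ interpolated with the $L^2$ norm (a posteriori this is \eqref{gn} with the sharp constant). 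Set $I:=\inf\{W(u):u\in H^{\frac12}(\mathbb R)\setminus\{0\}\}>0$. The functional $W$ is invariant under the three-parameter family $u\mapsto \mu\,u\big(\tfrac{\cdot-y}{\lambda}\big)$ with $\mu\neq0$, $\lambda>0$, $y\in\mathbb R$; given a minimizing sequence, I would use these symmetries to normalize it so that $\|u_n\|_{L^2}=\|D^{\frac12}u_n\|_{L^2}=1$, whence $\|u_n\|_{L^4}^4=1/W(u_n)\to 1/I>0$. In particular $(u_n)$ is bounded in $H^{\frac12}(\mathbb R)$ with $L^4$ norm bounded away from zero.

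For compactness I would pass to symmetric decreasing rearrangements: the fractional P\'olya--Szeg\H{o} inequality gives $\|D^{\frac12}u_n^\ast\|_{L^2}\le\|D^{\frac12}u_n\|_{L^2}$ while $\|u_n^\ast\|_{L^p}=\|u_n\|_{L^p}$ for all $p$, so $(u_n^\ast)$ is again minimizing and one may assume each $u_n$ even and nonincreasing on $[0,\infty)$. For such functions the elementary bound $|u_n(x)|\lesssim \|u_n\|_{L^2}\,|x|^{-1/2}$ forces $\|u_n\|_{L^4(|x|>R)}\to0$ uniformly in $n$ as $R\to\infty$; combined with the compact embedding $H^{\frac12}(-R,R)\hookrightarrow L^4(-R,R)$ and a diagonal extraction, this yields a subsequence with $u_n\to Q$ strongly in $L^4(\mathbb R)$ and weakly in $H^{\frac12}(\mathbb R)$. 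Then $\|Q\|_{L^4}^4=1/I>0$, so $Q\not\equiv0$, and by weak lower semicontinuity $\|D^{\frac12}Q\|_{L^2}^2\,\|Q\|_{L^2}^2\le1$, hence $W(Q)\le I$ and $Q$ is a minimizer. (Alternatively one runs the concentration--compactness principle of Lions directly, excluding vanishing via the $L^4$ lower bound and dichotomy via the strict subadditivity coming from the homogeneity of $W$; this is essentially the route of Weinstein \cite{We1,We2} and of Albert--Bona--Saut \cite{AlBoSa}.)

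Since $Q$ minimizes $W$, the condition $\frac{d}{d\varepsilon}W(Q+\varepsilon\phi)\big|_{\varepsilon=0}=0$ for all $\phi\in H^{\frac12}(\mathbb R)$ reads, after clearing denominators,
\[
\|Q\|_{L^2}^2\,D^1Q+\|D^{\frac12}Q\|_{L^2}^2\,Q=2\,W(Q)\,Q^3,
\]
i.e.\ $a\,D^1Q+b\,Q=c\,Q^3$ with $a,b,c>0$. Replacing $Q(x)$ by $\mu\,Q(x/\lambda)$ with $\lambda=b/a$ and $\mu=(c/b)^{1/2}$ normalizes $a=b=c=1$, which is precisely \eqref{E}; as this rescaling is among the invariances of $W$, the normalized $Q$ still solves \eqref{WeinsteinF}. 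Regularity follows by bootstrap: writing $Q=(D^1+1)^{-1}(Q^3)$ and using that the multiplier $(|\xi|+1)^{-1}$ gains one derivative in every $L^p$ scale ($1<p<\infty$) together with $H^{\frac12}(\mathbb R)\hookrightarrow L^p(\mathbb R)$ for all $p<\infty$, one improves the regularity of $Q^3$ and iterates to obtain $Q\in C^\infty(\mathbb R)$. Finally, the convolution kernel $k$ of $(D^1+1)^{-1}$ is everywhere positive and even, so $Q=k\ast Q^3>0$ on $\mathbb R$; evenness is imposed by the rearrangement step (or recovered from uniqueness), and the strict form of the P\'olya--Szeg\H{o} inequality forces $Q$ to coincide with its symmetric decreasing rearrangement, giving $Q'<0$ on $(0,+\infty)$.

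The main obstacle is the compactness step: because $W$ is invariant under translations and dilations, a minimizing sequence genuinely lacks compactness, and some symmetrization (or concentration--compactness) input is needed to recover a nonzero weak limit and, crucially, the \emph{strong} $L^4$ convergence without which one could only conclude $W(Q)\ge I$, not equality. A secondary technical point is the strict monotonicity $Q'<0$, which requires the strict (rather than plain) P\'olya--Szeg\H{o} inequality, together with the positivity and monotonicity properties of the resolvent kernel.
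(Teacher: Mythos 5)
The paper does not prove this proposition: it is stated as a black box imported from Weinstein \cite{We1,We2} and Albert--Bona--Saut \cite{AlBoSa}, so there is no internal argument to compare against. Your reconstruction follows the standard variational route of those references and is sound in its main steps: the positivity of $\inf W$, the normalization via the scaling/translation/multiplication invariances, the recovery of compactness through symmetric decreasing rearrangement (fractional P\'olya--Szeg\H{o} plus the $|x|^{-1/2}$ decay of radial nonincreasing $L^2$ functions and fractional Rellich on bounded intervals), the Euler--Lagrange computation and its rescaling to \eqref{E}, the bootstrap through $Q=(D^1+1)^{-1}(Q^3)$, and the positivity of $Q$ from the positivity of the kernel of $(D^1+1)^{-1}$ (the kernel $G(\cdot,0)=\int_0^\infty \tfrac1\pi\tfrac{w}{x^2+w^2}e^{-w}\,dw$ appearing in the paper's proof of Lemma~\ref{Lproperties}(iv)).

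The one step that does not close as written is $Q'<0$ on $(0,+\infty)$. Knowing that $Q$ coincides with its symmetric decreasing rearrangement gives only $Q'\le 0$ there, not strict negativity; moreover, the equality characterization in the \emph{strict} fractional P\'olya--Szeg\H{o} inequality is itself a nontrivial theorem, and you do not actually need it, since your $Q$ is a limit of even, nonincreasing profiles and is therefore automatically symmetric nonincreasing. To upgrade to $Q'<0$, use the representation $Q=k\ast Q^3$ with $k$ even, positive and strictly decreasing on $(0,+\infty)$: for $x>0$,
\begin{displaymath}
(k\ast Q^3)'(x)=\int_0^{+\infty}k'(z)\big[Q^3(|x-z|)-Q^3(x+z)\big]\,dz,
\end{displaymath}
where each integrand is $\le 0$ because $k'(z)<0$ and $Q^3$ is symmetric nonincreasing; if the integral vanished, one would have $Q(t)=Q(t+2x)$ for all $t>0$, contradicting $Q>0$ and $Q(t)\to 0$ as $t\to+\infty$. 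With this addition the proof is complete.
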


Note that for the corresponding  equation with quadratic nonlinearity associated to the Benjamin-Ono equation
\begin{equation} \label{EBO}
D^1Q+Q-Q^2=0 , 
\end{equation}
there exists an explicit solution $Q_{\rm BO}(x)=\frac{2}{1+x^2}$. By using complex analysis techniques,  Amick and Toland \cite{AmTo1}\footnote{Amick and Toland proved the following stronger statement in~\cite{AmTo2}: any nonconstant bounded solution  of \eqref{EBO} is either $Q_{\rm BO}$ (up to translation)  or a periodic wave solution.} proved that up to translation,  $Q_{\rm BO}$ is the unique solution  of \eqref{EBO} 
in $H^{\frac 12}$. These techniques do not apply to \eqref{E}. 

\medskip

More recently, Frank and Lenzmann \cite{FrLe} addressed succesfully the question of uniqueness  for \eqref{E}. Their results actually hold for a large class of nonlocal problems involving the fractional Laplacian in one dimension and are related to the well-known notion of ground state.

\begin{definition} \label{groundstate}
A positive and even solution $Q$ of \eqref{E} is called a \textit{ground state} solution of \eqref{E} if $Q$ satisfies \eqref{WeinsteinF}.
\end{definition} 

The following uniqueness result was obtained in \cite{FrLe}.
\begin{theorem}[\cite{FrLe}]
The ground state solution $Q=Q(|x|)>0$ of \eqref{E} is unique.
Moreover, every minimizer of \eqref{WeinsteinF} is of the form $\beta Q(\gamma(\cdot-x_0))$, for some $\beta \in \mathbb C$, $\beta \neq 0$, $\gamma>0$ and $x_0 \in \mathbb R$.
\end{theorem}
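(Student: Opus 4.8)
The plan is to split the statement into two parts: first the \emph{uniqueness} of the (positive, even) ground state solution of \eqref{E}, and then the description of \emph{all} minimizers of the Weinstein functional \eqref{WeinsteinF}; once uniqueness is in hand, the second part follows from a symmetrization argument. For uniqueness I would run a connectedness argument in the order $s$ of the fractional Laplacian $(-\Delta)^s$, of which $D^1=(-\Delta)^{\frac12}$ is the case $s=\frac12$, anchored at the \emph{local} endpoint $s=1$: there the equation reads $-Q''+Q-Q^3=0$, its unique (up to translation) positive solution is $Q=\sqrt2\,\mathrm{sech}$, and the linearized operator $-\partial_x^2+1-3Q^2$ is an explicitly diagonalizable Schr\"odinger operator with exactly one negative eigenvalue and kernel precisely $\mathrm{span}\{Q'\}$. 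Concretely, let $\mathcal S\subseteq(0,1]$ be the set of $s$ for which the positive even $H^s$ solution of $(-\Delta)^sQ+Q=Q^3$ is unique up to translation and non-degenerate, i.e. $\ker\big((-\Delta)^s+1-3Q^2\big)\big|_{L^2}=\mathrm{span}\{Q'\}$. The goal is to prove that $\mathcal S$ is nonempty ($1\in\mathcal S$), relatively open and relatively closed in $(0,1]$, hence equal to $(0,1]$, so in particular $\frac12\in\mathcal S$.

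Several qualitative properties of ground states are needed throughout and should be established first for all $s\in(0,1)$: positivity and even symmetry (via the Riesz rearrangement inequality, which holds for $(-\Delta)^s$), smoothness, the sharp decay $Q(x)\sim c\,|x|^{-(1+2s)}$ as $|x|\to\infty$ (so $Q(x)\lesssim(1+x^2)^{-1}$ when $s=\frac12$), and strict monotonicity $Q'<0$ on $(0,\infty)$. These, and most of the fine analysis below, are handled through the Caffarelli--Silvestre extension, which writes $(-\Delta)^su$ as a constant times the weighted normal derivative at $\{y=0\}$ of the solution $U$ of $\mathrm{div}\big(y^{1-2s}\nabla U\big)=0$ on the half-plane $\{y>0\}$ with trace $u$. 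This replaces the nonlocal equation by a local (degenerate elliptic) one, so that the strong maximum principle, Hopf-type boundary lemmas and moving-plane/sliding methods become available in this setting.

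The heart of the matter is non-degeneracy. Translation invariance gives $Q'\in\ker\mathcal L$ (with $\mathcal L$ as in \eqref{L} when $s=\frac12$); I would show nothing else lies there by splitting $\ker\mathcal L$ into its odd and even parts. Since $Q'<0$ on $(0,\infty)$, $Q'$ is a nowhere-vanishing odd function, and a Perron--Frobenius argument for $\mathcal L$ on the odd subspace (carried out on the extension, where positivity-improving properties of the relevant operator are available) shows $Q'$ is the bottom eigenfunction of that restriction with eigenvalue $0$; hence the odd part of the kernel is $\mathrm{span}\{Q'\}$. The even part is the delicate point: here one uses that $Q$ minimizes \eqref{WeinsteinF}, which together with $(\mathcal L Q,Q)=-2\int Q^4<0$ forces $\mathcal L$ to have Morse index exactly one, and combines this with the identity $\mathcal L(\Lambda Q)=-Q$ (with $\Lambda$ as in \eqref{lambda}) and Pohozaev/Rellich-type identities inside a spectral argument on the extension that locates the even spectrum of $\mathcal L$ and excludes $0$ from it. I expect this even-sector non-degeneracy --- squeezing enough rigidity out of minimality and the fractional identities to pin down the spectrum, necessarily via the extension rather than soft functional analysis --- to be the main obstacle of the whole argument. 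It is also exactly what makes the continuation work: non-degeneracy is what the implicit function theorem needs to make $\mathcal S$ open (applied to $(s,Q)\mapsto(-\Delta)^sQ+Q-Q^3$ on the even subspace, where $\mathcal L$ is then invertible), and what makes $\mathcal S$ closed (compactness yields a subsequential limit $Q_{s_\infty}$ of ground states along $s_n\to s_\infty$; it is again a non-degenerate ground state, and non-degeneracy forbids two distinct limiting branches, so uniqueness survives the limit).

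Finally, granting uniqueness, I would identify the minimizers of \eqref{WeinsteinF}. Because $W(u)$ depends on $u$ only through $|u|$ up to the diamagnetic inequality $\|D^{\frac12}|u|\|_{L^2}\le\|D^{\frac12}u\|_{L^2}$, if $u$ is a minimizer then so is $|u|$, equality holds in that inequality, and $u=e^{i\theta_0}|u|$ for a constant $\theta_0$. Schwarz symmetrization strictly decreases $W$ unless the function is already symmetric (equality case of the fractional P\'olya--Szeg\H{o} inequality), while preserving the $L^2$ and $L^4$ norms, so $|u|$ coincides, up to translation, with its symmetric decreasing rearrangement; in particular $|u|$ is, after translation, even and positive, and its Euler--Lagrange equation reduces to \eqref{E} after rescaling in amplitude and in space. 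Thus $|u|$ is a ground state, so by uniqueness $|u|=\beta_0\,Q(\gamma(\cdot-x_0))$ for some $\beta_0>0$, $\gamma>0$, $x_0\in\mathbb R$, and therefore $u=\beta\,Q(\gamma(\cdot-x_0))$ with $\beta=e^{i\theta_0}\beta_0$, which is the asserted form.
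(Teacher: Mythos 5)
This theorem is not proved in the paper: it is imported verbatim from Frank and Lenzmann \cite{FrLe}, so there is no in-paper argument to compare against. Your sketch is, however, an essentially faithful outline of the actual proof in \cite{FrLe}: the continuation in the order $s$ of $(-\Delta)^s$ anchored at the classical endpoint $s=1$, the reduction of everything hard to the non-degeneracy statement $\ker\mathcal L=\mathrm{span}\{Q'\}$, the odd/even splitting with a Perron--Frobenius argument on the odd sector, the use of the local extension to the half-plane to recover maximum principles, and the rearrangement/diamagnetic argument to classify all minimizers of \eqref{WeinsteinF} once uniqueness of the positive even ground state is known. You also correctly identify the even sector as the crux.

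Two caveats on where your outline is a pointer rather than a proof. First, excluding $0$ from the even spectrum of $\mathcal L$ is not a direct consequence of ``Morse index one plus $\mathcal L(\Lambda Q)=-Q$''; in \cite{FrLe} it requires a genuinely new ingredient, namely a Sturm-type oscillation bound (the second eigenfunction of a fractional Schr\"odinger operator changes sign at most twice), proved via a variational characterization on the extension, and it is this bound combined with $\mathcal L(\Lambda Q)=-Q$ and the sign information on $\Lambda Q$ that kills a putative even kernel element. Second, closedness of your set $\mathcal S$ is not automatic: one needs uniform (in $s$ on compact subsets of $(0,1]$) a priori bounds on ground states in $H^s$ and $L^\infty$, uniform decay, and a non-vanishing lower bound to ensure the limit along $s_n\to s_\infty$ is again a nontrivial ground state; moreover openness requires checking that the branch produced by the implicit function theorem actually consists of \emph{ground states} (minimizers), which is a separate verification. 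With those points filled in --- all of which is carried out in \cite{FrLe} --- your argument is the standard and, to date, essentially the only known route to this result.
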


Recall from  \cite{AmTo1,KeMaRo} the decay properties of the ground state solution.
\begin{proposition}[\cite{AmTo1,KeMaRo}]\label{pr:decay}
The  ground state   $Q$ of \eqref{E} satisfies $Q\in \mathcal Y_2$ and $E(Q)=0$.
\end{proposition}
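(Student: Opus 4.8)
The plan is to establish the two assertions separately: first the decay $Q \in \mathcal Y_2$, then the vanishing of the energy $E(Q) = 0$.

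For the decay statement, the key is that $Q$ satisfies $D^1 Q + Q = Q^3$, so that $(1 + D^1)^{-1}$ applied to $Q^3$ recovers $Q$. The Fourier multiplier $(1+|\xi|)^{-1}$ corresponds to a convolution kernel $K$ on $\mathbb R$ with $K(x) > 0$ and, crucially, $K(x) \sim c\, |x|^{-2}$ as $|x| \to \infty$ (this is the classical fact that the slowest-decaying singularity of the symbol at $\xi = 0$, namely the corner of $|\xi|$, produces quadratic decay of the kernel; one sees it by writing $(1+|\xi|)^{-1} = 1 - |\xi| + O(|\xi|^2)$ near $0$ and noting $\widehat{|\xi|}$ has a $|x|^{-2}$ tail). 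Since $Q \in H^{1/2} \cap C^\infty \subset L^\infty$ decays at infinity (it is a minimizer, hence tends to $0$), one has $Q^3 \in L^1$, and a bootstrap on the convolution $Q = K * Q^3$ upgrades the a priori decay: starting from $Q \to 0$, one shows $Q(x) = O(|x|^{-1})$, then feeding this back, $Q^3 = O(|x|^{-3}) \in L^1$ with moments, and $Q(x) = (\int Q^3)\, K(x) + \text{lower order} = O(|x|^{-2})$. For the derivatives, one differentiates the convolution identity: $Q^{(k)} = (D^1 + 1)^{-1} (Q^3)^{(k)}$, and since $(Q^3)^{(k)}$ inherits the decay of $Q$ and its derivatives together with the Schwartz-tail behaviour of $K^{(k)}$, an induction on $k$ gives $|Q^{(k)}(x)| \lesssim (1+|x|)^{-(2+k)}$, which is exactly $Q \in \mathcal Y_2$. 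This argument is carried out in detail in \cite{AmTo1} for the related equation and adapted in \cite{KeMaRo}; I would cite those and only sketch the bootstrap.

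For $E(Q) = 0$, I would use the two Pohozaev-type identities obtained by testing \eqref{E} against $Q$ and against $\Lambda Q = \frac12 Q + x Q'$. Pairing $D^1 Q + Q - Q^3 = 0$ with $Q$ gives
\[
\int |D^{\frac12} Q|^2 + \int Q^2 - \int Q^4 = 0.
\]
Pairing with $\Lambda Q$ and using that $D^1$ and $\partial_x$ commute with scaling in the homogeneous way (so $(D^1 Q, \Lambda Q) = 0$, since $\int (D^{1/2}(\Lambda f))(D^{1/2} f) = \frac{d}{d\mu}\big|_{\mu=1}\frac12 \int |D^{1/2}(\mu^{1/2} f(\mu \cdot))|^2 = \frac12 \cdot 0$ because $\|D^{1/2}(f_\mu)\|_{L^2}^2 = \mu \|D^{1/2} f\|_{L^2}^2$ — wait, that derivative is $\|D^{1/2}f\|^2$, not zero; I need the correct normalization) one gets a second linear relation among $\int |D^{1/2}Q|^2$, $\int Q^2$, $\int Q^4$. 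Concretely, from $(\Lambda Q, Q) = \frac12 \frac{d}{d\mu}\big|_{\mu=1} \int Q_\mu^2 = 0$ (the $L^2$ norm is scaling invariant under $f \mapsto \mu^{1/2} f(\mu\cdot)$), $(\Lambda Q, D^1 Q) = \frac12 \frac{d}{d\mu}\big|_{\mu=1}\int |D^{1/2} Q_\mu|^2 = \frac12 \int |D^{1/2}Q|^2$, and $(\Lambda Q, Q^3) = \frac14 \frac{d}{d\mu}\big|_{\mu=1}\int Q_\mu^4 = -\frac14 \int Q^4$, so testing \eqref{E} against $\Lambda Q$ yields $\frac12 \int |D^{1/2} Q|^2 - \frac14 \int Q^4 = 0$, i.e. precisely $E(Q) = 0$. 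The decay $Q \in \mathcal Y_2$ guarantees that all these integrations by parts and the pairing with the unbounded weight $x Q'$ are justified (the boundary terms vanish since $x Q(x)^2, x Q(x)^4 \to 0$).

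The main obstacle is the decay bootstrap: one must handle the nonlocal operator carefully, in particular track precisely the $|x|^{-2}$ tail of the kernel $K$ of $(1+D^1)^{-1}$ and verify that the convolution $K * Q^3$ does not decay faster than quadratically (so the exponent $2$ in $\mathcal Y_2$ is sharp and not improvable), while the derivative estimates require a clean induction. The Pohozaev computation for $E(Q) = 0$ is then routine given the decay.
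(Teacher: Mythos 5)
Your proof follows essentially the same route as the paper: the decay $Q\in\mathcal Y_2$ is deferred to \cite{AmTo1,KeMaRo} (the paper does the same, and your kernel bootstrap is a fair sketch of what those references establish), and $E(Q)=0$ is obtained from Pohozaev-type identities --- testing against $\Lambda Q$ as you do is just the linear combination of the paper's energy identity and its Pohozaev identity, with the key cancellation $\int\mathcal H(Q')\,xQ'=0$ appearing in your version as the scaling computation $(D^1Q,\Lambda Q)=\tfrac12\int|D^{\frac12}Q|^2$. One sign slip: since $\int Q_\mu^4=\mu\int Q^4$, one has $(\Lambda Q,Q^3)=\tfrac14\frac{d}{d\mu}\big|_{\mu=1}\int Q_\mu^4=+\tfrac14\int Q^4$, not $-\tfrac14\int Q^4$; with your stated value the equation tested against $\Lambda Q$ would read $\tfrac12\int|D^{\frac12}Q|^2+\tfrac14\int Q^4=0$, which is absurd, whereas the correct sign yields exactly the identity $\tfrac12\int|D^{\frac12}Q|^2-\tfrac14\int Q^4=0$, i.e.\ $E(Q)=0$, as you conclude.
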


Note that it is easy to check that   $E(Q)=0$. Indeed,  $Q$ satisfies the energy identity 
\begin{displaymath} 
\int Q^2 +\int |D^{\frac12}Q|^2 =\int Q^4  \, 
\end{displaymath}
and the Pohozaev identity\footnote{which follows by using that $\int_{\mathbb R}\mathcal{H}(Q')xQ'dx=0$, since $\mathcal{H}(x\phi)=x\mathcal{H}\phi$ if $\int \phi dx=0$.}
\begin{displaymath}
\int Q^2  =\frac12 \int  Q^4 \, ,
\end{displaymath}
which imply that  $E(Q)=0$. For future reference, we also note that for $|a|$ small, one has
\begin{equation}\label{Ea}\begin{aligned}
E((1-a)Q)
&= \frac 12 (1-a)^2 \int |D^{\frac 12} Q|^2 - \frac 14 (1-a)^4 \int Q^4 = \frac 12 ((1-a)^2 - (1-a) ^4) \int Q^2 \\
&= a \left(1-\frac 52a+2a^2-\frac 12a^3\right) \int Q^2.
\end{aligned}
\end{equation}

\medskip
  
We will need in the sequel technical facts related to the Hilbert transform.
It is well-known that $\mathcal{H}(\frac1{1+x^2})=\frac{x}{1+x^2}$.
More generally, we have the following result.
\begin{lemma} \label{HY}
If $f \in \mathcal{Y}_2$, then $\mathcal{H}f \in \mathcal{Y}_1$.	
\end{lemma}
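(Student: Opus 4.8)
The plan is to reduce the statement to a single pointwise decay estimate for one principal value integral, using that $\mathcal H$ commutes with $\partial_x$. First I would record that since $f\in\mathcal Y_2$ all its derivatives lie in $L^1(\mathbb R)$, so $\widehat f$ decays faster than any polynomial; hence $\xi^m\operatorname{sgn}(\xi)\widehat f(\xi)\in L^1(\mathbb R)$ for every $m\in\mathbb N$, which gives $\mathcal Hf\in C^\infty(\mathbb R)$ together with the identity $(\mathcal Hf)^{(k)}=\mathcal H\bigl(f^{(k)}\bigr)$ for all $k$. Thus it suffices to prove the following: if $g\in C^\infty(\mathbb R)$ satisfies $|g^{(m)}(y)|\lesssim(1+|y|)^{-(2+k+m)}$ for $m=0,1$ and $\int_{\mathbb R}y^jg(y)\,dy=0$ for $0\le j\le k-1$, then $|\mathcal Hg(x)|\lesssim(1+|x|)^{-(1+k)}$. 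Applying this with $g=f^{(k)}\in\mathcal Y_{2+k}$ is legitimate, since integrating by parts $j$ times gives $\int_{\mathbb R}y^jf^{(k)}=(-1)^jj!\int_{\mathbb R}f^{(k-j)}=0$ for $j\le k-1$ (all boundary terms vanish by the decay of $f$), and the resulting bound is exactly $\bigl|(\mathcal Hf)^{(k)}(x)\bigr|\lesssim(1+|x|)^{-(1+k)}$, i.e. $\mathcal Hf\in\mathcal Y_1$.

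For the pointwise bound itself, the range $|x|\le 2$ reduces to the boundedness of $\mathcal Hg$: symmetrize near the singularity, $\mathrm{p.v.}\int_{|y-x|<1}\frac{g(y)}{x-y}\,dy=-\int_{|y-x|<1}\frac{g(y)-g(x)}{y-x}\,dy$, and use $|g(y)-g(x)|\lesssim|y-x|$ there and $g\in L^1$ away from it. For $|x|>2$ I would split the integral at $|y|=|x|/2$. On $\{|y|\le|x|/2\}$ there is no singularity, and I would insert the exact Taylor identity
\[
\frac{1}{x-y}=\sum_{j=0}^{k-1}\frac{y^j}{x^{j+1}}+\frac{y^k}{x^k(x-y)}.
\]
The moment conditions turn each polynomial term into a tail integral, $\int_{|y|\le|x|/2}y^jg=-\int_{|y|>|x|/2}y^jg$, bounded by $|x|^{j-1-k}$, so that contribution is $O(|x|^{-(k+2)})$; the remainder term is bounded by $|x|^{-(k+1)}\int_{\mathbb R}|y|^k(1+|y|)^{-(2+k)}\,dy=O(|x|^{-(k+1)})$, the integral converging since its integrand is $\sim|y|^{-2}$ at infinity. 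On $\{|y|>|x|/2\}$ I would further decompose according to $|y-x|<1$ (handled by symmetrization, using $|g'(y)|\lesssim|x|^{-(3+k)}$ on that set), $1\le|y-x|\le|x|/4$ (where $|y|\gtrsim|x|$, so $|g(y)|\lesssim|x|^{-(2+k)}$, integrated against a kernel contributing $\lesssim\ln|x|$), and $|y-x|>|x|/4$ (where $|x-y|^{-1}\lesssim|x|^{-1}$ and $\int_{|y|>|x|/2}|g|\lesssim|x|^{-(1+k)}$). Each piece is $O(|x|^{-(1+k)})$, several with room to spare, which completes the estimate.

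The step I expect to be the main obstacle is the bookkeeping on the region $\{|y|>|x|/2\}$: one must treat the principal value near $y=x$ uniformly in $|x|$ while tracking precisely which decay rate of $g$, and which moment cancellation, is used in each subregion so as to land exactly on the exponent $-(1+k)$ rather than something weaker. The commutation identity, the moment identities for $f^{(k)}$, and the $|x|\le 2$ case are all routine once the splitting above is organized.
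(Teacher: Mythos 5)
Your argument is correct, and it reaches the conclusion by a genuinely different route from the paper. The paper's proof is algebraic and very short: it iterates the commutator identity $\mathcal H(xg)=x\mathcal Hg-\tfrac1\pi\int g$ to get $x^{k+1}\mathcal H(f^{(k)})=\mathcal H(x^{k+1}f^{(k)})+c_k\int f$ (the intermediate moment terms drop out for exactly the reason you exploit, namely $\int y^jf^{(k)}=0$ for $j\le k-1$), and then bounds both $\mathcal Hf^{(k)}$ and $\mathcal H(x^{k+1}f^{(k)})$ in $L^\infty$ via the embedding $H^1(\mathbb R)\hookrightarrow L^\infty(\mathbb R)$ and the boundedness of $\mathcal H$ on $H^1$. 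You instead estimate the principal value integral directly, making the same moment cancellation explicit through the exact Taylor identity for $\frac1{x-y}$ and then doing the region-by-region bookkeeping. Your version is longer but entirely elementary and self-contained (it uses neither the $L^2$-theory of $\mathcal H$ nor Sobolev embedding), and it shows transparently which decay rate of $f^{(k)}$ and which cancellation produces each power of $|x|^{-1}$; the paper's version buys brevity at the cost of hiding the mechanism inside the commutator identity and the $H^1$ machinery. Your subsidiary steps — the identity $(\mathcal Hf)^{(k)}=\mathcal H(f^{(k)})$ via Fourier transform, the vanishing of the moments $\int y^jf^{(k)}$ by integration by parts, and the splitting at $|y|=|x|/2$ with the further decomposition of $\{|y|>|x|/2\}$ — all check out, and the exponents land where you say they do (the logarithm in the middle region is absorbed by the extra power of decay available there).
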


\begin{proof} 
Let $f \in \mathcal{Y}_2$. 
By the definition of $\mathcal H$, we have for $k\geq 0$ and some constant $c_k$,
\begin{equation} \label{HY.1}
x^{k+1}\mathcal{H}(f^{(k)})=\mathcal{H}(x^{k+1}f^{(k)})+c_k \int f.
 \end{equation}
Moreover,  from the Sobolev embedding $H^1(\mathbb R) \hookrightarrow L^{\infty}(\mathbb R)$ and the
boundeness of $\mathcal{H}$ in $H^1$,
\[
\|\mathcal{H}f^{(k)}\|_{L^\infty}
 \lesssim \|\mathcal{H} f^{(k)}\|_{H^1}
 \lesssim \|f^{(k)}\|_{H^1} \lesssim  1\, ,
\]
\[
\|\mathcal{H}\big(x^{k+1}f^{(k)} \big)\|_{L^\infty} 
 \lesssim \|\mathcal{H}\big(x^{k+1}f^{(k)}\big)\|_{H^1} 
 \lesssim \|x^{k+1}f^{(k)}\|_{H^1}\lesssim  1\, .
\]
Thus, by \eqref{HY.1}, $|\mathcal Hf^{(k)}(x)|\lesssim (1+|x|)^{-(1+k)}$.
\end{proof}

We will also need the following variant of Lemma \ref{HY}. 
\begin{lemma} \label{pw.HY}
Let $a \in C^{\infty}(\mathbb R)$ be such that $a \in L^{\infty}(\mathbb R)$ and $a', \, a'' \in L^{\infty}(\mathbb R) \cap L^2(\mathbb R)$.	
Then, 
\begin{equation} \label{pw.HY.1}
\sup_{x \in \mathbb R} \, (1+x^2)\Big|\mathcal{H}\left(\frac{a(y)}{1+y^2}\right)'(x) \Big| \lesssim \sum_{j=0}^2\|a^{(j)}\|_{L^{\infty}}+\sum_{j=1}^2\|a^{(j)}\|_{L^2} \, .
\end{equation}
\end{lemma}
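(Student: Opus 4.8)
The goal is to bound $(1+x^2)$ times the derivative of $\mathcal{H}\big(\frac{a(y)}{1+y^2}\big)$ uniformly in $x$, in terms of low-order Sobolev and $L^\infty$ norms of $a$. The natural starting point is to write $g(y) = \frac{a(y)}{1+y^2}$ and observe that $g$ has the type of decay handled by Lemma~\ref{HY}: since $a,a'\in L^\infty$, we have $g\in H^1$ and more precisely $|g(y)|\lesssim (1+|y|)^{-2}$, $|g'(y)|\lesssim (1+|y|)^{-2}$. However, Lemma~\ref{HY} as stated uses the full $\mathcal{Y}_2$ structure (decay of all derivatives), which we do not have here with only two derivatives on $a$; so the plan is to extract from its proof only what is needed, namely the commutator identity $x\,\mathcal{H}(h) = \mathcal{H}(x h) + c_0\int h$ for a function $h$ with $\int h$ finite, applied to $h = g'$ (note $\mathcal{H}$ commutes with $\partial_x$, so $\mathcal{H}(g)' = \mathcal{H}(g')$).

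\medskip

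First I would write $\mathcal{H}(g)'(x) = \mathcal{H}(g')(x)$. To get the $(1+x^2)$ weight, I apply the commutator identity twice: $x^2\mathcal{H}(g') = x\,\mathcal{H}(x g') - c_0 x\int g' + (\text{lower order}) = \mathcal{H}(x^2 g') + c_1 \int (x g') - c_0 x \int g' + \dots$. Since $g$ decays like $(1+|y|)^{-2}$ and is smooth, $\int g' = 0$ and $\int x g'$ is finite (integrate by parts: $\int x g' = -\int g$, which is finite because $g\in L^1$); more carefully one should track that $x g'$ and $x^2 g'$ are the functions whose Hilbert transforms we must control in $L^\infty$. The key point is that $x^2 g'(y) = y^2\big(\frac{a(y)}{1+y^2}\big)' = y^2\big(\frac{a'(y)}{1+y^2} - \frac{2y a(y)}{(1+y^2)^2}\big)$, which is bounded by $\|a'\|_{L^\infty} + \|a\|_{L^\infty}$ pointwise and lies in $L^2$ provided we also use $a'\in L^2$ near infinity to get the $H^1$ control. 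Then, exactly as in the proof of Lemma~\ref{HY}, I bound $\|\mathcal{H}(x^2 g')\|_{L^\infty}\lesssim \|\mathcal{H}(x^2 g')\|_{H^1}\lesssim \|x^2 g'\|_{H^1}$ via Sobolev embedding $H^1\hookrightarrow L^\infty$ and $H^1$-boundedness of $\mathcal{H}$, and similarly for the $x\,\mathcal{H}(xg')$ piece (reusing $\|\mathcal{H}(xg')\|_{L^\infty}\lesssim \|x g'\|_{H^1}$), collecting the constants $\int g$, $\int xg$ from the commutator terms, which are themselves $\lesssim \|a\|_{L^\infty}$.

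\medskip

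Computing $\|x^2 g'\|_{H^1}$ requires one more derivative, i.e.\ $\partial_y(x^2 g')$, which brings in $a''$: schematically $\partial_y\big(y^2 g'(y)\big)$ involves $a''(y)$, $a'(y)$ and $a(y)$ against rational weights that are bounded and $L^2$. This is why the hypothesis asks for $a''\in L^\infty\cap L^2$. So the right-hand side $\sum_{j=0}^2\|a^{(j)}\|_{L^\infty} + \sum_{j=1}^2\|a^{(j)}\|_{L^2}$ emerges naturally: the $L^\infty$ norms control the pointwise size of all the weighted rational combinations near the origin, and the $L^2$ norms of $a'$, $a''$ control the tails where the weights $\frac{1}{1+y^2}$ alone are not square-integrable after multiplication by powers of $y$. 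I would organize the estimate as: (i) the commutator identities reducing $x^2\mathcal{H}(g')$ to $\mathcal{H}$ of explicit weighted functions plus finite constants; (ii) pointwise and $L^2$ bounds on $x^2 g'$, $x g'$ and their first derivatives in terms of the stated norms; (iii) Sobolev embedding and $H^1$-boundedness of $\mathcal{H}$ to pass from $H^1$ bounds to $L^\infty$ bounds; (iv) assembling, treating separately the regimes $|x|\le 1$ and $|x|\ge 1$ so the $(1+x^2)$ weight is handled cleanly.

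\medskip

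The main obstacle I anticipate is bookkeeping rather than conceptual: carefully justifying that the constant terms $\int g'$, $\int (xg')$ arising from iterating the commutator identity are well-defined and estimable (this needs the decay $|g(y)|\lesssim (1+|y|)^{-2}$, not more), and checking that $\partial_y(y^2 g')$ genuinely lands in $L^2$ using precisely $a''\in L^2$ for the slowly decaying piece. A secondary subtlety is that the commutator identity $\mathcal{H}(x h) = x\mathcal{H}(h) + c\int h$ requires $h\in L^1$ (or at least that $\int h$ makes sense), so one should apply it to $h=g'$ and $h=xg'$ rather than to $g$ itself; verifying $g', xg'\in L^1$ follows from the quadratic decay. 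None of these steps is deep, but assembling them so that the precise combination of norms on the right of \eqref{pw.HY.1} is what comes out requires attention.
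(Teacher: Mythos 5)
Your plan is correct and follows essentially the same route as the paper: the paper applies the identity \eqref{HY.1} with $k=1$ (equivalently, your twice-iterated $k=0$ commutator identity, using $\int g'=0$ and $\int yg'=-\int g$) to write $x^2\mathcal{H}(g')=\mathcal{H}(y^2g')+\int g$, and then bounds both $\mathcal{H}(g')$ and $\mathcal{H}(y^2g')$ in $L^\infty$ via $H^1\hookrightarrow L^\infty$ and the $H^1$-boundedness of $\mathcal{H}$, with exactly the norm bookkeeping you describe ($a''\in L^2$ entering through $\partial_y(y^2g')$, and $\|a\|_{L^\infty}$ controlling the constant $\int g$).
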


\begin{proof} 
First, we see from the Sobolev embedding $H^1(\mathbb R) \hookrightarrow L^{\infty}(\mathbb R)$ and the continuity of $\mathcal{H}$ in $H^1$ that 
\[
\Big\|\mathcal{H}\left(\frac{a(y)}{1+y^2}\right)'\Big\|_{L^\infty}
\lesssim \Big\|\left(\frac{a(y)}{1+y^2}\right)'\Big\|_{H^1} \lesssim  \sum_{j=0}^2\|a^{(j)}\|_{L^{\infty}}\, .
\]
Second, we deduce from \eqref{HY.1} with $k=1$ that
\[ x^2\mathcal{H}\left(\frac{a(y)}{1+y^2}\right)'(x) 
=\mathcal{H}\left(y^2\left(\frac{a(y)}{1+y^2}\right)'\right)(x)+\int \frac{a(y)}{1+y^2} \, .
\]
Arguing as above, we have   
\[
\Big\|\mathcal{H}\left(y^2\left(\frac{a(y)}{1+y^2}\right)'\right)\Big\|_{L^\infty}
\lesssim \Big\|y^2\left(\frac{a(y)}{1+y^2}\right)'\Big\|_{H^1} \lesssim  \sum_{j=0}^1\|a^{(j)}\|_{L^{\infty}}+\sum_{j=1}^2\|a^{(j)}\|_{L^2}\, .
\]

We conclude the proof of \eqref{pw.HY.1} gathering those estimates.
\end{proof}

We recall the properties of the   operator $\mathcal{L}$ defined in \eqref{L}.

\begin{lemma}[Linearized operator, \cite{We1,We2,FrLe}]  \label{Lproperties} 
The self-adjoint operator $\mathcal{L}$ in  $L^2$ with domain $H^1$ satisfies the following properties:
\begin{itemize} 
\item[(i)] \emph{Spectrum of $\mathcal{L}$.} The operator $\mathcal{L}$ has only one negative eigenvalue $-\kappa_0$ ( $\kappa_0>0$) associated to an even, positive eigenfunction $\chi_0$; $\ker \mathcal{L}=\{aQ' : a \in \mathbb R\}$; 
and $\sigma_{ess}( \mathcal{L})=[1,+\infty)$;

\item[(ii)] \emph{Scaling.} $\mathcal{L}\Lambda Q=-Q$ and $(Q,\Lambda Q)=0$, where $\Lambda$ is defined in \eqref{lambda};

\item[(iii)] for any function $h \in L^2(\mathbb R)$ orthogonal to $Q'$ (for the $L^2$-scalar product), there exists a unique function $f \in H^1(\mathbb R)$ orthogonal to $Q'$ such that $\mathcal{L}f=h$;

\item[(iv)] \emph{Regularity.} if $f \in H^1(\mathbb R)$ is such that $\mathcal{L}f \in \mathcal{Y}_1$, then $f \in \mathcal{Y}_1$;

\item[(v)] \emph{Coercivity of $\mathcal{L}$.} there exists $\kappa>0$ such that 
for all $f \in H^{\frac12}(\mathbb R)$,
\begin{equation} \label{coercivity}
(\mathcal{L}f,f) \ge \kappa \|f\|_{H^{\frac12}}^2-\frac1\kappa\Big((f,Q)^2+(f,\Lambda Q)^2 +(f,Q')^2 \Big) \,.
\end{equation}
\end{itemize}
	
\end{lemma}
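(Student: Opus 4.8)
The plan is to prove the five items in turn, relying on Frank--Lenzmann \cite{FrLe} for the two genuinely spectral facts (the exact Morse index of $\mathcal L$ and the triviality of its kernel modulo $Q'$), and otherwise arguing directly.

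\emph{Item (i).} Since $Q\in\mathcal Y_2$ (Proposition~\ref{pr:decay}), multiplication by $3Q^2$ is a relatively compact perturbation of $D^1+1$ — e.g. $3Q^2(D^1+1)^{-1}$ is compact on $L^2$, being a decaying multiplier composed with a smoothing operator — so Weyl's theorem yields $\sigma_{ess}(\mathcal L)=\sigma_{ess}(D^1+1)=[1,+\infty)$, and the spectrum below $1$ consists of isolated eigenvalues of finite multiplicity. Differentiating \eqref{E} in $x$ gives $\mathcal LQ'=0$, so $\mathbb RQ'\subset\ker\mathcal L$. Pairing \eqref{L} with $Q$ and using the energy identity $\int Q^2+\int|D^{\frac12}Q|^2=\int Q^4$ gives $(\mathcal LQ,Q)=\int Q^4-3\int Q^4=-2\int Q^4<0$, so $\mathcal L$ has at least one negative eigenvalue. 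That it has \emph{exactly} one and that $\ker\mathcal L=\mathbb RQ'$ is precisely the nondegeneracy and Morse-index statement of \cite{FrLe} (see also \cite{We1,We2}), which we invoke. Finally, the Poisson semigroup $e^{-tD^1}$ is positivity improving and $e^{-t(1-3Q^2)}$ is multiplication by a positive bounded function, so by the Trotter product formula $e^{-t\mathcal L}$ is positivity improving; by Perron--Frobenius its spectral radius $e^{t\kappa_0}$ is a simple eigenvalue with a positive eigenfunction, i.e.\ $-\kappa_0$ is simple with eigenfunction $\chi_0>0$, which is even since $\mathcal L$ commutes with $x\mapsto-x$.

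\emph{Items (ii) and (iii).} Set $R_\lambda(x)=\lambda^{\frac12}Q(\lambda x)$; using the homogeneity of $D^1$ one checks from \eqref{E} that $D^1R_\lambda+\lambda R_\lambda=R_\lambda^3$, and since $\partial_\lambda R_\lambda|_{\lambda=1}=\Lambda Q$ (by \eqref{lambda}), differentiating at $\lambda=1$ gives $D^1\Lambda Q+\Lambda Q-3Q^2\Lambda Q=-Q$, that is $\mathcal L\Lambda Q=-Q$; also $(Q,\Lambda Q)=\frac12\int Q^2+\int xQQ'=\frac12\int Q^2+\frac12\int x(Q^2)'=0$ by integration by parts. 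For (iii), $\mathcal L$ is self-adjoint with $0$ an isolated simple eigenvalue, hence Fredholm of index $0$ with range $(\ker\mathcal L)^\perp=\{Q'\}^\perp$; thus any $h\perp Q'$ admits a preimage $f\in H^1$, unique modulo $\mathbb RQ'$, and exactly one such $f$ lies in $\{Q'\}^\perp$.

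\emph{Item (iv).} If $\mathcal Lf\in\mathcal Y_1$, then $(D^1+1)f=\mathcal Lf+3Q^2f$; iterating $D^1f=\mathcal Lf+3Q^2f-f\in H^k$ shows $f\in H^k$ for all $k$, hence $f\in C^\infty\cap L^\infty$ with bounded derivatives. Writing $f=K*(\mathcal Lf+3Q^2f)$ with $K$ the kernel of $(D^1+1)^{-1}$, and using that $K$ and its derivatives decay like $(1+|x|)^{-2},(1+|x|)^{-3},\dots$ (the Lipschitz corner of $(1+|\xi|)^{-1}$ at $\xi=0$ produces the $|x|^{-2}$ tail, while $K$ is only logarithmically singular at $0$), together with $|\mathcal Lf(x)|\lesssim(1+|x|)^{-1}$ and $|3Q^2f(x)|\lesssim(1+|x|)^{-4}$, a standard convolution estimate gives $|f(x)|\lesssim(1+|x|)^{-1}$; differentiating the identity and bootstrapping on the derivatives of $f$ yields all the required bounds, so $f\in\mathcal Y_1$ (cf.\ Lemma~\ref{HY}).

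\emph{Item (v) and main obstacle.} Argue by contradiction: if \eqref{coercivity} fails for every $\kappa>0$, there are $f_n\in H^{\frac12}$ with $\|f_n\|_{H^{\frac12}}=1$, $(f_n,Q),(f_n,\Lambda Q),(f_n,Q')\to0$ and $(\mathcal Lf_n,f_n)\to\ell\le0$. Along a subsequence $f_n\rightharpoonup f$ in $H^{\frac12}$, hence $f_n\to f$ in $L^2_{\rm loc}$ and, since $Q^2$ decays, $\int Q^2f_n^2\to\int Q^2f^2$; weak lower semicontinuity of $\|\cdot\|_{H^{\frac12}}^2$ then gives $(\mathcal Lf,f)\le\ell\le0$, while $f\perp Q$, $f\perp\Lambda Q$, $f\perp Q'$. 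Because $\mathcal L$ has exactly one negative eigenvalue with $(\chi_0,Q)>0$, because $\ker\mathcal L=\mathbb RQ'$ with $Q\perp Q'$, and because $(\mathcal L^{-1}Q,Q)=-(\Lambda Q,Q)=0$ (Item (ii); note $\Lambda Q\perp Q'$ by parity, so $\mathcal L^{-1}Q=-\Lambda Q$), the classical spectral lemma for the restriction of $\mathcal L$ to $\{Q\}^\perp$ gives $(\mathcal Lg,g)\ge0$ for all $g\perp Q$, with equality only on ${\rm span}\{\Lambda Q,Q'\}$; combined with $f\perp\Lambda Q$, $f\perp Q'$ and $(\Lambda Q,Q')=0$, this forces $f=0$. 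Then $\|f_n\|_{H^{\frac12}}^2=(\mathcal Lf_n,f_n)+3\int Q^2f_n^2\to\ell$, contradicting $\|f_n\|_{H^{\frac12}}=1$ and $\ell\le0$. The only genuinely hard input is the exact Morse index and kernel of $\mathcal L$, imported from \cite{FrLe}; among the self-contained parts the most delicate is the decay bootstrap (iv), where the nonlocality of $D^1$ forbids ODE arguments and one must instead track the slow $|x|^{-2}$ decay of the resolvent kernel of $D^1+1$.
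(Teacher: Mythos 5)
Your proposal is correct and follows essentially the same route as the paper: the genuinely spectral facts (Morse index one, $\ker\mathcal L=\mathbb R Q'$, positivity of the ground state) are imported from \cite{FrLe,We1,We2}, (ii) comes from differentiating the scaled equation, (iii) from a Fredholm argument, (iv) from the decay $|\partial_x^kK(x)|\lesssim|x|^{-2-k}$ of the resolvent kernel of $D^1+1$ together with a convolution/bootstrap argument (the paper writes this same kernel via the harmonic extension of Amick--Toland, but it is the identical object and the identical induction on derivatives), and (v) is the standard compactness-plus-spectral-decomposition argument using $(\mathcal L^{-1}Q,Q)=-(\Lambda Q,Q)=0$, which the paper simply cites from Weinstein. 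The only difference is one of presentation, not of substance.
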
 

\begin{proof} (i) The fact that $\text{ker} \, \mathcal{L}=\{aQ' : a \in \mathbb R\}$ is a quite delicate property, proved by Frank and Lenzmann, see Theorem~2.3 in~\cite{FrLe}. The other properties were proved by Weinstein, see Proposition~4 in~\cite{We2}. See also \cite{AlBoSa}.
\smallskip	
	
(ii) The assertion follows directly by differentiating the equation satisfied by $Q_{\lambda}(x) = \lambda^{-\frac 12} Q(\lambda^{-1} x)$ with respect to $\lambda$ and taking $\lambda=1$. The property $(Q,\Lambda Q)=0$  follows from $( Q_{\lambda},Q_{\lambda}) = 
(Q,Q)$.\smallskip

(iii) Let $h \in L^2(\mathbb R)$. Observe that 
\begin{equation} \label{regularity.1}
\mathcal{L}f=h \ \text{for} \ f \in H^1(\mathbb R) \quad \Leftrightarrow \quad (id-T)f=(D^1+1)^{-1}h=:\widetilde{h} \ \text{for} \ f \in L^2(\mathbb R) \, ,
\end{equation}
where $Tf=(D^1+1)^{-1}(3Q^2f)$ is a compact operator on $L^2(\mathbb R)$. From (i), if $(h,Q')=0$, then $\widetilde{h} \in \text{ker} \, (id-T^{\star})^{\perp}$. Thus, the existence part of (iii) follows from the Fredholm alternative, while the uniqueness part follows directly from (i).

\smallskip

(iv) Assume now that $h \in \mathcal{Y}_1 \subseteq H^{\infty}(\mathbb R)$ and let $f \in H^1(\mathbb R)$ be solution to $\mathcal{L}f=h$. Then, it follows from \eqref{regularity.1} that $f \in H^{\infty}(\mathbb R)$. 

To prove the decay properties of $f$, we argue as in \cite{AmTo1}. As observed by Benjamin in \cite{Ben}, if $w=w(x,y)$ is the harmonic extension of $f$ in the upper half-plane $\mathbb R^2_+=\{(x,y) \in \mathbb R^2 : y >0\}$, then $\lim_{y \to 0}\partial_y w(x,y)=-D^1f(x)$. As a consequence, if $v=v(x,y)$ is a solution to 
\begin{equation} \label{regularity.2}
\left\{\begin{array}{l}  \Delta v=0 \quad \text{in} \quad \mathbb R^2_+ \\ 
\big(v-\partial_yv-3Q^2v\big)_{|_{y=0}}=h \, , \end{array} \right.
\end{equation}
then $f(x)=v(x,0)$ satisfies $\mathcal{L}f=h$.  

Following \cite{AmTo1}, the solution $v$ to \eqref{regularity.2} is given by 
\begin{displaymath}
v(x,y)=G(\cdot,y) \ast \big(3Q^2f+h \big)(x), \quad \forall \, (x,y) \in \overline{\mathbb R^2_+}\, ,
\end{displaymath}
where the kernel $G(x,y)$ is given by 
\begin{displaymath}
G(x,y)=\int_0^{+\infty}g(x,y+w)e^{-w}dw \quad \text{and} \quad g(x,y)=\frac1{\pi} \frac{y}{x^2+y^2} \, .
\end{displaymath}
Moreover, we easily see that $G$ is positive, harmonic on $\mathbb R^2_+$ and satisfies 
\begin{equation}\label{regularity.3}
\int_{-\infty}^{+\infty}G(x,y)dx=1, \quad \forall \, y \ge 0 \, ,
\end{equation}
\begin{equation} \label{regularity.4}
G(x,y) \lesssim \frac{1+y}{x^2+y^2}, \quad \forall \, (x,y) \in \overline{\mathbb R^2_+} \, ,
\end{equation}
and for any $k\geq 0$,
\begin{equation} \label{regularity.5}
\big|\partial_x^kG(x,0)\big| \lesssim \frac1{|x|^{2+k}}, \quad \forall \, |x| \ge 1 \, .
\end{equation}

In particular a solution $f \in H^{\infty}(\mathbb R)$ to $\mathcal{L}f=h$ satisfies 
\begin{equation} \label{regularity.6}
f(x)=G(\cdot,0) \ast \big(3Q^2f+h \big)(x), \quad \forall \, x \in \mathbb R \, .
\end{equation}
Since $h \in \mathcal{Y}_1$, $Q \in \mathcal{Y}_2$ and $f \in H^1(\mathbb R)$, we get that
\begin{displaymath}
|f(x)| \le G(\cdot,0) \ast \widetilde{h}(x), \quad \text{where} \quad \widetilde{h}=3\|f\|_{L^{\infty}}Q^2+|h|  \, .
\end{displaymath}
Let $|x| \ge 1$. It follows from \eqref{regularity.5} that 
\begin{displaymath} 
\begin{split}
|f(x)| &\le \sup_{t \in \mathbb R} \big\{ (1+|t|)|\widetilde{h}(t)| \big\} \int_{-\infty}^{+\infty}\frac{G(x-t,0)}{1+|t|}dt \\ 
& \lesssim \int_{|t-x| \le \frac12|x|}\frac{G(x-t,0)}{1+|t|}dt+\int_{|t-x|\ge \frac12|x|}\frac1{(1+|t|)(x-t)^2}dt \end{split}
\end{displaymath}
By using \eqref{regularity.3}, the first integral on the right-hand side of the above expression is  bounded by $2/|x|$, while the second integral is easily bounded by $c/|x|$. Therefore, we conclude that 
\begin{equation} \label{regularity.7}
\sup_{x \in \mathbb R}\big\{(1+|x|)|f(x)| \big\} < +\infty \, .
\end{equation}

Now, we prove by induction on $k$ that 
\begin{equation} \label{regularity.8}
\sup_{x \in \mathbb R}\big\{(1+|x|)^{(k+1)}|f^{(k)}(x)| \big\} < +\infty \, ,
\end{equation}
holds for all $k \in \mathbb N$. Let $l \in \mathbb N^{\star}$. 
Assume   that \eqref{regularity.8} holds for all $k \in \{0,l-1\}$. From \eqref{regularity.6},
\[
f^{(l)}(x)=\int G^{(l)}(x-t,0) \big(3Q^2f+h \big)(t)dt  := I+II
\]
where $I$, respectively $II$, corresponds to the region $|x-t| \le \frac12|x|$, respectively $|x-t|\ge \frac12|x|$.
Let $|x| \ge 1$. Arguing as above, we deduce from \eqref{regularity.5} that 
\begin{equation} \label{regularity.9}
|II| \lesssim \int_{|x-t|\ge \frac12|x|}\frac1{(1+|t|)|x-t|^{2+l}} \lesssim \frac1{|x|^{1+l}} \, ,
\end{equation}
where the implicit constant depends on $\|f\|_{L^{\infty}}$ and $\sup_{t \in \mathbb R}\{ (1+|t|)|h(t)|\}$.
To handle $I$, we  write, for $x\geq 1$ (the case $x<-1$ is handled similarly),
\[
I=\int_{|x-t| \le \frac12|x|}\partial_x^lG(x-t,0) \big(3Q^2f\big)(t)dt+\int_{|x-t| \le \frac12|x|}\partial_x^lG(x-t,0) h(t)dt   =: I_1+I_2 \, .\]
Several integrations by parts yield 
\begin{displaymath}
\begin{split}
I_2&=\sum_{j=0}^{l-1} (-1)^{j}\Big(\partial_x^{(l-j)}G(-\frac{x}2,0)h^{(j)}(\frac{3x}2)-(\partial_x^{(l-j)}G(\frac{x}2,0)h^{(j)}(\frac{x}2) \Big)\\ &\quad+ (-1)^{l}\int_{|t-x| \le \frac12|x|}G(x-t,0)h^{(l)}(t)dt \, .
\end{split}
\end{displaymath}
Since $G(.,0)\in \mathcal Y_2$ and $h \in \mathcal{Y}_1$, we obtain
$|I_2|\lesssim 1/{|x|^{1+l}}$.
By using the same strategy, we have (the first term exists only if $l\geq 2$),
\begin{displaymath}
\begin{split}
I_1&=\sum_{j=0}^{l-2}(-1)^j\Big(\partial_x^{(l-j)}G(-\frac{x}2,0)(3Q^2f)^{(j)}(\frac{3x}2)-(\partial_x^{(l-j)}G(\frac{x}2,0)(3Q^2f)^{(j)}(\frac{x}2) \Big)\\ &\quad+ (-1)^{l+1}\int_{|t-x| \le \frac12|x|}\partial_xG(x-t,0)(3Q^2f)^{(l-1)}(t)dt \, .
\end{split}
\end{displaymath}
Observe that  
$\big| (3Q^2f)^{(j)}(t) \big| \lesssim (1+|t|)^{-(5+j)}$, for all $j=0,\cdots l-1$, 
thanks to the Leibniz rule, the induction hypothesis on $f$ and the fact that $Q \in \mathcal{Y}_2$. Hence, it follows that 
$|I_1| \lesssim 1/{|x|^{1+l}}$ and so $|I|\lesssim 1/{|x|^{1+l}}$.
From this and \eqref{regularity.9}, we obtain estimate \eqref{regularity.8} with $k=l$. This finishes the proof of (iv).

\smallskip

(v) This is a standard property obtained as a consequence of (i). 
We refer to Proposition~4 in \cite{We2}. See also the proof of Lemma~2 (ii) in \cite{MaMe}.
\end{proof}

\subsection{Definition and estimates for the localized profile}

In this subsection, we  construct an approximate profile $Q_b$. 

\begin{lemma} \label{nlprofile}
There exists a unique function $P\in \mathcal{Z}$ such that 
\begin{equation} \label{nlprofile.1}
 (\mathcal{L}P)'=\Lambda Q, \quad  (P,Q')=0,   \quad \text{and} \quad   \lim_{y \to -\infty}P(y)=\frac12\int Q \, .
\end{equation}
Moreover,
\begin{equation} \label{nlprofile.2}
p_0:=(P,Q)=\frac18\Big( \int Q \Big)^2>0 \, .
\end{equation}
\end{lemma}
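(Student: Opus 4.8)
The plan is to construct $P$ by first solving the equation $\mathcal{L}\widetilde{P} = $ (antiderivative of $\Lambda Q$) and then adjusting by a suitable element of $\ker\mathcal{L}$ to meet the orthogonality and limit conditions. To do this, let $F(y) := \int_{-\infty}^y \Lambda Q(z)\, dz$; since $Q \in \mathcal{Y}_2$, we have $\Lambda Q \in \mathcal{Y}_2$ as well, and moreover $\int \Lambda Q = \int(\frac12 Q + yQ') = \frac12\int Q - \frac12\int Q = 0$, so $F \in \mathcal{Y}_1$-type decay at $+\infty$ and $F(-\infty)=0$, $F(+\infty)=0$; more precisely $F(y) = -\int_y^{+\infty}\Lambda Q$ decays like $(1+|y|)^{-1}$. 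We cannot directly invert $\mathcal{L}$ on $F$ unless $(F,Q')=0$, which need not hold. So instead I would look for $P$ solving $(\mathcal{L}P)' = \Lambda Q$ directly: writing $g = \mathcal{L}P$, we need $g' = \Lambda Q$, i.e. $g = F + c$ for a constant $c$. Thus we must solve $\mathcal{L}P = F + c$ for an appropriate constant $c$ chosen so that $F + c \perp Q'$; since $Q$ is even, $Q'$ is odd, and $F$ — being the antiderivative of the even function $\Lambda Q$ — is odd, hence $(F,Q')$ may be nonzero but $(c,Q') = c\int Q' = 0$ automatically. So actually $(F+c, Q') = (F,Q')$ regardless of $c$. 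I need $(F, Q') = 0$: integrating by parts, $(F,Q') = -\int F' Q = -\int (\Lambda Q) Q = -(Q,\Lambda Q) = 0$ by Lemma~\ref{Lproperties}(ii). Good — so for every constant $c$, $F + c \perp Q'$, and Lemma~\ref{Lproperties}(iii) gives a unique $P_c \in H^1$ with $\mathcal{L}P_c = F+c$ and $P_c \perp Q'$.

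Next I would pin down $c$ using the limit condition. Since $\mathcal{L} = D^1 + 1 - 3Q^2$ and $3Q^2 \in \mathcal{Y}_4$ decays, while $F+c \to c$ as $y \to -\infty$ (indeed $F(-\infty)=0$), one expects $P_c(y) \to c$ as $y \to -\infty$: heuristically, far to the left $D^1 P$ is negligible and $P \approx F + c \approx c$. To make this rigorous I would use the representation formula \eqref{regularity.6}, $P_c = G(\cdot,0) * (3Q^2 P_c + F + c)$, together with $\int G(\cdot,0) = 1$ from \eqref{regularity.3}, splitting off the constant $c$ and showing the remaining convolution tends to $0$ at $-\infty$ using the decay of $3Q^2 P_c$ and of $F$. (This also gives the analogous claim at $+\infty$, where the limit is $c + \lim_{y\to+\infty} F(y) = c$ as well — but then to land in $\mathcal{Z}$ I must check $P_c(y) \lesssim (1+|y|)^{-1}$ for $y > 0$, i.e. $P_c(+\infty)=0$; this forces a relation. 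Actually the two limits at $\pm\infty$ need not coincide because $F+c$ does not decay — so I should be more careful.) The honest approach: decompose $P_c = c\,\psi + R_c$ where $\psi$ solves $\mathcal{L}\psi = 1$ (note $1 \perp Q'$ since $Q'$ is odd, and $\mathcal{L}(\text{const})$... rather, solve it via Lemma~\ref{Lproperties}(iii)) and $\mathcal{L}R_c = F$ with $R_c \perp Q'$. One computes $\psi \to 1$ at $\pm\infty$ (since $D^1 1 = 0$, $\psi \equiv 1$ would solve $\psi + 0 - 3Q^2\psi = 1$ only if $Q\equiv 0$; so $\psi = 1 + (D^1+1)^{-1}(3Q^2\psi) - $ correction, and $\psi - 1 \in \mathcal{Y}$-decaying). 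By Lemma~\ref{Lproperties}(iv), since $F \in \mathcal{Y}_1$, $R_c = R \in \mathcal{Y}_1$, so $R(\pm\infty) = 0$ and $R$ has the required $\mathcal{Z}$-decay. Then $P_c(-\infty) = c\,\psi(-\infty) = c$ and we need this to equal $\frac12\int Q$, so $c = \frac12\int Q$ (using $\psi(-\infty)=1$). With this choice $P := P_c$; uniqueness follows from Lemma~\ref{Lproperties}(iii) (the orthogonality and the value of $c$ are forced). Membership in $\mathcal{Z}$: $P = c\psi + R$, $R \in \mathcal{Y}_1$, and $\psi - 1$ decays, so $P' = c\psi' + R' \in \mathcal{Y}_2$ and $P$ is bounded with $|P(y)| \lesssim (1+|y|)^{-1}$ for $y>0$ provided $\psi(y) - 1 \lesssim (1+|y|)^{-1}$ there — wait, we'd also need $P(+\infty)=0$, i.e. $c\,\psi(+\infty) = c \neq 0$.

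Let me reconsider: the resolution is that $\mathcal{L}P = F + c$ with $F$ odd means $P$ is \emph{not} symmetric, and the condition $P \in \mathcal{Z}$ only demands decay for $y>0$, the limit at $-\infty$ being $\frac12\int Q$. So the picture is: $P(y) \to \frac12\int Q$ as $y \to -\infty$ and $P(y) \to 0$ as $y \to +\infty$. This is consistent with $\mathcal{L}P = F + c$ only if we interpret things via $D^1$ acting on a non-decaying function: writing $P = c\,\theta + R$ where $\theta$ is the bounded solution of $\mathcal{L}\theta = H$ with $H(y) \to 1$ as $y\to -\infty$ and $H(y)\to 0$ as $y \to +\infty$ for an appropriate step-like $H$ — honestly, the cleanest route is the one the authors surely take: \emph{define} $P$ by $P = \frac12(\int Q)\chi + P_1$ where $\chi$ is the cutoff from \eqref{def:chi} (so $\frac12(\int Q)\chi$ carries the left limit) and solve for $P_1$, then verify $P_1 \in \mathcal{Y}_1$ via Lemma~\ref{Lproperties}(iv). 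Rather than belabor this, the key steps in order are: (1) reduce $(\mathcal{L}P)' = \Lambda Q$ to $\mathcal{L}P = F + c$ with $F = \int_{-\infty}^{\cdot}\Lambda Q$; (2) check $(F,Q') = -(Q,\Lambda Q) = 0$ so solvability holds for every $c$ by Lemma~\ref{Lproperties}(iii); (3) analyze the behavior at $-\infty$ via the convolution representation \eqref{regularity.6} and \eqref{regularity.3} to see the left limit of the solution equals $c$ times a universal constant which is $1$, forcing $c = \frac12\int Q$; (4) use Lemma~\ref{Lproperties}(iv) on the decaying remainder to conclude $P \in \mathcal{Z}$; (5) impose $(P,Q')=0$ — automatic or fixed by adding a multiple of $Q' \in \ker\mathcal{L}$, noting $Q' \notin \mathcal{Z}$ concerns only... actually $Q' \in \mathcal{Y}_3 \subset$ decaying so adding it preserves $\mathcal{Z}$, and $(Q',Q') \neq 0$ lets us arrange orthogonality, while uniqueness then follows. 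Finally, for \eqref{nlprofile.2}: compute $(P,Q)$ by pairing $\mathcal{L}P = F + c$ with $\Lambda Q$ and using $\mathcal{L}\Lambda Q = -Q$ (self-adjointness): $(P, -Q) = (P, \mathcal{L}\Lambda Q) = (\mathcal{L}P, \Lambda Q) = (F + c, \Lambda Q)$. Now $(c,\Lambda Q) = c\int \Lambda Q = 0$ and $(F,\Lambda Q) = \int F \Lambda Q = \int F(\frac12 Q + yQ')$; integrating the second piece by parts, $\int F y Q' = -\int (F + yF')Q = -\int FQ - \int y(\Lambda Q) Q$, so $(F,\Lambda Q) = \frac12\int FQ - \int FQ - \int y(\Lambda Q)Q = -\frac12\int FQ - \int y(\Lambda Q)Q$. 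This gives a relation but I need a closed form; the answer $\frac18(\int Q)^2$ suggests instead pairing with $\Lambda Q$ differently — e.g. $(P,Q) = -(P,\mathcal{L}\Lambda Q)$ is circular since $P$ was defined via $\mathcal{L}$. Better: integrate $(\mathcal{L}P)' = \Lambda Q$ directly against suitable test functions, or use that $\frac{d}{dy}(\mathcal{L}P) = \Lambda Q$ together with the explicit left-limit; the cleanest is to pair $\Lambda Q = (\mathcal{L}P)'$ with $\int_{-\infty}^{\cdot}(\cdots)$.

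The main obstacle I expect is step (3)–(5): rigorously controlling the boundary behavior of $P$ at $-\infty$ — the fact that $D^1$ is nonlocal means one cannot simply ``read off'' limits from the ODE, and the non-decay of the right-hand side $F+c$ means Lemma~\ref{Lproperties}(iii),(iv) do not apply directly to $P$ itself but only to the decaying remainder after subtracting an explicit non-decaying piece (a cutoff times the constant $\frac12\int Q$). Handling the commutator $[\mathcal{L},\chi]$ that this subtraction generates, and showing it lands in $\mathcal{Y}_1$ so that Lemma~\ref{Lproperties}(iv) closes the argument, is the technical heart. The computation \eqref{nlprofile.2} is then a relatively mechanical integration-by-parts identity once $P$ is in hand, though getting the exact constant $\frac18(\int Q)^2$ requires carefully tracking the boundary term $\frac12\int Q$ at $-\infty$; I would verify it by pairing the relation $\mathcal{L}P = F + c$ with $\Lambda Q$, using $\mathcal{L}\Lambda Q = -Q$ and $\int\Lambda Q = 0$, and separately evaluating $\int F\,\Lambda Q$ via an integration by parts that converts it into $\frac12(\int Q)\cdot(\text{limit of }P) = \frac14(\int Q)^2$, ultimately yielding $p_0 = \frac18(\int Q)^2 > 0$.
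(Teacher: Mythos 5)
Your overall strategy (reduce $(\mathcal L P)'=\Lambda Q$ to $\mathcal L P = F+c$, invert $\mathcal L$ via Lemma~\ref{Lproperties}(iii)--(iv) after subtracting an explicit non-decaying profile) is the right one and matches the paper's, but the proposal contains a computational error that derails it and is never actually completed. You assert $\int \Lambda Q = \tfrac12\int Q - \tfrac12 \int Q=0$; in fact $\int yQ' = -\int Q$, so $\int \Lambda Q = -\tfrac12 \int Q \neq 0$. Consequently $F(y)=\int_{-\infty}^y \Lambda Q$ does \emph{not} vanish at $+\infty$ (it tends to $-\tfrac12\int Q$), $F$ is not odd, and $F+c$ is never in $L^2$ for the relevant $c$, so Lemma~\ref{Lproperties}(iii) cannot be applied to $\mathcal L P_c = F+c$ as you do. The ``paradox'' you then wrestle with (both limits of $P_c$ apparently equal to $c$, incompatible with $P\in\mathcal Z$) is an artifact of this error: with the correct value, decay of $P$ at $+\infty$ forces $c-\tfrac12\int Q\cdot(-1)\ldots$ more precisely forces $c=\tfrac12\int Q$, and the limit at $-\infty$ is then automatically $\tfrac12\int Q$, with no tension. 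The same error invalidates your computation of $p_0$: you discard $(c,\Lambda Q)=c\int\Lambda Q$ as zero, whereas it contributes $-\tfrac14(\int Q)^2$; keeping it (together with $(F,\Lambda Q)=\tfrac12 F(+\infty)^2=\tfrac18(\int Q)^2$) does give the stated answer, but your argument as written does not.

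The paper's execution of the ``honest approach'' you gesture at is to set $P=\widetilde P-\int_y^{+\infty}\Lambda Q$: the subtracted tail simultaneously carries the limit $\tfrac12\int Q$ at $-\infty$ and the $O(|y|^{-1})$ decay at $+\infty$, and the equation for $\widetilde P$ becomes $\mathcal L\widetilde P=R$ with $R=-\mathcal H\Lambda Q-3Q^2\int_y^{+\infty}\Lambda Q\in\mathcal Y_1$ (by Lemma~\ref{HY}) and $(R,Q')=0$, so Lemma~\ref{Lproperties}(iii)--(iv) apply directly and yield $\widetilde P\in\mathcal Y_1$; no commutator $[\mathcal L,\chi]$ analysis is needed, and $(P,Q')=0$ comes out of the construction rather than requiring a correction by a multiple of $Q'$. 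One then reads off $\mathcal L P=-\int_y^{+\infty}\Lambda Q$ and gets $p_0=-(\mathcal L P,\Lambda Q)=\tfrac12(\int\Lambda Q)^2=\tfrac18(\int Q)^2$ in one integration by parts. To repair your write-up you would need to (a) correct the value of $\int\Lambda Q$ throughout, (b) actually carry out the subtraction of the non-decaying profile rather than describing it, and (c) redo the $p_0$ computation keeping the term $c\int\Lambda Q$.
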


\begin{proof}We look   for a solution of \eqref{nlprofile.1} of the form $P=\widetilde{P}-\int_{y}^{+\infty} \Lambda Q$. Observe that $P$ solves the equation in \eqref{nlprofile.1} if 
\begin{displaymath} 
(\mathcal{L}\widetilde{P})'=\Lambda Q+\Big(\mathcal{L}\int_{y}^{+\infty}\Lambda Q  \Big)'=: R' \quad \text{where} \quad R=-\mathcal{H}\Lambda Q-3Q^2\int_{y}^{+\infty} \Lambda Q \, .
\end{displaymath}
It follows from Lemma \ref{HY} that $\mathcal{H}\Lambda Q\in \mathcal Y_1$ and thus $R \in \mathcal{Y}_1$. 
Moreover,  Lemma \ref{Lproperties} (i) and (ii) yield
\[
(R,Q')=-\int R'Q=-\int \Lambda Q \, Q-\int \Big(\mathcal{L}\int_{y}^{+\infty}\Lambda Q  \Big)'Q=0 \, .
\]
Thus, using Lemma \ref{Lproperties} (iii) and (iv), there exists a unique $\widetilde{P} \in \mathcal{Y}_1$ orthogonal to $Q'$ such that $\mathcal{L}\widetilde{P}=R$.
Set $P=\widetilde{P}-\int_{y}^{+\infty}\Lambda Q\in \mathcal Z$. Then, $P$ satisfies \eqref{nlprofile.1} and  $(P,Q')=0$. 
We also see that $\lim_{y\to -\infty} P = -\int \Lambda Q = \frac 12 \int Q$.
Moreover, by using that $\mathcal{L}\Lambda Q=-Q$ and integrating by parts, we compute 
\begin{displaymath}
p_0:=(P,Q)=-\int \mathcal{L}P \, \Lambda Q =\int \Big( \int_{y}^{+\infty}\Lambda Q\Big) \, \Lambda Q=\frac12 \Big( \int \Lambda Q \Big)^2=\frac18 \Big( \int Q \Big)^2 \, .
\end{displaymath}
\end{proof}

Since $P$ does not belong to $L^2(\mathbb R)$, we define a suitable cut-off version of it.
Recall that $\chi$ is defined in \eqref{def:chi}.
\begin{definition} 
The localized profile $Q_b$ is defined for all $b$ by 
\begin{equation} \label{Qb} 
Q_b(y)=Q(y)+ bP_b(y) \, .
\end{equation}	
where 
\begin{equation} \label{Pb}
P_b(y)=\chi_b(y)P(y), \quad  \chi_b(y)=\chi(|b|y) \, .
\end{equation}
and $\chi$ is defined in \eqref{def:chi}.
Define
\begin{equation}\label{Rb}
R_b = \mathcal L P_b - P_b = D^1 P_b - 3 Q^2 P_b .
\end{equation}
\end{definition}

\begin{lemma}[Estimates on the localized profile]
\label{lprofile}
For $|b|$ small, the following properties hold.
\begin{itemize}
\item[(i)] \emph{Pointwise estimate for $Q_b$.} For  all $y\in \mathbb R$,
\begin{equation} \label{lprofile.1}
|Q_b(y)| \lesssim \frac1{(1+|y|)^2} + |b| {\bf1}_{[-2,0]}(|b|y)+|b|{\bf1}_{[0,+\infty)}(y)\frac1{1+|y|}  \,,
\end{equation}
\begin{equation} \label{lprofile.100}
|Q_b'(y)| \lesssim \frac1{(1+|y|)^3}+|b|\frac1{(1+|y|)^2} {\bf1}_{[-2,+\infty)}(|b|y)+|b|^2 {\bf1}_{[-2,-1]}(|b|y)  \, ,
\end{equation}
\begin{equation}\label{bd.PbRb}
\|P_b\|_{L^2}\lesssim |b|^{-\frac 12},\quad \|D^{\frac12} P_b\|_{L^2}\lesssim |\ln|b||^{\frac 12},\quad
\|R_b\|_{L^2} \lesssim 1\, ,
\end{equation}
\begin{equation} \label{yPbprime}
\|D^{\frac12}(yP_b')\|_{L^2} \lesssim 1 \, ,
\end{equation}
and 
\begin{equation} \label{dPb_db}
\Big\|D^{\frac12}\left(\frac{\partial P_b}{\partial b} \right)\Big\|_{L^2} \lesssim |b|^{-1} \, .
\end{equation}
\item[(ii)]\emph{Estimate for the equation of $Q_b$.} 
Let $\Psi_b$ be defined by 
\begin{equation} \label{Psib}
\Psi_b:=\big(\mathcal{H}Q_b'+Q_b-Q_b^3\big)'-b\Lambda Q_b+b^2\frac{\partial Q_b}{\partial b} \, .
\end{equation} 
Then
\begin{equation} \label{lprofile.2}
\|\Psi_b\|_{L^2}\lesssim |b|^{\frac 32},\quad 
\|\Psi_b-bP\chi_b'-\frac{b^2}2P_b\|_{L^2} \lesssim |b|^2 \, ,
\end{equation}
and
\begin{equation} \label{lprofile.3}
\|D^{\frac12}\Psi_b\|_{L^2} \lesssim  |b|^2|\ln |b||^{\frac12} \, .
\end{equation}
\item[(iii)]\emph{Projection of $\Psi_b$ in the direction $Q$.}
\begin{equation} \label{lprofile.4}
\big|(\Psi_b,Q)\big| \lesssim |b|^3  \, .
\end{equation}
\item[(iv)]\emph{Mass and energy for $Q_b$.}
\begin{equation} \label{lprofile.5}
\Big|\int Q_b^2-  \int Q^2   \Big| \lesssim  |b| \, ,
\end{equation}
\begin{equation} \label{lprofile.6}
\Big|E(Q_b)+  p_0 b \Big| \lesssim  |b|^{2} |\ln|b||\, .
\end{equation}
\end{itemize}
\end{lemma}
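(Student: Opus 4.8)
<br>

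The plan is to prove Lemma~\ref{lprofile} by systematically reducing all estimates to the decay properties $Q\in\mathcal Y_2$, $P\in\mathcal Z$, $R=\mathcal L\widetilde P\in\mathcal Y_1$ (hence $\widetilde P\in\mathcal Y_1$), together with the Hilbert transform estimates of Lemmas~\ref{HY} and~\ref{pw.HY}, and the elementary fact that $P$ converges to the constant $\frac12\int Q$ at $-\infty$ while $P(y)\lesssim (1+|y|)^{-1}$ at $+\infty$. For part~(i), I would first dissect $P_b=\chi_bP$: on $|b|y\ge -1$ one has $\chi_b\equiv1$ so $P_b=P$; on $-2\le|b|y\le-1$ one has $|\chi_b'|\lesssim|b|$ and $|\chi_b''|\lesssim|b|^2$; and $P_b\equiv0$ for $|b|y\le -2$. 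Since $P$ is bounded, $Q_b=Q+bP_b$ obeys \eqref{lprofile.1} directly by splitting according to the sign of $y$ and using $Q\in\mathcal Y_2$ and $|P(y)|\lesssim(1+|y|)^{-1}$ for $y>0$. For $Q_b'$ one differentiates, noting $P_b'=\chi_bP'+\chi_b'P$ and that $P'\in\mathcal Y_2$ (as $P\in\mathcal Z$), which gives \eqref{lprofile.100}. The norm bounds \eqref{bd.PbRb} follow by integrating: $\|P_b\|_{L^2}^2\lesssim\int_{-2/|b|}^{0}1\,dy+\int_0^\infty(1+y)^{-2}dy\lesssim|b|^{-1}$; the $\dot H^{1/2}$ bound uses $\|D^{1/2}P_b\|_{L^2}^2\simeq\|D^{1/2}(\chi_b P)\|^2$, which by the product structure and the logarithmic divergence of $\int_1^{1/|b|}y^{-1}dy$ gives $|\ln|b||$; and $R_b=\mathcal LP_b-P_b=D^1P_b-3Q^2P_b$ is controlled by $\|R\|_{L^2}\lesssim1$ plus commutator terms involving $\chi_b'$, all $O(1)$. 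The estimates \eqref{yPbprime} and \eqref{dPb_db} are similar, using that $yP'\in\mathcal Y_1$ and that $\partial_bP_b=y\,b|b|^{-1}\chi'(|b|y)P$ concentrates on $|b|y\in[-2,-1]$ where $|y|\lesssim|b|^{-1}$.

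For part~(ii), the key algebraic point is that $Q$ solves $\mathcal HQ'+Q-Q^3=0$ and $P$ was built precisely so that $(\mathcal LP)'=\Lambda Q$, i.e. $(D^1P+P-3Q^2P)'=\Lambda Q$. Expanding $\Psi_b$ with $Q_b=Q+bP_b$ and using the equation for $Q$, the leading terms organize as $b[(\mathcal L P_b)' - \Lambda Q] + (\text{error from }Q_b^3) + b^2\partial_bQ_b$. Since $\chi_b\equiv1$ on $|b|y\ge-1$ and $P$ solves the exact equation there, the $b$-order term is supported on $-2\le|b|y\le-1$ and equals (after computation) $bP\chi_b' + (\text{lower order})$, while the cubic correction $-(Q_b^3-Q^3-3Q^2\cdot bP_b)'$ and the $b^2\partial_bQ_b$ term contribute the $\frac{b^2}2P_b$ piece plus $O(|b|^2)$; this yields the refined identity in \eqref{lprofile.2}. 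The crude bound $\|\Psi_b\|_{L^2}\lesssim|b|^{3/2}$ then follows since $\|bP\chi_b'\|_{L^2}\lesssim|b|\cdot|b|\cdot|b|^{-1/2}=|b|^{3/2}$ and $\|b^2P_b\|_{L^2}\lesssim|b|^2\cdot|b|^{-1/2}$. For \eqref{lprofile.3}, one applies $D^{1/2}$ and uses \eqref{bd.PbRb}-type logarithmic bounds: $\|D^{1/2}(bP\chi_b')\|_{L^2}\lesssim|b|\cdot|b|^2|\ln|b||^{1/2}$ after accounting for the $|b|^{-1}$-wide support and the derivative cost, giving $|b|^2|\ln|b||^{1/2}$; the commutator and cubic terms need Lemma~\ref{pw.HY} to control $\mathcal H$ acting on localized functions.

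Part~(iii): to get the extra power in $(\Psi_b,Q)$, pair the refined identity \eqref{lprofile.2} against $Q$. One checks $(bP\chi_b'+\frac{b^2}2P_b,Q)=O(|b|^3)$ because $\int P\chi_b' Q$ involves $Q$ evaluated where $|y|\sim|b|^{-1}$, so $Q(y)\lesssim|b|^2$ and the integration region has length $\lesssim|b|^{-1}$, yielding $|b|\cdot|b|^2\cdot|b|^{-1}=|b|^2$—but one must also use the construction orthogonality and integrate by parts to gain the final power, exploiting $(\mathcal LP)'=\Lambda Q$ and $(\Lambda Q,Q)=0$ together with cancellation at leading order; the remaining $O(|b|^2)$ error from \eqref{lprofile.2} must be paired with $Q$ and shown to actually be $O(|b|^3)$ against the decaying weight $Q$. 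Part~(iv): \eqref{lprofile.5} is immediate since $\int Q_b^2-\int Q^2 = 2b\int QP_b + b^2\int P_b^2$ with $|\int QP_b|\lesssim\|Q\|_{L^2}\|P_b\|_{L^2}\lesssim|b|^{-1/2}$, so the cross term is $O(|b|^{1/2})$—wait, one needs $O(|b|)$, so here one uses instead that $\int QP_b\to\int QP = p_0$ is bounded (since $Q\in L^1$ and $P\in L^\infty$), giving $2b\int QP_b = O(|b|)$ and $b^2\int P_b^2=O(|b|^2\cdot|b|^{-1})=O(|b|)$. For \eqref{lprofile.6}, expand $E(Q_b)=\frac12\int|D^{1/2}Q_b|^2-\frac14\int Q_b^4$ in powers of $b$ using $E(Q)=0$; the $b$-order term is $b[(D^1Q,P_b)-( Q^3,P_b)] = b(\mathcal LP_b - P_b + \text{stuff},\cdot)$... more directly $b(D^{1/2}Q,D^{1/2}P_b)_{L^2}-b(Q^3,P_b)=b((D^1+1)Q-Q-Q^3,P_b)+b(Q,P_b)$; using $(D^1+1)Q=Q^3$ this collapses, and one identifies the coefficient as $-p_0$ via the definition $p_0=(P,Q)$ and $\mathcal LP=-\int_y^{+\infty}\Lambda Q + \dots$, with the logarithmic error $|b|^2|\ln|b||$ coming from the $\frac{b^2}2\|D^{1/2}P_b\|_{L^2}^2\lesssim|b|^2|\ln|b||$ term in the quadratic part.

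The main obstacle, I expect, is part~(iii): squeezing the extra factor of $|b|$ in $(\Psi_b,Q)$ beyond what the $L^2$ bound \eqref{lprofile.2} gives by Cauchy-Schwarz (which would only yield $|b|^{3/2}\cdot\|Q\|_{L^2}$). This requires using the \emph{pointwise} localization of $\Psi_b$ near $|y|\sim|b|^{-1}$ against the strong decay $Q(y)\lesssim y^{-2}$, and carefully tracking cancellations in the cubic terms via the algebraic identity $(\mathcal LP)'=\Lambda Q$ and orthogonality relations $(Q,\Lambda Q)=0$, rather than treating $\Psi_b$ as a black box. The second delicate point is controlling $\mathcal H$ (equivalently $D^1$) acting on the cut-off profile $P_b$ in the $\dot H^{1/2}$ estimate \eqref{lprofile.3}, where naive commutator bounds are insufficient and one genuinely needs the refined pointwise Hilbert transform estimate of Lemma~\ref{pw.HY} applied to functions like $a(y)/(1+y^2)$ with $a$ built from $\chi_b$ and $P$.
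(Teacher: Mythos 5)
Your sketches for parts (i), (ii) and (iv) follow essentially the paper's own route: expansion of $\Psi_b$ in powers of $b$, support analysis of the cut-off errors on $-2\le |b|y\le -1$, interpolation for the half-derivative bounds, and, for the energy, the equation $D^1Q-Q^3=-Q$ together with $\|D^{\frac12}P_b\|_{L^2}^2\lesssim|\ln|b||$. Those parts would go through (after fixing the minor bookkeeping: the $\frac{b^2}{2}P_b$ piece of \eqref{lprofile.2} comes from $-\Lambda(P_b)+\frac{\partial Q_b}{\partial b}=\frac12P_b-yP'\chi_b$, i.e.\ from the transport terms $-b\Lambda Q_b$ and $b^2\frac{\partial Q_b}{\partial b}$, not from the cubic correction).

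The genuine gap is in part (iii). You propose to pair the refined identity $\Psi_b=bP\chi_b'+\frac{b^2}{2}P_b+\mathcal O_{L^2}(|b|^2)$ with $Q$ and you assert $(bP\chi_b'+\frac{b^2}{2}P_b,Q)=\mathcal O(|b|^3)$. The first term is indeed $\mathcal O(|b|^3)$ by the support/decay argument you give, but the second is not: $(P_b,Q)\to(P,Q)=p_0>0$, so $\frac{b^2}{2}(P_b,Q)=\frac{p_0}{2}b^2+o(b^2)$, genuinely of order $b^2$. Likewise the $\mathcal O_{L^2}(|b|^2)$ remainder contains $-b^2yP'\chi_b$, $-3b^2(QP_b^2)'$ and $b^2yP\chi_b'$, each of which pairs with $Q$ to give an honest nonzero $\mathcal O(|b|^2)$ contribution; no amount of exploiting the decay of $Q$ makes these individually $\mathcal O(|b|^3)$. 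The order-$b^2$ contributions cancel only in the aggregate: one must keep the full $b^2\Psi_2$ with $\Psi_2=-3(QP_b^2)'+P_b-\Lambda P_b+yP\chi_b'$, whose leading part is $-3(QP^2)'+P-\Lambda P$, and prove the exact identity $\big(-3(P^2Q)'+P-\Lambda P,Q\big)=0$ (this is \eqref{lprofile.402} in the paper). That identity is not a consequence of $(\Lambda Q,Q)=0$; it comes from pairing the profile equation $(\mathcal LP)'=\Lambda Q$ with $P$ itself, using $((D^1P)',P)=-(\mathcal HP',P')=0$, the integration by parts $-3((Q^2P)',P)=3((P^2Q)',Q)$, and, crucially, the boundary value $(P,P')=-\frac12\lim_{y\to-\infty}P(y)^2=-\frac18\big(\int Q\big)^2=-p_0=-(P,Q)$, which ties the normalization of $P$ at $-\infty$ to the value of $p_0$ from Lemma~\ref{nlprofile}. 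Without this step you only obtain $|(\Psi_b,Q)|\lesssim|b|^2$. So you correctly flagged (iii) as the main obstacle, but the route through the $L^2$-refined identity \eqref{lprofile.2} has already discarded the structure needed for the pairing with $Q$; you must go back to the decomposition $\Psi_b=b\Psi_1+b^2\Psi_2+b^3\Psi_3$ and exhibit the cancellation at the level of $\Psi_2$.
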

\begin{remark}\label{rk:profile}
We see that $P$ defined above decays as $1/y$ as $y\to +\infty$. This is still acceptable for our needs in the next sections.
However, if one tries to improve the ansatz $Q_{b}$ at a higher order in $b$ (as was done at any order in \cite{CoMa} for (gKdV), one faces non $L^2$ functions for $y>0$. This is an important difficulty. In this paper, we have chosen to restrict ourselves to the above ansatz $Q_{b}$, at the cost of a relatively large error terms (see \eqref{bootstrap.4}).
\end{remark}
\begin{remark}\label{rk:law}
On the definition of $\Psi_b$ in \eqref{Psib}, we anticipate the blow up relations $\frac {\lambda_s}{\lambda} \sim -b$ and  $b_s + b^2 \sim 0$, which will eventually lead to the $\frac 1 t$ blow up rate. See Lemma \ref{modulation}.
\end{remark}

\begin{proof}[Proof of Lemma \ref{lprofile}] (i) The proof of \eqref{lprofile.1},  \eqref{lprofile.100}   follows directly from $Q \in \mathcal{Y}_2$, $P \in \mathcal{Z}$ and the definition of $Q_b$ in \eqref{Qb}. We see   
using $P \in \mathcal{Z}$ that
\begin{equation} \label{lprofile.208}
\|P_b\|_{L^2}^2 \lesssim   \int_{y<0} \chi(|b|y)^2dy+\int_{y>0} \frac{dy}{1+y^2}  \lesssim |b|^{-1}\,.\end{equation}
Next, we split $\|D^{\frac 12}P_b\|_{L^2}$ as follows
\[
\|D^{\frac 12}P_b\|_{L^2}^2 = (D^1P_b,P_b)=(D^1P,P_b)-(D^1((1-\chi_b)P),P_b).
\]
Since $P\in \mathcal Z$ and $P'\in \mathcal Y_2$, we have by Lemma \ref{HY}, $D^1 P=\mathcal HP'\in \mathcal Y_1$ and thus
\[
|(D^1P,P_b)|\lesssim \int_{-2|b|^{-1}<y<0} \frac{dy}{1+|y|} + \int_{y>0} \frac {dy}{(1+|y|)^2} \lesssim |\ln|b||.
\]
Moreover,
\[
|(D^1((1-\chi_b)P),P_b)| \lesssim \|P_b\|_{L^2} \|(1-\chi_b)P\|_{\dot H^1}
\lesssim |b|^{-\frac 12} \left(\|(1-\chi_b)P'\|_{L^2} + \|\chi_b'P\|_{L^2}\right)\lesssim 1
\,.\]
This implies $\|D^{\frac12} P_b\|_{L^2}\lesssim |\ln|b||^{\frac 12}$.
Concerning $R_b$ defined in \eqref{Rb}, we have
\[
\|D^1P_b\|_{L^2}^2 = \|P_b'\|_{L^2}^2 \lesssim 1 \quad \hbox{and}\quad   \|Q^2 P_b\|_{L^2}\lesssim 1.
\]
These estimates prove \eqref{bd.PbRb}.

We proceed similarly to prove \eqref{yPbprime} and decompose $\|D^{\frac12}(yP_b')\|_{L^2}$ as follows
\[ 
\|D^{\frac12}(yP_b')\|_{L^2}^2=\int yP_b'\mathcal{H}(yP_b')'=\int yP_b'\mathcal{H}(yP')'
+\int yP_b'\mathcal{H}(y(P(\chi_b-1))')' \, .
\]
Since $(yP')'\in \mathcal{Y}_2$, we have that $\mathcal{H}(yP')'\in \mathcal Y_1$ by using Lemma \ref{HY} and thus
\[ 
\left| \int yP_b'\mathcal{H}(yP')' \right| \lesssim \int P_b' \frac{|y|}{1+|y|} \lesssim 1 \, .
\]
Moreover, since $P'\in \mathcal Y_2$,
\[ 
\left| \int yP_b'\mathcal{H}(yP(\chi_b-1)')' \right| \lesssim \|yP_b'\|_{L^2}\|(yP(\chi_b-1)')'\|_{L^2}
\lesssim |b|^{-\frac12}|b|^{\frac12} \lesssim 1 \, ,
\]
which finishes the proof \eqref{yPbprime}.

 To prove \eqref{dPb_db}, we observe that
$\frac{\partial P_b}{\partial b}=\text{sgn}(b)y\chi'(|b|y)P$. Thus, it follows by interpolation that 
\begin{displaymath} 
\Big\|D^{\frac12}\left(\frac{\partial P_b}{\partial b} \right)\Big\|_{L^2} 
\le \|y\chi'(|b|y)\|_{L^2}^{\frac12}\big\|\big(y\chi'(|b|y)\big)'\big\|_{L^2}^{\frac12} \lesssim |b|^{-1} \, ,
\end{displaymath}
which is the desired estimate. 

\smallskip	

(ii) Expanding the expression \eqref{Qb} of $Q_b$ in the definition of $\Psi_b$ and using \eqref{E}, we find that 
\begin{equation} \label{lprofile.7}
\Psi_b=b\Psi_1+b^2\Psi_2+b^3\Psi_3 \, ,
\end{equation}
where
\begin{displaymath} 
\Psi_1:=(\mathcal{L}P_b)'-\Lambda Q \, ,
\end{displaymath}

\begin{displaymath}
\Psi_2:=-3(QP_b^2)'-\Lambda(P_b)+\frac{\partial{Q}_b}{\partial b} \, ,
\quad \text{and} \quad
\Psi_3=-(P_b^3)' \, . 
\end{displaymath}

First, we prove \eqref{lprofile.2}. By  \eqref{nlprofile.1}, we rewrite $\Psi_1$ as 
\begin{displaymath}
\begin{split}
\Psi_1&=(\mathcal{L}P_b)'-(\mathcal{L}P)' \\ 
&=\mathcal{H}\big((P(\chi_b-1)\big)''+\big(P(\chi_b-1)\big)'-3\big(Q^2P(\chi_b-1)\big)'  \\ 
&=\mathcal{H}\big(P''(\chi_b-1) \big)+2\mathcal{H}\big(P'\chi_b' \big)+\mathcal{H}\big(P\chi_b''\big)+P'(\chi_b-1)+P\chi_b'-3\big(Q^2P(\chi_b-1)\big)'
\end{split}
\end{displaymath}
Since $\mathcal{H}$ is bounded in $L^2$, $P \in L^{\infty}(\mathbb R)$ and $P' \in \mathcal{Y}_2$, we obtain
\begin{displaymath} 
\|b\mathcal{H}\big(P''(\chi_b-1) \big)\|_{L^2} \lesssim |b|\Big( \int_{y \le -|b|^{-1}}\frac1{(1+|y|)^6} \, dy\Big)^{\frac12} \lesssim |b|^{\frac72} \, ,
\end{displaymath}
\begin{displaymath} 
\|b\mathcal{H}(\chi_b'P')\|_{L^2}=|b| \|\chi_b'P'\|_{L^2}	\lesssim |b|^2\Big( \int_{y \le -|b|^{-1}}\frac1{(1+|y|)^4}  \, dy\Big)^{\frac12} \lesssim |b|^{\frac72} \,  
\end{displaymath}
and 
\begin{displaymath} 
\|b\mathcal{H}(\chi_b''P)\|_{L^2}	\lesssim  |b|^3\Big( \int \chi''(|b|y)^2 \, dy\Big)^{\frac12} \lesssim  |b|^{\frac52} \, .
\end{displaymath}
Similarly, 
\begin{displaymath} 
\|bP'(\chi_b-1) \|_{L^2} \lesssim |b|\Big( \int_{y \le -|b|^{-1}}\frac1{(1+|y|)^4} \, dy\Big)^{\frac12} \lesssim |b|^{\frac52} \, ,
\end{displaymath}
and
\begin{displaymath} 
\|b\big(\chi_b'Q^2P\big)\|_{L^2} \lesssim |b|^2\Big( \int_{y \le -|b|^{-1}}\frac1{(1+|y|)^8}  \, dy\Big)^{\frac12} \lesssim |b|^{\frac{11}2}\, .
\end{displaymath}
Therefore, we conclude gathering those estimates that 
\begin{equation} \label{lprofile.205}
\|b\big(\Psi_1-\mathcal{H}(\chi_b''P)-P'(\chi_b-1)-P\chi_b')\|_{L^2} \lesssim |b|^{\frac72}
\quad \hbox{and}\quad
\|b\big(\Psi_1-P\chi_b')\|_{L^2} \lesssim |b|^{\frac52} \, .
\end{equation}
Also, note that\begin{equation} \label{lprofile.206}
\|bP\chi_b'\|_{L^2}  \lesssim  |b|^2\Big( \int |\chi'(|b|y)|^2 \, dy\Big)^{\frac12}  \lesssim |b|^{\frac32} \, .
\end{equation}
Now we focus on $b^2\Psi_2$ and $b^3\Psi_3$. One sees  easily that 
\begin{displaymath} 
\|b^2(Q\chi_b^2P^2)'\|_{L^2}+\|b^3(\chi_b^3P^3)'\|_{L^2} \lesssim |b|^2 \, .
\end{displaymath}
To deal with the remaining terms in $\Psi_2$, we observe that 
\begin{displaymath} 
\Lambda(P_b)=\frac12 P_b+yP'\chi_b +yP\chi_b' 
\end{displaymath}
and 
\begin{equation}\label{def.DQb}
\frac{\partial Q_b}{\partial b}=P_b+|b|y\chi'(|b|y)P=P_b+yP\chi_b' \, ,
\end{equation}
so that 
\begin{equation} \label{lprofile.207}
-\Lambda(P_b)+\frac{\partial Q_b}{\partial b}=\frac12P_b-yP'\chi_b \, .
\end{equation}
By using  $P \in \mathcal{Z}$,
\begin{displaymath} 
\|y\chi_bP'\|_{L^2} \lesssim \Big(\int \chi(|b|y)^2\frac{y^2}{1+y^4}dy\Big)^{\frac12} \lesssim 1 \, . 
\end{displaymath}
Hence
\begin{equation} \label{lprofile.209}
\|b^2(\Psi_2-\frac12P_b)\|_{L^2} + \|b^3\Psi_3\|_{L^2}	\lesssim |b|^2 \, . 
\end{equation}
Therefore, we conclude estimate \eqref{lprofile.2}  by gathering \eqref{lprofile.7}, \eqref{lprofile.205}, \eqref{lprofile.207} and \eqref{lprofile.209}. 

\medskip

 To prove estimate \eqref{lprofile.3}, we estimate each term on the right-hand side of \eqref{lprofile.7} in $\dot{H}^{\frac12}$.  For the sake of simplicity, we only explain how to deal with the terms corresponding to \eqref{lprofile.206} and \eqref{lprofile.207}, which are the most problematic ones. First, observe that 
\begin{displaymath} 
\big\| b(\chi_b'P)' \big\|_{L^2} \lesssim |b|\|\chi_b''P\|_{L^2}+|b|\|\chi_b'P'\|_{L^2} \lesssim  |b|^{\frac52} \, ,
\end{displaymath}
which gives after interpolation with \eqref{lprofile.206}
\begin{displaymath} 
|b|\big\| D^{\frac12}(\chi_b'P) \big\|_{L^2} \lesssim  |b|^2 \, .
\end{displaymath}
Moreover, we have from \eqref{bd.PbRb} that
\begin{displaymath} \label{lprofile.302}
|b|^2\big\| D^{\frac12}(P_b) \big\|_{L^2} \lesssim  |b|^2|\ln |b||^{\frac12}\, . 
\end{displaymath}
This last estimate gives the bound in \eqref{lprofile.3}.

\medskip
(iii) We take the scalar product of \eqref{lprofile.7} with $Q$. 
First,  for $\Psi_1$, we use \eqref{lprofile.205}. Since $\mathcal H Q \in \mathcal Y_1$ by Lemma \ref{HY},
\[
\big|\big(\mathcal{H}(\chi_b''P),Q\big)\big|=\big|\big( \chi_b''P,\mathcal HQ\big)\big|
\lesssim b^2 \int_{-2|b|^{-1}<y<-|b|^{-1}} \frac {dy}{1+|y|}\lesssim b^2.
\]
Moreover, since $P',Q\in \mathcal Y_2$,
\[\big|\big(P'(\chi_b-1),Q\big)\big|
+\big|\big( P\chi_b', Q\big) \big| \lesssim |b|^2\, ,\]
Thus, it follows from \eqref{lprofile.205} that 
\begin{equation} \label{lprofile.400}
\big|\big( b\Psi_1, Q\big) \big| \lesssim |b|^{3} \, .
\end{equation}
Next, we see using the computations in \eqref{lprofile.207} that 
\begin{equation} \label{lprofile.401}
\begin{split}
(\Psi_2,Q)&=\big(-3(QP^2)'-\Lambda P+P,Q \big)
-3\big((QP^2(\chi_b-1))',Q \big)\\ & \quad
+\frac12\big(P(\chi_b-1),Q \big)
-\big(yP'(\chi_b-1),Q \big) \, .
\end{split}
\end{equation}
By taking the scalar product of the equation in \eqref{nlprofile.1} with $P$, we get that 
\begin{displaymath}
(P',P)-3\big((Q^2P)',P \big)=-(\Lambda P,Q) \, .
\end{displaymath}
On the one hand, it follows from \eqref{nlprofile.2} that 
\begin{displaymath}
(P,P')=-\frac12\lim_{y \to -\infty}P(y)^2=-(P,Q) \, .
\end{displaymath}
On the other hand,  integration by parts yields 
\begin{displaymath}
-3\big((Q^2P)',P \big)=3((P^2Q)',Q) \, .
\end{displaymath}
Hence, we deduce combining those identities that 
\begin{equation} \label{lprofile.402}
\big(-3(P^2Q)'+P-\Lambda P,Q \big)=0 \, ,
\end{equation}
so that the first term on the right-hand side of \eqref{lprofile.401} cancels out. We estimate the other ones as follows: 
\begin{displaymath}
\big|\big((QP^2(\chi_b-1))',Q \big) \big| \lesssim \int_{y \le |b|^{-1}} \frac1{(1+|y|)^5}dy \lesssim |b|^4 \, ,
\end{displaymath}
\begin{displaymath}
\big|\big(P(\chi_b-1),Q \big) \big| \lesssim \int_{y \le |b|^{-1}} \frac1{(1+|y|)^2}dy \lesssim |b| \, ,
\end{displaymath}
and 
\begin{displaymath}
\big|\big(yP'(\chi_b-1),Q \big) \big| \lesssim \int_{y \le |b|^{-1}} \frac1{(1+|y|)^3}dy \lesssim |b|^2 \, .
\end{displaymath}
This implies that 
$\big|\big( b^2\Psi_2,Q \big)\big| \lesssim |b|^3$. Finally, we see easily that 
$\big|\big( b^3\Psi_3,Q \big)\big| \lesssim |b|^3$.
This finishes the proof of estimate \eqref{lprofile.4}.

\medskip
(iv) By using the definition of $Q_b$ in \eqref{Qb}, we compute
\begin{displaymath} 
\int Q_b^2=\int Q^2+2b(Q,\chi_bP)+b^2\int \chi_b^2P^2 \, .
\end{displaymath}
Moreover, observe that $\big|b(Q,\chi_bP) \big|  \lesssim |b|$
and $\Big|b^2\int \chi_b^2 P^2 \Big| \lesssim |b|$. This finishes the proof of estimate \eqref{lprofile.5}.

\medskip
 
Expanding $Q_b=Q+bP_b$ in the definition of $E$ in \eqref{mass}, we see that 
\begin{displaymath} 
E(Q_b)=E(Q)+b\int  P_b \big(D^1Q-Q^3\big)+\frac{b^2}2\int|D^{\frac12}(P_b)|^2+\mathcal{O}(b^2) \, .
\end{displaymath}
From $E(Q)=0$,  \eqref{E} and $p_0=(P,Q)$, it follows that
\begin{equation} \label{lprofile.601}
\Big|E(Q_b)+p_0 b \Big| \lesssim |b|\Big| \int PQ(\chi_b-1) \Big|+|b|^2\int|D^{\frac12} P_b|^2+\mathcal{O}(b^2) \,.
\end{equation}
By  $| \int PQ(\chi_b-1)|\lesssim \int_{y<-|b|^{-1}} Q(y)dy  \lesssim |b|$ and \eqref{bd.PbRb}, 
we  finish the proof of  \eqref{lprofile.6}.
\end{proof}

\subsection{Decomposition of the solution using refined profiles}
\begin{lemma}[First modulation around $Q$]\label{modulation.le1}
There exists $\delta_0>0$ such that if $v\in H^{\frac12}$ satisfies
\begin{equation} \label{tube.1}
\inf_{\lambda_1>0,\, x_1 \in \mathbb R} \|\lambda_1^{\frac12}v(\lambda_1\cdot+x_1)-Q\|_{H^{\frac12}} =\delta <\delta_0 \, ,
\end{equation}
then there exist unique $(\lambda_v,x_v)\in (0,+\infty) \times \mathbb R$ such that  the function $\eta_v$ defined by
\begin{equation} \label{modulation.1.1}
\eta_v(y)=\lambda_v^{\frac12}v(\lambda_v y+x_v)-Q(y)
\end{equation}
satisfies  
\begin{equation} \label{modulation.1.2} 
(\eta_v, Q')=(\eta_v,\Lambda Q)=0 
,\quad \|\eta_v\|_{H^{\frac12}} \lesssim\delta \,.
\end{equation}
\end{lemma}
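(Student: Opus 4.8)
The plan is to obtain $(\lambda_v, x_v)$ by the implicit function theorem applied to the two orthogonality conditions, viewing them as a smooth map of the scaling and translation parameters. First I would set up the map: for $v$ satisfying \eqref{tube.1} and for parameters $(\lambda_1, x_1)$ in a neighborhood of the near-optimal pair, define $w_{\lambda_1, x_1}(y) = \lambda_1^{\frac12} v(\lambda_1 y + x_1) - Q(y)$ and consider
\[
F(v, \lambda_1, x_1) = \big( (w_{\lambda_1, x_1}, Q'), \ (w_{\lambda_1, x_1}, \Lambda Q) \big) \in \mathbb R^2 .
\]
By \eqref{tube.1} there is a pair $(\bar\lambda, \bar x)$ with $\|w_{\bar\lambda, \bar x}\|_{H^{\frac12}} \le 2\delta$; after the harmless rescaling $v \mapsto \bar\lambda^{\frac12} v(\bar\lambda \cdot + \bar x)$ we may assume $(\bar\lambda, \bar x) = (1,0)$, so $\|v - Q\|_{H^{\frac12}} \le 2\delta$, i.e. $F(Q,1,0) = 0$ since $(Q,Q')=0$ by parity and $(Q, \Lambda Q)=0$ by Lemma~\ref{Lproperties}(ii). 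Then I would compute the Jacobian of $(\lambda_1, x_1) \mapsto F(Q, \lambda_1, x_1)$ at $(1,0)$: differentiating $\lambda_1^{\frac12} Q(\lambda_1 y + x_1)$ in $\lambda_1$ gives $\Lambda Q$ (up to the obvious chain-rule factor) and in $x_1$ gives $Q'$, so the Jacobian matrix is, up to nonzero scalars, $\mathrm{diag}\big((Q', \Lambda Q),\ (\Lambda Q, Q'), \ldots\big)$ — more precisely its diagonal entries are $\|Q'\|_{L^2}^2$ and $\|\Lambda Q\|_{L^2}^2$ and the off-diagonal entries vanish because $(Q', \Lambda Q) = \frac12 \int (Q^2)'  - \int x Q' Q'$... here one checks $(Q', \Lambda Q) = 0$ directly (integrate by parts: $(\Lambda Q, Q') = \frac12(Q, Q') + (xQ', Q') = 0 + \frac12 \int x (Q'^2)' \cdot(-1)$, hmm) — in any case the point is that $\ker \mathcal L = \mathrm{span}\{Q'\}$ and $\Lambda Q \notin \ker \mathcal L$ guarantee the two test directions $Q'$, $\Lambda Q$ are linearly independent, so the Jacobian is invertible. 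The implicit function theorem then yields, for $\delta_0$ small, a unique $C^1$ solution $(\lambda_v, x_v)$ near $(1,0)$ with $F(v, \lambda_v, x_v) = 0$, and $|\lambda_v - 1| + |x_v| \lesssim \delta$ by the quantitative form of the theorem.

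Next I would undo the preliminary rescaling to recover the statement for the original $v$: the parameters transform by composition, the orthogonality conditions \eqref{modulation.1.2} are exactly $F(v, \lambda_v, x_v)=0$, and the bound $\|\eta_v\|_{H^{\frac12}} \lesssim \delta$ follows by writing $\eta_v = w_{\lambda_v, x_v} = (w_{\lambda_v, x_v} - w_{1,0}) + (v - Q)$ in the rescaled coordinates and estimating the first difference using $|\lambda_v - 1| + |x_v| \lesssim \delta$ together with smoothness of the action of the scaling-translation group on $H^{\frac12}$ near $Q$ (equivalently, a Taylor expansion of $w_{\lambda_1, x_1}$ in the parameters, controlled since $Q \in \mathcal Y_2 \subset H^{\frac12}$ and $\Lambda Q, Q' \in L^2$).

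For uniqueness, I would argue that any other admissible pair $(\lambda_v', x_v')$ must, by the same near-optimality and a continuity/connectedness argument, lie in the same small neighborhood where the implicit function theorem gives uniqueness; alternatively one invokes that the map $(\lambda_1, x_1) \mapsto \|w_{\lambda_1, x_1}\|_{H^{\frac12}}$ has a unique critical point near $(1,0)$ and the orthogonality conditions are precisely its critical-point equations. The main obstacle is nothing deep but rather bookkeeping: checking that the action $(\lambda_1, x_1) \mapsto \lambda_1^{\frac12} v(\lambda_1 \cdot + x_1)$ is $C^1$ from a parameter neighborhood into $H^{\frac12}$ uniformly for $v$ in a $2\delta$-ball around $Q$ — this requires that differentiating under the $H^{\frac12}$ norm is legitimate, which follows from $Q \in \mathcal Y_2$ (so $Q, xQ', D^{\frac12} Q, D^{\frac12}(xQ') \in L^2$) plus the smallness of $\eta$; and making the constants in $|\lambda_v - 1| + |x_v| + \|\eta_v\|_{H^{\frac12}} \lesssim \delta$ genuinely uniform. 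All of this is standard modulation theory; I would cite Lemma~\ref{Lproperties} for the spectral input $\ker \mathcal L = \mathrm{span}\{Q'\}$ and $\mathcal L \Lambda Q = -Q$ which is what makes the Jacobian nondegenerate.
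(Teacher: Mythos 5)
Your proposal is correct and follows essentially the same route as the paper: the paper's proof simply invokes the standard implicit function theorem argument (citing Proposition 1 of \cite{MaMegafa}) and highlights the non-degeneracy of the Jacobian, whose determinant is $(\Lambda Q,\Lambda Q)(Q',Q')\neq 0$ since $(\Lambda Q,Q')=0$ by parity. Your momentary hesitation about $(\Lambda Q,Q')$ is harmless, because the Jacobian is the Gram matrix of $\{\Lambda Q, Q'\}$ and linear independence of these two (even/odd) directions already gives invertibility.
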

\begin{proof}
The existence (and uniqueness) of $(\lambda_v,x_v)$ such that $\eta_v$ defined in \eqref{modulation.1.1} satisfies \eqref{modulation.1.2}  follows from standard arguments,   based on the implicit function theorem. 
We refer for example to Proposition 1 in \cite{MaMegafa}.
The key point of the proof is the non-degeneracy of the Jacobian matrix:
\[
\left|
  \begin{array}{ccc}
    ({\Lambda} Q,{\Lambda} Q) &  ({\Lambda} Q,Q')\\ 
 ( Q',{\Lambda} Q)  & ( Q',Q')\\ 
  \end{array}\right|=({\Lambda} Q,{\Lambda} Q)   ( Q',Q')\not= 0.
\]
\end{proof}

\begin{lemma}[Refined modulated flow] \label{modulation}
 Let $u$ be a solution to \eqref{mBO} on a time interval $\mathcal{I}$ such that
\begin{equation} \label{tube.2}
\sup_{t \in \mathcal{I}} \inf_{\lambda_1>0,\,x_1 \in \mathbb R} \|\lambda_1^{\frac12}u(t,\lambda_1\cdot+x_1)-Q\|_{H^{\frac12}}=\delta <\delta_0 \, .
\end{equation}
For $t\in \mathcal I$, let $\lambda(t):=\lambda_{u(t)}$ and $x(t):=x_{u(t)}$ be given by Lemma \ref{modulation.le1} and set
\begin{equation} \label{modulation.1}
\varepsilon(t,y)=\lambda^{\frac12}(t)u(t,\lambda(t)y+x(t))-Q_{b(t)}(y), \quad \mbox{where}\quad b(t) = - \frac{E(u_0)}{p_0} \lambda(t)
\end{equation}
and $Q_b$ is defined in \eqref{Qb}.
Then, there exists $\delta_0>0$ such that the following holds.
\begin{itemize} 
\item[(i)] \emph{Almost orthogonalities and smallness.} On $\mathcal{I}$, it holds
\begin{equation} \label{modulation.3}
\|\varepsilon\|_{H^{\frac12}}^2+|b| \lesssim \delta \,.
\end{equation}
\begin{equation} \label{modulation.2} 
|(\varepsilon, Q')|+|(\varepsilon,\Lambda Q)|\lesssim |b|.
\end{equation}
\item[(ii)] \emph{Conservation laws.} On $\mathcal{I}$, it holds
\begin{equation} \label{modulation.5} 
\|\varepsilon\|_{H^{\frac 12}}^2   \lesssim  |b|+   \int u_0^2 - \int Q^2   \, ,
\end{equation}
\begin{equation} \label{modulation.6} 
\Big|- (\varepsilon, Q) +  b( \varepsilon,R_b) +  \frac 12 \int \Big[|D^{\frac12}\varepsilon|^2 -\frac 12 \Big( (Q_b+\varepsilon)^4-Q_b^4-4Q_b^3 \varepsilon\Big) \Big]\Big|  \lesssim |b|^{2} |\ln|b||,
\end{equation}
where $R_b$ is defined in \eqref{Rb}, and
\begin{equation}\label{modulation.6bis}
|(\varepsilon,Q)|\lesssim |b|^2|\ln |b|| + |b| \|\varepsilon\|_{L^2}  + \|\varepsilon\|_{\dot H^{\frac 12}}^2+\int \varepsilon^2 Q^2.
\end{equation}
\item[(iii)] \emph{Equation of $\varepsilon$.} 
The function $(\lambda,x,b) : \mathcal{I} \to (0,+\infty) \times \mathbb R^2$ is of class $\mathcal{C}^1$. 
For  $t_0 \in \mathcal{I}$ and $s_0 \in \mathbb R$, define the rescaled time variable $s$ by
\[s(t)=s_0+\int_{t_0}^t \frac{dt'}{\lambda^2(t')} \quad\hbox{and}\quad  \mathcal{J}=s(\mathcal{I}).\]
Then, on $\mathcal J$, the function $\varepsilon(s,y)$ is  solution of
\begin{equation} \label{modulation.4}
\begin{aligned}
&\varepsilon_s-\Big( D^1 \varepsilon+\varepsilon-\big((Q_b+\varepsilon)^3-Q_b^3\big) \Big)_y
\\  & \quad =\frac{\lambda_s}{\lambda}\Lambda \varepsilon +(\frac{\lambda_s}{\lambda}+b)\Lambda Q_b+(\frac{x_s}{\lambda}-1)(Q_b+\varepsilon)_y-(b_s+b^2)\frac{\partial Q_b}{\partial b}+\Psi_b \, ,
\end{aligned}
\end{equation}
where $\Psi_b$ is defined in \eqref{Psib}.
\item[(iv)] \emph{Modulation equations.} On $\mathcal J$, it holds
\begin{equation} \label{modulation.7}
\Big| \frac{x_s}{\lambda}-1\Big|+
\Big| \frac{\lambda_s}{\lambda}+b\Big| \lesssim  |b|^2+\Big(\int \frac{\varepsilon^2}{1+y^2} \Big)^{\frac12}  .
\end{equation}
\end{itemize}
\end{lemma}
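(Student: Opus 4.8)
The plan is to establish the items in the order (iii), then (i) and (ii), and finally (iv), since the dynamical equation and the regularity of the parameters are needed to differentiate the orthogonality conditions.

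\textbf{Step 1 (Decomposition and $\mathcal{C}^1$ regularity, item (iii)).} For each fixed $t\in\mathcal I$, apply Lemma \ref{modulation.le1} to $v=u(t)$ to obtain $(\lambda(t),x(t))$ and a function $\eta_{u(t)}$ orthogonal to $Q'$ and $\Lambda Q$. Define $b(t)=-E(u_0)\lambda(t)/p_0$ and then $\varepsilon(t,y)=\lambda^{1/2}(t)u(t,\lambda(t)y+x(t))-Q_{b(t)}(y)$, so that $\varepsilon=\eta_{u(t)}-bP_{b}$. The map $t\mapsto(\lambda(t),x(t))$ is $\mathcal{C}^1$ by the implicit function theorem applied to the two orthogonality relations, using the non-degeneracy of the Jacobian exactly as in Lemma \ref{modulation.le1}, together with the $\mathcal{C}^1$ (in fact continuous) dependence of $u$ in $H^{1/2}$ guaranteed by the Cauchy theory of \cite{KeTa}; then $b$ is $\mathcal{C}^1$ too, being an affine function of $\lambda$. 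To get \eqref{modulation.4}, insert the ansatz $u(t,x)=\lambda^{-1/2}(Q_b+\varepsilon)((x-x(t))/\lambda)$ into \eqref{mBO}, change to the variables $(s,y)$ with $ds/dt=\lambda^{-2}$, and use the defining identity \eqref{Psib} for $\Psi_b$ to substitute for $(\mathcal H Q_b'+Q_b-Q_b^3)'$; the terms not accounted for by $\Psi_b$ organize into the scaling term $\frac{\lambda_s}\lambda\Lambda\varepsilon$, the translation term $(\frac{x_s}\lambda-1)(Q_b+\varepsilon)_y$, the correction terms $(\frac{\lambda_s}\lambda+b)\Lambda Q_b$ and $-(b_s+b^2)\partial_b Q_b$ coming from the fact that $b$ is driven by $\lambda$ rather than by its ``natural'' law, and the nonlinear part $((Q_b+\varepsilon)^3-Q_b^3)_y$. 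This is a routine but careful computation.

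\textbf{Step 2 (Smallness and almost-orthogonality, item (i)).} Since $\varepsilon=\eta_{u(t)}-bP_b$, the bound $\|\eta_{u(t)}\|_{H^{1/2}}\lesssim\delta$ from Lemma \ref{modulation.le1} and $\|P_b\|_{L^2}\lesssim|b|^{-1/2}$, $\|D^{1/2}P_b\|_{L^2}\lesssim|\ln|b||^{1/2}$ from \eqref{bd.PbRb} give $\|\varepsilon\|_{H^{1/2}}\lesssim\delta+|b|^{1/2}$. On the other hand $|b|=|E(u_0)|\lambda/p_0$ and, by conservation of energy and the expansion \eqref{lprofile.6} together with a coercivity/Gagliardo--Nirenberg argument, $|E(u_0)|\lambda=|E(u(t))|\lambda$ is controlled by $|b|$-small quantities; combined with the smallness of $\delta$ this yields \eqref{modulation.3}. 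For \eqref{modulation.2}, use $(\eta_{u(t)},Q')=(\eta_{u(t)},\Lambda Q)=0$ so that $(\varepsilon,Q')=-b(P_b,Q')$ and $(\varepsilon,\Lambda Q)=-b(P_b,\Lambda Q)$; since $(P,Q')=0$ by \eqref{nlprofile.1} one has $(P_b,Q')=(( \chi_b-1)P,Q')=O(|b|)$ and $(P_b,\Lambda Q)=O(1)$, giving $|(\varepsilon,Q')|+|(\varepsilon,\Lambda Q)|\lesssim|b|$.

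\textbf{Step 3 (Conservation laws, item (ii)).} Expand $M(u(t))=M(u_0)$ in the variables $(s,y)$: $\int u_0^2=\int(Q_b+\varepsilon)^2=\int Q_b^2+2(\varepsilon,Q_b)+\int\varepsilon^2$, and using $\int Q_b^2-\int Q^2=O(|b|)$ from \eqref{lprofile.5} and $(\varepsilon,Q_b)=(\varepsilon,Q)+b(\varepsilon,P_b)$, together with the coercivity estimate \eqref{coercivity} of $\mathcal L$ (modulo the bounded directions $Q,\Lambda Q,Q'$, which are controlled by \eqref{modulation.2}), one arrives at \eqref{modulation.5}. For \eqref{modulation.6}, expand $E(u(t))=E(u_0)$ in the rescaled variables, $E(u(t))=\lambda^{-2}[\,\frac12\int|D^{1/2}(Q_b+\varepsilon)|^2-\frac14\int(Q_b+\varepsilon)^4\,]$, write $E(Q_b)=-p_0 b+O(|b|^2|\ln|b||)$ using \eqref{lprofile.6}, collect the linear-in-$\varepsilon$ terms into $(\mathcal L\varepsilon,\varepsilon)$-type expressions plus the cross term $b(\varepsilon,R_b)$ coming from $D^1 P_b-3Q^2P_b=R_b$, and recognize that the linear form in $\varepsilon$ is $(\mathcal L_{Q_b}\varepsilon)$ tested against $Q_b$, whose principal part is $-(\varepsilon,Q)$ after using the profile equations; matching with $b=-E(u_0)\lambda/p_0$ and $E(u_0)\lambda^{-1}=b\,p_0/\lambda^2\cdot(-1)$ produces the stated identity up to $O(|b|^2|\ln|b||)$, where the logarithm enters through $\|D^{1/2}P_b\|_{L^2}^2\lesssim|\ln|b||$. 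Finally \eqref{modulation.6bis} follows by isolating $(\varepsilon,Q)$ from \eqref{modulation.6} and bounding the quartic remainder $\int((Q_b+\varepsilon)^4-Q_b^4-4Q_b^3\varepsilon)$ by $\int Q_b^2\varepsilon^2+\|\varepsilon\|_{H^{1/2}}^4$ via Gagliardo--Nirenberg, the $Q_b^2$ weight being essentially $Q^2$ up to $O(|b|)$-terms.

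\textbf{Step 4 (Modulation equations, item (iv)).} Differentiate the two orthogonality relations $(\varepsilon,Q')=-b(P_b,Q')$ and $(\varepsilon,\Lambda Q)=-b(P_b,\Lambda Q)$ in $s$, substitute $\varepsilon_s$ from \eqref{modulation.4}, and integrate by parts to move the $y$-derivatives onto $Q'$ and $\Lambda Q$. The terms involving $D^1\varepsilon+\varepsilon$ and the cubic nonlinearity, tested against $(Q')_y$ respectively $(\Lambda Q)_y$, are either identically zero (self-adjointness of $\mathcal L$, $\mathcal L Q'=0$), or bounded by $\|\varepsilon\|_{H^{1/2}}^2$ and $(\int\varepsilon^2/(1+y^2))^{1/2}$ (using the localized weight coming from the decay of $Q'$, $\Lambda Q\in\mathcal Y_2$ and Cauchy--Schwarz); the source term $\Psi_b$ contributes $O(|b|^{3/2})$ directly, but its projections onto the relevant directions are much smaller --- $(\Psi_b,Q)=O(|b|^3)$ by \eqref{lprofile.4}, and similar projections onto $Q'$, $\Lambda Q$ are $O(|b|^2)$ --- and the terms $\frac{\partial_b Q_b}{}$, $\Lambda Q_b$ contribute $O(|b|^2)$ after pairing. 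One thus obtains a $2\times2$ linear system for $(\frac{x_s}\lambda-1,\frac{\lambda_s}\lambda+b)$ whose matrix is, to leading order, the non-degenerate Jacobian of Lemma \ref{modulation.le1}, with right-hand side $O(|b|^2+(\int\varepsilon^2/(1+y^2))^{1/2})$; inverting it gives \eqref{modulation.7}.

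\textbf{Main obstacle.} The delicate point is the energy expansion \eqref{modulation.6}: one must track the interaction between the nonlocal operator $D^1$ and the cut-off $\chi_b$ sharply enough to keep the error at $O(|b|^2|\ln|b||)$ rather than merely $O(|b|)$, which requires the refined estimates \eqref{bd.PbRb}--\eqref{lprofile.3} on $P_b$ and $R_b$ and the exact cancellations built into the profile equations \eqref{nlprofile.1}; everything else is a careful but essentially standard modulation computation.
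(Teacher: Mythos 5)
Your plan follows the paper's proof essentially step by step: decompose via Lemma \ref{modulation.le1} and write $\varepsilon=\eta-bP_b$, exploit the two conservation laws together with the coercivity of $\mathcal L$, and differentiate the orthogonality relations to get the modulation equations. The architecture is sound, but two points need repair before the argument closes.

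First, the energy scaling in Step 3 is wrong: for $u(t,x)=\lambda^{-1/2}(Q_b+\varepsilon)\big((x-x(t))/\lambda\big)$ one has $E(u(t))=\lambda^{-1}E(Q_b+\varepsilon)$, not $\lambda^{-2}E(Q_b+\varepsilon)$. This exponent is not cosmetic: the entire point of the choice $b=-E(u_0)\lambda/p_0$ is that $\lambda E(u_0)=-p_0b$ cancels exactly against the leading term of $E(Q_b)=-p_0b+\mathcal O(b^2|\ln|b||)$ from \eqref{lprofile.6}; with $\lambda^2E(u_0)=-p_0b\lambda$ the cancellation fails and an $\mathcal O(|b|)$ error survives, which destroys \eqref{modulation.6}. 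Second, \eqref{modulation.5} cannot be obtained from mass conservation plus ``coercivity of $\mathcal L$'' as stated: the mass identity only yields $\|\varepsilon\|_{L^2}^2+2(\varepsilon,Q)$ up to $\mathcal O(|b|)$ errors and carries no $\dot H^{1/2}$ information. One must add it to the energy identity \eqref{modulation.6} so as to reconstitute the full quadratic form $(\mathcal L\varepsilon,\varepsilon)$, and only then invoke \eqref{coercivity} together with \eqref{modulation.2} and \eqref{modulation.6bis} to control the three unstable directions; this is how the paper proceeds, and it also fixes the order of your steps (the energy expansion must come first). A last, minor remark on (iv): differentiating $(\varepsilon,Q')$ and $(\varepsilon,\Lambda Q)$ directly, rather than the exact orthogonality relations for $\eta$ as the paper does, is workable, but you then pick up extra terms $b_s(P_b,\cdot)$ and $(b_s+b^2)(\partial_bQ_b,\cdot)$; these are harmless since $b_s+b^2=b(\lambda_s/\lambda+b)$ can be absorbed into the $2\times 2$ linear system, but they must be tracked explicitly to reach the $|b|^2$ precision claimed in \eqref{modulation.7}.
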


\begin{remark}
The choice  $b = - \frac{E(u_0)}{p_0} \lambda$ in \eqref{modulation.1} is not really standard. Usually, for simplicity, $b(t)$ is tuned so that $(\varepsilon,Q)=0$ (see e.g. \cite{MaMeRa1}). The choice \eqref{modulation.1} leads to the relation \eqref{modulation.6}, which is a sufficient substitute to the exact orthogonality $(\varepsilon,Q)=0$.
In Sect.~\ref{S4}, such a sharp choice related to energy will be technically important in our proof. 
\end{remark}

\begin{proof} (i)
For all $t\in \mathcal I$, let $\lambda(t):=\lambda_{u(t)}$ and $x(t):=x_{u(t)}$ be given by Lemma \ref{modulation.le1} such that
$
\eta(t,y)=\lambda(t)^{\frac12}u(t,\lambda(t) y+x(t))-Q(y)$ satisfies
$
(\eta(t), Q')=(\eta(t),\Lambda Q)=0$ and $\|\eta(t)\|_{H^{\frac 12}}\lesssim \delta$.
Then,  since $E(Q)=0$, $\lambda E(u)=\lambda E(u_0) = E(Q+\eta)=\mathcal O(\delta)$. It follows that $b(t)$ defined
by $b = - ({E(u_0)}/{p_0}) \lambda$ satisfies $|b(t)|\lesssim \delta$. Moreover, $\varepsilon(t)$ defined by \eqref{modulation.1} satisfies $\varepsilon = -b P_b + \eta$, where $P_b$ is defined in \eqref{Pb}.
Since $\|P_b\|_{L^2}\lesssim |b|^{-\frac 12}$ and $\|P_b\|_{\dot H^{\frac 12}}\lesssim 1$, we obtain
$\|\varepsilon\|_{ H^{\frac 12}}\lesssim |b|^{\frac 12}+\delta\lesssim \delta^{\frac 12}$.
Since $|(P_b,\Lambda Q)|+|(P_b,Q')|\lesssim 1$, estimates \eqref{modulation.2} are consequences of $
(\eta(t), Q')=(\eta(t),\Lambda Q)=0$.

\medskip

(ii) 
By using the conservation of the energy $E$ defined in \eqref{mass}, we get from \eqref{lprofile.6} and the equation of $Q$  that
\begin{align*}
\lambda E(u_0)&=E(Q_b+\varepsilon)=E(Q_b)+\int (D^1\varepsilon)  Q_b+\frac 12 \int |D^{\frac12}\varepsilon|^2-\frac14 \int \Big((Q_b+\varepsilon)^4-Q_b^4 \Big)\\ 
&=-p_0 b +\int \varepsilon (D^1 Q_b -Q_b^3) + \frac 12\int |D^{\frac12}\varepsilon|^2-\frac14\int \Big((Q_b+\varepsilon)^4-Q_b^4-4Q_b^3\varepsilon\Big)\\ &\quad +\mathcal O(|b|^2|\ln|b||) \, .
\end{align*}
We compute ($R_b$ being defined in \eqref{Rb})
\[
D^1 Q_b - Q_b^3  = D^1 Q - Q^3 + b (D^1 P_b -3 Q^2 P_b) -3 b^2 Q P_b^2 - b^3 P_b^3
= -Q + b R_b + \mathcal O_{L^2}(|b|^2).
\] 
Thus, by the choice $\lambda E(u_0) = -p_0 b$ and $\|\varepsilon\|_{L^2}\lesssim 1$, one obtains
\begin{equation}\label{energie.1}
 \int \Big[|D^{\frac12}\varepsilon|^2-\frac12 \Big(Q_b+\varepsilon)^4-Q_b^4-4Q_b^3\varepsilon\Big)\Big]
 -2(\varepsilon,Q)+2b(\varepsilon,R_b)=\mathcal O(|b|^2|\ln|b||),
\end{equation}
which is \eqref{modulation.6}.
In particular, by \eqref{bd.PbRb} and \eqref{gnp},
$\int \varepsilon^4 \lesssim \|\varepsilon\|_{H^{\frac 12}}^4$, 
$\int |Q_b| |\varepsilon|^3 \lesssim \|\varepsilon\|_{H^{\frac 12}}^3$,
$\int |Q_b^2-Q^2| \varepsilon^2 \lesssim |b| \int \varepsilon^2$, and thus,
\begin{equation}\label{energie.2}
|(\varepsilon,Q)|\lesssim |b|^2 |\ln|b|| + |b| \|\varepsilon\|_{L^2}   + \|\varepsilon\|_{\dot H^{\frac 12}}^2
+\int \varepsilon^2 Q^2 \,.
\end{equation}

\smallskip

By using the $L^2$ conservation, \eqref{lprofile.5} and \eqref{bd.PbRb}, we get (using the notation in \eqref{ulambda}),
\begin{align*} 
\|u_0\|_{L^2}^2
&=\|u_{\lambda^{-1}}(\cdot+x(t),t)\|_{L^2}^2=\|Q_b\|_{L^2}^2+2(\varepsilon,Q_b)+\|\varepsilon\|_{L^2}^2\\
&=\|Q\|_{L^2}^2+2(\varepsilon,Q)+\|\varepsilon\|_{L^2}^2+\mathcal O(|b|+|b|^{\frac 12}\|\varepsilon\|_{L^2}).
\end{align*}
Summing this identity with \eqref{energie.1}, we obtain
\[
\int \Big[|D^{\frac12}\varepsilon|^2+|\varepsilon|^2-\frac12 \Big((Q_b+\varepsilon)^4-Q_b^4-4Q_b^3\varepsilon\Big)\Big]
=\mathcal O(|b|+|b|^{\frac 12}\|\varepsilon\|_{L^2})+\|u_0\|_{L^2}^2-\|Q\|_{L^2}^2\,.
\]
Since by \eqref{gnp}, $\int \varepsilon^4 \lesssim \|\varepsilon\|_{H^{\frac 12}}^4$, 
$\int |Q_b| |\varepsilon|^3 \lesssim \|\varepsilon\|_{H^{\frac 12}}^3$ and
$\int |Q_b^2-Q^2| \varepsilon^2 \lesssim |b| \int \varepsilon^2$,
we obtain
\[
(\mathcal L \varepsilon,\varepsilon) = \mathcal O(|b|+|b|^{\frac 12}\|\varepsilon\|_{L^2})+\|u_0\|_{L^2}^2-\|Q\|_{L^2}^2 + \mathcal O(\|\varepsilon\|_{H^{\frac 12}}^3).
\]
By the coercivity of $\mathcal L$ (see \eqref{coercivity}) and \eqref{modulation.2}, \eqref{energie.2},  for $\delta$ small enough, we obtain
\[
\|\varepsilon\|_{H^{\frac 12}}^2 \lesssim |b|+\|u_0\|_{L^2}^2-\|Q\|_{L^2}^2.
\]

(iii) The $\mathcal C^1$ regularity of $\lambda(t)$ and $x(t)$ is a standard fact, see e.g.~Proposition~1 in \cite{MaMe}.
Equation \eqref{modulation.4} follows directly by writing that $u$ under the decomposition \eqref{modulation.1} solves \eqref{mBO}
and by using the definition of $\Psi_b$ in \eqref{Psib}. An intermediate step in the computations is the derivation of the following equation for $\eta(s,y)$, which is more handy to derive later the estimates for $x_s$ and $\lambda_s$
\begin{equation} \label{modulation.0}
 \eta_s-\Big( D^1 \eta+\eta-\big((Q+\eta)^3-Q^3\big) \Big)_y
   =  \frac{\lambda_s}{\lambda} \Lambda (Q+\eta)+(\frac{x_s}{\lambda}-1)(Q+\eta)_y. \end{equation}
   
\medskip

(iv) Differentiating the  orthogonality relation $(\eta,Q')=0$  with respect to $s$, using \eqref{modulation.0}, 
$(\Lambda Q,Q')=0$ and the decay properties of $Q$, we
  see that
\begin{equation}\label{ortho.1}
\left| \left(\frac {x_s}\lambda- 1\right) - \frac 1{\|Q'\|_{L^2}^2} \int \eta \mathcal L(Q'') \right|   \lesssim \left(\int \frac {\eta^2}{1+y^2}\right)^{\frac 12} \left(\left| \frac{\lambda_s}{\lambda}\right| + \left| \frac {x_s}\lambda- 1\right|\right)+
 \int \frac {\eta^2+|\eta|^3}{1+y^2}  .
\end{equation}
Similarly, using $(\eta,\Lambda Q)=0$, we have
\begin{equation}\label{ortho.2}
\left| \frac {\lambda_s}\lambda - \frac 1{\|\Lambda Q\|_{L^2}^2} \int \eta \mathcal L\big((\Lambda Q)'\big) \right| \lesssim \left(\int \frac {\eta^2}{1+y^2}\right)^{\frac 12} \left(\left| \frac{\lambda_s}{\lambda}\right| + \left| \frac {x_s}\lambda- 1\right|\right)+\int \frac {\eta^2+|\eta|^3}{1+y^2}.
\end{equation}
Combining \eqref{ortho.1} and \eqref{ortho.2}, we obtain
\begin{equation}\label{ortho.3}
 \left| \left(\frac {x_s}\lambda- 1\right) - \frac 1{\|Q'\|_{L^2}^2} \int \eta \mathcal L(Q'') \right|+
\left| \frac {\lambda_s}\lambda - \frac 1{\|\Lambda Q\|_{L^2}^2} \int \eta \mathcal L\big((\Lambda Q)'\big) \right| \lesssim \int \frac {\eta^2+|\eta|^3}{1+y^2} .
\end{equation}
Now, we insert $\eta=\varepsilon+bP_b$ in \eqref{ortho.3}.
Note that
\[
\int P \mathcal L(Q'') = - \int (\mathcal L P)' Q' =-\int Q' \Lambda Q=0,\quad
\int P \mathcal L\big((\Lambda Q)'\big) = - \int (\mathcal L P)' \Lambda Q = - \|\Lambda Q\|_{L^2}^2\,;
\]
moreover, from the definition of $P_b$ and the fact that $P$ is bounded,
\[
\int \frac {|P-P_b|}{1+y^2}  \lesssim  \int \frac {|1-\chi_b|}{1+y^2} \lesssim |b|\,, 
\quad  \int \frac {|bP_b|^2+|bP_b|^3}{1+y^2} \lesssim |b|^{2},
\]
 and from \eqref{gn},
 \[
 \int \frac { | \varepsilon|^3} {1+y^2}  \lesssim \left( \int\frac { | \varepsilon|^2}{1+y^2}   \right)^{\frac 12}
 \left( \int   | \varepsilon|^4 \right)^{\frac 12}\lesssim \left( \int\frac { | \varepsilon|^2}{1+y^2}   \right)^{\frac 12}.
 \]
Thus,  we obtain
\begin{equation}\label{ortho.4}
 \left|  \frac {x_s}\lambda- 1 \right|+
\left| \frac {\lambda_s}\lambda+ b \right| \lesssim |b|^{2}+
\left( \int \frac {\varepsilon^2}{1+y^2} \right)^{\frac 12} ,
\end{equation}
which yields \eqref{modulation.7}.
\end{proof}

\subsection{Cauchy problem and weak continuity of the flow}\label{S.2.4}
In this subsection, we recall known facts on the Cauchy problem in $H^{\frac 12}$ and the weak continuity of the flow for \eqref{mBO}. Then, we show that the decomposition of Lemma~\ref{modulation} is stable by weak $H^{\frac 12}$ limit, a technical fact that will be used in the proof of Theorem~\ref{th1}.

\begin{theorem}[\cite{KeTa,CuKe}]\label{th:CP}
For any $u_0\in H^{\frac 12}(\mathbb R)$, the following holds true. 

 {\rm (i)}  there exist $T=T(\|u_0\|_{H^{\frac 12}})$ and a unique solution $u$ of the equation \eqref{mBO} satisfying
$u(0,\cdot)=u_0$ and $u\in C([-T,T]\, :\, H^{\frac 12}(\mathbb R)) \cap X_T$,
for a certain resolution space $X_T$ (see \cite{KeTa}).
Moreover, for any $R>0$,   the flow map data solution is Lipschitz continuous from $ \big\{u_0 \in H^{\frac12}(\mathbb R) : \| u_0\|_{H^{\frac12}} < R \big\} $ into   $C([-T(R),T(R)]:H^{\frac12}(\mathbb R))$. 

 {\rm (ii)}  Let $\{u_{0,n}\}$ be a sequence of $H^{\frac12}$ initial data such that
\[
u_{0,n}\rightharpoonup u_0\quad \hbox{in } H^{\frac 12}\ \hbox{as }\ n\to +\infty.\]
Assume that, for some $C, T_1, T_2>0$, for all $n>0$, the corresponding solution $u_n(t)$ of~\eqref{mBO}
exists on $[-T_1,T_2]$ and satisfies $\max_{t\in [-T_1,T_2]}\|u_n(t)\|_{H^{\frac 12}}\leq C$.
Let $u(t)$ be the solution of~\eqref{mBO} corresponding to $u_0$.
Then, $u(t)$ exists on $[-T_1,T_2]$ and
\begin{equation}\label{we:1}
\forall t\in [-T_1,T_2], \quad u_n(t) \rightharpoonup u(t)\quad \hbox{in } H^{\frac 12}\ \hbox{as }\ n\to +\infty.
\end{equation}
\end{theorem}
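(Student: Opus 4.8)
The statement to be proved is Theorem~\ref{th:CP}, the Cauchy theory in $H^{\frac12}$ together with weak continuity of the flow. Part~(i) is purely a citation: local well-posedness with the Lipschitz flow map on balls is exactly the content of Kenig--Takaoka \cite{KeTa}, including the construction of the resolution space $X_T$. So the plan for~(i) is simply to invoke \cite{KeTa} and record the statement, with the standard remark that the existence time depends only on $\|u_0\|_{H^{\frac12}}$ by the subcritical nature of the norm in the local theory.

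The substance is part~(ii), the weak continuity. The plan is to follow the now-classical scheme (used for Benjamin--Ono in Cui--Kenig \cite{CuKe}, and similar to the KdV case): first, since $\{u_{0,n}\}$ converges weakly in $H^{\frac12}$, it is bounded there, so passing to a subsequence we may extract, using the uniform-in-$n$ bound $\max_{t\in[-T_1,T_2]}\|u_n(t)\|_{H^{\frac12}}\le C$ together with the equation \eqref{mBO} to control $\partial_t u_n$ in a negative-order space, a limit $v\in L^\infty([-T_1,T_2]:H^{\frac12})$ with $u_n\rightharpoonup v$; by Aubin--Lions one also gets strong convergence on compact sets in space-time in a weaker topology, which is enough to pass to the limit in the nonlinear term $(u_n^3)_x$ and in the linear dispersive term $(\mathcal Hu_{n,x})_x$ (the latter being linear, weak convergence suffices). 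Hence $v$ is a distributional solution of \eqref{mBO} with $v(0,\cdot)=u_0$, and $v$ lies in the resolution space $X_{T'}$ on a possibly shorter interval by the uniform bound. Then, by the uniqueness statement of part~(i) applied on $[-T_1,T_2]$, $v$ must coincide with $u$, the solution emanating from $u_0$; in particular $u$ exists on all of $[-T_1,T_2]$. Finally, since every weak-$\ast$ subsequential limit of $(u_n(t))$ is forced to be $u(t)$, the whole sequence converges, giving \eqref{we:1} for each $t$.

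The main obstacle is the justification that the limit $v$ actually belongs to the resolution space $X_{T}$ (or at least has enough structure) so that the uniqueness clause of~(i) applies to it: weak limits a priori only give a distributional solution in $C([-T_1,T_2]:H^{\frac12})$, whereas the uniqueness in \cite{KeTa} holds in the class $C\cap X_T$. This is precisely the delicate point resolved in Cui--Kenig \cite{CuKe}, where one shows that the uniform $H^{\frac12}$ bound propagates to a uniform bound in $X_T$ and that the latter space has the weak lower-semicontinuity needed to place $v$ in it; then uniqueness and the identification $v=u$ follow. A secondary technical point is that one must iterate the local argument on subintervals of length $\sim T(C)$ to cover the full interval $[-T_1,T_2]$, which is possible because the a priori bound $C$ is uniform in time and in $n$.

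Given all this, I would write the proof of~(ii) as: (1)~extract a weak limit and upgrade regularity using the equation and compactness; (2)~pass to the limit in the equation to get a distributional solution with the right data; (3)~invoke \cite{CuKe} (or the $X_T$ machinery of \cite{KeTa}) to place this limit in the uniqueness class and conclude $v=u$ on $[-T_1,T_2]$; (4)~use a subsequence argument to promote the convergence to the whole sequence. Throughout, I would simply cite \cite{KeTa,CuKe} for the two hard ingredients — the construction/properties of $X_T$ and the weak-continuity mechanism — since the paper explicitly attributes Theorem~\ref{th:CP} to those references.
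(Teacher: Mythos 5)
Your proposal matches the paper's treatment: Theorem~\ref{th:CP} is not proved in the paper but attributed to the references, with part~(i) taken directly from \cite{KeTa} and part~(ii) from the last remark of \cite{CuKe} (the paper also lists analogous weak-continuity statements for gKdV and Benjamin--Ono). Your sketch of the compactness/uniqueness mechanism for~(ii) is the standard one underlying those references and correctly identifies the delicate point (placing the weak limit in the uniqueness class $C\cap X_T$), so nothing further is needed.
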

The second part of the Theorem, \emph{i.e.}~the weak convergence of $u_n(t)$ to $u(t)$, is stated in the last remark of \cite{CuKe}.  
For similar statements, we   refer to Lemma~30 in~\cite{MaMejmpa} in the case of the critical (gKdV) equation,
to Theorem~5 in~\cite{KeMa} in the case of the Benjamin-Ono equation in $H^{\frac 12}$,
to \cite{CuKe} for Benjamin-Ono in $L^2$  
and to Lemma~3.4 in~\cite{Co} for the mass supercritical (gKdV) equation.

As usual, given any $u_0\in H^{\frac 12}(\mathbb R)$, we   consider the   solution 
$u\in C([0,T^\star)\, :\, H^{\frac 12}(\mathbb R))$ emanating from $u_0$ at $t=0$ and defined on its maximal interval of existence $[0,T^\star)$. If $T^\star<+\infty$, we see from Theorem~\ref{th:CP} that
$\lim_{t \uparrow T^\star} \| D^{\frac12}u(t)\|_{L^2}=+\infty$.
A similar statement holds for negative times.

\begin{remark}
Let $u(t)$ be a solution of (mBO) such that $T^\star<+\infty$. Then,
\[
\|D^{\frac 12} u(t)\|_{L^2}\gtrsim (T^\star-t)^{-\frac 14}.
\]
Indeed, let $0<t_{0}<T^\star$, and let $v(s,y)$ be the following solution of (mBO) 
\[
v(s,y)= \lambda_{0}^{-\frac 12} u(\lambda_0^{-2} s + t_0,\lambda_{0}^{-1} y),\quad
\lambda_0=\|D^{\frac 12} u(t_0)\|_{L^2}^2.
\]
Then, $\|D^{\frac 12}v(0)\|_{L^2} = 1$ and $\|v(0)\|_{L^2} = \|u(t_0)\|_{L^2}= \|u(0)\|_{L^2}$.
By Theorem~\ref{th:CP}~(i), it follows that $v(s)$ exists as a solution of \eqref{mBO} in $H^{\frac 12}$ on a time interval 
$[0,S]$, $S>0$ independent of $t_0$. Thus, $T^\star> \lambda_0^{-2} S  + t_0$, which is equivalent to
\[
(T^\star-t_0)   \|D^{\frac 12} u(t_0)\|_{L^2}^4 \gtrsim 1.
\]
\end{remark}

Now, we claim the following  consequence of Theorem~\ref{th:CP}~(ii) on the decomposition of  Lemma \ref{modulation}.
 
\begin{lemma}[Weak $H^{\frac12}$ stability of the decomposition] \label{le:weak}
Let $\{u_{0,n}\}$ be a sequence of $H^{\frac12}$ initial data such that
\[
u_{0,n}\rightharpoonup u_0\quad \hbox{in } H^{\frac 12}\ \hbox{as }\ n\to +\infty.
\]
Let $u(t)$ be the solution of~\eqref{mBO} corresponding to $u_0$.
Assume   that for all $n>0$, $u_n(t)$ exists and satisfies~\eqref{tube.2} on $[-T_1,T_2]$ for some $T_1,T_2>0$ and that the parameters of the decomposition
$(\lambda_n,x_n,b_n,\varepsilon_n)$ of~$u_n$ given by Lemma~\ref{modulation} satisfy, for some $c,C>0$, for all $n$ large,
\begin{equation} \label{hypoweak}
\forall t\in [-T_1,T_2],\quad 0<c\leq \lambda_n(t)<C ,\quad \lambda_n(0)=1,\quad x_n(0)=0.
\end{equation}
Then, $u(t)$ exists and satisfies~\eqref{tube.2} on $[-T_1,T_2]$ and its decomposition $(\lambda,x,b,\varepsilon)$ satisfies, as $n\to +\infty$,
\[
\forall t\in [-T_1,T_2],\quad \varepsilon_n(t)\rightharpoonup \varepsilon(t)\quad \hbox{in } H^{\frac 12},\quad
\lambda_n(t)\to \lambda(t),\quad x_n(t)\to x(t), \quad b_n(t)\to b(t).
\]
\end{lemma}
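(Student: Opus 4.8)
The plan is to derive the conclusion from Theorem~\ref{th:CP}~(ii) together with the explicit, continuous-in-data nature of the modulation construction in Lemmas~\ref{modulation.le1} and~\ref{modulation}. First I would invoke Theorem~\ref{th:CP}~(ii): since $u_{0,n}\rightharpoonup u_0$ in $H^{\frac 12}$ and each $u_n$ satisfies $\max_{[-T_1,T_2]}\|u_n(t)\|_{H^{\frac 12}}\leq C$ (a consequence of \eqref{tube.2} and \eqref{hypoweak}, which give $\|u_n(t)\|_{H^{\frac 12}}\lesssim \lambda_n(t)^{-\frac12}(\|Q\|_{H^{\frac 12}}+\delta)\lesssim c^{-\frac12}(\|Q\|_{H^{\frac12}}+\delta_0)$), the limit solution $u(t)$ exists on $[-T_1,T_2]$ and $u_n(t)\rightharpoonup u(t)$ in $H^{\frac 12}$ for every $t$ in that interval.

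Next I would fix $t\in[-T_1,T_2]$ and pass to the limit in the decomposition at time $t$. The sequences $(\lambda_n(t),x_n(t))$ are bounded (by \eqref{hypoweak} and by the fact that $x_n$ stays in a compact set — this follows since $\varepsilon_n$ bounded in $L^2$ combined with weak convergence of $u_n(t)$ forces $x_n(t)$ to remain bounded; otherwise the profile $\lambda_n^{-\frac12}Q((\cdot-x_n)/\lambda_n)$ would escape to spatial infinity and $u_n(t)$ would have vanishing weak limit locally, contradicting the $L^2$-mass carried near $x_n(t)$). Hence along a subsequence $\lambda_n(t)\to\bar\lambda>0$, $x_n(t)\to\bar x$, and then $b_n(t)=-\frac{E(u_{0,n})}{p_0}\lambda_n(t)\to -\frac{E(u_0)}{p_0}\bar\lambda=:\bar b$ because $E$ is continuous for the $H^{\frac 12}$ norm on bounded sets and $\|u_{0,n}\|_{H^{\frac12}}$ is bounded — actually one must be a little careful here since weak convergence does not imply convergence of $E$; instead I would use that $E(u_{0,n})=\lambda_n(0)E(u_n(0))$... more robustly, $E$ is conserved so $E(u_{0,n})=E(u_n(t))$ and one reads off $b_n$ from the decomposition at time $t$ via the relation coming from \eqref{modulation.6}; the cleanest route is simply to note $b_n(t)=\lambda_n(t)b_n(0)/\lambda_n(0)$ is not available, so instead one defines $\bar\varepsilon(t):=\text{w-}\lim\varepsilon_n(t)$ and checks it fits the decomposition. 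Concretely: $\varepsilon_n(t,y)=\lambda_n(t)^{\frac12}u_n(t,\lambda_n(t)y+x_n(t))-Q_{b_n(t)}(y)$; the first term converges weakly in $H^{\frac12}$ to $\bar\lambda^{\frac12}u(t,\bar\lambda y+\bar x)$ (using $u_n(t)\rightharpoonup u(t)$, $\lambda_n(t)\to\bar\lambda$, $x_n(t)\to\bar x$ and the fact that dilation/translation are strongly continuous on $H^{\frac12}$), and $Q_{b_n(t)}\to Q_{\bar b}$ strongly in $H^{\frac12}$ by Lemma~\ref{lprofile}, so $\varepsilon_n(t)\rightharpoonup \varepsilon^\sharp(t):=\bar\lambda^{\frac12}u(t,\bar\lambda\cdot+\bar x)-Q_{\bar b}$ in $H^{\frac 12}$. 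The orthogonality conditions $(\eta_n,Q')=(\eta_n,\Lambda Q)=0$ pass to the limit since $Q',\Lambda Q$ are fixed $L^2$ functions, giving $(\eta^\sharp,Q')=(\eta^\sharp,\Lambda Q)=0$ with $\eta^\sharp=\varepsilon^\sharp+\bar b P_{\bar b}$, and $\|\eta^\sharp\|_{H^{\frac12}}\leq\liminf\|\eta_n\|_{H^{\frac12}}\lesssim\delta<\delta_0$. Hence $u(t)$ satisfies \eqref{tube.2} and, by the uniqueness part of Lemma~\ref{modulation.le1}, $\bar\lambda=\lambda(t)$, $\bar x=x(t)$, so also $\bar b=b(t)$ and $\varepsilon^\sharp(t)=\varepsilon(t)$.

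Finally, since the limits $\lambda(t),x(t),b(t)$ and $\varepsilon(t)$ are independent of the subsequence (by the uniqueness just invoked), the full sequences converge: $\lambda_n(t)\to\lambda(t)$, $x_n(t)\to x(t)$, $b_n(t)\to b(t)$ and $\varepsilon_n(t)\rightharpoonup\varepsilon(t)$ in $H^{\frac12}$, for every $t\in[-T_1,T_2]$. The main obstacle I anticipate is the compactness of the translation parameters $x_n(t)$: ruling out $|x_n(t)|\to\infty$ requires using the fact that $u_n(t)$ carries a fixed amount of $L^2$ mass (namely $\|Q\|_{L^2}+O(\delta)$) concentrated near $x_n(t)$ at scale $\lambda_n(t)\in[c,C]$, which is incompatible with $u_n(t)\rightharpoonup u(t)$ unless $x_n(t)$ stays bounded (here one argues that a weak limit in $H^{\frac12}$, which embeds compactly in $L^2_{\mathrm{loc}}$, cannot absorb mass escaping to infinity; equivalently one can normalize using $x_n(0)=0$, $\lambda_n(0)=1$ together with the a~priori control $|x_n'(t)|\lesssim\lambda_n(t)^{-1}$ from \eqref{modulation.7} rescaled, which bounds the displacement of $x_n$ over the fixed time interval $[-T_1,T_2]$). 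A secondary technical point is justifying that $v\mapsto \lambda^{\frac12}v(\lambda\cdot+x)$ is continuous from (weak-$H^{\frac12}$)$\times\mathbb R_{>0}\times\mathbb R$ to weak-$H^{\frac12}$ jointly, which follows by testing against Schwartz functions and using the uniform bound on $\|u_n(t)\|_{H^{\frac12}}$.
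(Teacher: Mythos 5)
Your proposal follows essentially the same route as the paper's (sketched) proof: weak continuity of the flow from Theorem~\ref{th:CP}~(ii), compactness of the modulation parameters, passage to the limit in the orthogonality relations, and identification of the limit via the uniqueness in Lemma~\ref{modulation.le1}. The only structural difference is that you work pointwise in $t$ with subsequences and then invoke uniqueness, whereas the paper bounds the time derivatives of $(\lambda_n,x_n,b_n)$ via \eqref{modulation.7} and applies Ascoli; both are fine.

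Two cautions. First, your ``mass cannot escape'' argument for the boundedness of $x_n(t)$ is not valid as stated: weak convergence in $H^{\frac12}$ is perfectly compatible with a fixed amount of $L^2$ mass escaping to spatial infinity (e.g. $Q(\cdot-n)\rightharpoonup 0$); the weak limit simply loses that mass, there is no contradiction. The argument you give in parentheses is the correct one, and it is the paper's: the normalization $x_n(0)=0$ together with $|\frac{dx_n}{dt}|=\lambda_n^{-2}|\partial_s x_n|\lesssim \lambda_n^{-1}\leq c^{-1}$, which follows from \eqref{modulation.7} and \eqref{hypoweak}, bounds $x_n(t)$ uniformly on $[-T_1,T_2]$. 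Second, you rightly flag that $b_n(t)=-E(u_{0,n})\lambda_n(t)/p_0$ and that weak convergence does not yield $E(u_{0,n})\to E(u_0)$; but your proposed workaround does not close this gap, because the orthogonality relations and the uniqueness of Lemma~\ref{modulation.le1} only pin down $\lambda(t)$ and $x(t)$, while $b(t)$ is defined through the energy, not through an orthogonality condition. Thus $b_n(t)\to b(t)$ (equivalently, identifying your $\varepsilon^\sharp$ with $\varepsilon$, since they differ by $Q_{\bar b}-Q_{b(t)}$) genuinely requires $E(u_{0,n})\to E(u_0)$ as an additional input. The paper's sketch is silent on this point as well, so you are not behind it, but in a complete write-up this convergence must be supplied (and verified for the specific sequence used in Section~\ref{S6}).
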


\begin{proof}[Sketch of proof]
 We use the strategy of the proof of Lemma~17  in~\cite{MaMejmpa}.
We also refer to~\cite{MeRainvent}, page 599, for a more detailed argument.
The first step of the proof is to note that estimates~\eqref{modulation.7}
provide uniform bounds on the time derivatives of the geometric parameters $(\lambda_n(t),x_n(t),b_n(t))$ on $[-T_1,T_2]$.
Therefore, by Ascoli's theorem, up to the extraction of a subsequence,
\[
(\lambda_n(t),x_n(t),b_n(t))\to (\widetilde\lambda(t),\widetilde x(t),\widetilde b(t)) \quad \hbox{on } [-T_1,T_2],
\]
for some functions $(\widetilde\lambda(t),\widetilde x(t),\widetilde b(t))$.
Writing the orthogonality conditions~\eqref{modulation.2} in terms of $u_n(t)$ and $(\lambda_n(t),x_n(t),b_n(t))$,
using~\eqref{we:1} and passing to the limit as $n\to +\infty$,
we see that $u(t)$ and the limiting parameters $(\widetilde\lambda(t),\widetilde x(t),\widetilde b(t))$
satisfy the same orthogonality relations.
In particular, they correspond to the unique parameters $(\lambda(t),x(t),b(t))$ given by Lemma~\ref{modulation}.
This uniqueness statement proves by a standard argument that, for the whole sequence,
$(\lambda_n(t),x_n(t),b_n(t))$ converges to $(\lambda(t),x(t),b(t))$ on $[-T_1,T_2]$ as $n\to +\infty$.
It follows from \eqref{modulation.1} that $\varepsilon_n(t)\rightharpoonup \varepsilon(t)$ in $H^{\frac 12}$ as $n\to +\infty$.
\end{proof}

\subsection{Estimates and localization arguments for  fractional Laplacians}\label{s.2.5}
First, we recall various useful inequalities and commutator estimates 
related to $D^\alpha$ ($0<\alpha \le 1$),
and the Hilbert transform $\mathcal H$.
\begin{lemma}[\cite{Ca,KPV,DaMcPo}]\label{COMMUTATOR}
For any $f,g,a\in \mathcal S(\mathbb R)$,
\begin{equation}\label{gnp}
\forall \, 2\leq p< +\infty,\quad \|f\|_{L^p}
 \lesssim \|f\|_{L^2}^{\frac 2p} \|D^{\frac 12} f\|_{L^2}^{\frac {p-2}p},
\end{equation}
\begin{equation}\label{COMM1}
\forall \, 0<\alpha \le 1, \quad \|[D^{\alpha},g]f \|_{L^2}
\lesssim \|f\|_{L^4} \|D^{\alpha} g\|_{L^4},
\end{equation}
where $[D^{\alpha},g]f=D^{\alpha}(fg)-gD^{\alpha}f$, 
and
\begin{equation} \label{Calderon}
   \forall \, l, m \in \mathbb N \, ,
\quad
 \big\|\partial_x^l[\mathcal{H},a]\, \partial_x^mf\|_{L^2} \lesssim \|\partial_x^{l+m}a\|_{L^{\infty}}\|f\|_{L^{2}} \, ,\end{equation}
 where $[\mathcal{H},a]\, g=\mathcal{H}(ag)-a\mathcal{H}g$.
\end{lemma}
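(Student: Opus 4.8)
The statement to prove is Lemma~\ref{COMMUTATOR}, which collects four classical estimates: the Gagliardo--Nirenberg inequality \eqref{gnp}, the Kato--Ponce type commutator estimate \eqref{COMM1}, and the Calder\'on commutator estimate \eqref{Calderon}. These are all standard, so the ``proof'' would really be a matter of attributing each inequality to the correct source in the literature and sketching the argument where it is short; I would not expect to reprove anything from scratch.

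\textbf{Plan.} For \eqref{gnp}, the plan is to recall that it is the one-dimensional fractional Gagliardo--Nirenberg--Sobolev inequality: by the Sobolev embedding $\dot H^{\frac 12}(\mathbb R)\hookrightarrow L^p(\mathbb R)$ one has $\|f\|_{L^p}\lesssim \|D^{\frac 12(1-\frac 2p)}f\|_{L^2}$ when combined with $L^2$ control, but more directly one interpolates: write $\|f\|_{L^p}$ via the Stein--Weiss / fractional integration bound $\|D^{-s}h\|_{L^p}\lesssim\|h\|_{L^2}$ with $s=\frac12-\frac1p$, applied to $h=D^s f$, to get $\|f\|_{L^p}\lesssim\|D^sf\|_{L^2}$, and then use the elementary interpolation $\|D^sf\|_{L^2}\lesssim\|f\|_{L^2}^{1-2s}\|D^{\frac12}f\|_{L^2}^{2s}$ (Plancherel plus H\"older on the Fourier side, splitting $|\xi|\le 1$ and $|\xi|\ge 1$ or just H\"older with exponents $\frac{1}{1-2s},\frac{1}{2s}$), with $2s=\frac{p-2}{p}$. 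This gives exactly the stated exponents. For \eqref{COMM1}, the plan is to cite the Kato--Ponce / Kenig--Ponce--Vega commutator estimate \cite{KPV}: the fractional Leibniz rule gives $\|[D^\alpha,g]f\|_{L^2}\lesssim\|D^\alpha g\|_{L^{p_1}}\|f\|_{L^{p_2}}+\|g\|_{L^{q_1}}\|D^\alpha f\|_{L^{q_2}}$, and the specific form here (only the first product, with $p_1=p_2=4$, and no derivative landing on $f$) is the refined version valid for $0<\alpha\le 1$ proved by Dawson--McGahagan--Ponce \cite{DaMcPo} (or the earlier BKPSV estimates); one simply quotes it. For \eqref{Calderon}, the plan is to reduce to the case $l=m=0$, i.e. $\|[\mathcal H,a]g\|_{L^2}\lesssim\|a'\|_{L^\infty}\|g\|_{L^2}$, which is the classical Calder\'on first commutator estimate \cite{Ca}, and then handle the derivatives by the commutator identity: since $\mathcal H$ commutes with $\partial_x$, one has $\partial_x[\mathcal H,a]g=[\mathcal H,a]\partial_x g+[\mathcal H,a']g$; iterating this $l+m$ times distributes the $l+m$ derivatives, the worst term being $[\mathcal H,\partial_x^{l+m}a]g$ (controlled by Calder\'on with $\|\partial_x^{l+m}a\|_{L^\infty}$) and all intermediate terms of the form $[\mathcal H,\partial_x^j a]\partial_x^k g$ with $j\ge 1$, $j+k\le l+m$, each bounded by $\|\partial_x^j a\|_{L^\infty}\|\partial_x^k g\|_{L^2}\lesssim\|\partial_x^{l+m}a\|_{L^\infty}\|g\|_{L^2}$ after interpolation --- though actually since we only need the estimate with $\partial_x^l$ on the outside and $\partial_x^m$ on $f$, the cleanest route is to write $\partial_x^l[\mathcal H,a]\partial_x^m f$ and move all $l$ outer derivatives inside, landing either on $a$ (giving lower commutators with higher derivatives of $a$, all controlled by $\|\partial_x^{l+m}a\|_{L^\infty}$) or combining with $\partial_x^m$ on $f$; in every resulting term one uses the base Calder\'on estimate plus $\|\partial_x^k g\|_{L^2}\lesssim\|g\|_{L^2}^{1-k/(l+m)}\|\partial_x^{l+m}g\|_{L^2}^{k/(l+m)}$, but since $g$ is applied with fewer than $l+m$ derivatives one absorbs, or more simply one notes the paper only ever applies this with $g$ smooth and the count balanced. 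I would just cite \cite{Ca,DaMcPo} and state that \eqref{Calderon} follows from the $l=m=0$ case together with the commutation $[\partial_x,\mathcal H]=0$.

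\textbf{Main obstacle.} There is essentially no mathematical obstacle here, since all three are textbook results; the only thing requiring a small amount of care is the bookkeeping in \eqref{Calderon} when $l+m\ge 1$, namely making sure that after commuting all derivatives through, every term is of the form $[\mathcal H,\partial_x^j a]\partial_x^k$ with $j+k=l+m$ (plus $j\ge 1$ terms where one derivative has already hit $a$), so that applying the basic Calder\'on bound $\|[\mathcal H,b]h\|_{L^2}\lesssim\|b'\|_{L^\infty}\|h\|_{L^2}$ to each piece yields the factor $\|\partial_x^{l+m}a\|_{L^\infty}$ — here one uses that $\|\partial_x^j a\|_{L^\infty}$ for $1\le j\le l+m$ is controlled by $\|a\|_{L^\infty}+\|\partial_x^{l+m}a\|_{L^\infty}$ under the standing assumptions, or, in the form actually used in the paper, the relevant $a$ are always fixed smooth profiles so this is harmless. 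I would therefore present the proof as: \eqref{gnp} is the fractional Gagliardo--Nirenberg inequality (Sobolev embedding $\dot H^{\frac12}\hookrightarrow L^p$ plus Fourier-side interpolation $\|D^s f\|_{L^2}\le\|f\|_{L^2}^{1-2s}\|D^{\frac12}f\|_{L^2}^{2s}$); \eqref{COMM1} is the commutator estimate of \cite{DaMcPo} (see also \cite{KPV}); \eqref{Calderon} is the Calder\'on commutator estimate \cite{Ca}, the case $l=m=0$, combined with $[\mathcal H,\partial_x]=0$ to distribute derivatives. That is all that is needed.
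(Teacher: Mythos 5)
Your overall strategy matches the paper exactly: the paper gives no self-contained proof either, but simply attributes \eqref{gnp} to Sobolev embedding plus interpolation, \eqref{COMM1} to Calder\'on (for $\alpha=1$) and to Theorem A.8 of \cite{KPV} with the parameters $\alpha_1=0$, $\alpha_2=\alpha$, $p=2$, $p_1=p_2=4$ (for $0<\alpha<1$), and \eqref{Calderon} to Calder\'on \cite{Ca} for $l=0$, $m=1$ and to Bajvsank--Coifman \cite{BaCo} (or Lemma 3.1 of \cite{DaMcPo}) for the general case. Your treatments of \eqref{gnp} and \eqref{COMM1} are correct and in the same spirit.

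There is, however, a genuine error in your handling of \eqref{Calderon}. First, the ``base case'' you state, $\|[\mathcal H,a]g\|_{L^2}\lesssim\|a'\|_{L^\infty}\|g\|_{L^2}$, is not the Calder\'on commutator estimate and is in fact false: the operator norm of $[\mathcal H,a]$ on $L^2$ is invariant under the dilation $a\mapsto a(\delta\,\cdot)$ (since $\mathcal H$ commutes with dilations), while $\|\partial_x(a(\delta\,\cdot))\|_{L^\infty}=\delta\|a'\|_{L^\infty}\to0$; taking e.g.\ $a=\tanh$ gives a contradiction. The classical Calder\'on theorem is the case $l=0$, $m=1$, i.e.\ $\|[\mathcal H,a]\partial_xf\|_{L^2}\lesssim\|a'\|_{L^\infty}\|f\|_{L^2}$ — the derivative inside the commutator is essential. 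Second, your commutation identity $\partial_x[\mathcal H,a]g=[\mathcal H,a]\partial_xg+[\mathcal H,a']g$ is correct and does reduce general $l$ to $l=0$, but it cannot produce the case $l=0$, $m\ge2$ from $m=1$: writing $[\mathcal H,a]\partial_x^2f=\partial_x\big([\mathcal H,a]\partial_xf\big)-[\mathcal H,a']\partial_xf$, the first term requires an $L^2$ bound on the \emph{derivative} of $[\mathcal H,a]\partial_xf$, which the $m=1$ estimate does not provide. The bound $\|[\mathcal H,a]\partial_x^mf\|_{L^2}\lesssim\|\partial_x^ma\|_{L^\infty}\|f\|_{L^2}$ for $m\ge2$ is the higher-order Calder\'on commutator theorem and is a genuinely deeper result; it must be quoted (as the paper does, from \cite{BaCo} or \cite{DaMcPo}) rather than derived from the first commutator by distributing derivatives.
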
 
Recall that \eqref{gnp} is the Gagliardo-Nirenberg inequality, which follows from complex interpolation and Sobolev embedding.

Estimate \eqref{COMM1} in the case $\alpha=1$ is due to Calder\'on \cite{Ca}, see also Coifman and Meyer \cite{CM}, 
formula (1.1).
Estimate \eqref{COMM1} in the case $0<\alpha<1$ is a consequence of Theorem A.8 in \cite{KPV} for functions depending only on $x$, with the following choice of parameters: $0<\alpha<1$, $\alpha_1=0$, $\alpha_2=\alpha$,
$p=2$, $p_1=p_2=4$.

Finally, estimate \eqref{Calderon} in the case $l=0$ and $m=1$ is the classical Calder\'on commutator estimate proved in \cite{Ca}. The general case was proved by Bajvsank and Coifman \cite{BaCo} (see also Lemma 3.1 of~\cite{DaMcPo} for a different proof).

\medskip

The following estimates are direct consequences of \eqref{gnp}-\eqref{Calderon}.
\begin{lemma} For any $a,f \in \mathcal S(\mathbb R)$,
\begin{equation}\label{m.1}
\left| \int (D^1 f) f' a \right|\lesssim \| f\|_{L^2}^2 \|a''\|_{L^\infty},
\end{equation}
and
\begin{equation}\label{m.2}
\left| \int (D^1 f) f a'  - \int |D^{\frac 12} f|^2 a'\right|\lesssim \|D^{\frac 12} f\|_{L^2}^{\frac 32}\| f\|_{L^2}^{\frac 12} \|a''\|_{L^2}^{\frac 34}\|a'\|_{L^2}^{\frac 14}.
\end{equation}
\end{lemma}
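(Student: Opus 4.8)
The plan is to prove the two estimates \eqref{m.1} and \eqref{m.2} directly from the commutator bounds in Lemma~\ref{COMMUTATOR}, by writing each quantity as a commutator against the cutoff-type function $a$ and integrating by parts. The key algebraic observation is that $\int (D^1 f) f' a$ and $\int (D^1 f) f a'$ are, up to lower-order terms, controlled by $[\mathcal H,a]$ acting on derivatives of $f$, because $D^1 = \mathcal H \partial_x$.

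\textbf{Proof of \eqref{m.1}.} First I would write $D^1 f = \mathcal H f'$ and introduce the commutator $[\mathcal H, a] f' = \mathcal H(a f') - a \mathcal H f'$. The idea is to symmetrize: since $\mathcal H$ is skew-adjoint on $L^2$,
\begin{align*}
\int (D^1 f) f' a & = \int (\mathcal H f')\, (a f')
 = \int (\mathcal H f')\, [\mathcal H,a](\mathcal H f') \cdot(\text{schematically})\ldots
\end{align*}
More carefully, one uses $\int (\mathcal Hf')(af') = -\int f' \,\mathcal H(af') = -\int f'\,[\mathcal H,a] f' - \int f'\, a\,\mathcal H f' = -\int f'\,[\mathcal H,a]f' - \int (af')(\mathcal Hf')$, hence $\int (D^1f)f'a = -\tfrac12 \int f'\,[\mathcal H,a]f'$. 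Now integrate by parts twice to move both derivatives off $f$: $\int f'\,[\mathcal H,a]f' = \int f\, \partial_x [\mathcal H,a]\partial_x f$ (up to sign), and then apply \eqref{Calderon} with $l=m=1$ to get $\big\|\partial_x[\mathcal H,a]\partial_x f\big\|_{L^2}\lesssim \|a''\|_{L^\infty}\|f\|_{L^2}$, followed by Cauchy--Schwarz. This yields \eqref{m.1}.

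\textbf{Proof of \eqref{m.2}.} Here I would again put $D^1 f = \mathcal H f'$ and compute $\int (\mathcal H f')\, f a'$. Using skew-adjointness, $\int (\mathcal Hf')\,(f a') = -\int f'\,\mathcal H(f a') = -\int f'\,[\mathcal H,a']f - \int f'\, a'\,\mathcal Hf'$. The last term is $-\int a'\,f'\,\mathcal H f' = \int a'\, (\mathcal Hf')\,f' \cdot(-1)$? — better to note $\int a' f' \mathcal H f'$ relates to $\int |D^{1/2}f|^2 a'$ after a symmetrization of the quadratic form $\int (D^1 f) f a'$; indeed, writing $\int (D^1f)(fa') $ and using the identity $\int (D^1 f) (fa') = \tfrac12 \int |D^{\frac12}f|^2 a' + (\text{commutator})$, the commutator being $\tfrac12\int \big([D^{\frac12},a']D^{\frac12}f\big)\cdot(\text{something})$ or, more directly, controlled via \eqref{COMM1} with $\alpha=\tfrac12$, giving $\big\|[D^{\frac12},a']f\big\|_{L^2}\lesssim \|f\|_{L^4}\|D^{\frac12}a'\|_{L^4}$. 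Then interpolate: by \eqref{gnp}, $\|f\|_{L^4}\lesssim \|f\|_{L^2}^{1/2}\|D^{\frac12}f\|_{L^2}^{1/2}$, and $\|D^{\frac12}a'\|_{L^4}\lesssim \|D^{\frac12}a'\|_{L^2}^{?}\cdots$; one interpolates $\|D^{\frac12}a'\|_{L^4} \lesssim \|a'\|_{L^2}^{1/4}\|a''\|_{L^2}^{3/4}$ using Gagliardo--Nirenberg for fractional derivatives (this is where the exponents $\tfrac34$ and $\tfrac14$ in \eqref{m.2} come from). Collecting the factors, the error term is bounded by $\|D^{\frac12}f\|_{L^2}^{3/2}\|f\|_{L^2}^{1/2}\|a''\|_{L^2}^{3/4}\|a'\|_{L^2}^{1/4}$, which is precisely \eqref{m.2}; and the main term reduces exactly to $\int |D^{\frac12}f|^2 a'$.

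\textbf{Main obstacle.} The routine part is the integration by parts and applying the quoted commutator estimates; the delicate point is the bookkeeping of the symmetrization that extracts $\int |D^{\frac12}f|^2 a'$ as the main term of $\int (D^1f)f a'$, together with choosing the right interpolation of $\|D^{\frac12}a'\|_{L^4}$ to land on the stated mixed norm $\|a''\|_{L^2}^{3/4}\|a'\|_{L^2}^{1/4}$. I would be careful to track which Lebesgue exponent each factor of $a$ and $f$ carries so that Hölder closes with total exponent $1$ on each function, and to verify the density/approximation by $\mathcal S(\mathbb R)$ so the formal integrations by parts are justified.
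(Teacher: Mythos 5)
Your argument is essentially the paper's: \eqref{m.1} via the antisymmetrization $\int (D^1f)f'a=-\tfrac12\int f'[\mathcal H,a]f'$ followed by \eqref{Calderon} with $l=m=1$, and \eqref{m.2} via the self-adjointness of $D^{\frac12}$, the commutator bound \eqref{COMM1} with $\alpha=\tfrac12$, and the Gagliardo--Nirenberg interpolations $\|f\|_{L^4}\lesssim\|f\|_{L^2}^{1/2}\|D^{\frac12}f\|_{L^2}^{1/2}$ and $\|D^{\frac12}a'\|_{L^4}\lesssim\|a'\|_{L^2}^{1/4}\|a''\|_{L^2}^{3/4}$. Only drop the false start through $[\mathcal H,a']$ and the spurious factor $\tfrac12$ in the middle of your proof of \eqref{m.2}: the clean identity is $\int (D^1f)fa'=\int (D^{\frac12}f)\,D^{\frac12}(fa')=\int|D^{\frac12}f|^2a'+\int (D^{\frac12}f)[D^{\frac12},a']f$, with no factor $\tfrac12$ on the main term.
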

\begin{proof}
First,
\begin{align*}
\int (D^1 f) f' a & = 
\int (\mathcal H f') f' a 
= - \int f' \mathcal H (f' a) 
= - \int (\mathcal H f') f' a  - \int f' [\mathcal H,a] f'.
\end{align*}
Thus,
\begin{align*}
\int (D^1 f) f' a & = 
 \frac 12 \int f  \left([\mathcal H,a] f'\right)' ,
\end{align*}
which, combined to Cauchy-Schwarz inequality and \eqref{Calderon} (with $l=m=1$) implies \eqref{m.1}

\medskip

Second,
\begin{align*}
\int (D^1 f) f a' & = 
\int (D^{\frac 12} f)  D^{\frac 12} (f a')
= \int |D^{\frac 12} f|^2    a'
+\int (D^{\frac 12} f) [D^{\frac 12} , a']  f .
\end{align*}
By \eqref{COMM1} with $\alpha=\frac12$ and \eqref{gnp},
\begin{align*}
\left| \int (D^1 f) f a'  - \int |D^{\frac 12} f|^2 a'\right|
& \lesssim \|D^{\frac 12} f\|_{L^2} \| f\|_{L^4} \| D^{\frac 12} a'\|_{L^4}
\\
&\lesssim \|D^{\frac 12} f\|_{L^2}^{\frac 32}\| f\|_{L^2}^{\frac 12} \|D^1(a')\|_{L^2}^{\frac 12}\|D^{\frac 12}(a')\|_{L^2}^{\frac 12}
\\&\lesssim \|D^{\frac 12} f\|_{L^2}^{\frac 32}\| f\|_{L^2}^{\frac 12} \|a''\|_{L^2}^{\frac 34}\|a'\|_{L^2}^{\frac 14},
\end{align*}
which proves \eqref{m.2}.
\end{proof}

In the proof of Theorem~\ref{th1}, we will also need sharp localization arguments reminiscent of the  identity and the smoothing effect first observed by Kato for the generalized KdV equations (see \cite{Ka} and also e.g. \cite{MaMejmpa}).

Let  \[\phi(x)=\frac 1{\pi} \int_{-\infty}^x \frac{d y}{1+y^2}.\]
We recall the following estimates. 
\begin{lemma}[Lemmas 6 and 7 of \cite{KeMaRo}, Lemmas 2 and 3 of \cite{KeMa}]\label{le.commutateur}
There exists $C_1>0$ such that, for any $f\in \mathcal S(\mathbb R)$,
\begin{equation} \label{eq:41b}
  \left|\int (D^1 f)f' \phi\right|\leq C_1 \int f^2 \phi' ,
\end{equation}
and
\begin{equation}\label{eq:41c}
  \left| \int (D^1 f)f \phi'
 -\int  \left| D^{\frac 1 2}\left( f\sqrt{\phi'} \right)\right|^2\right|
 \leq C_1 \int f^2 \phi'.
\end{equation}
\end{lemma}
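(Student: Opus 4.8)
The statement to prove is Lemma~\ref{le.commutateur}, the two commutator-type estimates \eqref{eq:41b} and \eqref{eq:41c} for $D^1$ against the weight $\phi$ with $\phi'(x) = \frac{1}{\pi}\frac{1}{1+x^2}$.

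\textbf{Overall strategy.} The plan is to reduce both inequalities to a positivity/boundedness property of an explicit commutator kernel, exploiting that $\phi' = \frac1\pi \frac{1}{1+x^2}$ is (up to the constant) the Poisson kernel at height $1$, so that $\phi'$ and $\phi''$ are very well behaved under the Hilbert transform. Recall $\mathcal H\!\left(\frac{1}{1+x^2}\right) = \frac{x}{1+x^2}$, hence $\mathcal H \phi''$ and related quantities are rational with controlled decay. For \eqref{eq:41b}, I would write $\int (D^1 f) f' \phi = \int (\mathcal H f') f' \phi$ and symmetrize: since $\int (\mathcal H g)g = 0$ for real $g$, one gets $2\int (\mathcal H f')f'\phi = \int f' ([\mathcal H,\phi]f')' \,+\,(\text{lower order})$, or more cleanly integrate by parts once to land on $\int f \,([\mathcal H,\phi]f')'$ and expand $([\mathcal H,\phi]f')' = [\mathcal H,\phi']f' + [\mathcal H,\phi]f''$. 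Then $[\mathcal H,\phi']$ has kernel $\frac{\phi'(x)-\phi'(y)}{x-y}$, which because $\phi'$ is the Poisson kernel is pointwise controlled by $c\sqrt{\phi'(x)}\sqrt{\phi'(y)}$ (this is the key kernel bound), so Cauchy--Schwarz against $\sqrt{\phi'}$ in each variable yields $\lesssim \int f^2 \phi'$; the term $[\mathcal H,\phi]f''$ is handled by another integration by parts to move a derivative off $f''$.

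\textbf{The $\dot H^{1/2}$ term.} For \eqref{eq:41c} the target is the difference between $\int (D^1f) f\phi'$ and $\int |D^{1/2}(f\sqrt{\phi'})|^2$. Writing $g = f\sqrt{\phi'}$ so $f = g/\sqrt{\phi'}$, I would expand $\int (D^1 f) f\phi' = \int (D^{1/2} f)\, D^{1/2}(f\phi') = \int (D^{1/2}f)(D^{1/2}g)\sqrt{\phi'} + \int (D^{1/2}f)[D^{1/2},\sqrt{\phi'}](f\sqrt{\phi'})$ — but it is cleaner to symmetrize directly: $\int (D^1 f) f\phi' = -\frac12\int D^1(\phi') f^2 + \tfrac12(\text{commutator})$ is not quite it; rather, using the bilinear form of $D^{1/2}$, $\int (D^{1/2}g)^2 = \int D^{1/2}(f\sqrt{\phi'})\,D^{1/2}(f\sqrt{\phi'})$ and one compares term by term against $\int (D^{1/2}f)(D^{1/2}(f\phi'))$. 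The difference reduces to integrals of the form $\iint \bigl(f(x)-f(y)\bigr)^2 K(x,y)\,dx\,dy$ against a kernel $K$ built from $\phi'$, and the crucial estimate is again that the relevant kernel combination is dominated by a constant times $\frac{\sqrt{\phi'(x)}\sqrt{\phi'(y)}}{|x-y|}$ type expressions that, after integrating one variable, give $\lesssim \int f^2\phi'$. Explicit computation with the Poisson kernel (e.g.\ $\phi'(x)^{1/2} = (\pi(1+x^2))^{-1/2}$ and the identity $\frac{1}{1+x^2} - \frac{1}{1+y^2} = \frac{(y-x)(y+x)}{(1+x^2)(1+y^2)}$) makes all these kernel bounds elementary.

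\textbf{Main obstacle.} The genuinely delicate point is establishing the pointwise kernel domination $\left|\frac{\phi'(x)-\phi'(y)}{x-y}\right| \lesssim \sqrt{\phi'(x)}\sqrt{\phi'(y)}$ and its analogues for the fractional piece — i.e., controlling the commutator kernels uniformly so that the Schur-type test against the weight $\sqrt{\phi'}$ closes with the \emph{same} weight $\phi'$ on the right-hand side (no loss). For $\phi'$ the Poisson kernel this is a short explicit computation, but it is the heart of why the constant $C_1$ exists, and it is exactly the step that fails for a generic weight. Since the paper attributes these to \cite{KeMaRo,KeMa}, I would present the argument by reducing to these explicit kernel bounds and citing the prior works for the detailed commutator computations, then assembling \eqref{eq:41b} and \eqref{eq:41c} by the integration-by-parts and symmetrization scheme above, checking that all boundary terms vanish because $f\in\mathcal S(\mathbb R)$ and $\phi,\phi'$ have the stated decay.
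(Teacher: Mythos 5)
The paper offers no proof of this lemma: it is quoted verbatim from \cite{KeMaRo} and \cite{KeMa}, so your decision to defer the detailed commutator computations to those references is consistent with what the paper itself does. The problem is that the part of the argument you do spell out has a gap at exactly the point you single out as "the heart of why the constant $C_1$ exists". The bound $\bigl|\frac{\phi'(x)-\phi'(y)}{x-y}\bigr|\lesssim\sqrt{\phi'(x)\phi'(y)}$ is true (the left side equals $\frac{1}{\pi}\frac{|x+y|}{(1+x^2)(1+y^2)}$), but it does not close a Schur-type test with target $\int f^2\phi'$: it only yields $\bigl(\int|f|\sqrt{\phi'}\bigr)^2$, and $\int|f|\sqrt{\phi'}$ is \emph{not} controlled by $(\int f^2\phi')^{1/2}$ (take $f=(\phi')^{-1/2}\mathbf 1_{[0,N]}$: the former is $N$, the latter $\sqrt N$; equivalently, $\int(\sqrt{\phi'})^2/\phi'=\infty$). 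The weight that does close is the full $\phi'$ in \emph{each} variable, because $\int|f|\phi'\le(\int f^2\phi')^{1/2}(\int\phi')^{1/2}$ with $\int\phi'=1$. For \eqref{eq:41c} this is exactly what the correct reduction delivers: polarizing the Gagliardo form gives
\begin{equation*}
\int(D^1f)f\phi'-\int\bigl|D^{\frac12}(f\sqrt{\phi'})\bigr|^2
=-\frac1{2\pi}\iint f(x)f(y)\left(\frac{\sqrt{\phi'(x)}-\sqrt{\phi'(y)}}{x-y}\right)^{2}dx\,dy
\end{equation*}
(note the product $f(x)f(y)$, not $(f(x)-f(y))^2$ as you wrote), and the decisive estimate is on the difference quotient of $\sqrt{\phi'}$, namely $\bigl|\frac{\sqrt{\phi'(x)}-\sqrt{\phi'(y)}}{x-y}\bigr|\le\sqrt{\pi\,\phi'(x)\phi'(y)}$ (using $|x+y|\le\sqrt{1+x^2}+\sqrt{1+y^2}$); its \emph{square} is $\le\pi\,\phi'(x)\phi'(y)$ and the estimate closes with $C_1=\tfrac12$.

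For \eqref{eq:41b} a pure pointwise-domination argument fails outright. Your symmetrization gives $2\int(\mathcal Hf')f'\phi=-\int f'[\mathcal H,\phi]f'$ with smooth kernel $N(x,y)=\frac{\phi(x)-\phi(y)}{\pi(x-y)}$, and two integrations by parts lead to $\iint f(x)f(y)\,\partial_x\partial_yN$. But $\partial_x\partial_yN=\frac1\pi\bigl[\frac{\phi'(x)+\phi'(y)}{(x-y)^2}-\frac{2(\phi(x)-\phi(y))}{(x-y)^3}\bigr]$ is \emph{not} dominated by $C\phi'(x)\phi'(y)$: at $(x,y)=(N,-N)$ it has size $N^{-3}$ while $\phi'(N)\phi'(-N)\sim N^{-4}$. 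One must exploit the cancellation between the two constituents of $\partial_x\partial_yN$ (equivalently, the vanishing of $\int\phi'''$ and $\int s\,\phi'''(s)\,ds$ in the representation $\partial_x\partial_yN=\frac1\pi\int_0^1t(1-t)\phi'''(tx+(1-t)y)\,dt$, since replacing $\phi'''$ by $|\phi'''|$ already overestimates the kernel by a factor $N^2$ there). This cancellation, specific to the Poisson-kernel weight, is the actual content of Lemma 6 of \cite{KeMaRo}, and your sketch ("another integration by parts") does not engage with it. So the overall strategy is the right one, but the named key estimate is not the one that makes the lemma true, and as written the sketch would not assemble into a proof.
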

\begin{remark}
When compared to \eqref{m.1}-\eqref{m.2}, the main point of the estimates  \eqref{eq:41b}-\eqref{eq:41c} is to obtain error terms depending only on localized $L^2$ quantities. In return, such estimates require a special choice of function $\phi$ - see \cite{KeMaRo}.
The weak decay of $\phi'$ is a difficulty in Sect.~4, but   due to the nonlocal nature of $D^1$, it is not clear whether \eqref{eq:41b}-\eqref{eq:41c} hold for functions decaying at $\infty$ faster than $\phi'$ (see also Lemma \ref{HY}).
\end{remark}

\section{Uniform estimates}\label{sect3}
 In this section, we define a specific sequence of global solutions with a rigid behavior, related to the desired minimal mass regime in Theorem \ref{th1}.
\subsection{Uniform bounds for a sequence of solutions}\label{s:BS}
Let $T_n=\frac1n$ for $n>1$ large. Let
\begin{equation} \label{unTn}
u_n^{in}(x)=\frac {1-a^{in}_n} {(\lambda^{in}_n)^{\frac12}} Q\left(\frac{x-x_n^{in}}{\lambda_n^{in}}\right) \quad \text{where} \quad   x_n^{in}=-\ln(n),
\end{equation}
the parameter $\lambda_n^{in}\sim \frac 1n$ is to be fixed later and $a^{in}_n$ is uniquely chosen (depending on $\lambda_n^{in}$) so that (see \eqref{Ea})
\begin{equation}\label{ain}
  E(u_n^{in})=(\lambda_n^{in})^{-1}E((1-a^{in}_n) Q)=p_0,\quad
 a^{in}_n\sim \frac {p_0}{\int Q^2}\lambda_n^{in} \sim \frac {p_0}{\int Q^2}\frac 1n\,.
\end{equation}
(Recall that $p_0$ is defined in \eqref{nlprofile.2}.)
Note that $\int (u_n^{in})^2=(1-a^{in}_n)^2 \int Q^2$ so that
\begin{equation}\label{inittrois}
  \int Q^2 - \int (u_n^{in})^2  \sim \frac {p_0}{\int Q^2}  \frac 2n.
\end{equation}
 We consider the global solution $u_n$ of \eqref{mBO}  corresponding to the data $u_n(T_n)=u_n^{in}$.
For $n$ large, let $0<\tau_n\leq +\infty$ be such that $\mathcal{I}_n=[T_n,T_n+\tau_n)$ is the maximal time interval where $u_n$ satisfies \eqref{tube.2} ($\tau_n>0$ exists by \eqref{unTn} and by continuity of $t\mapsto u_n(t)$).  By using Lemma~\ref{modulation}, we  decompose $u_n(t)$ 
for $t \in \mathcal{I}_n$, as 
\begin{equation} \label{decomposition.1} 
u_n(t,x)=\frac1{\lambda^{\frac12}_n(t)} \left(Q_{b_n(t)} +\varepsilon_n\right)(t,y) \, ,
\quad y = \frac{x-x_n(t)}{\lambda_n(t)},
\end{equation}
where $\lambda_n, \, x_n, \, b_n$ are $\mathcal{C}^1$ functions. 
We define the rescaled time variable $s$ by 
\begin{equation} \label{sn}
s=s(t)=S_n+\int_{T_n}^t\frac{dt'}{\lambda_n^2(t')} \quad \text{with} \quad s(T_n)=S_n=-n \, .
\end{equation}
We   consider all time dependent functions (such as $\lambda_n, \, b_n, \, x_n$ and $\varepsilon_n$) indifferently as functions of $t$ on $\mathcal I_n$ or of $s$ on the  interval $\mathcal{J}_n=s(\mathcal{I}_n)$. 
From now on and until Sect.~\ref{S6} where we go back to the original variables $(t,x)$, we work for fixed $b$ and with the rescaled variables $(s,y)$.
Note that by \eqref{unTn} and \eqref{modulation.1}, we have
 \begin{equation} \label{decomposition.2}\begin{aligned}
& \lambda_n(S_n)=\lambda_n^{in}, \quad b_n(S_n)=b_n^{in}=-\lambda^{in}_n, \quad x_n(S_n)=x_n^{in},\\
& \varepsilon_n(S_n)= \varepsilon_n^{in}= -a^{in}_nQ - b_n^{in} P_{b_n^{in}}.
\end{aligned}\end{equation}

\medskip

Fix 
\begin{equation}\label{theta.def}
\frac 35 < \theta < \frac 23 \quad  \hbox{and}\quad  B=100 \, C_1\, ,
\end{equation}
where $C_1>0$ is the universal constant in Lemma  \ref{le.commutateur}.
Let
\begin{equation}\label{def.varphi}
\phi(x)=\frac 1{\pi} \int_{-\infty}^x \frac{d y}{1+y^2}=\frac {\arctan x}{\pi} + \frac 12\,,\quad
\varphi(s,y) = \frac {\phi\left(\frac {y}B +|s|^{\theta}\right)} {\phi\left(|s|^{\theta}\right)}\,,
\end{equation}
and \begin{equation}\label{defN}
\mathcal N(\varepsilon_n)=\left( \int  |D^{\frac 12} \varepsilon_n|^2 +\varepsilon_n^2 \varphi \,\right)^{\frac 12} .
\end{equation}
Moreover, let
\begin{equation}\label{defJ1}
\rho(y)=\int_{-\infty}^y \Lambda Q(y') dy', \quad
J_{n}(s)= \int \varepsilon_{n}(s,y) \rho(y) \chi(-y|s|^{-\frac 23}) dy,
\end{equation}
and
\begin{equation}\label{defl01}
\mu_{n}(s) =|1-J_{n}(s)|^{\frac 1{p_{0}}} \lambda_{n}(s).
\end{equation}
Note that $|J_{n}|\lesssim \int_{y<0} |\varepsilon_n|(1+|y|)^{-1} + \int_{0<y<2|s|^{\frac 23}}| \varepsilon_{n}|\lesssim |s|^{\frac 13} \| \varepsilon_{n}\|_{L^2}.$
Observe that by \eqref{decomposition.2},
\begin{equation}\label{epsilton.initial}
\|\varepsilon_n(S_n)\|_{L^2} \lesssim  {|S_n|^{-\frac 12}},  \quad
\mathcal N(\varepsilon_n(S_n)) \lesssim |a_n^{in}|+|b_n^{in}| |S_n|^{\frac \theta2}\lesssim |S_n|^{-1+\frac \theta 2}.
\end{equation}

Let $C^{\star}>1$, $S_0<-1$  and $n_0 >1$  to be fixed later.

\begin{definition} \label{bootstrap}  For $n \ge n_0$ and $s \in \mathcal{J}_n$, $s <S_0$. We say that $(\lambda_n, \, b_n, \, x_n,\varepsilon_n)$ satisfy the \textit{bootstrap estimates} at the time $s$ if
\begin{equation} \label{bootstrap.1}
\Big|\mu_n(s)-\frac1{|s|} \Big|  \leq   {|s|^{-\frac 76 + \frac \theta 4}}, 
\end{equation}
\begin{equation} \label{bootstrap.2}
\Big|\lambda_n(s)-\frac1{|s|} \Big|=\Big|b_n(s)-\frac1s \Big| \leq   {|s|^{-\frac {13}{12} + \frac \theta 8}} \, ,
\end{equation}
\begin{equation} \label{bootstrap.3} 
\Big|x_n(s)+\ln |s|  \Big| \leq |s|^{-\frac 1{12}+ \frac \theta 8},
\end{equation}
and 
\begin{equation} \label{bootstrap.4}
\mathcal N(\varepsilon_n(s))  \le C^{\star}  {|s|^{-1+\frac \theta2}} \, .
\end{equation}
\end{definition}

\begin{definition} \label{bootstraptime}
For $n \ge n_0$, we define (if this set is not empty),
\begin{equation} 
S_n^{\star}:=\sup \big\{s \in \mathcal{J}_n \cap (S_n,S_0) \ : \   \text{\eqref{bootstrap.1}-\eqref{bootstrap.4}  are satisfied on $[S_n,s]$} \,\big\} \,,
\end{equation}
and $\mathcal I_n^\star = [S_n,S_n^\star]$.
\end{definition}

The main result of this section states that there exists at least one choice of $\lambda_n^{in}\sim\frac 1n$ such that $S_n^\star=S_0$ i.e. such that the bootstrap estimates in Definition \ref{bootstrap} are valid up to a time $S_0$ independent of~$n$.
\begin{proposition} \label{bootstrapprop}
There exist  $C^\star>1$, $S_0<-1$ and $n_0>1$ such that, for all $n>n_0$, there exists $\lambda_n^{in}$   
satisfying 
\begin{equation}\label{Snstar}
\Big|\lambda_n^{in} - \frac 1{n}\Big|< n^{-\frac {13}{12} + \frac \theta 8}
\quad \hbox{and}\quad  S_n^{\star}= S_0.
\end{equation}
\end{proposition}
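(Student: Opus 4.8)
The plan is to combine the a priori (bootstrap) estimates, which are the content of Sections~\ref{S4} and~\ref{S5}, with a one-parameter topological shooting argument on the initial scaling $\lambda_n^{in}$, in the spirit of \cite{MaMeRa2,RaSz}. \textbf{Step 1 (closing all estimates but one).} Fixing $C^\star$ large and $|S_0|$, $n_0$ large, work on $\mathcal I_n^\star=[S_n,S_n^\star]$ where \eqref{bootstrap.1}--\eqref{bootstrap.4} hold. The localized energy estimates of Section~\ref{S4} (built on the coercivity of $\mathcal L$, Lemma~\ref{Lproperties}(v), on the functional $\mathcal N(\varepsilon_n)$, and on the localization lemmas of \cite{KeMa,KeMaRo} recalled in Lemma~\ref{le.commutateur}) are designed to yield the \emph{strict} improvement $\mathcal N(\varepsilon_n(s))\le \tfrac12 C^\star |s|^{-1+\theta/2}$ on $\mathcal I_n^\star$. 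Inserting this into the modulation equations \eqref{modulation.7} and their refinements (Section~\ref{S5}) and integrating the laws $\tfrac{\lambda_s}\lambda\simeq -b$, $b_s+b^2\simeq 0$ and the equation for $x$, using the initial data \eqref{decomposition.2}--\eqref{epsilton.initial}, one gets on $\mathcal I_n^\star$ the strict improvements
\[ \Big|b_n(s)-\tfrac1s\Big|\le \tfrac12 |s|^{-\frac{13}{12}+\frac\theta8},\qquad \big|x_n(s)+\ln|s|\big|\le \tfrac12 |s|^{-\frac1{12}+\frac\theta8}. \]
Consequently, if $S_n^\star<S_0$ then the exit from the bootstrap region at $s=S_n^\star$ can only occur through \eqref{bootstrap.1}, i.e. $\big|\mu_n(S_n^\star)-|S_n^\star|^{-1}\big|=|S_n^\star|^{-7/6+\theta/4}$.

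\textbf{Step 2 (outgoing behaviour of the corrected scaling).} Set $N_n(s)=|s|^{7/6-\theta/4}\big(\mu_n(s)-|s|^{-1}\big)$, so that \eqref{bootstrap.1} reads $|N_n|\le 1$, $\mu_n$ being defined in \eqref{defl01}. The whole point of the correction $J_n$ in \eqref{defJ1}--\eqref{defl01} is that, after combining the refined modulation equations with the equation \eqref{modulation.4} of $\varepsilon_n$, the diverging contributions to the evolution of $\lambda_n$ are cancelled and $\mu_n$ obeys a \emph{closed} dynamical law whose remainder is genuinely smaller than $|s|^{-7/6+\theta/4}$; this gives the transversality relation
\[ \frac{d}{ds}\big(N_n^2\big)(s)>0 \quad\text{whenever } |N_n(s)|=1 \text{ and \eqref{bootstrap.2}--\eqref{bootstrap.4} hold.} \]
Hence, once $N_n$ reaches $\pm1$ it leaves $[-1,1]$ transversally and never returns, so $S_n^\star$ is exactly the first time $|N_n|=1$ and $N_n(S_n^\star)\in\{-1,1\}$.

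\textbf{Step 3 (shooting).} Parametrize the admissible data by $\ell$, with $\lambda_n^{in}=\lambda_n^{in}(\ell)$ covering $|\lambda_n^{in}-\tfrac1n|<n^{-\frac{13}{12}+\frac\theta8}$. From \eqref{decomposition.2}, $\mu_n(S_n)=|1-J_n(S_n)|^{1/p_0}\lambda_n^{in}=\lambda_n^{in}\big(1+O(|S_n|^{-1/6})\big)$ and $|S_n|^{-1}=\tfrac1n$, so $\ell\mapsto \mu_n(S_n)$ is continuous and the set $\mathcal A_n$ of $\ell$ for which \eqref{bootstrap.1}--\eqref{bootstrap.4} all hold at $s=S_n$ is a nonempty closed subinterval (it contains all $\ell$ with $|\lambda_n^{in}-\tfrac1n|\lesssim n^{-7/6+\theta/4}$), on whose two endpoints $|N_n(S_n)|=1$ with opposite signs; here we use $n^{-\frac{13}{12}+\frac\theta8}>n^{-\frac76+\frac\theta4}$, which holds because $\theta<\tfrac23$. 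Suppose, for contradiction, that $S_n^\star(\ell)<S_0$ for every $\ell\in\mathcal A_n$. By Lipschitz continuous dependence of the (mBO) flow on the data (Theorem~\ref{th:CP}(i)), the modulated decomposition depends continuously on $\ell$, and together with the transversality of Step~2 this makes $\ell\mapsto S_n^\star(\ell)$ continuous on $\mathcal A_n$, hence $\ell\mapsto N_n(S_n^\star(\ell))\in\{-1,1\}$ continuous, hence constant. But at the two endpoints of $\mathcal A_n$ the exit occurs already at $s=S_n$ with values $N_n(S_n)=+1$ and $N_n(S_n)=-1$, a contradiction. Therefore there exists $\ell\in\mathcal A_n$ with $S_n^\star(\ell)=S_0$; the corresponding $\lambda_n^{in}$ satisfies $|\lambda_n^{in}-\tfrac1n|<n^{-\frac{13}{12}+\frac\theta8}$ by construction, proving \eqref{Snstar}.

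The main obstacle is Step~2, i.e. essentially the bulk of Sections~\ref{S4}--\ref{S5}: extracting from the coupled system \eqref{modulation.4}, \eqref{modulation.7} a dynamical law for $\mu_n$ that is precise to order $o(|s|^{-7/6+\theta/4})$. This requires the sharp energy control of $\varepsilon_n$ through $\mathcal N(\varepsilon_n)$ — delicate because of the slow decay of $Q$ (which prevents a higher-order ansatz, Remark~\ref{rk:profile}) and the nonlocality of $D^1$ (which forces the use of the weighted commutator estimates of Lemma~\ref{le.commutateur} whose weight $\phi'$ decays only like $|y|^{-2}$) — together with the refined algebra extending \cite{RaSz,MaMeRa2} used to cancel the otherwise uncontrollable cubic terms, and a careful verification that the correction $J_n$ removes exactly the diverging terms in the modulation equation for $\lambda_n$.
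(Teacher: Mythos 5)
Your proposal follows the paper's proof essentially verbatim: strictly improve \eqref{bootstrap.2}--\eqref{bootstrap.4} via the localized energy functional and the refined modulation law \eqref{dsl0} for the corrected scaling $\mu_n$, observe that the only possible exit from the bootstrap regime is through \eqref{bootstrap.1}, establish transversality of $f(s)=\big(\mu(s)+s^{-1}\big)^2(-s)^{2+2\alpha}$ (your $N_n^2$) at the saturation time, and conclude by a continuity/connectedness contradiction on the shooting parameter, checking finally that $\lambda_n^{in}\mapsto\mu_n^{in}$ covers the required interval. The outline is correct and coincides with the paper's argument in Sections~\ref{S4}--\ref{S5}.
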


We prove Proposition \ref{bootstrapprop} in Sections~\ref{S4} and~\ref{S5}.
For the sake of simplicity,  we will omit the subscript $n$  and write $x$, $\lambda$, $\mu$, $b$ and $\varepsilon$ instead of $x_n$, $\lambda_n$, $\mu_n$, $b_n$ and $\varepsilon_n$.

\subsection{Consequences of the bootstrap bounds}
 For future reference, we state here some   consequences of the   bootstrap estimates.
\begin{lemma}
For all $s\in \mathcal I^\star$,
\begin{equation}\label{BS.1}
\lambda+|b|+\|\varepsilon\|_{L^2}^2\lesssim |s|^{-1},
\end{equation}
\begin{equation}\label{BS.2}
\left|\frac{x_s}\lambda -1 \right| + \left| \frac {\lambda_s}{\lambda} +b\right|   \lesssim |s|^{- 2}+ \mathcal N (\varepsilon)\lesssim C^\star |s|^{-1+\frac \theta2},
\end{equation}
\begin{equation}\label{BS.3}
|b_s+b^2|=|\lambda|  \left| \frac {\lambda_s}{\lambda} +b\right|  \lesssim |s|^{-3}+ |s|^{-1}\mathcal N (\varepsilon)\lesssim C^\star |s|^{-2+\frac \theta2} ,
\end{equation}
 so that 
\begin{equation} \label{BS.5}
|b_s| \le |b_s+b^2|+|b|^2 \lesssim C^{\star}|s|^{-2+\frac{\theta}2} ,
\end{equation}
and 
\begin{equation}\label{BS.4}
|(\varepsilon,Q')|+|(\varepsilon,\Lambda Q)|\lesssim |s|^{-1} ,\quad |(\varepsilon,Q)|\lesssim |s|^{-\frac 32}+
\mathcal N (\varepsilon)^2\lesssim (C^\star)^2 |s|^{-2+\theta}.
\end{equation}
\end{lemma}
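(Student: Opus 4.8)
The plan is to derive each of the estimates \eqref{BS.1}--\eqref{BS.4} as an almost immediate consequence of the bootstrap estimates \eqref{bootstrap.1}--\eqref{bootstrap.4} combined with the modulation estimates already established in Lemma~\ref{modulation}, the profile estimates in Lemma~\ref{lprofile}, and elementary interpolation. First I would note that \eqref{BS.1} follows directly: from \eqref{bootstrap.2} one gets $\lambda = |s|^{-1} + O(|s|^{-13/12+\theta/8}) \lesssim |s|^{-1}$ and $|b| = |s|^{-1} + O(|s|^{-13/12+\theta/8}) \lesssim |s|^{-1}$ for $|s|$ large (i.e.\ $|S_0|$ large), while $\|\varepsilon\|_{L^2}^2 \le \mathcal N(\varepsilon)^2 \le (C^\star)^2 |s|^{-2+\theta} \lesssim |s|^{-1}$ since $\theta < 1$. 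Here one uses that $\varphi \ge \varphi(s,y)$ is bounded below on any fixed compact set but more simply that $\int \varepsilon^2 \varphi \le \mathcal N(\varepsilon)^2$ and, because $\varphi(s,y) \gtrsim 1$ for $y$ in the relevant region and $\varphi$ is comparable to a positive constant times a weight bounded below, in fact $\|\varepsilon\|_{L^2}^2 \lesssim \mathcal N(\varepsilon)^2$; I would record this comparison explicitly at the start of the subsection.

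Next, for \eqref{BS.2} I would invoke \eqref{modulation.7}, which gives $|x_s/\lambda - 1| + |\lambda_s/\lambda + b| \lesssim |b|^2 + (\int \varepsilon^2/(1+y^2))^{1/2}$, and then bound $|b|^2 \lesssim |s|^{-2}$ by \eqref{BS.1} and $(\int \varepsilon^2/(1+y^2))^{1/2} \lesssim \mathcal N(\varepsilon)$, again using that $\varphi(s,y) \gtrsim (1+|y|)^{-1}\cdot$(something); more precisely $1/(1+y^2) \lesssim \varphi(s,y)$ needs to be checked, but since $\varphi(s,y) = \phi(y/B + |s|^\theta)/\phi(|s|^\theta)$ and $\phi$ is increasing with $\phi(|s|^\theta) \to 1$, for $y \gtrsim -B|s|^\theta$ one has $\varphi(s,y) \gtrsim \phi(y/B)$ which decays like $1/|y|$ for $y$ very negative — so one gets $\int \varepsilon^2/(1+y^2) \lesssim \int \varepsilon^2 \varphi \le \mathcal N(\varepsilon)^2$ at least in the regime where the bootstrap controls things; if there is a mismatch for $y < -B|s|^\theta$ one absorbs it using $\|\varepsilon\|_{L^2}^2 \lesssim |s|^{-1}$ with extra decay. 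Then \eqref{BS.3} follows by multiplying \eqref{BS.2} by $\lambda \lesssim |s|^{-1}$ and using $b_s + b^2 = \lambda(\lambda_s/\lambda + b)$ which comes from $b = -(E(u_0)/p_0)\lambda$ so $b_s = -(E(u_0)/p_0)\lambda_s = (b/\lambda)\lambda_s$, hence $b_s + b^2 = b(\lambda_s/\lambda + b)$ and $|b_s+b^2| = |b|\,|\lambda_s/\lambda+b| = \lambda\,|\lambda_s/\lambda+b|$ up to the sign of $b$ (note $|b| = \lambda \cdot |E(u_0)/p_0|$ and by \eqref{bootstrap.2} these are comparable). Estimate \eqref{BS.5} is then just the triangle inequality $|b_s| \le |b_s + b^2| + |b|^2$ with both terms $\lesssim C^\star |s|^{-2+\theta/2}$.

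For \eqref{BS.4}, the first two bounds $|(\varepsilon,Q')| + |(\varepsilon,\Lambda Q)| \lesssim |b| \lesssim |s|^{-1}$ come directly from \eqref{modulation.2} and \eqref{BS.1}. The bound $|(\varepsilon,Q)| \lesssim |s|^{-3/2} + \mathcal N(\varepsilon)^2$ comes from \eqref{modulation.6bis}: that inequality reads $|(\varepsilon,Q)| \lesssim |b|^2 |\ln|b|| + |b|\|\varepsilon\|_{L^2} + \|\varepsilon\|_{\dot H^{1/2}}^2 + \int \varepsilon^2 Q^2$; here $|b|^2|\ln|b|| \lesssim |s|^{-2}\ln|s| \lesssim |s|^{-3/2}$, $|b|\|\varepsilon\|_{L^2} \lesssim |s|^{-1}\cdot |s|^{-1/2} = |s|^{-3/2}$ using \eqref{BS.1}, and $\|\varepsilon\|_{\dot H^{1/2}}^2 + \int \varepsilon^2 Q^2 \lesssim \mathcal N(\varepsilon)^2$ since $Q^2 \lesssim \varphi$ (as $Q \in \mathcal Y_2$ decays fast and $\varphi$ is bounded below near the origin). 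Then $\mathcal N(\varepsilon)^2 \le (C^\star)^2|s|^{-2+\theta}$ by \eqref{bootstrap.4}, which dominates $|s|^{-3/2}$ for $|s|$ large when $\theta > 1/2$ — but actually $|s|^{-3/2}$ is smaller, so one keeps both terms as written.

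The main obstacle I anticipate is not any of these estimates individually but rather making precise and clean the comparison $\|\varepsilon\|_{L^2}^2 + \int \varepsilon^2/(1+y^2) + \|\varepsilon\|_{\dot H^{1/2}}^2 + \int \varepsilon^2 Q^2 \lesssim \mathcal N(\varepsilon)^2$, i.e.\ controlling ordinary (global and weighted) $L^2$ norms by the specially-weighted quantity $\mathcal N(\varepsilon)$ with the weight $\varphi(s,y)$ that degenerates like $1/|y|$ as $y \to -\infty$ and depends on $s$. For the $\dot H^{1/2}$ part this is immediate since $\mathcal N(\varepsilon)^2 \ge \|D^{1/2}\varepsilon\|_{L^2}^2$; for $\int \varepsilon^2 Q^2$ and $\int \varepsilon^2/(1+y^2)$ one uses that $Q^2$ and $(1+y^2)^{-1}$ are $\lesssim \varphi(s,y)$ uniformly in $s$ (because $\varphi \ge \phi(y/B + |s|^\theta) \ge \phi(y/B)$ for $|s|$ large and $\phi(y/B) \gtrsim (1+|y|)^{-1}$ for $y > 0$ while $\phi \equiv$ bounded below on $y > 0$, and for $y < 0$ one has $\phi(y/B) \sim B/|y|$ which still beats $Q^2 \sim |y|^{-4}$; the weighted norm $\int \varepsilon^2 (1+y^2)^{-1}$ needs $(1+y^2)^{-1} \lesssim \phi(y/B)$ which holds for $y < 0$ since $\phi(y/B) \sim B/|y| \gtrsim |y|^{-2}$). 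The global $\|\varepsilon\|_{L^2}^2 \lesssim \mathcal N(\varepsilon)^2$ is false in general since $\varphi \to 0$, so wherever I wrote it above I should instead only use what is actually needed, namely $\|\varepsilon\|_{L^2}^2 \lesssim |s|^{-1}$, which in turn must come from the conservation-law estimate \eqref{modulation.5}: $\|\varepsilon\|_{H^{1/2}}^2 \lesssim |b| + \int u_0^2 - \int Q^2 \lesssim |s|^{-1} + |S_n|^{-1} \cdot$const, using \eqref{inittrois} and that $\int u_0^2 = \int(u_n^{in})^2$ is conserved; since the solution under consideration has $\int u_0^2 - \int Q^2 \sim -2p_0(\int Q^2)^{-1}/n < 0$, or in any case $O(|S_n|^{-1}) \le O(|s|^{-1})$ on $\mathcal I^\star$ because $|s| \le |S_n| = n$, one gets $\|\varepsilon\|_{L^2}^2 \lesssim |s|^{-1}$. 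I would therefore restructure \eqref{BS.1}'s proof to cite \eqref{modulation.5} for the $\|\varepsilon\|_{L^2}^2$ bound rather than the bootstrap \eqref{bootstrap.4}, keeping everything self-consistent.
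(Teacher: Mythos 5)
Your proposal is correct and follows essentially the same route as the paper, whose proof of this lemma is simply the observation that $b=-\lambda$ (from the normalization $E(u_0)=p_0$ in \eqref{ain}) together with the bootstrap bounds defining $\mathcal I^\star$ and the estimates \eqref{modulation.2}, \eqref{modulation.5}, \eqref{modulation.6bis}, \eqref{modulation.7} of Lemma~\ref{modulation}. Your self-correction at the end is the right one: the global $L^2$ bound must indeed come from the mass conservation estimate \eqref{modulation.5} combined with \eqref{inittrois} and $|s|\le|S_n|$, not from $\mathcal N(\varepsilon)$, while the weighted quantities $\int\varepsilon^2(1+y^2)^{-1}$ and $\int\varepsilon^2Q^2$ are correctly absorbed into $\mathcal N(\varepsilon)^2$ via $(1+y^2)^{-1}\lesssim\varphi$.
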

\begin{proof}
From \eqref{ain} and the definition of $b$ in \eqref{modulation.1}, we have $b=-\lambda$.
Thus, these estimates are direct consequences of the definition of $S^\star_n$ and Lemma~\ref{modulation}.
\end{proof}
\begin{remark}\label{rk:BS}
We note that the estimate on the full $L^2$ norm of $ \varepsilon$ given by \eqref{BS.1} cannot be improved.
In contrast, the $\dot H^{\frac 12}$ norm and local $L^2$ norm of $ \varepsilon$ have a better decay as $t\downarrow 0$ (see \eqref{bootstrap.4}). This phenomenon seems specific to KdV-type equations and requires the use of local norms and estimates, which, as mentionned in the Introduction, are delicate to combine with the non local operator $D^1$. See Lemma~\ref{le.commutateur}. Note that in the present paper, we localize only the $L^2$ term in the definition of the energy and local norm (see \eqref{defN} and \eqref{defF} below). See also \cite{CoMa} on such questions.
\end{remark}

\section{Control of   $\mathcal N(\varepsilon)$}\label{S4}

In this section, we close the estimates for $\mathcal N(\varepsilon)$, i.e., we strictly improve \eqref{bootstrap.4} under the bootstrap assumptions.
We consider the following main functionals
\begin{equation}\label{defF}
F = \int |D^{\frac 12}\varepsilon|^2 +\varepsilon^2 \varphi
- \frac 12 \left(\left( Q_{b}+\varepsilon\right)^4-Q_{b}^4- 4 Q_{b}^3 \varepsilon\right)  ;
\end{equation}
\begin{equation}\label{defG}
G  = \int \psi  \varepsilon^2
\quad \hbox{where}\quad
\psi(y)=  |s|^{\frac 12} \pi\left(  \phi\left(y|s|^{-\frac 12}\right)-\frac 12\right) \chi^2(y|s|^{-\frac 35}),
\end{equation}
\begin{equation}\label{defK}
K   = \int \varepsilon \mathcal L_\varphi P_b \quad \hbox{where} \quad  
\mathcal L_\varphi f = D^1 f + f \varphi -3Q^2 f\,.
\end{equation}
The functional $F$ is a localized energy functional in $\varepsilon$. 
At the quadratic order in $\varepsilon$, it is similar to $(\mathcal L\varepsilon,\varepsilon)$, except for the localization factor $\varphi$ on the $L^2$ term, as in $(\mathcal L_\varphi \varepsilon, \varepsilon)$.
The functional $G$ is related to Virial identity and is useful to cancel some critical terms in $\frac {d F} {ds}$, 
as first observed for the (NLS) equation in \cite{RaSz}, see Lemma \ref{le:GG}. The functional $K$ appears as an error term in the variation of $E$, but we will see in Lemma \ref{le:KK} that it enjoys a special algebra related to scaling variation.

\subsection{Energy-type estimate}

\begin{lemma}\label{le:FF}  There exists $C_0>0$ independent of $C^{\star}$ such that  for $|S_0|$ large enough, depending on $C^\star$, and for all $s\in \mathcal I^\star$,
\begin{equation}\label{eq:F}\begin{aligned}
&\frac {d}{ds}\left( \frac F\lambda\right) +\frac 1{4\lambda} \int \varepsilon^2 \varphi_y
 \leq    {C_0}\, C^\star|s|^{-2+\theta } - A
 -\left(\frac{\lambda_s}{\lambda} + b\right)  \left(\frac{F}\lambda {  -}  K + \frac Z\lambda\right),
 \end{aligned}\end{equation}
 where
 \begin{align}\label{defA}
A&=   \int \varepsilon^2 \varphi  -6 \int \varepsilon^2 Q  \Lambda Q   
 + 2 \left(\frac{\lambda_s}{\lambda} + b\right) \int \varepsilon y \Lambda  Q
 + 2 \left(\frac{x_s}{\lambda} -1\right) \int \varepsilon y   Q' \, ,
\\
\label{defZ}
 Z &=  \int \varepsilon^2 y \varphi_y \chi_1 -2 \int \varepsilon \Lambda Q (1-\varphi) \chi_2
\end{align}
and 
\begin{equation}   
\chi_1(s,y)=1-\chi(y |s|^{-2/3}), \quad
\chi_2(s,y)=1-\chi\left(y \frac 4 B |s|^{-\theta}\right).\label{chi1chi2}
\end{equation}
\end{lemma}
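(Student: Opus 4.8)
The strategy is to differentiate $F/\lambda$ in the rescaled time $s$, using the equation \eqref{modulation.4} for $\varepsilon$ and the modulation estimates \eqref{modulation.7}, and to organize the resulting terms so that the ``good'' term $-\frac1{4\lambda}\int\varepsilon^2\varphi_y$ comes out, the genuinely dangerous quadratic terms are collected into $A$, the terms that will be compensated by $dG/ds$ and $dK/ds$ are collected into $Z$ and the coefficient of $-(\lambda_s/\lambda+b)$, and all remaining terms are shown to be $\mathcal O(C^\star|s|^{-2+\theta})$. First I would compute $\frac{d}{ds}F$ term by term from \eqref{defF}: the time derivative hitting $\varphi$ (which depends on $s$ through $|s|^\theta$) produces $\int\varepsilon^2\varphi_s$, a lower order term since $\partial_s\varphi\sim |s|^{\theta-1}\varphi_y\cdot(\text{stuff})$; the derivative hitting $Q_b$ produces $b_s$-weighted cubic and quartic terms in $\varepsilon$, controlled by \eqref{BS.5} and the profile estimates of Lemma~\ref{lprofile}; and the derivative hitting $\varepsilon$ is substituted from \eqref{modulation.4}.

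For the main substitution, after integrating by parts one recognizes the linearized-operator structure: the principal part yields, schematically, $-\frac2\lambda\int(\mathcal L_\varphi\varepsilon)\,(D^1\varepsilon+\varepsilon-((Q_b+\varepsilon)^3-Q_b^3))_y$ plus the contributions of the right-hand side of \eqref{modulation.4}. The term $-\frac1\lambda\int(\mathcal H\varepsilon')\varepsilon'\varphi$-type expression is where the localization estimate \eqref{eq:41b} of Lemma~\ref{le.commutateur} is used, with $\varphi$ built from the special function $\phi$; the choice $B=100C_1$ in \eqref{theta.def} is precisely what makes the resulting error absorbable by $\frac14\int\varepsilon^2\varphi_y$, leaving $\frac14\int\varepsilon^2\varphi_y$ on the good side with a favorable constant. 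The cubic terms $\int Q_b\varepsilon^2(\cdots)_y$ must be integrated by parts to expose $\int\varepsilon^2(Q_b\varphi)_y$ and, at leading order in $b$, the term $-6\int\varepsilon^2 Q\Lambda Q$ appearing in $A$; the point here — and this is the novelty the introduction advertises — is that the would-be diverging cubic contributions are cancelled by a matching algebraic identity rather than merely estimated, using $\mathcal L\Lambda Q=-Q$ and \eqref{nlprofile.1}. The terms generated by $\frac{\lambda_s}\lambda\Lambda\varepsilon$, $(\frac{x_s}\lambda-1)(Q_b+\varepsilon)_y$ and $(b_s+b^2)\partial_b Q_b$ on the right of \eqref{modulation.4} are treated by pulling out the scalar factors (bounded by $|b|\sim|s|^{-1}$ or by $|s|^{-2}+\mathcal N(\varepsilon)$ via \eqref{BS.2}) and estimating the remaining integrals; the pieces that are not lower order get absorbed into the $A$ term (the $(\frac{\lambda_s}\lambda+b)\int\varepsilon y\Lambda Q$ and $(\frac{x_s}\lambda-1)\int\varepsilon yQ'$ contributions) or into the factor multiplying $-(\frac{\lambda_s}\lambda+b)$, where $Z$ collects exactly the localization-defect terms $\int\varepsilon^2 y\varphi_y\chi_1$ and $-2\int\varepsilon\Lambda Q(1-\varphi)\chi_2$ coming from commuting cutoffs past $\Lambda$ and $D^1$. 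Finally, the contribution of $\Psi_b$ is estimated by Cauchy--Schwarz together with \eqref{lprofile.2}--\eqref{lprofile.3}, giving $\mathcal O(|b|^{3/2}\mathcal N(\varepsilon))=\mathcal O(|s|^{-3/2}\cdot C^\star|s|^{-1+\theta/2})=\mathcal O(C^\star|s|^{-5/2+\theta/2})$, which is lower order for $\theta<2/3$ — wait, one must be careful: the dominant error in \eqref{lprofile.4}, $|(\Psi_b,Q)|\lesssim|b|^3$, and the $L^2$-pairing $(\Psi_b,\varepsilon)$ bounded via \eqref{lprofile.2} by $|b|^{3/2}\|\varepsilon\|_{L^2}\lesssim|s|^{-2}$, together with the sharpened $\dot H^{1/2}$ bound \eqref{lprofile.3}, are exactly calibrated to land in $C_0C^\star|s|^{-2+\theta}$.

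\textbf{Main obstacle.} The hard part will be the treatment of the cubic terms $\int \varepsilon^2 Q_b\,(\cdots)$ and the nonlocal quadratic term $\int(\mathcal H\varepsilon')\varepsilon'\varphi$ simultaneously: because $\varepsilon$ is only controlled in the weighted norm $\mathcal N(\varepsilon)$ at size $|s|^{-1+\theta/2}$ (not smaller, per Remark~\ref{rk:profile}), one cannot afford to lose any power, so every cubic term must either be genuinely lower order after integration by parts and use of the decay of $Q$, or be exactly cancelled by the $A$-algebra; the nonlocal commutator \eqref{eq:41b} only gives an $L^2$-localized error with $\phi'$-weight, whose slow decay (flagged in the remark after Lemma~\ref{le.commutateur}) forces the cutoffs $\chi_1,\chi_2$ in $Z$ and a careful matching with what $dG/ds$ and $dK/ds$ will later produce. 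Getting the constant in front of $\int\varepsilon^2\varphi_y$ strictly positive (here $\tfrac14$) after absorbing the commutator error, while keeping $C_0$ independent of $C^\star$ and only paying large $|S_0|$, is the delicate bookkeeping that the choice \eqref{theta.def} is designed to make work.
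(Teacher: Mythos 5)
Your plan follows the paper's own route: differentiate $F/\lambda$ using \eqref{modulation.4}, extract the coercive term $\int\varepsilon^2\varphi_y$ from the localization lemma (Lemma~\ref{le.commutateur}) with $B=100\,C_1$, collect the dangerous quadratic terms into $A$ and the cut-off defects into $Z$, and estimate the $\Psi_b$ contribution via \eqref{lprofile.2}--\eqref{lprofile.3}. Two points should be made precise. First, the principal term is not handled through $\int(\mathcal L_\varphi\varepsilon)V_y$ as you suggest, but through the antisymmetry $\int V_yV=0$, which reduces $2\int V_yV_\varphi$ to $2\int V_y\,\varepsilon(\varphi-1)$; the good term $-\int\varepsilon^2\varphi_y$ then comes from the transport part $2\int\varepsilon_y\varepsilon(\varphi-1)$, and both \eqref{eq:41b} and \eqref{eq:41c} are needed to absorb the $D^1$ contribution into a fraction of it. Second --- and this is the step your plan does not account for --- the exact coefficient $\frac F\lambda-K$ of $-(\frac{\lambda_s}\lambda+b)$ cannot be obtained merely by ``collecting terms that $dK/ds$ will later compensate'': it arises by computing $\int\big(D^1(\Lambda Q_b)+\Lambda Q_b\varphi-3Q_b^2\Lambda Q_b\big)\varepsilon$, recognizing the combination $-(\varepsilon,Q)+b(\varepsilon,R_b)$ inside it, and converting that combination into $-\tfrac12(F+bK)$ up to $\mathcal O(|b|^2|\ln|b||)$ by means of the energy identity \eqref{modulation.6}, which is available only because of the non-standard choice $b=-E(u_0)\lambda/p_0$. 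This is not optional bookkeeping: since $F/\lambda$ and $K$ are only of size $|s|^{-1+\theta}$ while $|\frac{\lambda_s}\lambda+b|\lesssim C^\star|s|^{-1+\frac\theta2}$, the product is of size $|s|^{-2+\frac{3\theta}2}\gg|s|^{-2+\theta}$ and cannot be dumped into the error term with a constant independent of $C^\star$; it must appear exactly so as to be cancelled by the $K$-algebra in Proposition~\ref{FJH}.
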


\begin{proof}
The following notation will   be used
\begin{equation}\label{defV}\begin{aligned}
&V=V(\varepsilon)=D^1 \varepsilon + \varepsilon   - \left((Q_b+\varepsilon)^3 - Q_b^3\right),\quad 
\\&V_\varphi=V_\varphi(\varepsilon) = D^1 \varepsilon + \varepsilon \varphi - \left((Q_b+\varepsilon)^3 - Q_b^3\right).
\end{aligned}\end{equation}
We compute using \eqref{modulation.4},
\begin{align*}
\lambda \frac d{ds} \left(\frac F\lambda\right) & =  2 \int \varepsilon_s V_\varphi - \frac {\lambda_s}\lambda F
  -2 \int \partial_s Q_b \left( (Q_b+\varepsilon)^3 - Q_b^3 - 3 Q_b^2 \varepsilon\right) +  \int \varepsilon^2  \partial_s \varphi \\
& = f_1+f_2+f_3+f_4+f_5 \, ,
\end{align*}
where
\begin{align*}
	& f_1 = 2 \int V_y  V_\varphi \, , \\
	& f_2 = 2 \frac{\lambda_s}{\lambda} \int (\Lambda \varepsilon) V_\varphi  - \frac {\lambda_s}\lambda  F + 2(\frac {\lambda_s}{\lambda} + b) \int (\Lambda Q_b) V_\varphi - 2 (b_s+b^2)  \int \frac{\partial Q_b}{\partial b} V_\varphi \, ,\\
	& f_3 = 2(\frac {x_s}\lambda-1) \int (Q_b+\varepsilon)_y V_\varphi \, , \\
	& f_4 = 2 \int \Psi_b V_\varphi \, , \\
	& f_5 = -2 \int (Q_b)_s \left( (Q_b+\varepsilon)^3 - Q_b^3 - 3 Q_b^2 \varepsilon\right)
	+ \int \varepsilon^2  \partial_s \varphi \, .
\end{align*}
In the rest of the proof,
we estimate the terms $f_1$, \ldots, $f_5$, taking $S_0$ large enough, possibly depending on  $C^\star$.
For future reference, note that
\begin{equation}\label{def.Dvarphi}
\varphi_y(s,y)=\partial_y \varphi(s,y)= \frac 1{B}  \frac{\phi'\left(\frac {y}B +|s|^{\theta}\right)}{\phi\left(|s|^{\theta}\right)}
= \frac 1{B\pi}  \frac{1}{\phi\left(|s|^{\theta}\right)}\frac 1{1+\left(\frac yB + |s|^{\theta}\right)^2}\,.
\end{equation}

\medskip

\noindent \emph{Estimate for $f_1$.} Integrating by parts, we have
\[
f_1 = 2 \int V_y V_\varphi =2 \int V_y V + 2 \int V_y (V_\varphi-V) = 2 \int V_y \varepsilon (\varphi-1) .
\]
Note that by integration by parts,
\begin{align*}
& 2 \int \left(D^1 \varepsilon\right)_y \varepsilon (\varphi-1) = 2 \int \left(D^1 \varepsilon\right)_y \varepsilon  \varphi=
-2 \int \left(D^1 \varepsilon\right)\left( \varepsilon_y  \varphi  +\varepsilon \varphi_y\right).
\end{align*}
By the change of variable $y' = \frac y{B}+|s|^\theta$, we have  from Lemma \ref{le.commutateur},  
\[  \left|\int \left(D^1 \varepsilon\right)  \varepsilon_y  \varphi  \right| 
\leq \frac{ C_1} B \int   \varepsilon^2\varphi_y \]
and 
\[  
\left|\int \left(D^1 \varepsilon\right)   \varepsilon \varphi_y 
- \int  \left| D^{\frac 1 2}\left( \varepsilon\sqrt{\varphi_y}\right)\right|^2\right|
\le \frac {C_1} B \int   \varepsilon^2 \varphi_y
\]
Recall that $B = 100 C_1$.
Thus,
\begin{align*}
 2 \int \left(D^1 \varepsilon\right)_y \varepsilon (\varphi-1)   \leq  - 2\int  \left| D^{\frac 1 2}\left( \varepsilon\sqrt{\varphi_y}\right)\right|^2 +\frac 1{25} \int \varepsilon^2 \varphi_y.
\end{align*}
Next, integrating by parts,
\[ 
2 \int \varepsilon_y  \varepsilon (\varphi-1) = - \int \varepsilon^2 \varphi_y,
\]
and
\begin{align*}
 \int \big((Q_b&
+\varepsilon)^3-Q_b^3\big)_y \varepsilon (\varphi-1)\\
&=-\int \left((Q_b+\varepsilon)^3-Q_b^3\right)  \varepsilon_y (\varphi-1)-\int \left((Q_b+\varepsilon)^3-Q_b^3\right)  \varepsilon  \varphi_y\\
& = -\frac 14 \int \left(   {(Q_b+\varepsilon)^4}  - 4 Q_b^3 \varepsilon -  {Q_b^4} \right)_y (\varphi-1) \\
&\quad+ \int (Q_b)_y \left( (Q_b+\varepsilon)^3-3 \varepsilon Q_b^2-Q_b^3\right) (\varphi-1)
 -\int \left((Q_b+\varepsilon)^3-Q_b^3\right)  \varepsilon  \varphi_y\\
& = \frac 14 \int \left( {(Q_b+\varepsilon)^4}  - 4 Q_b^3 \varepsilon -  {Q_b^4}  - 4 \left((Q_b+\varepsilon)^3-Q_b^3\right)  \varepsilon\right) \varphi_y
\\ &\quad +\int (Q_b)_y \left( (Q_b+\varepsilon)^3-3 \varepsilon Q_b^2-Q_b^3\right) (\varphi-1).
\end{align*}
Note that
\[
\left|   {(Q_b+\varepsilon)^4}  - 4 Q_b^3 \varepsilon -  {Q_b^4}  - 4 \left((Q_b+\varepsilon)^3-Q_b^3\right)  \varepsilon\right|\lesssim
\varepsilon^4 + \varepsilon^2 Q_b^2.
\]
First, by Cauchy-Schwarz inequality and then \eqref{gnp},
we have
\begin{align*}
\int \varepsilon^4 \varphi_y& = \int \left|\varepsilon \sqrt{\varphi_y}\right|^2 \varepsilon^2  
\lesssim \left( \int \left|\varepsilon \sqrt{\varphi_y}\right|^4\right)^{\frac 12} \left(\int \varepsilon^4 \right)^{\frac 12}
\\& \lesssim  \left( \int \left|\varepsilon \sqrt{\varphi_y}\right|^2\right)^{\frac 12}
\left( \int \left|D^{\frac 12}\left(\varepsilon \sqrt{\varphi_y}\right)\right|^2\right)^{\frac 12} \left(\int \varepsilon^2 \right)^{\frac 12}\left(\int \left|D^{\frac 12}\varepsilon\right|^2 \right)^{\frac 12}\\
& \lesssim \frac 1 {|s|} \left( \int \left|D^{\frac 12}\left(\varepsilon \sqrt{\varphi_y}\right)\right|^2 + \int \varepsilon^2  {\varphi_y}\right).
\end{align*}
Second, since
\[
Q^2 \varphi_y \lesssim \left\{\begin{aligned}
& |s|^{-2\theta} Q^2 \lesssim |s|^{-2 \theta} \varphi  &\quad \hbox{for $y>-|s|^{\frac 14}$},\\
& |s|^{-1} \varphi_y  &\quad \hbox{for $y<-|s|^{\frac 14}$},
 \end{aligned} \right.
\]
we have
\[\int \varepsilon^2 Q_b^2 \varphi_y
\lesssim    \int \varepsilon^2 (Q^2+b^2) \varphi_y 
\lesssim   |s|^{-2\theta} \mathcal N (\varepsilon)^2+ |s|^{-1} \int |\varepsilon|^2 \varphi_y.
\]
Thus,  for $|S_0|$ large enough,
\begin{align*}
&\left|\int \left(   {(Q_b+\varepsilon)^4} - 4Q_b^3 \varepsilon -  {Q_b^4}  - 4\left((Q_b+\varepsilon)^3-Q_b^3\right)  \varepsilon\right) \varphi_y\right|\\
& \leq \frac 1{100}  \left(\int  \left| D^{\frac 1 2}\left( \varepsilon\sqrt{\varphi_y}\right)\right|^2 + \int \varepsilon^2 \varphi_y\right)
+ C|s|^{-2\theta}\mathcal N(\varepsilon)^2.
\end{align*}
Similarly, 
\[
 \left|(Q_b+\varepsilon)^3-3 Q_b^2\varepsilon -Q_b^3\right|\lesssim  |\varepsilon|^3 + \varepsilon^2 .
\]
Since $(Q_b)_y = Q' + b P' \chi_b + b P \chi_b'$, 
\[
|(Q_b)_y|\lesssim |Q'|+|b|Q + b^2\mathbf{1}_{[-2 |b|^{-1},-  |b|^{-1}]} \,,
\]
we first observe that
\begin{align*}
  \int_{y< -\frac B 2 |s|^{\theta}} |(Q_b)_y|  \left(|\varepsilon|^3+\varepsilon^2\right)
\lesssim   |s|^{-3\theta}  \int \left(|\varepsilon|^3+\varepsilon^2\right) \lesssim  |s|^{-1-3\theta}.
\end{align*}
Second, by the definition of $\varphi$,
\begin{equation}\label{phi_B}
\hbox{for $y>-\frac B 2 |s|^{\theta}$},\quad
|1-\varphi(y)|=|\varphi(0)-\varphi(y)|\lesssim |y| \max_{[-\frac B 2 |s|^{\theta},+\infty)} \varphi_y \lesssim |y| |s|^{-2\theta},
\end{equation}
and so, since $|y|Q\lesssim Q^{\frac 12} \lesssim \varphi$ and $\int |\varepsilon|^3 \varphi \lesssim \int \varepsilon^2 \varphi  + \int \varepsilon^4
\lesssim \mathcal N(\varepsilon)^2$,
\[
\int_{y> -\frac B 2 |s|^{\theta}} |(Q_b)_y|  \left(|\varepsilon|^3+\varepsilon^2\right) |1-\varphi|
\lesssim |s|^{-2\theta} \int  (|\varepsilon|^3 + \varepsilon^2 ) \varphi
\lesssim |s|^{-2\theta} \mathcal N(\varepsilon)^2.
\]

In conclusion, since $\frac35<\theta<\frac23$, we obtain
for $|S_0|$ large enough,
\begin{equation}\label{eq:f1}
f_1 \leq  - \int  \left( D^{\frac 1 2}\left( \varepsilon\sqrt{\varphi_y}\right)\right)^2 -\frac 12 \int \varepsilon^2 \varphi_y + 
C |s|^{-3+\theta}.
\end{equation}
\medskip

\noindent \emph{Estimate for $f_2$.}
Since $ \mathcal H(y \varepsilon_y) = y \mathcal H( \varepsilon_y)$, we have
\[\int y \varepsilon_y (D^1 \varepsilon) =  \int y \varepsilon_y \mathcal H(\varepsilon_y) = - \int \mathcal H(y \varepsilon_y)(\varepsilon_y) =
- \int y \mathcal \mathcal H(\varepsilon_y) (\varepsilon_y) =0,\]
and so
\[
2 \int (\Lambda \varepsilon) (D^1 \varepsilon)=   \int   |D^{\frac 12} \varepsilon|^2 .
\]
By integration by parts,
\[
2 \int (\Lambda \varepsilon) \varepsilon\varphi= -\int \varepsilon^2 y   \varphi_y.
\]
Moreover,
\[ - 2 \int \Lambda \varepsilon   \left((Q_b+\varepsilon)^3-Q_b^3\right)
=- \int \varepsilon \left((Q_b+\varepsilon)^3-Q_b^3\right) - 2 \int y\varepsilon_y \left((Q_b+\varepsilon)^3-Q_b^3\right)\]
where,  integrating by parts,
\begin{align*}
  &- 2 \int y\varepsilon_y \left((Q_b+\varepsilon)^3-Q_b^3\right)\\
 & =-\frac 12 \int y \left(\left( Q_{b}+\varepsilon\right)^4-Q_{b}^4- 4 Q_{b}^3 \varepsilon\right)_y  
    +2 \int y (Q_b)_y \left(\left( Q_{b}+\varepsilon\right)^3-Q_{b}^3- 3 Q_{b}^2 \varepsilon \right)\\
  & =\frac 12 \int   \left(\left( Q_{b}+\varepsilon\right)^4-Q_{b}^4- 4 Q_{b}^3 \varepsilon\right)
  +2 \int y (Q_b)_y \left(\left( Q_{b}+\varepsilon\right)^3-Q_{b}^3- 3 Q_{b}^2 \varepsilon \right).
\end{align*}
Note that
\begin{align*}
&- \int \varepsilon \left((Q_b+\varepsilon)^3-Q_b^3\right)+ \int   \left(\left( Q_{b}+\varepsilon\right)^4-Q_{b}^4- 4 Q_{b}^3 \varepsilon\right)
  +2 \int y (Q_b)_y \left(\left( Q_{b}+\varepsilon\right)^3-Q_{b}^3- 3 Q_{b}^2 \varepsilon \right)\\
&= 6 \int \varepsilon^2 Q_b \Lambda Q_b + 2 \int \varepsilon^3 \Lambda Q_b .
\end{align*}
Therefore, for the first part of $f_2$, we obtain
\begin{align*}
 2 \frac{\lambda_s}{\lambda} \int (\Lambda \varepsilon) V_\varphi  - \frac {\lambda_s} \lambda F
 &= \frac{\lambda_s}{\lambda} \left( -\int \varepsilon^2 y \varphi_y - \int \varepsilon^2 \varphi 
+6 \int \varepsilon^2 Q_b \Lambda Q_b + 2 \int \varepsilon^3 \Lambda Q_b \right)\\
&= - \frac{\lambda_s}{\lambda} \int \varepsilon^2 y \varphi_y \\
& +  (\frac {\lambda_s}{\lambda} + b)
\left( - \int \varepsilon^2 \varphi  +6 \int \varepsilon^2 Q_b \Lambda Q_b + 2 \int \varepsilon^3 \Lambda Q_b \right),
\\&   -b \left( - \int \varepsilon^2 \varphi  +6 \int \varepsilon^2 Q_b \Lambda Q_b + 2 \int \varepsilon^3 \Lambda Q_b \right),
\end{align*}

\medskip

Then, we observe that 
\begin{align*}
2 \int (\Lambda Q_b) V_\varphi
&= 2 \int (\Lambda Q_b) (D^1 \varepsilon + \varepsilon \varphi - 3 Q_b^2 \varepsilon - 3 Q_b \varepsilon^2 - \varepsilon^3)\\
&= 2 \int \varepsilon (D^1 (\Lambda Q_b) + \Lambda Q_b \varphi - 3 Q_b^2 \Lambda Q_b) 
-   6 \int \varepsilon^2 Q_b \Lambda Q_b - 2 \int \varepsilon^3 \Lambda Q_b.
\end{align*}
Thus, summing these two expressions, we obtain
\begin{align*}
f_2
&= - \frac{\lambda_s}{\lambda} \int \varepsilon^2 y \varphi_y \\
& +2  (\frac {\lambda_s}{\lambda} + b)
  \left[\int  (D^1 (\Lambda Q_b) + \Lambda Q_b \varphi - 3 Q_b^2 \Lambda Q_b) \varepsilon -\frac 12 \int \varepsilon^2 \varphi\right],\\
& - 2 (b_s+b^2)  \int \frac{\partial Q_b}{\partial b} V_\varphi
\\&   -b \left( - \int \varepsilon^2 \varphi  +6 \int \varepsilon^2 Q_b \Lambda Q_b + 2 \int \varepsilon^3 \Lambda Q_b \right)\\
&=  f_{2,1}+f_{2,2}+f_{2,3}+f_{2,4}.
\end{align*}
First, we split $f_{2,1}$ into two parts
\begin{equation}\label{e.f21}
f_{2,1} =- \left(\frac{\lambda_s}{\lambda}+b\right) \int \varepsilon^2 y \varphi_y + b \int \varepsilon^2 y \varphi_y.
\end{equation}
We split the first term in right-hand side of \eqref{e.f21} using $Z_1=\int \varepsilon^2 y \varphi_y \chi_1$,
where $\chi_1$ is defined in \eqref{defZ}
\[
\int \varepsilon^2 y \varphi_y = Z_1 + \int \varepsilon^2 y \varphi_y \chi( y |s|^{-2/3}).
\]
Note that
\begin{align*}
\left|\int \varepsilon^2 y \varphi_y \chi( y |s|^{-2/3})\right|
&\lesssim \int_{|y|<2|s|^{2/3}} |y| \varphi_y \varepsilon^2+ \int_{y>2|s|^{2/3}} |y| \varphi_y \varepsilon^2\\ 
&\lesssim |s|^{\frac 23} \int \varepsilon^2\varphi_y + |s|^{-2/3} \mathcal N(\varepsilon)^2,
\end{align*}
and thus by \eqref{BS.2},
\begin{align*}
\left|\frac{\lambda_s}{\lambda}+b\right|\left|\int \varepsilon^2 y \varphi_y \chi( y |s|^{-2/3})\right|
&\lesssim C^\star |s|^{-\frac 13+\frac \theta 2} \int \varepsilon^2\varphi_y 
+ (C^\star)^3 |s|^{-\frac {11} 3+\frac 32 \theta}\\
&\lesssim o(1) \int \varepsilon^2\varphi_y  + |s|^{-3+\theta}.
\end{align*}
For the second term on the right-hand side of \eqref{e.f21}, we see that
\begin{align*}
|b|  \int \varepsilon^2 |y| \varphi_y &\lesssim 
 |s|^{-\frac 12 (1-\theta)}  \int_{|y|< |s|^{\frac 12(1+\theta)}}\varepsilon^2 \varphi_y
+|s|^{-1} \int_{|y|> |s|^{\frac 12(1+\theta)}} \varepsilon^2 |y \varphi_y|\\
& \lesssim |s|^{-\frac 12 (1-\theta)}  \int \varepsilon^2 \varphi_y
+|s|^{-1}\int_{|y|> |s|^{\frac 12(1+\theta)}} \frac{\varepsilon^2}{|y|}
\lesssim |s|^{-\frac 12 (1-\theta)}  \int \varepsilon^2 \varphi_y
+|s|^{-\frac 52 - \frac\theta2}.\end{align*}
Thus,
\[
  f_{2,1} = - \left(\frac{\lambda_s}{\lambda}+b\right) Z_1+  o(1)  \int \varepsilon^2 \varphi_y
+\mathcal O(|s|^{-3+\theta})\quad \hbox{where}\quad
Z_1 = \int \varepsilon^2 y \varphi_y \chi_1.
\]

\medskip

Second, we compute   $D^1 (\Lambda Q_b) + \Lambda Q_b \varphi - 3 Q_b^2 \Lambda Q_b$ to simplify the expression of $f_{2,2}$.
First, we claim that for any function $f$,
\begin{equation}\label{calculs.L}
\mathcal L(f')=(\mathcal Lf)'+6QQ'f,\quad \mathcal L(\Lambda f)=\frac 32 \mathcal Lf + y (\mathcal Lf)' +6yQQ'f-f + 3Q^2 f.
\end{equation}
The first identity follows directly from the definition of $\mathcal L$. For the second one, we proceed as follows
\begin{align*}
 \mathcal L(\Lambda f)& = \frac 12 \mathcal Lf + \mathcal H (yf')'+yf'-3Q^2 yf'
 =\frac 32 \mathcal Lf + \mathcal H (yf'') -f + 3Q^2 f +yf'-3Q^2 yf'\\
& =\frac 32 \mathcal Lf + y (\mathcal H (f'))' -f + 3Q^2 f +yf'-3Q^2 yf'\\
& =\frac 32 \mathcal Lf + y (\mathcal Lf)' -f + 3Q^2 f+6yQQ'f.
\end{align*}

Since $\mathcal L\Lambda Q = -Q$ and $Q_b = Q+bP_b$, we have
\begin{align*}
&D^1 (\Lambda Q_b) + \Lambda Q_b \varphi - 3 Q^2 \Lambda Q_b=
\mathcal L(\Lambda Q) + b \mathcal L(\Lambda P_b) + (\Lambda Q+b\Lambda P_b)(\varphi-1)\\
& = - Q + \frac 32 b  \mathcal L P_b + by(\mathcal LP_b)_y - bP_b +3bQ^2 P_b + 6 by QQ'P_b + (\Lambda Q+b\Lambda P_b)(\varphi-1).
\end{align*}
We use the notation from the proof of Lemma \ref{lprofile}, $\Psi_1 = (\mathcal LP_b - \mathcal LP)_y = (\mathcal LP_b)_y - \Lambda Q.$ Thus,
\begin{align*}
&D^1 (\Lambda Q_b) + \Lambda Q_b \varphi - 3 Q^2 \Lambda Q_b\\
& = - Q +  by\Lambda Q +by\Psi_1 +\frac 32 b  \mathcal L P_b - bP_b +3bQ^2 P_b + 6b y QQ'P_b + (\Lambda Q+b\Lambda P_b)(\varphi-1).
\end{align*}
Then,
\begin{align*}
&D^1 (\Lambda Q_b) + \Lambda Q_b \varphi - 3 Q^2 \Lambda Q_b\\
& = - Q +  by\Lambda Q +by\Psi_1 +\frac 32 b  \mathcal L_\varphi P_b - b\varphi P_b +3bQ^2 P_b + 6b y QQ'P_b + (\Lambda Q+by(P_b)_y)(\varphi-1).
\end{align*}
Next, we see that
\begin{align*}
- 3 (Q_b^2-Q^2) \Lambda Q_b = -6 b Q \Lambda Q P_b -b^2 \left(\frac 92 QP_b^2 + 6yQP_b(P_b)_y + 3yQ'P_b^2\right)
-3 b^3 P_b^2 \Lambda P_b.
\end{align*}
Combining these computations, we obtain
\begin{align*}
&D^1 (\Lambda Q_b) + \Lambda Q_b \varphi - 3 Q_b^2 \Lambda Q_b\\
& = - Q +  b \left(y\Lambda Q +\frac 32  \mathcal L_\varphi P_b +y P\chi_b' \varphi - P_b \varphi\right)+by(\Psi_1-P\chi_b')   + (\Lambda Q+byP'\chi_b)(\varphi-1)\\
&-b^2 \left(\frac 92 QP_b^2 + 6yQP_b(P_b)_y + 3yQ'P_b^2\right)
-3 b^3 P_b^2 \Lambda P_b  .
\end{align*}
We claim the following estimates
\begin{align}
& \| \mathcal L_\varphi \frac {\partial Q_b}{\partial b} - (\mathcal L_\varphi P_b + y P \chi_b'\varphi)\|_{L^2} \lesssim |s|^{-\frac 12},
\label{cl.1}\\
& \mathcal L_\varphi P_b - P_b \varphi = R_b,\label{cl.2}\\
& \|b y (\Psi_1 -  P \chi_b') \|_{L^2}\lesssim   |b|^{\frac32} ,\label{cl.3}\\
&   \|QP_b^2\|_{L^2} + \|yQP_b(P_b)_y \|_{L^2}+ \|yQ'P_b^2\|_{L^2}+ |b|^\frac 12 \|P_b^2 \Lambda P_b\|_{L^2} \lesssim1.\label{cl.4}
\end{align}
Indeed, by using \eqref{def.DQb},
\[ \mathcal L_\varphi \frac {\partial Q_b}{\partial b} - (\mathcal L_\varphi P + y P \chi_b'\varphi)  = D^1 (yP\chi_b')-3 Q^2 P \chi_b'
\]
and $\|D^1 (yP\chi_b')\|_{L^2} \lesssim \|P\chi_b'\|_{L^2}+\|yP'\chi_b'\|_{L^2}+\|yP\chi_b''\|_{L^2}\lesssim |b|^{\frac 12},$
$\|Q^2 P \chi_b'\|_{L^2}\lesssim |b|^{\frac 52}$, which proves \eqref{cl.1}.
Next, \eqref{cl.2} is a simple consequence of the definitions of $\mathcal L_\varphi$ and  $R_b$ in \eqref{defK} and \eqref{Rb}.
Then, estimate \eqref{cl.3} is proved as in \eqref{lprofile.205}. Finally, \eqref{cl.4} is straightforward.

\medskip

Inserting \eqref{cl.1}-\eqref{cl.4}, we obtain
\begin{align*}
&D^1 (\Lambda Q_b) + \Lambda Q_b \varphi - 3 Q_b^2 \Lambda Q_b\\
& = - Q +b R_b +  b  y\Lambda Q + b \mathcal L_\varphi \frac {\partial Q_b} {\partial b}  -\frac 12 \mathcal L_{\varphi} P_b + (\Lambda Q+byP'\chi_b)(\varphi-1) 
+\mathcal O_{L^2} (|s|^{-\frac 32}),
\end{align*}
and thus, using \eqref{modulation.6} and $\|\varepsilon\|_{L^2} \lesssim |s|^{-\frac 12}$,
\begin{align*}
&\int  (D^1 (\Lambda Q_b) + \Lambda Q_b \varphi - 3 Q_b^2 \Lambda Q_b) \varepsilon -\frac 12 \int \varepsilon^2 \varphi
\\
&\quad =  -\frac 12 F -\frac b2 K + b \int \varepsilon \mathcal L_\varphi \frac {\partial Q_b} {\partial b}
+b \int \varepsilon y \Lambda Q+ \int (\Lambda Q+byP'\chi_b)(\varphi-1)\varepsilon + \mathcal O(|s|^{-2}\ln |s|).
\end{align*}

Concerning $f_{2,3}$, by the definition of $V_\varphi$, one has
\begin{align*}
\int \frac{\partial Q_b}{\partial b} V_\varphi  = 
\int \varepsilon  \mathcal L_\varphi \frac{\partial Q_b}{\partial b} 
- \int  \frac{\partial Q_b}{\partial b} \left((6bQP_b+3b^2P_b^2) \varepsilon+3 Q_b \varepsilon^2+ \varepsilon^3\right)
\end{align*}
One sees easily by using \eqref{def.DQb} that
\[
\left| \int  \frac{\partial Q_b}{\partial b} \left((6bQP_b+3b^2P_b^2) \varepsilon+3 Q_b \varepsilon^2+ \varepsilon^3\right)
\right| \lesssim |s|^{-1} \mathcal N(\varepsilon) + \mathcal N(\varepsilon)^2
\lesssim (C^\star)^2 |s|^{-2+\theta},
\]
and thus by \eqref{BS.2}, since $b=-\lambda$,
\[
 f_{2,3} =- 2 b \left(\frac {\lambda_s}{\lambda}+b\right)\int  \varepsilon\mathcal L_\varphi \frac{\partial Q_b}{\partial b} 
 + \mathcal O( (C^\star)^3 |s|^{-4+\frac 32\theta}).
\]
 
For $f_{2,2}$ and $f_{2,3}$, we thus obtain
\begin{align*}
f_{2,2}+f_{2,3} 
&=  -\left(\frac{\lambda_s}{\lambda} + b\right)  (F + b K  ) 
+2 b \left(\frac{\lambda_s}{\lambda} + b\right) \int \varepsilon y \Lambda Q\\
&+2 \left(\frac{\lambda_s}{\lambda} + b\right) \int (\Lambda Q+byP'\chi_b)(\varphi-1)\varepsilon  
+ \mathcal O( |s|^{-3+\theta}).
\end{align*} 
We split the third term in the right-hand side as follows
\begin{align*}
&2 \left(\frac{\lambda_s}{\lambda} + b\right) \int (\Lambda Q+byP'\chi_b)(\varphi-1)\varepsilon 
\\&= -  \left(\frac{\lambda_s}{\lambda} + b\right)Z_2 + 
2 \left(\frac{\lambda_s}{\lambda} + b\right) \int \left(\Lambda Q\chi\left( y \frac4B |s|^{-\theta}\right)+ byP'\chi_b\right)(\varphi-1)\varepsilon
\end{align*}
where $Z_2   = - 2 \int  \Lambda Q(\varphi-1)\varepsilon \chi_2$.

\medskip


It follows from \eqref{phi_B} and $|\Lambda Q|\lesssim (1+|y|)^{-2}$ that
\[
\left| \int  \Lambda Q\chi\left( y \frac4B |s|^{-\theta}\right)(\varphi-1)\varepsilon \right|
\lesssim |s|^{-2\theta}\int_{y>- \frac B2|s|^{\theta}} \frac {|\varepsilon|}{1+|y|}
\lesssim |s|^{-2\theta} \|\varepsilon\|_{L^2} \lesssim |s|^{-2\theta-\frac 12}.
\]
Thus, by \eqref{BS.2}, since $\theta>3/5$, for $|S_0|$ large enough,
\[
\left|\left(\frac{\lambda_s}{\lambda} + b\right) \int  \Lambda Q\chi\left( y \frac4B |s|^{-\theta}\right)(\varphi-1)\varepsilon \right|
\lesssim C^\star |s|^{-\frac 32 -\frac 32\theta} \lesssim |s|^{-3+\theta}.
\]
Next, since $P'\in \mathcal Y_2$, using again \eqref{phi_B},
\begin{align*}
\left| \int byP'\chi_b (\varphi-1)\varepsilon\right|&\lesssim 
  |s|^{-1-2\theta} \int_{|y|< \frac B2|s|^{\theta}} |\varepsilon|
+|s|^{-1} \int_{|y|> \frac B2|s|^{\theta}} \frac{|\varepsilon|}{|y|} \\
&\lesssim  |s|^{-1-\frac 32 \theta} \|\varepsilon\|_{L^2} + |s|^{-1-\frac  \theta2} \|\varepsilon\|_{L^2}
\lesssim |s|^{-\frac 32-\frac \theta2}.
\end{align*}
Thus, by \eqref{BS.2}, since $\theta>\frac 12$, we obtain
\[
\left|\left(\frac{\lambda_s}{\lambda} + b\right) \int byP'\chi_b (\varphi-1)\varepsilon\right|
\lesssim 
  C^\star |s|^{-\frac 52}\lesssim |s|^{-3+\theta}.
\]
 
\medskip

In conclusion for   $f_{2,2}$ and $f_{2,3}$, we   obtain
\begin{align*}
f_{2,2}+f_{2,3} 
 =  -\left(\frac{\lambda_s}{\lambda} + b\right)  (F + b K +Z_2 ) 
+2 b \left(\frac{\lambda_s}{\lambda} + b\right) \int \varepsilon y \Lambda Q 
+ \mathcal O( |s|^{-3+\theta}).
\end{align*} 
where $Z_2   = - 2 \int  \Lambda Q(\varphi-1)\varepsilon \chi_2$.

\medskip

For $f_{2,4}$, we claim
\begin{equation}\label{cl.30}
f_{2,4} = b \left(   \int \varepsilon^2 \varphi  -6 \int \varepsilon^2 Q \Lambda Q \right) +\mathcal O(|s|^{-3+\theta}).
\end{equation} 
 Indeed, we check that
\[
|b|  \int \varepsilon^2 |Q_b \Lambda Q_b-Q\Lambda Q| \lesssim |b|^2 \int |\varepsilon|^2 \lesssim |s|^{-3},
\]
and
\[
|b| \left|\int \varepsilon^3 \Lambda Q_b \right|\lesssim |b|\int |\varepsilon|^3\lesssim |b| \left(\int |\varepsilon|^2 \right)\left(\int |D^{\frac 12} \varepsilon|^2\right)^{\frac 12} \lesssim C^\star |s|^{-3 + \frac \theta 2}.
\] 
 
\medskip

In conclusion for $f_2$, we obtain
\begin{align} \label{eq:f2}
f_2 
&=  -\left(\frac{\lambda_s}{\lambda} + b\right)  (F + b K +Z )\nonumber \\
& +b \left( \int \varepsilon^2 \varphi  -6 \int \varepsilon^2 Q  \Lambda Q   
+ 2 \left(\frac{\lambda_s}{\lambda} + b\right) \int \varepsilon y \Lambda  Q\right)\\
&+  o(1)  \int \varepsilon^2 \varphi_y+\mathcal O(|s|^{-3+\theta}). \nonumber
\end{align}

\medskip

\noindent \emph{Estimate for $f_3$.}
By integration by parts, we have
\begin{align*}
2  \int (Q_b+\varepsilon)_y V_\varphi & = 2  \int (Q_b+\varepsilon)_y (D^1 \varepsilon + \varepsilon \varphi - 3 Q_b^2 \varepsilon - 3 Q_b \varepsilon^2 - \varepsilon^3)\\
& = 2 \int \varepsilon \left(D^1 (Q_b)_y + (Q_b)_y \varphi - 3 Q_b^2 (Q_b)_y \right)
 -   \int \varepsilon^2 \varphi_y .
\end{align*}
As before, using \eqref{calculs.L}, since $\mathcal LQ'=0$,
\begin{align*}
D^1 (Q_b)_y + (Q_b)_y \varphi - 3 Q^2 (Q_b)_y &= \mathcal L Q' + b \mathcal L (P_b)_y  + (Q+bP_b)_y (\varphi-1)\\
& = b (\mathcal LP_b)_y + 6bQQ'P_b + (Q+bP_b)_y (\varphi-1)\\
&= b \Lambda Q + b \Psi_1 + 6bQQ'P_b + (Q+bP_b)_y (\varphi-1),
\end{align*}
and
\begin{align*}
- 3 (Q_b^2-Q^2) (Q_b)_y = - 6 bQ Q' P_b -   b^2 (3Q' P_b^2+6QP_b(P_b)_y)  - 3b^3 P_b^2 (P_b)_y.
\end{align*}
Thus,
\begin{align*}
D^1 (Q_b)_y + (Q_b)_y \varphi - 3 Q_b^2 (Q_b)_y &= b \Lambda Q + b \Psi_1 + (Q+bP_b)_y (\varphi-1)\\ &\quad-   b^2 (3Q' P_b^2+6QP_b(P_b)_y)   - 3b^3 P_b^2 (P_b)_y.
\end{align*}
We claim
\begin{align}
&\|b \Psi_1\|_{L^2} \lesssim |s|^{-\frac 32},\label{cl.10}\\
&\|Q'P_b^2\|_{L^2} + \|P_b^2 (P_b)_y\|_{L^2}+ \|QP_b(P_b)_y\|_{L^2} \lesssim 1, \label{cl.11}\\
& \int \varepsilon \Lambda Q = \int \varepsilon y Q' + \mathcal O\big((C^\star)^2|s|^{-2+\theta}\big),\label{cl.12}\\
& \|Q' (\varphi-1)\|_{L^2} + |b| \|(P_b)_y (\varphi-1)\|_{L^2} \lesssim |s|^{-2 \theta}. \label{cl.13}
\end{align}
Indeed, \eqref{cl.10} is a consequence of \eqref{lprofile.205} and \eqref{lprofile.206},
\eqref{cl.11} is a direct consequence of the definition of $P_b$ and \eqref{cl.12} follows from 
$\Lambda Q = \frac 12 Q+ yQ'$ and \eqref{BS.4}.  
To prove \eqref{cl.13}, first we apply \eqref{phi_B}, so that
\[
\|Q' (\varphi-1)\|_{L^2}^2\lesssim \int \frac {|\varphi-1|^2}{1+y^6} dy\lesssim
|s|^{- 4 \theta}\int_{|y| < \frac B2 |s|^\theta} \frac {dy}{1+y^4} + 
\int_{|y| > \frac B2 |s|^\theta} \frac {dy}{1+y^6}\lesssim |s|^{-4 \theta}.
\]
Second, since $P\in \mathcal Z$,
\begin{align*}
\|{ |b|}(P_b)_y (\varphi-1)\|_{L^2}^2&\lesssim { |b|^2}\int \frac {|\varphi-1|^2}{1+y^4} dy+  { |b|^2}\int |\chi_b'|^2
\\ &\lesssim
|s|^{{  -2}- 4 \theta}\int_{|y| < \frac B2 |s|^\theta} \frac {dy}{1+y^2} +{ |s|^{-2}}
\int_{|y| > \frac B2 |s|^\theta} \frac  {dy}{1+y^4}+ |s|^{{ -3}} \lesssim  |s|^{ -3 }.
\end{align*}

\medskip

Thus, using also \eqref{BS.2}, we obtain, for $S_0$ large (possibly depending on $C^\star$),
using $\theta>\frac 35$,
\begin{align} \label{eq:f3}
f_3 = 2 b\left(\frac{x_s}{\lambda}-1\right)  \int \varepsilon y Q'
+o(1) \int \varepsilon^2 \varphi_y +\mathcal O(|s|^{-3+  \theta}).
\end{align}

\medskip

\noindent \emph{Estimate for $f_4$.} Recall that
\begin{align*}
f_4 & = 2 \int \Psi_b V_\varphi
=2 \int \Psi_b \left(D^1 \varepsilon + \varepsilon \varphi - \left((Q_b+\varepsilon)^3 - Q_b^3\right)\right).
\end{align*}
First, using \eqref{lprofile.3},
\begin{align*}
\left| \int \Psi_b D^1 \varepsilon \right| & \lesssim 
\left| \int (D^{\frac 12}\Psi_b) (D^{\frac 12} \varepsilon) \right|\lesssim
\|D^{\frac 12}\Psi_b\|_{L^2} \|D^{\frac 12}\varepsilon\|_{L^2}\lesssim |s|^{-2}|\ln |s||^{\frac12} \mathcal N(\varepsilon). 
\end{align*}
Second, by \eqref{lprofile.2}, we have
\begin{equation} \label{PsibVarphi}
 \left(\int \Psi_b^2 \varphi \right)^{\frac 12}\lesssim 
 |b|^2 + |b| \left( \int P^2 (\chi_b')^2 \varphi \right)^{\frac 12} + |b|^2 \left( \int P_b^2\varphi \right)^{\frac 12} \lesssim |b|^{2-\frac \theta 2}.
\end{equation}
Thus,
\begin{align*}
\left| \int \Psi_b \varepsilon \varphi \right| & \lesssim 
\left(\int \Psi_b^2 \varphi\right)^{\frac 12}\left(\int \varepsilon^2 \varphi\right)^{\frac 12}
\lesssim |s|^{-2+\frac \theta 2}\mathcal N(\varepsilon)\lesssim C^\star |s|^{-3+\theta}.  
\end{align*} 
Using \eqref{gnp} (with $p=6$), \eqref{lprofile.2} and \eqref{BS.1}, we also have
\begin{displaymath}
\left| \int \Psi_b \varepsilon^3  \right| \le \|\Psi_b\|_{L^2}\|\varepsilon\|_{L^6}^3 \lesssim 
|s|^{-\frac32}\|\varepsilon\|_{L^2} \mathcal{N}(\epsilon)^2 \lesssim (C^{\star})^2|s|^{-4+\theta}  .
\end{displaymath}
Estimating the other terms $\int \Psi_bQ_b ^2 \varepsilon$ and $\int \Psi_b Q_b \varepsilon^2$ similarly, we obtain
\begin{equation} \label{eq:f4}
|f_4|\lesssim |s|^{-2+\frac \theta 2}\mathcal N(\varepsilon)\lesssim C^\star |s|^{-3+\theta}.
\end{equation}

\medskip

\noindent \emph{Estimate for $f_5$.}
\begin{align*}
f_5 & = -2 \int (Q_b)_s \left( (Q_b+\varepsilon)^3 - Q_b^3 - 3 Q_b^2 \varepsilon\right)
 + \int \varepsilon^2  \partial_s \varphi = f_{5,1}+f_{5,2}.
\end{align*}
First, by \eqref{def.DQb} and \eqref{BS.3},
\[
|(Q_b)_s| = |b_s| \left| \frac{\partial Q_b}{\partial b}\right| \lesssim (|s|^{-2} + |s|^{-1}\mathcal N(\varepsilon))
\lesssim C^\star |s|^{-2 +\frac \theta 2}.
\]
Thus,
\begin{align*}
|f_{5,1}|&\lesssim C^\star |s|^{-2 +\frac \theta 2} \int |\varepsilon|^2 (Q+|b|) + |\varepsilon|^3
\lesssim C^\star |s|^{-2 +\frac \theta 2} \left( \mathcal N(\varepsilon)^2   + |s|^{-\frac 32}\right)\\
&\lesssim (C^\star)^3 |s|^{-4 +\frac 32\theta }\lesssim |s|^{-3+\theta}.
\end{align*}

Second, we see from the definition of $\varphi$ in \eqref{def.varphi}
\begin{align} \label{varphi_s}
\partial_s \varphi & = \theta s^{-1} |s|^\theta  \frac {\phi'\left(\frac y B + |s|^\theta\right)}{\phi(|s|^\theta)}
- \theta s^{-1} |s|^\theta \frac {\phi\left(\frac y B + |s|^\theta\right)\phi'(|s|^\theta)}{\phi^2(|s|^\theta)}.
\end{align}
We also have that
\[
|s|^{-1+\theta} \int \varepsilon^2  \frac {\phi'\left(\frac y B + |s|^\theta\right)}{\phi(|s|^\theta)}
\lesssim |s|^{-1+\theta} \int \varepsilon^2 \varphi_y,
\]
and 
\[
|s|^{-1+\theta} \int \varepsilon^2  \frac {\phi\left(\frac y B + |s|^\theta\right)\phi'(|s|^\theta)}{\phi^2(|s|^\theta)}
\lesssim |s|^{-1-\theta} \int \varepsilon^2 \varphi \lesssim |s|^{-1-\theta} (\mathcal N(\varepsilon))^2.\]

Thus,
\begin{equation} \label{eq:f5}
f_5  = o(1) \int \varepsilon^2 \varphi_y + \mathcal O(|s|^{-3+\theta})
 .
\end{equation}

Therefore, we conclude the proof of \eqref{eq:F} gathering \eqref{eq:f1}, \eqref{eq:f2}, \eqref{eq:f3}, \eqref{eq:f4} and \eqref{eq:f5}.
\end{proof}

\subsection{Virial-type estimate} 
Now, to extend to the cubic Benjamin-Ono equation the technique developed in \cite{RaSz}, we prove the following   suitably localized Virial-type identity.
\begin{lemma}\label{le:GG}
For $|S_0|$ large enough possibly depending on $C^{\star}$, for all $s\in \mathcal I^\star$, 
\begin{equation}\label{bg:G}
|G|\lesssim |s|^{-\frac 12} 
\end{equation}
and
\begin{equation}\label{eq:G}\begin{aligned}
\frac {d G}{ds} &=-2 \int \big|D^{\frac 12} (\varepsilon \rho)\big|^{2} -\int (\varepsilon  \rho)^2 
+ 6\int Q^2 (\varepsilon  \rho)^2  -6 \int Q\Lambda Q \varepsilon^2
\\ &\quad +2 \left(\frac{\lambda_s}{\lambda} + b\right) \int \varepsilon y \Lambda  Q+2 \left(\frac{x_s}{\lambda} -1\right) \int \varepsilon y   Q'
 \\ & \quad+ \mathcal O(|s|^{-2+\theta})
 +  \mathcal O(|s|^{-\frac 1{20}} \|D^{\frac 12}(\varepsilon \rho)\|_{L^2}^2)\, ,
 \end{aligned}
\end{equation}
where
\begin{equation}\label{def.rho}
\rho(s,y) =  \frac {\chi(y|s|^{-\frac 35})}{\left(1+y^2 |s|^{-1}\right)^{\frac 12}}.
\end{equation}
\end{lemma}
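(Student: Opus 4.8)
The plan is to differentiate $G = \int \psi \varepsilon^2$ in $s$, where $\psi(s,y) = |s|^{\frac12}\pi(\phi(y|s|^{-\frac12}) - \frac12)\chi^2(y|s|^{-\frac35})$, using the equation \eqref{modulation.4} for $\varepsilon_s$. First I would note that $\psi$ is essentially a truncated, rescaled version of the linear function $y$: on the region $|y| \lesssim |s|^{\frac12}$ one has $\psi(s,y) \approx y$ (since $\phi(z) - \frac12 \approx z/\pi$ near $0$), and the cutoffs $\chi(y|s|^{-\frac35})$ and the $(1+y^2|s|^{-1})^{-1/2}$ saturation in $\rho$ are designed precisely so that $\psi_y \approx \rho^2$ where $\rho$ is as in \eqref{def.rho}. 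The bound \eqref{bg:G} is immediate: $|\psi| \lesssim |s|^{\frac12}$ on the support of $\chi^2(y|s|^{-\frac35})$ (actually $|\psi|\lesssim |s|^{\frac12}$ everywhere it's supported, using $|\phi - \frac12| \le \frac12$ and that where $\phi-\frac12$ is not small we gain $|s|^{\frac12}$ from the argument being $\lesssim |s|^{\frac35-\frac12}\cdot$ — more simply $|\phi(y|s|^{-1/2})-1/2|\lesssim \min(|y||s|^{-1/2},1)$), combined with $\|\varepsilon\|_{L^2}^2 \lesssim |s|^{-1}$ from \eqref{BS.1}, giving $|G| \lesssim |s|^{\frac12} \cdot |s|^{-1} = |s|^{-\frac12}$.

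For the main identity, I would write $\frac{dG}{ds} = \int \psi_s \varepsilon^2 + 2\int \psi \varepsilon \varepsilon_s$ and substitute \eqref{modulation.4}. The term $2\int \psi \varepsilon (D^1\varepsilon + \varepsilon - ((Q_b+\varepsilon)^3 - Q_b^3))_y$ is the heart of the computation. I would integrate by parts to move the $\partial_y$ off, producing $-2\int \psi_y \varepsilon(D^1\varepsilon) - \int \psi_y(\varepsilon^2 - \ldots) + \ldots$; the first term, after the commutator identity \eqref{eq:41c} of Lemma~\ref{le.commutateur} applied with weight $\psi_y \approx \rho^2$ (and its derivative controlled), yields $-2\int |D^{\frac12}(\varepsilon\rho)|^2$ up to errors controlled by $\int \varepsilon^2 |\psi_{yy}|$ and by $|s|^{-\frac1{20}}\|D^{\frac12}(\varepsilon\rho)\|_{L^2}^2$ (the latter coming from the region where $\psi_y$ differs from $\rho^2$ or from the derivative of the cutoff, which lives at scale $|s|^{\frac35}$ or $|s|^{\frac12}$). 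The mass-like term $-\int \psi_y \varepsilon^2 = -\int (\varepsilon\rho)^2 + (\text{error})$ and the cubic term produces $-\frac12\int \psi_y((Q_b+\varepsilon)^4 - Q_b^4 - 4Q_b^3\varepsilon)$-type expressions plus the quadratic-in-$\varepsilon$ pieces $6\int Q^2(\varepsilon\rho)^2$ and, from the terms where one derivative hits $Q_b$, the crucial $-6\int Q\Lambda Q\,\varepsilon^2$; here one uses $yQ^2Q' \approx$ something proportional to $Q\Lambda Q$ after integration by parts and $Q_b = Q + bP_b$ with $|b|\lesssim |s|^{-1}$ to replace $Q_b$ by $Q$ at acceptable cost. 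The lower-order terms from the right-hand side of \eqref{modulation.4} — $\frac{\lambda_s}{\lambda}\Lambda\varepsilon$, $(\frac{\lambda_s}{\lambda}+b)\Lambda Q_b$, $(\frac{x_s}{\lambda}-1)(Q_b+\varepsilon)_y$, $-(b_s+b^2)\partial_b Q_b$, $\Psi_b$ — are handled as follows: the $\Lambda\varepsilon$ term integrates by parts against $\psi\varepsilon$ and is absorbed because $\frac{\lambda_s}{\lambda} = -b + \mathcal O(\mathcal N(\varepsilon)) \approx |s|^{-1}$; the $\Lambda Q_b$ and $(Q_b)_y$ terms give exactly $2(\frac{\lambda_s}{\lambda}+b)\int\varepsilon y\Lambda Q + 2(\frac{x_s}{\lambda}-1)\int \varepsilon y Q'$ once $\psi$ is replaced by $y$ on the support of $\Lambda Q, Q'$ (the correction being $\mathcal O(|s|^{-2+\theta})$); and $\Psi_b$, $b_s+b^2$ contribute $\mathcal O(|s|^{-2+\theta})$ via \eqref{lprofile.2}, \eqref{BS.3}, \eqref{BS.1}. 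Finally $\int \psi_s \varepsilon^2$: since $\psi_s \approx \frac12 |s|^{-1}\psi + (\text{terms from differentiating the cutoffs/arguments})$ and $|\psi|\lesssim |s|^{\frac12}$, this is $\mathcal O(|s|^{-\frac12}\|\varepsilon\|_{L^2}^2) = \mathcal O(|s|^{-\frac32})$, absorbed into $\mathcal O(|s|^{-2+\theta})$ since $\theta > \frac12$. Actually more care is needed — $\psi_s$ has a piece of size $\approx |s|^{-\frac12}$ times a bounded function, so $\int \psi_s\varepsilon^2 \lesssim |s|^{-\frac12}|s|^{-1} = |s|^{-\frac32}$, fine.

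The main obstacle, I expect, will be the careful bookkeeping around the two cutoff scales: $\rho$ involves both $\chi(y|s|^{-\frac35})$ and the algebraic saturation $(1+y^2|s|^{-1})^{-\frac12}$, while $\psi$ involves $\chi^2(y|s|^{-\frac35})$ and $\phi(y|s|^{-\frac12})$. One must verify that $\psi_y$ genuinely agrees with $\rho^2$ up to errors that are either (a) supported where $|y|\sim |s|^{\frac12}$ or $|y|\sim |s|^{\frac35}$ and hence controllable by $|s|^{-\frac1{20}}$ times the localized $\dot H^{\frac12}$ norm (the exponent $\frac1{20}$ presumably coming from the gap between $\frac35$ and $\frac12$, or from $\theta$), and that $\psi_{yy}$ is small enough — of size $\lesssim |s|^{-1}\rho^2 + |s|^{-\frac65}\mathbf 1_{\{|y|\sim|s|^{3/5}\}}$ or so — that $\int \psi_{yy}\varepsilon^2 \lesssim |s|^{-1}\mathcal N(\varepsilon)^2 + \ldots \lesssim |s|^{-2+\theta}$. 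The second delicate point is extracting exactly the coefficient $6$ in $6\int Q^2(\varepsilon\rho)^2$ and $-6\int Q\Lambda Q\varepsilon^2$ from the cubic nonlinearity: expanding $(Q_b+\varepsilon)^3 - Q_b^3 = 3Q_b^2\varepsilon + 3Q_b\varepsilon^2 + \varepsilon^3$, the $3Q_b^2\varepsilon$ term contributes to the quadratic form $-\int\psi_y\cdot 3Q_b^2\varepsilon^2$-ish after the integration by parts structure is unwound, and the term where $\partial_y$ falls on $Q_b^2$ must be combined with a piece of the $\psi\varepsilon \varepsilon_s$ manipulation; one uses $\psi_y\approx\rho^2$ and $\int(3Q^2)(\varepsilon\rho)^2$ doubles to $6$ because of how $\mathcal L_\rho = D^1 + \rho^2 - 3Q^2$-type structure appears, matched against the $\rho$-conjugated operator. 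I would organize this exactly as in the analogous computation for (gKdV) in \cite{MaMeRa2} and for (NLS) in \cite{RaSz}, treating the nonlocal operator $D^1$ with Lemma~\ref{le.commutateur} in place of the pointwise Kato identity, and being careful that all error terms not of the form $\|D^{\frac12}(\varepsilon\rho)\|_{L^2}^2$ times a small constant reduce, via \eqref{BS.1}, \eqref{BS.2}, \eqref{BS.3}, \eqref{BS.4} and $\|\varepsilon\|_{L^2}\lesssim|s|^{-\frac12}$, $\mathcal N(\varepsilon)\lesssim C^\star|s|^{-1+\frac\theta2}$, to $\mathcal O(|s|^{-2+\theta})$.
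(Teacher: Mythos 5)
Your overall architecture is the paper's: differentiate $G=\int\psi\varepsilon^2$, split according to the terms of \eqref{modulation.4}, use $\psi_y\approx\rho^2$ to produce $-2\int|D^{\frac12}(\varepsilon\rho)|^2-\int(\varepsilon\rho)^2$, extract $6\int Q^2(\varepsilon\rho)^2-6\int Q\Lambda Q\,\varepsilon^2$ from the cubic term via $\Lambda Q=\frac12Q+yQ'$, keep the $y\Lambda Q$ and $yQ'$ modulation terms exactly, and dump everything else into $\mathcal O(|s|^{-2+\theta})$; the bound \eqref{bg:G} is handled identically. The decomposition into seven pieces, the role of the two scales $|s|^{1/2}$ and $|s|^{3/5}$, and the origin of the $|s|^{-1/20}\|D^{\frac12}(\varepsilon\rho)\|_{L^2}^2$ error (commuting $\rho$ through $D^{\frac12}$ via \eqref{COMM1}) are all as in the paper.

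There is, however, one step that would fail as you state it: for the dispersive term $\int (D^1\varepsilon)\,\varepsilon\,\psi_y$ you propose to invoke Lemma~\ref{le.commutateur} (i.e.\ \eqref{eq:41b}--\eqref{eq:41c}) with weight $\psi_y$. First, those identities are proved only for the specific weight $\phi'$ (the paper explicitly warns that it is unclear whether they survive for other weights), and $\psi_y$ carries the extra cutoff $\chi^2(y|s|^{-3/5})$. Second, and more importantly, even granting them, their error term is $C_1\int \varepsilon^2\psi_y\sim C_1\int(\varepsilon\rho)^2$, which is of exactly the same order as the main term $-\int(\varepsilon\rho)^2$ you are trying to isolate, with a universal constant $C_1$ that may well exceed the coercivity constant $\kappa$ of Lemma~\ref{coer1}; in the energy estimate this is beaten by building the large parameter $B=100\,C_1$ into $\varphi$, but $\psi$ contains no such parameter, so the negativity of the quadratic form in \eqref{eq:F+G2} would be destroyed. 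The error bounds you actually write down ($\int\varepsilon^2|\psi_{yy}|$ plus $|s|^{-1/20}\|D^{\frac12}(\varepsilon\rho)\|_{L^2}^2$) are not what \eqref{eq:41c} gives; they are what the soft commutator estimates \eqref{m.1}--\eqref{m.2} give, using $\|\psi_{yy}\|_{L^\infty}\lesssim|s|^{-1/2}$, $\|\psi_{yy}\|_{L^2}\lesssim|s|^{-1/4}$, $\|\psi_y\|_{L^2}\lesssim|s|^{1/4}$ (Lemma~\ref{le:psi}), followed by \eqref{COMM1} to convert $\int|D^{\frac12}\varepsilon|^2\rho^2$ into $\int|D^{\frac12}(\varepsilon\rho)|^2$. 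This is precisely the route the paper takes, and the reason the two localization tools are not interchangeable: \eqref{eq:41b}--\eqref{eq:41c} are reserved for the energy functional with its slowly varying weight, while the Virial weight, being smooth at scale $|s|^{1/2}$, is handled by the Calder\'on-type estimates whose errors are controlled by second derivatives of the weight. With that substitution your plan matches the paper's proof.
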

\begin{proof}
The bound \eqref{bg:G} follows from $\|\varepsilon\|_{L^2}\lesssim |s|^{-1}$ (see \eqref{BS.1}) and the 
bound  $|\psi|\lesssim |s|^{\frac 12}$ coming directly from the definition of $\psi$ in \eqref{defG}.

\medskip

Now, we prove \eqref{eq:G}.
We compute using \eqref{modulation.4},
\[
\frac { dG}{ds} = 2 \int  \varepsilon_s \varepsilon \psi+ \int \varepsilon^2 \psi_s= g_1 + g_2 +g_3+g_4+g_5+g_6+g_7,
\]
where
\begin{align*}
	&g_1 = 2 \int V_y \varepsilon \psi\,,&
	g_2 &= 2\frac{\lambda_s}{\lambda}\int (\Lambda \varepsilon) \varepsilon \psi\,, \\
	&g_3 = 2\left(\frac{\lambda_s}{\lambda}+b\right) \int \Lambda Q_b \varepsilon \psi\,, &
	g_4 &= 2 \left(\frac {x_s}\lambda -1\right) \int (Q_b+\varepsilon)_y \varepsilon\psi\,,\\
	&g_5 = -2 (b_s+b^2) \int \frac{\partial Q_b}{\partial b} \varepsilon \psi\, , &
	g_6& =  2\int \Psi_b \varepsilon\psi ,\\& g_7= \int \varepsilon^2 \psi_s .
\end{align*}
 
We claim that the following technical facts on $\psi$ and $\rho$.
\begin{lemma}\label{le:psi} The following hold.

\noindent{\rm (i)} Pointwise estimates.
\begin{equation}\label{bd.psi}
|\psi(y)-y|\lesssim |s|^{-\frac 12} |y|^2,\quad |\psi|\lesssim |s|^{\frac 12},
\end{equation}
 \begin{equation}\label{e.g10}
|y \psi_y|\lesssim |s|^{\frac 12} \mathbf{1}_{y>-2 |s|^{\frac 35}},
\end{equation}
\begin{equation}\label{e.g14}
\psi_y =  
\frac {\chi^2(y|s|^{-\frac 35})}{1+y^2 |s|^{-1}}  +  \mathbf{1}_{-2|s|^{\frac 35} <y<-|s|^{\frac 35}}\mathcal O( |s|^{-\frac 1{10}}),
 \end{equation}
\begin{equation}\label{e.g15}
|\psi_y-1|+|\rho-1|\lesssim \frac {y^2 |s|^{-1}}{1+y^2 |s|^{-1}} ,\quad |\psi_y|\lesssim 1   ,\quad
|\psi_{yy}|\lesssim |s|^{-\frac 12},
\end{equation}
\begin{equation}\label{e.g17}
|\psi_{yyy}|\lesssim   
 \frac {|s|^{-1}}{(1+y^2 |s|^{-1})^2}+
|s|^{-\frac {11}{10}} \mathbf{1}_{-2|s|^{ \frac 35}<y<-|s|^{\frac 35}} \lesssim |s|^{-1} .
\end{equation}
{\rm (ii)} Norm estimates.
\begin{equation}\label{e.g16}
\|\rho\|_{L^2}+\|\psi_y\|_{L^2} \lesssim |s|^{\frac 14},\quad \|\rho_y\|_{L^2}+\|\psi_{yy}\|_{L^2} \lesssim |s|^{-\frac 14}.
\end{equation}
\end{lemma}
\begin{proof}
By  Taylor expansion, since $\phi(0)=\frac 12$, $\phi'(0)=\frac 1{\pi}$ and $\sup_{\mathbb{R}} |\phi''|<+\infty$,
one finds for $|y|<|s|^{\frac 12}$,
\[
\Big|\phi( |s|^{-\frac 12} y) - \frac 12 - \frac 1{\pi}  |s|^{-\frac 12}  y\Big| \lesssim |s|^{-1}|y|^2.
\]
The definition of $\psi$ in \eqref{defG} then implies \eqref{bd.psi} for $|y|<|s|^{\frac 12}$.
For $|y|>|s|^{\frac 12}$, \eqref{bd.psi} is a consequence of $|\psi|\lesssim |s|^{\frac 12}$.
Next, note that by the definition of $\psi$ in \eqref{defG},
\begin{equation}\label{e.001}\begin{aligned}
\psi_y 
&= \pi \phi'(y|s|^{-\frac 12}) \chi^2(y|s|^{-\frac 35}) + |s|^{-\frac 1{10}} \pi \Big(\phi(y|s|^{-\frac 12})-\frac 12\Big) (\chi^2)'(y |s|^{-\frac 35})\\
&= \frac {\chi^2(y|s|^{-\frac 35})}{1+y^2 |s|^{-1}}  + |s|^{-\frac 1{10}} \pi \Big(\phi(y|s|^{-\frac 12})-\frac 12\Big) (\chi^2)'(y |s|^{-\frac 35}),
\end{aligned}\end{equation}
which implies directly \eqref{e.g10} and \eqref{e.g14}. Moreover,
\[
|\psi_y-1|\lesssim \frac {y^2 |s|^{-1}}{1+y^2 |s|^{-1}} \mathbf{1}_{y>-|s|^{\frac 35}} + 
|s|^{-\frac 1{10}} \mathbf{1}_{y<-|s|^{\frac 35}}  .
\]
Differentiating \eqref{e.001}, we have
\begin{align*}
\psi_{yy} 
&= |s|^{-\frac 12} \pi   \phi''\left(y|s|^{-\frac 12}\right) \chi^2(y|s|^{-\frac 35})
+ 2 |s|^{-\frac 35} \pi   \phi'\left(y|s|^{-\frac 12}\right) (\chi^2)'(y|s|^{-\frac 35})\\ &
+ |s|^{-\frac 7{10}} \pi \left(\phi\left(y|s|^{-\frac 12}\right) -\frac 12\right)(\chi^2)''(y|s|^{-\frac 35}).\end{align*}
and thus $|\psi_{yy}|\lesssim |s|^{-\frac 12}$.
This proves  \eqref{e.g15}.
Now, we estimate $\psi_{yyy}$. By direct computations, we have
\begin{align*}
\psi_{yyy} & = 
|s|^{-1} \pi   \phi'''\left(y|s|^{-\frac 12}\right) \chi^2(y|s|^{-\frac 35})
+ 3 |s|^{-\frac 12 - \frac 35} \pi   \phi''\left(y|s|^{-\frac 12}\right) (\chi^2)'(y|s|^{-\frac 35})\\ &
+ 3 |s|^{-\frac 65} \pi \phi'\left(y|s|^{-\frac 12}\right) (\chi^2)''(y|s|^{-\frac 35})
+ |s|^{-\frac 95 +\frac 12} \pi\left(  \phi\left(y|s|^{-\frac 12}\right)-\frac 12\right)  (\chi^2)'''(y|s|^{-\frac 35}).
\end{align*}
Thus,
\[
|\psi_{yyy}|\lesssim 
 \frac {|s|^{-1}}{(1+y^2 |s|^{-1})^2}+
|s|^{-\frac {11}{10}} \mathbf{1}_{-2|s|^{-\frac 35}<y<-|s|^{\frac 35}} \lesssim |s|^{-1}.
\]

Finally, we prove \eqref{e.g16}. 
First, 
\[
\int \rho^2 \lesssim \int \frac{dy}{1+y^2 |s|^{-1}} \lesssim |s|^{\frac 12}.
\]
Note that by direct computation
\[
\rho_y = |s|^{-\frac 35} \frac{\chi'(y|s|^{-\frac 35})}{(1+y^2 |s|^{-1})^{\frac 12}}
- |s|^{-\frac 12} \frac {y |s|^{-\frac 12} \chi(y|s|^{-\frac 35})}{(1+y^2 |s|^{-1})^{\frac 32}}
\]
and thus,  
\[
\int (\rho_y)^2 \lesssim |s|^{-\frac 65} \int \frac{dy}{1+y^2 |s|^{-1}} + |s|^{-1} \int  \frac{y^2 |s|^{-1} dy}{(1+y^2 |s|^{-1})^{3}}  \lesssim  |s|^{-\frac 12}.
\]
By \eqref{e.001}, we have $\int (\psi_y-\rho)^2 \lesssim |s|^{-\frac 15} \int_{-2|s|^{\frac 35} <y<-|s|^{\frac 35}} dy\lesssim |s|^{\frac 25}$ and
\[
|(\psi_y-\rho)_y|\lesssim |s|^{-\frac 35 }\mathbf{1}_{-2|s|^{\frac 35}<y<-|s|^{\frac 35}}\quad \hbox{and so}\quad
\int |(\psi_y-\rho)_y|^2 \lesssim |s|^{-\frac 35 },
\]
which finishes the proof of \eqref{e.g16}.
\end{proof} 

\noindent \emph{Estimate for $g_1$.}  
We claim   
\begin{equation}\label{e.g1}
\begin{split}
g_1&=  -2 \int \big|D^{\frac 12} (\varepsilon \rho)\big|^{2} -\int (\varepsilon \rho)^2+ 
{6\int Q^2 (\varepsilon  \rho)^2  -6 \int Q\Lambda Q \varepsilon^2}\\ & \quad+\mathcal O(|s|^{-2+\theta})
+  \mathcal O( |s|^{-\frac 1{20}} \|D^{\frac 12}(\varepsilon \rho)\|_{L^2}^2). 
 \end{split}
\end{equation}
By using the definition of $V$ in \eqref{defV} and integrations by parts, we decompose $g_1$ as
\[ g_1=g_{1,1}+g_{1,2}+g_{1,3} \, ,
\]
where
\begin{align*}
	g_{1,1} & =  2 \int (D^1 \varepsilon)_y \varepsilon  \psi ,\\
	g_{1,2} &=- \int \varepsilon^2 \psi_y ,\\
	g_{1,3}&=
	-\frac 12 \int \big((Q_b+\varepsilon)^4 - Q_b^4 - 4Q_b^3 \varepsilon\big) \psi_y
	+{ 2} \int \big((Q_b+\varepsilon)^3 - Q_b^3  \big) \varepsilon \psi_y
	\\ & \quad -2 \int \big((Q_b+\varepsilon)^3-Q_b^3 - 3 Q_b^2 \varepsilon\big) (Q_b)_y \psi .
\end{align*}
 
First, concerning $g_{1,1}$, we claim the following two estimates
\begin{equation}\label{e:g02}
\left| \int (D^1 \varepsilon) \varepsilon_y \psi \right|\lesssim |s|^{-2+\theta},
\end{equation}
\begin{equation}\label{e:g03}
\left| \int  (D^1 \varepsilon)  \varepsilon \psi_y -\int \big|D^{\frac 12} (\varepsilon\rho)\big|^2 \right|\lesssim 
|s|^{-2+\theta} +   |s|^{-\frac 1{20}} \|D^{\frac 12}(\varepsilon \rho)\|_{L^2}^2.
\end{equation} 
{With these estimates in hand, we obtain after integration by parts that 
\begin{equation} \label{e.g11}
\begin{split}
g_{1,1}&=-2\int (D^1\varepsilon) \varepsilon_y \psi-2\int (D^1\varepsilon) \varepsilon \psi_y \\ &
=-2\int \big|D^{\frac 12} (\varepsilon\rho)\big|^2+\mathcal{O}(|s|^{-2+\theta}) 
+  \mathcal O( |s|^{-\frac 1{20}} \|D^{\frac 12}(\varepsilon \rho)\|_{L^2}^2)\, .
\end{split}
\end{equation}}

\noindent \textit{Proof of \eqref{e:g02}}.
Using \eqref{m.1} with $a=\psi$, since $\|\psi_{yy}\|_{L^\infty} \lesssim |s|^{-\frac 12}$
(see \eqref{e.g15}), we obtain
from \eqref{BS.1},
\[
\left| \int (D^1 \varepsilon) \varepsilon_y \psi \right|\lesssim 
\| \varepsilon\|_{L^2}^2 \|\psi_{yy}\|_{L^\infty}   
\lesssim  |s|^{-\frac 32} \lesssim |s|^{-2+\theta}.
\]
\medskip

\noindent \textit{Proof of \eqref{e:g03}}. Using \eqref{m.2} with $a=\psi$,
since $\|a''\|_{L^2} \lesssim |s|^{-\frac 14}$ and $\|a'\|_{L^2} \lesssim |s|^{\frac 14}$,
we have
\begin{align*}
\left| \int  (D^1 \varepsilon)  \varepsilon \psi_y - \int  |D^{\frac 12} \varepsilon|^2\psi_y \right|
&\lesssim  
   \|\varepsilon\|_{\dot H^{\frac 12}}^{\frac 32} \|\varepsilon\|_{L^2}^{\frac 12} 
\| \psi_{yy}\|_{L^2}^{\frac 34} \|\psi_y\|_{L^2}^{\frac 14}\\
&\lesssim (C^\star)^{\frac 32} |s|^{-\frac {15}8 + \frac 34 \theta}\lesssim |s|^{-2+\theta},
\end{align*}
since $\theta>\frac 12$.
By \eqref{e.001} and the definition of $\rho= \frac {\chi(y|s|^{-\frac 35})}{(1+y^2 |s|^{-1})^{\frac12}}$,
 we see that $|\Psi_y - \rho^2 |\lesssim |s|^{-\frac 1{10}} \mathbf{1}_{-2|s|^{\frac 35}<y<-|s|^{\frac 35}} $, and thus
 \begin{align*}
\left|  \int  |D^{\frac 12} \varepsilon|^2\psi_y - \int  |D^{\frac 12} \varepsilon|^2\rho^2\right|
&\lesssim |s|^{-\frac 1{10}} \|D^{\frac 12} \varepsilon\|_{L^2}^2\lesssim |s|^{-\frac 1{10}} (\mathcal N(\varepsilon))^2
\lesssim |s|^{-2+\theta}.
\end{align*}
Now, we claim 
\begin{equation}
\label{e:g003}
\left| \int  | D^{\frac 12} \varepsilon|^2\rho^2  -\int |D^{\frac 12} (\varepsilon\rho)|^2 \right|\lesssim 
|s|^{-2+\theta}+   |s|^{-\frac 1{20}} \|D^{\frac 12}(\varepsilon \rho)\|_{L^2}^2,
\end{equation}
which is sufficient to finish the proof of \eqref{e:g03}.
Indeed,  using \eqref{COMM1} and \eqref{gnp}, 
\begin{align*}
\left|\int  |D^{\frac 12}  \varepsilon|^2\rho^2  -\int |D^{\frac 12} (\varepsilon\rho)|^2\right|
& = \left|\int \left( (D^{\frac 12} \varepsilon)\rho + D^{\frac 12} (\varepsilon\rho)\right)
\left( (D^{\frac 12} \varepsilon)\rho  - D^{\frac 12} (\varepsilon\rho)\right)\right| \\
& \lesssim \left(\|(D^{\frac 12} \varepsilon) \rho\|_{L^2} + \|D^{\frac 12}(\varepsilon \rho)\|_{L^2} \right)
\|[D^{\frac 12},\rho] \varepsilon \|_{L^2}\\
&  \lesssim \left(\|D^{\frac 12} \varepsilon \|_{L^2} + \|D^{\frac 12}(\varepsilon \rho)\|_{L^2} \right) 
\|\varepsilon\|_{\dot H^{\frac 12}}^{\frac 12} \|\varepsilon\|_{L^2}^{\frac 12} \|\rho_y\|_{L^2}^{\frac 34} \|\rho\|_{L^2}^{\frac 14} 
\\& \lesssim |s|^{-2+\theta} +  (C^{\star})^{\frac12}|s|^{-\frac78+\frac \theta 4} \|D^{\frac 12}(\varepsilon \rho)\|_{L^2} 
\\& \lesssim |s|^{-2+\theta} +   |s|^{-\frac 1{20}} \|D^{\frac 12}(\varepsilon \rho)\|_{L^2}^2 .
\end{align*} 
 
\medskip
 
Second, we see from \eqref{e.g14}, 
\begin{align}
- g_{1,2} 
&= \int \varepsilon^2 \psi_y = \int \varepsilon^2 \frac {\chi^2(y|s|^{-\frac 35})}{1+y^2 |s|^{-1}}  
+\mathcal O( |s|^{-\frac 1{10}})\int_{-2|s|^{\frac 35} <y<-|s|^{\frac 35}} \varepsilon^2 \nonumber\\
&= \int \varepsilon^2 \frac {\chi^2(y|s|^{-\frac 35})}{1+y^2 |s|^{-1}}  
+\mathcal O((C^\star)^2 |s|^{-\frac {21}{10}+\theta})\nonumber\\&= \int {\varepsilon^2} \rho^2  
+\mathcal O( |s|^{-  2+\theta}), \label{e.g12}
\end{align}
since (by $\theta>\frac 35$), $\int_{-2|s|^{\frac 35} <y<-|s|^{\frac 35}} \varepsilon^2\lesssim \mathcal N(\varepsilon)^2\lesssim
(C^\star)^2 |s|^{-2+\theta}$.

\medskip

Next, we claim
\begin{equation}
g_{1,3}  = {6} \int Q^2 (\varepsilon\rho)^2  -6 \int Q \Lambda Q \varepsilon^2 + \mathcal O(|s|^{-2+\theta})\label{e.g31}. 
\end{equation}
We start with rough bounds, using \eqref{bd.psi} (in particular $|\psi Q'|\lesssim 1$), and \eqref{e.g15}  , and then \eqref{gnp},
\begin{align*}
&\left| g_{1,3} - {3}\int  \psi_y  Q^2 \varepsilon^2 + 6 \int \psi QQ' \varepsilon^2\right|\\
& \lesssim \int |\psi_y| \left( |b|  |\varepsilon|^2  +|\varepsilon|^3 + |\varepsilon|^4\right) 
+\int |\psi| \left( |b||\varepsilon|^2 + (|Q'|+|b|) |\varepsilon|^3 \right)\\
& \lesssim |s|^{-\frac 12} \int \varepsilon^2 + \int |\varepsilon|^3 + \int |\varepsilon|^4
\lesssim |s|^{-\frac 32}\lesssim |s|^{-2+\theta}.
\end{align*}
Now, using \eqref{e.g15}, 
\[
\left|\int  \psi_y  Q^2 \varepsilon^2 - \int   Q^2 \varepsilon^2 \right| \lesssim 
|s|^{-1} \int \varepsilon^2 \lesssim |s|^{-2} \lesssim |s|^{-2 +\theta},
\]
and, using \eqref{bd.psi}, 
\[
\left|\int \psi QQ' \varepsilon^2-\int y QQ' \varepsilon^2 \right| \lesssim 
|s|^{-\frac 12} \int \varepsilon^2 \lesssim |s|^{-\frac 32} \lesssim |s|^{-2 +\theta}.
\]
Thus,
\begin{align*}
g_{1,3}& =  {3}\int   Q^2 \varepsilon^2 -6 \int y QQ' \varepsilon^2+\mathcal O(|s|^{-2 +\theta})
\\ &={6} \int Q^2 \varepsilon^2-6 \int Q\Lambda Q \varepsilon^2 +\mathcal O(|s|^{-2 +\theta}).
\end{align*}
Hence, to finish the proof of \eqref{e.g31}, we only have to prove
\begin{equation}\label{bof2}
 \int Q^2 \varepsilon^2 |1-\rho^2|  \lesssim |s|^{-2 +\theta},
\end{equation}
but similarly   as before, this follows  from $|\rho^2-1|\lesssim y^2 |s|^{-1}$.

Therefore, we conclude the proof of \eqref{e.g1} gathering \eqref{e.g11}, \eqref{e.g12} and \eqref{e.g31}.

\medskip
\noindent \emph{Estimate for $g_2$.}
We claim   
\begin{equation}\label{e.g2}
g_2=\mathcal O(|s|^{-2+\theta}). 
\end{equation}
Indeed, integrating by parts,
\[
g_2 = - \frac {\lambda_s}{\lambda} \int \varepsilon^2 y \psi_y
\]
Note that since $\theta>\frac 35$, $\varphi \gtrsim 1$ for $y>-2|s|^{\frac 35}$, and thus $\int_{y>-2 |s|^{\frac 35}} |\varepsilon|^2 \lesssim \mathcal N(\varepsilon)^2 \lesssim (C^\star)^2 |s|^{-2+\theta}$. Combining this, \eqref{e.g10}  and \eqref{BS.2}, we find
$
|g_2| \lesssim (C^\star)^3 |s|^{-\frac 52+\frac 32 \theta} 
$, which implies \eqref{e.g2} since $\theta<1$.

\medskip
\noindent \emph{Estimate for $g_3$.}
We claim   
\begin{equation}\label{e.g3}
g_3= 2 \left(\frac{\lambda_s}{\lambda} + b\right) \int \varepsilon y \Lambda  Q+\mathcal O( |s|^{-2+\theta}). 
\end{equation}
First,   by \eqref{bd.psi}
\begin{align*}
\left|\int \Lambda Q  \varepsilon (\psi-y)\right|& \lesssim
\int_{|y|>|s|^{\frac 12}} |\varepsilon| (|\psi|+|y|)\frac {dy}{y^2} + |s|^{-\frac 12} \int_{|y|<|s|^{\frac 12}} |\varepsilon| \\
& \lesssim \|\varepsilon\|_{L^2}  \left(\int_{|y|>|s|^{\frac 12}} dy/y^2\right)^{\frac 12}+ |s|^{-\frac 14} \|\varepsilon\|_{L^2}
    \lesssim  |s|^{-\frac 34}.
\end{align*}
Second, since $\psi\equiv 0$ for $y<-2|s|^{\frac 35}$, $|\psi|\lesssim |s|^{\frac 12}$, and $|\Lambda P_b |\lesssim (1+y_+)^{-1}$
(here $y_+=\max(0,y)$),
\begin{align*}
\left|b \int \Lambda P_b \varepsilon  \psi \right| \lesssim
|s|^{-\frac 12} \int_{y>-2|s|^{\frac 35 }} |\varepsilon| (1+y_+)^{-1}
\lesssim |s|^{- \frac 15} \|\varepsilon\|_{L^2} \lesssim |s|^{-\frac 7{10}}.
\end{align*}
Thus, \eqref{e.g3} follows from \eqref{BS.2} and $\theta>\frac 35$.

\medskip
\noindent \emph{Estimate for $g_4$.}
The proof of the following estimate is similar and easier (due to the stronger decay of $Q'$ with respect to $\Lambda Q$)  
\begin{equation}\label{e.g4}
g_4=2 \left(\frac{x_s}{\lambda} -1\right) \int \varepsilon y   Q' +\mathcal O(  |s|^{-2+\theta}). 
\end{equation}
{Note that to deal with the term $2\left(\frac{x_s}{\lambda} -1\right) \int \varepsilon_y\varepsilon \psi$, we integrate by parts and use \eqref{e.g15}  so that
\[ \left|2\left(\frac{x_s}{\lambda} -1\right) \int \varepsilon_y\varepsilon \psi \right|=
\left|\left(\frac{x_s}{\lambda} -1\right) \int \varepsilon^2 \psi_y \right|
\lesssim \mathcal{N}(\epsilon) \int \varepsilon^2 \lesssim C^{\star} |s|^{-2+\frac{\theta}2} \, .
\]}
\medskip

\noindent \emph{Estimate for $g_5$.}
We claim   
\begin{equation}\label{e.g5}
g_5=\mathcal O(|s|^{-2+\theta}). 
\end{equation}
From \eqref{def.DQb}, one has $\Big|\frac{\partial Q_b}{\partial b}\Big|\lesssim (1+y_+)^{-1} $  and thus,
using \eqref{BS.3} and \eqref{bd.psi},
\begin{align*}
|g_5|&\lesssim C^\star |s|^{-2+\frac \theta 2} \|\psi\|_{L^\infty} \int_{y>-2 |s|^{\frac 35}} |\varepsilon|(1+y_+)^{-1} \\
&\lesssim  C^\star |s|^{-\frac {17}{10} + \frac \theta 2} { |s|^{\frac12}}\|\varepsilon\|_{L^2} \lesssim C^\star |s|^{-{ \frac {17}{10}} + \frac \theta 2}
\lesssim |s|^{-2+\theta},
\end{align*}
since $\theta>\frac35$.
 
\medskip
\noindent \emph{Estimate for $g_6$.}
We claim   
\begin{equation}\label{e.g6}
g_6=\mathcal O(|s|^{-2+\theta}). 
\end{equation}
Indeed, by $\|\psi\|_{L^\infty} \lesssim |s|^{\frac 12}$, Cauchy-Schwarz inequality and \eqref{lprofile.2}, 
we have (using $\theta>\frac 12$)
\[
|g_6|\lesssim |s|^{-1} \|\varepsilon\|_{L^2} \lesssim |s|^{-\frac 32} \lesssim |s|^{-2+\theta}.
\]

\medskip
\noindent \emph{Estimate for $g_7$.}
We claim   
\begin{equation}\label{e.g7}
g_7=\mathcal O(|s|^{-2+\theta}). 
\end{equation}
By the definition of   $\psi$ in \eqref{defG},
\[
  \partial_s \psi = - \frac 12 |s|^{-1} \psi + 
\frac{\pi}2 |s|^{-1}  y \phi'(y|s|^{-\frac 12}) \chi^2(y|s|^{-\frac 35}) 
+ \frac 35 |s|^{-\frac{11}{10}} \pi \Big(\phi(y|s|^{-\frac 12})-\frac 12\Big) y (\chi^2)'(y |s|^{-\frac 35}).
\]
Thus,
$|\partial_s \psi|\lesssim |s|^{-\frac 12} \mathbf{1}_{y>-2 |s|^{\frac 35}}$ and
$$|g_7|\lesssim |s|^{-\frac 12} \mathcal N(\varepsilon)^2 \lesssim (C^\star)^2 |s|^{-\frac 52 + \theta}
\lesssim |s|^{-2+\theta} .$$

\medskip

Therefore, combining \eqref{e.g1}, \eqref{e.g2}, \eqref{e.g3}, \eqref{e.g4}, \eqref{e.g5}, \eqref{e.g6} and 
 \eqref{e.g7},  we finish the proof of \eqref{eq:G}.
\end{proof}

\subsection{Estimates on the functional $K$} 
Recall that $K=\int \varepsilon \mathcal{L}_{\varphi} P_b$ is defined in \eqref{bootstrap.4}.  
\begin{lemma}\label{le:KK}
For $|S_0|$ large enough possibly depending on $C^{\star}$, for all $s\in \mathcal I^\star$, 
\begin{equation} \label{bg:K}
|K| \lesssim \mathcal{N}(\varepsilon)|s|^{\frac{\theta}2} \lesssim C^{\star}|s|^{-1+\theta} \, .
\end{equation}
and
\begin{equation}\label{eq:K}
\Big|\frac {d K}{ds}+\big(\frac{\lambda_s}{\lambda}+b\big)p_0 \Big| 
\lesssim \left( \int \varepsilon^2 \varphi_y \right)^{\frac12} + (C^\star)^2 |s|^{  -2+\frac{3\theta}2}  \, .
\end{equation}
\end{lemma}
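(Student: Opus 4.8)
The plan is to start from the definition $K=\int \varepsilon\,\mathcal L_\varphi P_b$, where $\mathcal L_\varphi f = D^1 f + f\varphi - 3Q^2 f$, and exploit the identity \eqref{cl.2}, namely $\mathcal L_\varphi P_b = R_b + \varphi P_b$, together with the fact that $R_b$ is $L^2$-bounded (see \eqref{bd.PbRb}) and that $\varphi P_b$ is supported where $\varphi\gtrsim 1$. For the pointwise bound \eqref{bg:K}, I would write $|K|\le \|\mathcal L_\varphi P_b\|_{L^2(\varphi^{-1})}$-type estimate: more concretely, $|K|\le (\int (\mathcal L_\varphi P_b)^2 \varphi^{-1})^{1/2} (\int \varepsilon^2\varphi)^{1/2} \lesssim (\int (\mathcal L_\varphi P_b)^2\varphi^{-1})^{1/2}\,\mathcal N(\varepsilon)$, and then bound $\int (\mathcal L_\varphi P_b)^2 \varphi^{-1}$. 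Using $\mathcal L_\varphi P_b = R_b + \varphi P_b$, the $R_b$ contribution gives $\int R_b^2\varphi^{-1}\lesssim \sup \varphi^{-1}$ on $\mathrm{supp}\,R_b$; since $R_b$ decays and $\varphi^{-1}\lesssim |s|^\theta$ where $\varphi$ is smallest, one gets $\lesssim |s|^\theta$, and the $\varphi P_b$ term gives $\int \varphi P_b^2 \lesssim \|P_b\|_{L^2}^2\cdot\sup\varphi\lesssim |b|^{-1}\lesssim |s|$ — wait, this needs care; more safely, localize: $\varphi P_b$ is essentially $P_b$ restricted to $y\gtrsim -|s|^\theta$ where $P\in\mathcal Z$ gives $|P(y)|\lesssim (1+|y|)^{-1}$ for $y>0$ and $|P|\lesssim 1$ for $-|s|^\theta<y<0$, so $\int \varphi^{-1}(\varphi P_b)^2 = \int \varphi P_b^2 \lesssim \int_{-2|s|^\theta}^0 1 + \int_0^\infty (1+y)^{-2}\lesssim |s|^\theta$. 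Hence $|K|\lesssim |s|^{\theta/2}\mathcal N(\varepsilon)\lesssim C^\star |s|^{-1+\theta}$ by \eqref{bootstrap.4}.

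For the derivative estimate \eqref{eq:K}, I would differentiate $K = \int \varepsilon\, \mathcal L_\varphi P_b$ in $s$, producing three groups of terms: (a) $\int \varepsilon_s\,\mathcal L_\varphi P_b$, (b) $\int \varepsilon\, \partial_s(\mathcal L_\varphi P_b)$ coming from $\partial_s\varphi$ and from $\partial_s P_b = b_s\,\partial_b P_b$. For (a), I substitute the equation \eqref{modulation.4} for $\varepsilon_s$; the leading term is $\int \big(D^1\varepsilon + \varepsilon - ((Q_b+\varepsilon)^3-Q_b^3)\big)_y \mathcal L_\varphi P_b$, and after integration by parts this becomes $-\int V \,(\mathcal L_\varphi P_b)_y$ where $V$ is the nonlinear expression. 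The key algebraic point — this is where the "special algebra related to scaling variation" enters — is that $\mathcal L P = $ (something whose derivative is $\Lambda Q$) by \eqref{nlprofile.1}, more precisely $(\mathcal L P)' = \Lambda Q$, so the dominant linear piece produces $-\int D^1\varepsilon\,( D^1 P_b + \varphi P_b - 3Q^2 P_b)_y$ which, modulo the localization error $\varphi-1$ and the cutoff $\chi_b$, reduces to $-\int \varepsilon\, \mathcal L(P)' = -\int \varepsilon\,\Lambda Q$; but $(\varepsilon,\Lambda Q) = \mathcal O(|b|) = \mathcal O(|s|^{-1})$ by \eqref{modulation.2}... this needs to be matched against the $(\lambda_s/\lambda+b)p_0$ term. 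The correct bookkeeping is: the modulation terms in \eqref{modulation.4} contribute $(\lambda_s/\lambda+b)\int \Lambda Q_b\,\mathcal L_\varphi P_b$, whose leading part is $(\lambda_s/\lambda+b)\int \Lambda Q\cdot \mathcal L P = -(\lambda_s/\lambda+b)\int (\mathcal L P)' \rho$-type $= -(\lambda_s/\lambda+b)\,p_0$ after using $(\mathcal L P)'=\Lambda Q$, $\mathcal L\Lambda Q = -Q$ and $p_0=(P,Q)$; this is the term displayed in \eqref{eq:K}.

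So concretely, I would organize the computation of $dK/ds$ as: (i) isolate the main term $-(\lambda_s/\lambda+b)p_0$ coming from the $(\lambda_s/\lambda+b)\Lambda Q_b$ term of \eqref{modulation.4} paired against the principal part of $\mathcal L_\varphi P_b$ (using \eqref{nlprofile.1}, \eqref{nlprofile.2}, and that $\varphi-1$, $\chi_b-1$ produce only $\mathcal O(|s|^{-2\theta})$ and $\mathcal O(|b|^N)$ errors as in \eqref{phi_B} and the estimates in Lemma \ref{lprofile}); (ii) the term $-\int V\,(\mathcal L_\varphi P_b)_y$: the linear $D^1\varepsilon$ and $\varepsilon$ pieces, after integration by parts, pair with $(\mathcal L_\varphi P_b)_y$; the worst residual is $\int D^1\varepsilon\cdot (\varphi P_b)_y$ which by the commutator estimate \eqref{COMM1} / \eqref{Calderon} and Cauchy–Schwarz with the weight $\varphi_y$ gives $(\int \varepsilon^2\varphi_y)^{1/2}$ times a bounded factor — this is the source of the $(\int \varepsilon^2\varphi_y)^{1/2}$ term in \eqref{eq:K}; the cubic pieces $((Q_b+\varepsilon)^3-Q_b^3)$ paired with $(\mathcal L_\varphi P_b)_y$ are bounded by $\|\varepsilon\|_{L^2}\|\varepsilon\|_{\dot H^{1/2}}$ and $|b|$-small factors, giving $\lesssim (C^\star)^2|s|^{-2+3\theta/2}$; (iii) the remaining modulation terms $(\lambda_s/\lambda)\Lambda\varepsilon$, $(x_s/\lambda-1)(Q_b+\varepsilon)_y$, $-(b_s+b^2)\partial_b Q_b$, $\Psi_b$, paired with $\mathcal L_\varphi P_b\in L^2(\varphi^{-1})$-controlled — each is estimated using \eqref{BS.2}, \eqref{BS.3}, \eqref{lprofile.2} and the bound $\|\mathcal L_\varphi P_b\|$ weighted by $\varphi^{-1}\lesssim|s|^\theta$, yielding $\lesssim C^\star |s|^{-2+3\theta/2}$ or better; (iv) the $\partial_s$ of the weight: $\int\varepsilon\,\partial_s(\varphi P_b) = \int\varepsilon\,(\partial_s\varphi)P_b + b_s\int\varepsilon\,\varphi\,\partial_b P_b$, controlled via \eqref{varphi_s} (giving a $(\int\varepsilon^2\varphi_y)^{1/2}$-type or $|s|^{-1-\theta}$-type bound) and \eqref{BS.5} together with \eqref{dPb_db}. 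Collecting all these, the identity \eqref{eq:K} follows.

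The main obstacle I anticipate is step (ii): extracting cleanly the principal $(\int \varepsilon^2\varphi_y)^{1/2}$ term from $-\int V\,(\mathcal L_\varphi P_b)_y$ while controlling the nonlocal $D^1$ interaction against the weight $\varphi$. One must integrate by parts carefully — $(\mathcal L_\varphi P_b)_y = \mathcal H P_b'' + (\varphi P_b)_y - (3Q^2 P_b)_y$ — and handle $\int (D^1\varepsilon)(\varphi P_b)_y = \int (D^1\varepsilon)(\varphi_y P_b + \varphi P_b')$. The term $\int(D^1\varepsilon)\varphi_y P_b$ is the delicate one: $P_b$ is bounded and supported essentially where $\varphi_y$ is supported, so Cauchy–Schwarz against $\sqrt{\varphi_y}$ (after possibly a commutator move $D^{1/2}$ to balance derivatives, as in the proof of Lemma \ref{le:FF}) yields $\lesssim (\int\varepsilon^2\varphi_y)^{1/2}\cdot(\int\varphi_y P_b^2)^{1/2}$, and $\int \varphi_y P_b^2\lesssim\sup\varphi_y\cdot\|P_b\|_{L^2}^2$ which is dangerously $\sim |s|^{-2\theta}|b|^{-1}\sim|s|^{-2\theta+1}$; so one cannot afford the full $\|P_b\|_{L^2}^2$ — instead one must use that on $\mathrm{supp}\,\varphi_y$ (near $y\sim -B|s|^\theta$), $P_b$ is already small, or more simply that $\varphi_y P_b^2 \lesssim \varphi_y \cdot (1+|y|)^{-2}$ for $y>0$ plus a bounded contribution for $-2|b|^{-1}<y<0$ giving $\int\varphi_y P_b^2\lesssim 1$. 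Getting these two localizations (support of $\varphi_y$ versus decay of $P_b = \chi_b P$, $P\in\mathcal Z$) to cooperate is the crux; everything else is bookkeeping with the already-established estimates \eqref{bd.PbRb}, \eqref{modulation.2}, \eqref{BS.1}--\eqref{BS.4}.
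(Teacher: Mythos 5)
Your overall architecture for \eqref{eq:K} matches the paper's: differentiate $K$, substitute \eqref{modulation.4}, extract the term $-(\frac{\lambda_s}{\lambda}+b)p_0$ from the modulation term $(\frac{\lambda_s}{\lambda}+b)\int \Lambda Q_b\,\mathcal L_\varphi P_b$ via $\mathcal L\Lambda Q=-Q$ and $p_0=(P,Q)$, and trace the $(\int\varepsilon^2\varphi_y)^{1/2}$ error to the interaction $\int (D^1\varepsilon)\,\varphi_y P_b$. However, there are two genuine gaps. First, in your treatment of the transport term you assert that the dominant linear piece "reduces to $-\int\varepsilon\,\mathcal L(P)'=-\int\varepsilon\,\Lambda Q$", which you then note is $\mathcal O(|b|)=\mathcal O(|s|^{-1})$. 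If that were the term, the lemma would fail: $|s|^{-1}$ is far larger than the allowed error $(C^\star)^2|s|^{-2+\frac{3\theta}{2}}$ (recall $\theta<\tfrac23$), and indeed $(\varepsilon,\Lambda Q)=-b(P_b,\Lambda Q)$ is genuinely of size $|s|^{-1}$. The correct algebra pairs $\mathcal L\varepsilon$ (the full linear part of $V$), not $\varepsilon$, against $(\mathcal L P)'=\Lambda Q$, and self-adjointness gives $-\int\mathcal L\varepsilon\,\Lambda Q=\int\varepsilon\,\mathcal L\Lambda Q\cdot(-1)=\int\varepsilon Q$, which is $\mathcal O((C^\star)^2|s|^{-2+\theta})$ precisely because of the energy-adapted choice of $b$ encoded in \eqref{modulation.6} and \eqref{BS.4}. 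You have lost one application of $\mathcal L$, and with it the whole reason the scheme closes.

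Second, your proof of \eqref{bg:K} by global Cauchy--Schwarz with weight $\varphi^{-1}$ rests on the claim that $\varphi^{-1}\lesssim|s|^{\theta}$ "where $\varphi$ is smallest"; this is false. From \eqref{def.varphi}, $\varphi(s,y)\sim B/(\pi|y|)$ for $y\ll -B|s|^{\theta}$, so $\varphi^{-1}\sim|y|$ is unbounded there, and the integral $\int (D^1P_b)^2\varphi^{-1}$ is a borderline computation: $D^1P=\mathcal HP'$ decays only like $1/|y|$ (since $\int P'=-\frac12\int Q\neq0$), so $(D^1P_b)^2\varphi^{-1}\sim 1/|y|$ on the intermediate range $|s|^{\theta}\lesssim|y|\lesssim|s|$, and one must use $\int P_b'=0$ to get $y^{-2}$ decay beyond $|y|\sim|s|$ before concluding $\int(D^1P_b)^2\varphi^{-1}\lesssim\ln|s|$. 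You neither perform nor correctly justify this. The paper's route for this term is both simpler and robust: $|\int\varepsilon\,D^1P_b|\le\|D^{\frac12}\varepsilon\|_{L^2}\|D^{\frac12}P_b\|_{L^2}\lesssim\mathcal N(\varepsilon)|\ln|s||^{\frac12}$ by \eqref{bd.PbRb}, with the weighted Cauchy--Schwarz reserved for the local term $\int\varepsilon P_b\varphi$ only. Relatedly, for $\int(D^1\varepsilon)\varphi_yP_b$ a naive Cauchy--Schwarz against $\sqrt{\varphi_y}$ produces $(\int(D^1\varepsilon)^2\varphi_y)^{1/2}$, which is not controlled; the paper's device is Lemma \ref{pw.HY}, giving the pointwise bound $|\mathcal H\partial_y(P_b\varphi_y)|\lesssim\varphi_y$ and hence $|\int\varepsilon\,\mathcal H\partial_y(P_b\varphi_y)|\lesssim(\int\varepsilon^2\varphi_y)^{1/2}$ --- you gesture at "a commutator move" but do not commit to an argument that actually balances the derivatives.
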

\begin{proof}
We begin with the proof of \eqref{bg:K}. By using the definition of $\mathcal{L}_{\varphi}$ in \eqref{defK}, we decompose $K$ as 
\begin{displaymath} 
K=\int \varepsilon D^1P_b +\int \varepsilon P_b \varphi-3\int \varepsilon Q^2P_b \, .
\end{displaymath}
First, we deduce from \eqref{Pb}, \eqref{def.varphi} and \eqref{defN}  that
\begin{displaymath}
\begin{split}
\Big| \int \varepsilon P_b \varphi \Big| &\lesssim \int_{y>0}	\frac{|\varepsilon|}{1+y}+\int_{ -2B|s|^{\theta}<y<0}|\varepsilon| \varphi+\int_{-2|s|<y<-2B|s|^{\theta}}|\varepsilon| \varphi \\
& \lesssim \mathcal{N}(\varepsilon)+\mathcal{N}(\varepsilon)|s|^{
\frac{\theta}2}+\mathcal{N}(\varepsilon)\left( \int_{-2|s|<y<-2B|s|^{\theta}}\frac1{1+|y|}\right)^{\frac12} \\ & \lesssim \mathcal{N}(\varepsilon)|s|^{
\frac{\theta}2} \, .
\end{split}
\end{displaymath}
Second, we deduce from \eqref{bd.PbRb} and \eqref{defN} that
\begin{displaymath}
\Big| \int \varepsilon D^1P_b  \Big| = \Big| \int D^{\frac12}\varepsilon D^{\frac12}P_b  \Big| \lesssim 
\|D^{\frac12}\varepsilon\|_{L^2} \|D^{\frac12}P_b\|_{L^2} \lesssim \mathcal{N}(\varepsilon) \ln|s| \, .
\end{displaymath}
Moreover, we get easily that 
\begin{displaymath}
\Big|\int \varepsilon Q^2P_b  \Big| \lesssim \mathcal{N}(\varepsilon) \, .
\end{displaymath}
Those estimates together with the bootstrap hypothesis \eqref{bootstrap.4} conclude the proof of \eqref{bg:K}. 

\smallskip
Next, we turn to the proof of \eqref{eq:K}. By using the equation of $\varepsilon_s$ in \eqref{modulation.4}, we compute 
\begin{equation} \label{eq:K.1}
\begin{split}
\frac{dK}{ds}&=\int \varepsilon_s \mathcal{L}_{\varphi}P_b+\int \varepsilon \varphi_s P_b+b_s \int \varepsilon \mathcal{L}_{\varphi}\left(\frac{\partial P_b}{\partial b}\right)\\
&=k_1+k_2+k_3+k_4+k_5+k_6+k_7 \, ,
\end{split}
\end{equation}
where
\begin{align*}
	& k_1 =  \int V_y  \mathcal{L}_{\varphi}P_b \, , 
	& k_2& =  \frac{\lambda_s}{\lambda} \int (\Lambda \varepsilon) \mathcal{L}_{\varphi}P_b \, , \\
	&k_3=(\frac {\lambda_s}{\lambda} + b) \int (\Lambda Q_b) \mathcal{L}_{\varphi}P_b \, ,
	& k_4& = (\frac {x_s}\lambda-1) \int (Q_b+\varepsilon)_y \mathcal{L}_{\varphi}P_b\, , \\
	&k_5= -(b_s+b^2)  \int \frac{\partial Q_b}{\partial b} \mathcal{L}_{\varphi}P_b\, ,
	& k_6& =  \int \Psi_b \mathcal{L}_{\varphi}P_b\, , \\
	&k_7 = \int \varepsilon \varphi_s P_b+b_s \int \varepsilon \mathcal{L}_{\varphi}\left(\frac{\partial P_b}{\partial b}\right)\, .
\end{align*}
In the rest of the proof, we estimate $k_1, \cdots, k_7$ separately, taking $S_0$ large enough, possibly depending on $C^{\star}$.

\medskip

\noindent \emph{Estimate for $k_1$.} In order to estimate $k_1$, we rewrite $\mathcal{L}_{\varphi}P_b$ as 
\begin{equation} \label{LvarphiPb}
\begin{split}
\mathcal{L}_{\varphi}P_b&=D^1P_b+P_b\varphi-3Q^2P_b \\ 
&=\mathcal{L}P+D^1(P(\chi_b-1))+P(\chi_b\varphi-1)-3Q^2P(\chi_b-1) \, .
\end{split}
\end{equation}
Moreover,  integrating by parts and using the definition of $V$ in \eqref{defV},
\begin{displaymath}
\begin{split}
k_1&=-\int \mathcal{L}\varepsilon \big(\mathcal{L}_{\varphi}P_b \big)_y+3\int (Q_b^2-Q^2) \varepsilon \big(\mathcal{L}_{\varphi}P_b \big)_y+\int \big(3Q_b\varepsilon^2+\varepsilon^3\big)\big(\mathcal{L}_{\varphi}P_b \big)_y \\ 
&=k_{1,1}+k_{1,2}+k_{1,3} \, .
\end{split}
\end{displaymath}

First, we deal with $k_{1,1}$. Recalling that $\mathcal{L}(\Lambda Q)=-Q$,  the equation in \eqref{nlprofile.1} and \eqref{LvarphiPb}, we have 
\begin{displaymath}
\begin{split}
k_{1,1}&=\int \varepsilon Q-\int \mathcal{L}\varepsilon \big(D^1(P(\chi_b-1))\big)_y-\int  \mathcal{L}\varepsilon \big(P(\chi_b\varphi-1)\big)_y+3\int \mathcal{L}\varepsilon \big(Q^2P(\chi_b-1)\big)_y\\ 
&=\int \varepsilon Q+k_{1,1,1}+k_{1,1,2}+k_{1,1,3} \, .
\end{split}
\end{displaymath}
From the definition of $\mathcal{L}$, \eqref{BS.1}, the definition of $\chi_b$ in \eqref{Pb}
and $P\in \mathcal Z$, we get that 
\begin{equation} \label{eq:k111}
|k_{1,1,1}| \lesssim \|\varepsilon\|_{L^2}\big(\|(P(\chi_b-1))'''\|_{L^2}+\|(P(\chi_b-1))''\|_{L^2}\big) \lesssim |s|^{-2} \, ,
\end{equation}
and
\begin{equation} \label{eq:k113}
|k_{1,1,3}| \lesssim \|\varepsilon\|_{L^2}\big(\|(Q^2P(\chi_b-1))'\|_{L^2}+\|(Q^2P(\chi_b-1))''\|_{L^2}\big) \lesssim |s|^{-\frac92} \, .
\end{equation}
We rewrite $k_{1,1,2}$ as 
\begin{displaymath} 
k_{1,1,2}=\int D^1\varepsilon
\big(P(\chi_b\varphi-1)\big)_y+\int(1-3Q^2) \varepsilon \big(P(\chi_b\varphi-1)\big)_y \, .
\end{displaymath}
To treat the first term above, we use the decomposition 
$$\big(P(\chi_b\varphi-1)\big)_y=P'(\chi_b\varphi-1)+P\chi_b'\varphi+P_b\varphi_y \, .$$
On the one hand, observe that 
\begin{displaymath}
\int D^1\varepsilon \big(P'(\chi_b\varphi-1)+P\chi_b'\varphi\big)
=-\int \mathcal{H}\varepsilon\Big(P''(\chi_b\varphi-1)+2P'\chi_b'\varphi
+P\chi_b''\varphi+P\chi_b'\varphi_y\Big) ,
\end{displaymath}
\[ \|P''(1-\chi_b\varphi)\|_{L^2}\lesssim \left(\int_{y<-\frac{B}2|s|^{\theta}}\frac1{|y|^6} \right)^{\frac12}
+|s|^{-2\theta}\|yP''\|_{L^2}\lesssim |s|^{-2\theta} \, ,
\]
thanks to \eqref{phi_B}, using $P'\in \mathcal Y_2$ and the definition of $\varphi$ in \eqref{def.varphi},
\[ \|P'\chi_b'\varphi\|_{L^2}\lesssim |b|\left(\int_{-2|s|<y<-|s|}\frac1{|y|^6} \right)^{\frac12} \lesssim |s|^{-\frac72} \, ,
\]
\[ \|P\chi_b''\varphi\|_{L^2}\lesssim |b|^2\left(\int_{-2|s|<y<-|s|}\frac1{|y|^2} \right)^{\frac12} \lesssim |s|^{-\frac52} \, ,
\]
and 
\[ \|P\chi_b'\varphi_y\|_{L^2}\lesssim |b|\left(\int_{-2|s|<y<-|s|}\frac1{|y|^4} \right)^{\frac12} \lesssim |s|^{-\frac52} \, .
\]
Then, we deduce from \eqref{BS.1} that 
\begin{displaymath} 
\left|\int D^1\varepsilon \big(P'(\chi_b\varphi-1)+P\chi_b'\varphi\big)\right|\lesssim |s|^{-\frac12-2\theta} \, .
\end{displaymath}
On the other hand, Lemma \ref{pw.HY} and the translation invariance of $\mathcal{H}$ 
together with \eqref{def.Dvarphi} imply that
\[
|\mathcal{H}\partial_y\big(P_b \varphi_y\big)| \lesssim \varphi_y,
\]
and thus
\begin{displaymath} 
\left|\int D^1\varepsilon P_b \varphi_y \right|=
\left|\int \varepsilon \mathcal{H}\partial_y\big(P_b \varphi_y\big) \right|
\lesssim \int |\varepsilon|\varphi_y \lesssim \left(\int \varepsilon^2 \varphi_y \right)^{\frac12} \, .
\end{displaymath}
Hence, we conclude  that 
\begin{equation} \label{eq:k112.1}
\left|\int D^1\varepsilon
\big(P(\chi_b\varphi-1)\big)_y\right| \lesssim  |s|^{-\frac12-2\theta}+\left(\int \varepsilon^2 \varphi_y \right)^{\frac12} \, .
\end{equation}
We write the second term in the decomposition of $k_{1,1,2}$ as 
\begin{displaymath}
\begin{split}
\int(1&-3Q^2) \varepsilon \big(P(\chi_b\varphi-1)\big)_y
\\&=\int(1-3Q^2) \varepsilon P'(\chi_b\varphi-1)+\int(1-3Q^2) \varepsilon P\chi_b'\varphi+\int(1-3Q^2) \varepsilon P_b\varphi_y \, .
\end{split}
\end{displaymath}
By using  \eqref{BS.1}, $P'\in \mathcal Y_2$, and \eqref{phi_B}, we have 
\begin{displaymath}
\begin{split}
\left|\int(1-3Q^2) \varepsilon P'(\chi_b\varphi-1) \right|
&\lesssim \|\varepsilon\|_{L^2}\left(\int_{y<-\frac{B}2|s|^{\theta}}\frac1{|y|^4}\right)^{\frac12}+|s|^{-2\theta}\int_{y>-\frac{B}2|s|^{\theta}} |\varepsilon|\frac{|y|}{(1+|y|)^2} \\ 
& \lesssim |s|^{-\frac12-\frac{3\theta}2}+|s|^{-2\theta}\mathcal{N}(\varepsilon) \, ,
\end{split}
\end{displaymath}
\[ \left|\int(1-3Q^2) \varepsilon P\chi_b'\varphi \right|
\lesssim |b|\left(\int\varepsilon^2\varphi \right)^{\frac12}\left(\int_{-2|s|<y<-|s|}\frac1{|y|} \right)^{\frac12}
 \lesssim |s|^{-1}\mathcal{N}(\varepsilon) \, ,
\]
and 
\[\left|\int(1-3Q^2) \varepsilon P_b\varphi_y \right|  
 \lesssim \left( \int \varepsilon^2 \varphi_y \right)^{\frac12} \|\varphi_y\|_{L^1}^{\frac 12}
\lesssim \left( \int \varepsilon^2 \varphi_y \right)^{\frac12} \, .
\]
Hence, we deduce gathering those estimates and using \eqref{bootstrap.4} that 
\begin{equation} \label{eq:k112.2}
\left|\int(1-3Q^2) \varepsilon \big(P(\chi_b\varphi-1)\big)_y\right| \lesssim  |s|^{-\frac12-\frac{3\theta}2}+\left(\int \varepsilon^2 \varphi_y \right)^{\frac12} \, ,
\end{equation}
since $2-\frac{\theta}2>\frac12+\frac{3\theta}2$. Therefore, we conclude from \eqref{eq:k112.1} and \eqref{eq:k112.2} that
\begin{equation} \label{eq:k112}
|k_{1,1,2}| \lesssim  |s|^{-\frac12-\frac{3\theta}2}+\left(\int \varepsilon^2 \varphi_y \right)^{\frac12} \, .
\end{equation}

Next, we derive an estimate for $k_{1,2}$. Since 
\[k_{1,2}=3\int (2bQP_b+b^2P_b^2) \varepsilon \big(D^1P_b+P_b\varphi-3Q^2P_b \big)_y \, ,
\]
we deduce easily from \eqref{BS.1} that
\begin{equation} \label{eq:k12}
|k_{1,2}| \lesssim |b|\|\varepsilon\|_{L^2} \lesssim |s|^{-\frac32} \, .
\end{equation}

Finally, since 
\[ k_{1,3}=\int \big(3(Q+bP_b)\varepsilon^2+\varepsilon^3\big)\big(D^1P_b+P_b\varphi-3Q^2P_b \big)_y \, ,\]
we deduce by using the Cauchy-Schwarz inequality, the Gagliardo-Nirenberg inequalities \eqref{gnp} with $p=4$ and $p=6$, \eqref{bootstrap.4} and \eqref{BS.1} that
\begin{equation} \label{eq:k13}
|k_{1,3}| \lesssim \left( \int \varepsilon^4\right)^{\frac12}+\left( \int \varepsilon^6\right)^{\frac12} \lesssim   \|\varepsilon\|_{L^2} \, \big(\mathcal{N}(\varepsilon)+\mathcal{N}(\varepsilon)^2\big)
\lesssim C^{\star}|s|^{-\frac32+\frac{\theta}2} \, .
\end{equation}

Therefore, we conclude gathering \eqref{BS.4} (to control $( \varepsilon,Q)$), \eqref{eq:k111}, \eqref{eq:k113}, \eqref{eq:k112}, \eqref{eq:k12} and \eqref{eq:k13} that 
\begin{equation} \label{eq:k1}
|k_1| \lesssim  C^{\star}|s|^{-\frac32+\frac{\theta}2} +\left(\int \varepsilon^2 \varphi_y \right)^{\frac12}\, .
\end{equation}

\medskip

\noindent  \emph{Estimate for $k_2$.} We decompose $k_2$ as
\begin{displaymath}
\begin{split}
k_2&=\frac {\lambda_s}\lambda \int \Lambda \epsilon\big( D^1P_b+P_b\varphi-3Q^2P_b \big) \\
&=k_{2,1}+k_{2,2}+k_{2,3} \, ,
\end{split}
\end{displaymath}
and estimate each term separately. 

By the definition of $\Lambda \varepsilon$, we have 
\[ 
 \int \Lambda \varepsilon D^1P_b=\frac12\int \varepsilon D^1P_b+\int y\varepsilon_y D^1P_b \, .
\]
On the one hand, we see from \eqref{bd.PbRb} and \eqref{BS.1} that
\[ 
\left|\int \varepsilon D^1P_b\right|\lesssim \|D^{\frac12}\varepsilon\|_{L^2} \|D^{\frac12}P_b\|_{L^2} \lesssim \mathcal{N}(\varepsilon) |\ln |s||^{\frac12} \, .
\]
On the other hand, we have from the properties of $\mathcal{H}$
\[ 
\int y\varepsilon_y D^1P_b=-\int \mathcal{H}(y\varepsilon_y)P_b'=-\int y (\mathcal{H}\varepsilon_y) P_b'=-\int yD^1\varepsilon P_b' \, ,
\]
so that by \eqref{yPbprime}
\[ 
\left|\int y\varepsilon_y D^1P_b \right| \lesssim \|D^{\frac12}\varepsilon\|_{L^2} \|D^{\frac12}(yP_b')\|_{L^2} \lesssim \mathcal{N}(\varepsilon) \, .
\]
Hence, it follows from \eqref{bootstrap.4}, \eqref{BS.1} and \eqref{BS.2} that 
\[ 
|k_{2,1}| \lesssim (C^{\star})^2|s|^{-2+\theta}|\ln |s||^{\frac12} \, .
\]

After integrating by parts, we have
\[ 
 \int \Lambda \varepsilon P_b\varphi=-\frac12 \int \varepsilon P_b\varphi-
 \int y\varepsilon P_b'\varphi -\int y\varepsilon P_b\varphi_y\, .
\]
Moreover, 
\[ 
\left| \int \varepsilon P_b\varphi \right|+\left| \int y\varepsilon P_b'\varphi \right|
\lesssim \left(\int \varepsilon^2 \varphi\right)^{\frac12} \left(\left(\int P_b^2\varphi\right)^{\frac12} +\left(\int (yP_b')^2\varphi\right)^{\frac12}\right)
\lesssim \mathcal{N}(\varepsilon) |s|^{\frac{\theta}2} \, ,
\]
\[
\left| \int y\varepsilon P_b\varphi_y \right| \lesssim \left(\int (yP_b)^2 \varphi_y\right)^{\frac12}\left(\int \varepsilon^2 \varphi_y\right)^{\frac12} 
\]
and 
\[ 
\int (yP_b)^2 \varphi_y \lesssim \int_{y>0}\varphi_y+\int_{-2B|s|^{\theta}<y<0}|y|^2\varphi_y+\int_{-2|s|<y<-2B|s|^{\theta}}B\frac{y^2}{1+y^2}\lesssim |s|^{2\theta} \, .
\]
Hence, we deduce from \eqref{bootstrap.4} and \eqref{BS.1} and \eqref{BS.2} that 
\[ 
|k_{2,2}| \lesssim (C^{\star})^2|s|^{-2+\frac{3\theta}2} +C^{\star}|s|^{-1+\frac{3\theta}2}\left(\int \varepsilon^2 \varphi_y\right)^{\frac12} \, .
\]

Finally, we get easily 
\[ 
\left|\int \Lambda \varepsilon Q^2P_b \right| \lesssim \|\varepsilon\|_{L^2}  \|\Lambda (Q^2P_b)\|_{L^2} \lesssim \|\varepsilon\|_{L^2} \, ,
\]
so that, by using \eqref{BS.1},
\[ 
|k_{2,3}| \lesssim C^{\star}|s|^{-\frac32+\frac{\theta}2}  \, .
\]

Therefore, we conclude gathering those estimates that 
\begin{equation} \label{eq:k2}
|k_2| \lesssim  (C^{\star})^2|s|^{-2+\frac{3\theta}2} +C^{\star}|s|^{-1+\frac{3\theta}2}\left(\int \varepsilon^2 \varphi_y\right)^{\frac12} \, .
\end{equation}

\medskip

\noindent \emph{Estimate for $k_3$.} We split $k_3$ as 
\begin{displaymath}
\begin{split}
k_3&=(\frac {\lambda_s}\lambda+b) \int \Lambda Q \mathcal{L}_{\varphi}P_b
+(\frac {\lambda_s}\lambda+b)b \int \Lambda P_b \mathcal{L}_{\varphi}P_b\\
&=k_{3,1}+k_{3,2} \, ,
\end{split}
\end{displaymath}
and estimate each term separately. 

By using \eqref{LvarphiPb} and the identity $\mathcal{L}(\Lambda Q)=-Q$, we have 
(see the definition of $p_0=(P,Q)$ in \eqref{nlprofile.2})
\[k_{3,1}=(\frac {\lambda_s}\lambda+b)\left(-p_0+\int \Lambda Q\big(D^1(P(\chi_b-1))+P(\chi_b\varphi-1)-3Q^2P(\chi_b-1)\big)\right) \, . \]
Moreover, since $\Lambda Q \in \mathcal{Y}_2$, Lemma \ref{HY} implies that $\mathcal{H}(\Lambda Q) \in \mathcal{Y}_1$, and thus 
\[ 
\left|\int \Lambda Q D^1(P(\chi_b-1))\right| = \left|\int P(\chi_b-1) \partial_y\mathcal{H}(\Lambda Q) \right| \lesssim \int_{y <-|s|}\frac1{|y|^2} \lesssim |s|^{-1} \, .
\]
By using \eqref{phi_B}, $\Lambda Q\in \mathcal Y_2$, $P\in \mathcal Z$, we also have that 
\[
\begin{split}
&\left|\int \Lambda Q (P(\chi_b\varphi-1))\right|\\ & \lesssim \int_{y <-\frac{B}2|s|^{\theta}}\frac1{|y|^2}+|s|^{-2\theta}\left(\int_{-\frac{B}2|s|^{\theta}<y<0}\frac{|y|}{(1+|y|)^2}+\int_{y>0}\frac{|y|}{(1+|y|)^3}\right) \lesssim |s|^{-\theta} \, ,
\end{split}
\]
and 
\[
\left|\int \Lambda Q Q^2P(\chi_b-1))\right| \lesssim \int_{y<-|s|}\frac1{|y|^6} \lesssim |s|^{-5} \, .
\]
Hence, we deduce from \eqref{bootstrap.1} and \eqref{BS.2} that 
\[
\big|k_{3,1}+(\frac {\lambda_s}\lambda+b)p_0\big| \lesssim C^{\star}|s|^{-1-\frac{\theta}2}.
\]

Next, we look at $k_{3,2}$. We have that
\[
k_{3,2}=(\frac {\lambda_s}\lambda+b)b\int \big(\frac12 P_b+yP_b'\big) \big(D^1P_b+P_b\varphi-3Q^2P_b \big)
\]
By using \eqref{bd.PbRb} and \eqref{yPbprime}, we see that 
\[
\left|\int P_b D^1P_b \right| =\|D^{\frac12}P_b\|_{L^2}^2 \lesssim \ln|s|
\]
and 
\[
\left|\int yP_b' D^1P_b \right| \le \|D^{\frac12}(yP_b')\|_{L^2}\|D^{\frac12}P_b\|_{L^2} \lesssim |\ln|s||^{\frac12} \, .
\]
We also have that  (see the definition of $\varphi$ in \eqref{def.varphi})
\begin{equation}\label{encore.truc}
\left|\int P_b^2\varphi \right| 
\lesssim \int_{-2 |b|^{-1}<y<-10 B|s|^{\theta}} \frac {dy} {|y|} + \int_{y>-10 B|s|^{\theta}} P^2
\lesssim |s|^{\theta},
\end{equation}
\[
\left|\int yP_b'P_b\varphi \right| \lesssim \int |P'|+|b|\int |\chi'(|b|y)| \lesssim 1 \, 
\]
and 
\[
\left|\int \Lambda P_bQ^2P_b \right| \lesssim \int Q^2 \lesssim 1 \, .
\]
Hence, we deduce from \eqref{bootstrap.4} and \eqref{BS.1} that
\[
\big|k_{3,2}\big| \lesssim C^{\star}|s|^{-2+\frac{3\theta}2}
\]

Therefore, we conclude gathering those estimates that 
\begin{equation} \label{eq:k3}
\big|k_{3}+(\frac {\lambda_s}\lambda+b)p_0\big|  \lesssim   C^{\star} |s|^{-2+\frac{3\theta}2} \, .
\end{equation}
\medskip

\noindent \emph{Estimate for $k_4$.} We decompose $k_4$ as follows. 
\begin{displaymath}
\begin{split}
k_4&=(\frac {x_s}\lambda-1) \int Q' \mathcal{L}_{\varphi}P_b
+(\frac {x_s}\lambda-1)b \int P_b'\mathcal{L}_{\varphi}P_b
+(\frac {x_s}\lambda-1)\int \varepsilon_y\mathcal{L}_{\varphi}P_b \\
&=k_{4,1}+k_{4,2}+k_{4,3} \, .
\end{split}
\end{displaymath}
We estimate each term separately. 

The decomposition   \eqref{LvarphiPb} and the property $\mathcal{L}(Q')=0$ imply
\[k_{4,1}=(\frac {x_s}\lambda-1)\int Q'\big(D^1(P(\chi_b-1))+P(\chi_b\varphi-1)-3Q^2P(\chi_b-1)\big) \, . \]
Moreover, we have the following bounds 
\[\left| \int Q' D^1(P(\chi_b-1))\right| \lesssim \|Q\|_{L^2}\|(P(\chi_b-1))''\|_{L^2}\lesssim |s|^{-\frac32} \, ,
\]
\[\left| \int Q' P(\chi_b\varphi-1)\right| \lesssim \int_{y<-\frac{B}2|s|^{\theta}}\frac1{|y|^3}+|s|^{-2\theta}\int \frac{|y|}{(1+|y|)^3} \lesssim |s|^{-2\theta}\, ,
\]
thanks to \eqref{phi_B}, and
\[\left| \int Q' Q^2P(\chi_b-1)\right| \lesssim \int_{y < -|s|}\frac1{|y|^7} \lesssim |s|^{-6}\, .
\]
Thus, it follows from \eqref{BS.2} that
$|k_{4,1}| \lesssim C^{\star}|s|^{-1-\frac{3\theta}2}$.

By using that $\int P_b'D^1P_b=\int P_b'\mathcal{H}(P_b')=0$, we rewrite $k_{4,2}$ as
\[ k_{4,2}=(\frac {x_s}\lambda-1)b \int P_b' P_b\varphi-3(\frac {x_s}\lambda-1)b \int P_b'Q^2P_b \, .
\]
We also observe that 
\[\left| \int P_b' P_b\varphi \right| =\frac 12 \left| \int P_b^2\varphi_y \right| \lesssim 1  \quad \hbox{and}\quad 
\left| \int P_b' Q^2P_b \right| \lesssim \int \frac1{1+|y|^2}\lesssim 1 \, .  \]
Thus, it follows from \eqref{BS.1} and \eqref{BS.2} that
$|k_{4,2}| \lesssim C^{\star}|s|^{-2+\frac{\theta}2}$.

We rewrite $k_{4,3}$ as 
\[ k_{4,3}=(\frac {x_s}\lambda-1)\int \varepsilon_y\big(D^1P_b+P_b\varphi-3Q^2P_b \big) \, .
\]
Moreover, we observe that 
\[
\left| \int \varepsilon_yD^1P_b\right| \lesssim \|D^{\frac12}\varepsilon\|_{L^2} \|D^{\frac32}P_b\|_{L^2} \lesssim \mathcal{N}(\varepsilon) \, ,\]
\[
\begin{split}
 \left| \int \varepsilon_yP_b\varphi\right| & \le \left| \int \varepsilon P_b'\varphi\right|+
\left| \int \varepsilon P_b\varphi_y\right|\\
& \lesssim \left(\int \varepsilon^2\varphi \right)^{\frac12} \|P_b'\|_{L^2}+ \left(\int \varepsilon^2\varphi_y\right)^{\frac12} \left(\int \varphi_y\right)^{\frac12} \lesssim \mathcal{N}(\varepsilon) \, ,  
\end{split}
\]
and 
\[
\left| \int \varepsilon_yQ^2P_b\right| \lesssim \|D^{\frac12}\varepsilon\|_{L^2} \|D^{\frac12}(Q^2P_b)\|_{L^2} \lesssim \mathcal{N}(\varepsilon) \, .\]
Then, we deduce from \eqref{bootstrap.4} that 
 $ |k_{4,3}| \lesssim (C^{\star})^2|s|^{-2+\theta}$.

Therefore, we conclude gathering those estimates that 
\begin{equation} \label{eq:k4}
 |k_4| \lesssim (C^{\star})^2|s|^{-2+\theta}  \, .
\end{equation}

\medskip

\noindent \emph{Estimate for $k_5$.} By using \eqref{def.DQb}, we decompose $k_5$ as
\begin{displaymath}
k_5 = -(b_s+b^2) \int \frac{\partial Q_b}{\partial b} \mathcal{L}_{\varphi}P_b=-(b_s+b^2)\int \left( P_b+y\chi_b'P\right)\left(D^1P_b+P_b \varphi-3Q^2P_b \right) \, ,
\end{displaymath}
and estimate each term separately. 

First, we deduce from  \eqref{bd.PbRb} that 
\begin{displaymath}
\left|\int P_bD^1P_b \right|=\|D^{\frac12}P_b\|_{L^2}^2 \lesssim \ln|s|  \, ,
\end{displaymath}
and 
\begin{displaymath}
\begin{split}
\left|\int y\chi_b'PD^1P_b \right|&\le\|D^{\frac12}(y\chi_b'P)\|_{L^2}\|D^{\frac12}P_b\|_{L^2} \\
&\lesssim \|y\chi_b'P\|_{L^2}^{\frac12}\big\|\big(y\chi_b'P\big)'\big\|_{L^2}^{\frac12}
\|D^{\frac12}P_b\|_{L^2} \lesssim |\ln|s||^{\frac12}  \, .
\end{split}
\end{displaymath}
Next, in addition to \eqref{encore.truc},
$\left|\int P_b^2 \varphi \right| \lesssim |s|^{\theta} 
$, we have
\begin{displaymath}
\left|\int y\chi_b'PP_b \varphi \right| \lesssim |b| \int_{-2|s| \le y \le -|s|}\frac{|y|}{1+|y|}|\chi'(|b|y)| \lesssim 1 \, .
\end{displaymath}
Finally, we see easily that 
\begin{displaymath}
\left|\int \left( P_b+y\chi_b'P\right)Q^2P_b\right| \lesssim 1 \, .
\end{displaymath}
Therefore, we conclude gathering those estimates and using \eqref{BS.3} that 
\begin{equation} \label{eq:k5}
|k_5| \lesssim C^{\star}|s|^{-2+\frac{3\theta}2}\, .
\end{equation}
\medskip

\noindent \emph{Estimate for $k_6$.} Recall that 
\begin{displaymath}
k_6 =  \int \Psi_b \mathcal{L}_{\varphi}P_b=\int \Psi_b\left(D^1P_b+P_b \varphi-3Q^2P_b \right) \, .
\end{displaymath}
First, it follows from \eqref{bd.PbRb} and \eqref{lprofile.3}  that 
\begin{displaymath}
\left|\int \Psi_b D^1P_b \right| \le \|D^{\frac12}\Psi_b\|_{L^2}\|D^{\frac12}P_b\|_{L^2} \lesssim |s|^{-2}\ln|s| \, .
\end{displaymath}
Second, we deduce from \eqref{PsibVarphi} and \eqref{encore.truc} that 
\begin{displaymath}
\left|\int \Psi_b P_b \varphi \right| \le \left( \int \Psi_b^2\varphi \right)^{\frac12}\left( \int P_b^2\varphi \right)^{\frac12} \lesssim |s|^{-2+\theta} \, .
\end{displaymath}
Finally, \eqref{lprofile.2} yields 
\begin{displaymath}
\left|\int \Psi_b Q^2P_b   \right| \le \left( \int \Psi_b^2Q^2\right)^{\frac12}\left( \int P_b^2Q^2 \right)^{\frac12} \lesssim |s|^{-2} \, .
\end{displaymath}
Therefore, we conclude that 
\begin{equation} \label{eq:k6}
|k_6| \lesssim |s|^{-2+\theta}\, .
\end{equation}
\medskip

\noindent \emph{Estimate for $k_7$.} By using \eqref{varphi_s}, we decompose the first term of $k_7$ as
\begin{displaymath}
 \int \varepsilon  \varphi_s P_b=\theta s^{-1}|s|^{\theta}\int \varepsilon \frac{\phi'(\frac{y}{B}+|s|^{\theta})}{\phi(|s|^{\theta})}P_b-\theta s^{-1}|s|^{\theta}\int \varepsilon \frac{\phi(\frac{y}{B}+|s|^{\theta}) \phi'(|s|^{\theta})}{\phi^2(|s|^{\theta})}P_b \, .
\end{displaymath}
We deduce from \eqref{def.Dvarphi} that 
\begin{displaymath}
|s|^{-1+\theta}\Big|\int \varepsilon \frac{\phi'(\frac{y}{B}+|s|^{\theta})}{\phi(|s|^{\theta})}P_b \Big| \lesssim |s|^{-1+\theta}\left( \int \varepsilon^2 \varphi_y \right)^{\frac12} 
\end{displaymath}
and by \eqref{encore.truc}
\begin{displaymath}
\begin{split}
|s|^{-1+\theta}\left|\int \varepsilon \frac{\phi(\frac{y}{B}+|s|^{\theta}) \phi'(|s|^{\theta})}{\phi^2(|s|^{\theta})}P_b\right| &\lesssim |s|^{-1-\theta}\left( \int \varepsilon^2 \varphi \right)^{\frac12} \left( \int P_b^2 \varphi \right)^{\frac12}  \lesssim |s|^{-1-\frac{\theta}2} \mathcal{N}(\varepsilon) \, .
\end{split}
\end{displaymath}
Hence, it follows from \eqref{bootstrap.4} that
\begin{equation} \label{eq:k7.1}
\left| \int \varepsilon \varphi_s P_b \right| \lesssim |s|^{-1+\theta}\left( \int \varepsilon^2 \varphi_y \right)^{\frac12}+C^{\star}|s|^{-2} \, .
\end{equation}

To deal with the second term in $k_7$, we  use $\frac{\partial P_b}{\partial b}=-y\chi'(|b|y)P$, so that
\begin{displaymath}
\mathcal{L}_{\varphi}\left(\frac{\partial P_b}{\partial b}\right)=
D^1\left(\frac{\partial P_b}{\partial b}\right)-y\chi'(|b|y)P\varphi+3Q^2y\chi'(|b|y)P \, .
\end{displaymath}
We estimate each corresponding terms separately. First, we deduce by using \eqref{dPb_db} that
\begin{displaymath}
\left| \int \varepsilon D^1\left(\frac{\partial P_b}{\partial b}\right) \right| \lesssim 
\|D^{\frac12}\varepsilon\|_{L^2}\Big\| D^{\frac12}\left(\frac{\partial P_b}{\partial b}\right) \Big\|_{L^2} \lesssim |s| \mathcal{N}(\varepsilon) \, .
\end{displaymath}
Moreover, 
\begin{displaymath}
\left| \int \varepsilon \frac{\partial P_b}{\partial b} \varphi \right| \lesssim 
\left( \int \varepsilon^2 \varphi \right)^{\frac12} 
\left( \int |y \chi'(|b|y)|^2 \frac1{|y|} \right)^{\frac12} \lesssim |s| \mathcal{N}(\varepsilon) \, ,
\end{displaymath}
and 
\begin{displaymath}
\left| \int Q^2\varepsilon \frac{\partial P_b}{\partial b}  \right| \lesssim 
\left( \int \varepsilon^2 \varphi \right)^{\frac12} 
\left( \int |y \chi'(|b|y)|^2 \frac1{|y|^6} \right)^{\frac12} \lesssim |s|^{-\frac32} \mathcal{N}(\varepsilon) \, ,
\end{displaymath}
since $Q \in \mathcal{Y}_2$. Then it follows from \eqref{bootstrap.4} and \eqref{BS.5} that
\begin{equation} \label{eq:k7.2}
\left| b_s \int \varepsilon \mathcal{L}_{\varphi}\left(\frac{\partial P_b}{\partial b}\right)\right|
\lesssim (C^{\star})^2 |s|^{-2+\theta} \, .
\end{equation}

Therefore, we deduce gathering \eqref{eq:k7.1} and \eqref{eq:k7.2} that 
\begin{equation} \label{eq:k7}
|k_7| \lesssim |s|^{-1+\theta}\left( \int \varepsilon^2 \varphi_y \right)^{\frac12}+(C^{\star})^2 |s|^{-2+\theta} \, .
\end{equation}

 Finally, we conclude the proof of estimate \eqref{eq:K} gathering estimates \eqref{eq:k1}, \eqref{eq:k2}, \eqref{eq:k3}, \eqref{eq:k4}, \eqref{eq:k5}, \eqref{eq:k6}, \eqref{eq:k7} and taking $|S_0|$ large enough.
 \end{proof}

\subsection{Remaining terms}

\begin{lemma} For $|S_0|$ large enough possibly depending on $C^{\star}$ and for all $s\in \mathcal I^\star$,
\begin{equation}\label{eq:Z}  
|Z|\lesssim |s|^{-2+\theta},
\end{equation}
and
\begin{equation}\label{eq:dZ}  
\left|\frac {d Z}{ds}\right|\lesssim |s|^{-2}+|s|^{-\frac{3\theta}2}\left(\int \varepsilon^2 \varphi_{y} \right)^{\frac12}.
\end{equation}
\end{lemma}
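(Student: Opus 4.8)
\emph{Static bound \eqref{eq:Z}.} The plan is to write $Z=Z_1+Z_2$ as in \eqref{defZ}, with $Z_1=\int\varepsilon^2 y\varphi_y\chi_1$ and $Z_2=-2\int\varepsilon\Lambda Q(1-\varphi)\chi_2$, and to estimate the two pieces separately using the bootstrap consequences \eqref{BS.1}--\eqref{BS.5}. Two localization facts are the engine of the whole proof. First, since $\theta<\frac23$ we have $|s|^{2/3}\gg B|s|^{\theta}$ for $|S_0|$ large, so on $\operatorname{supp}\chi_1\subset\{y<-|s|^{2/3}\}$ the shift $|s|^{\theta}$ is negligible, $|\frac yB+|s|^{\theta}|\gtrsim|y|$, hence by \eqref{def.Dvarphi} $\varphi_y\lesssim(1+y^2)^{-1}$ and $|y\varphi_y\chi_1|\lesssim|y|^{-1}\lesssim|s|^{-2/3}$ there; the a priori dangerous factor $y\varphi_y$ (which is $\sim|s|^{\theta}$ near $y\sim-B|s|^{\theta}$) is thus rendered harmless by $\chi_1$. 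Second, on $\operatorname{supp}\chi_2\subset\{y<-\frac B4|s|^{\theta}\}$ one has $|\Lambda Q|\lesssim(1+|y|)^{-2}$, so $\int_{\operatorname{supp}\chi_2}(\Lambda Q)^2\,dy\lesssim|s|^{-3\theta}$. Combined with $\|\varepsilon\|_{L^2}^2\lesssim|s|^{-1}$, the first fact gives $|Z_1|\lesssim|s|^{-2/3}\|\varepsilon\|_{L^2}^2\lesssim|s|^{-5/3}$ and the second, via Cauchy--Schwarz and $|1-\varphi|\le1$, gives $|Z_2|\lesssim\|\varepsilon\|_{L^2}\big(\int_{\operatorname{supp}\chi_2}(\Lambda Q)^2\big)^{1/2}\lesssim|s|^{-1/2-3\theta/2}$; both are $\lesssim|s|^{-2+\theta}$, the estimate for $Z_2$ being sharp precisely when $\theta>\frac35$.

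\emph{Differentiation.} For \eqref{eq:dZ} I would differentiate and substitute the $\varepsilon$-equation \eqref{modulation.4}, as in the proofs of Lemmas \ref{le:GG} and \ref{le:KK}: writing $V=D^1\varepsilon+\varepsilon-((Q_b+\varepsilon)^3-Q_b^3)$,
\begin{multline*}
\frac{dZ}{ds}=2\int\varepsilon_s\varepsilon\, y\varphi_y\chi_1+\int\varepsilon^2\partial_s(y\varphi_y\chi_1)\\
-2\int\varepsilon_s\,\Lambda Q(1-\varphi)\chi_2-2\int\varepsilon\,\Lambda Q\,\partial_s\!\big((1-\varphi)\chi_2\big),
\end{multline*}
and the leading contributions come from the transport term $V_y$ inside $\varepsilon_s$. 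Integrating by parts turns these into $2\int V(\varepsilon a)_y$ with $a=y\varphi_y\chi_1$ and $2\int Vg_y$ with $g=\Lambda Q(1-\varphi)\chi_2$. In the quadratic-in-$\varepsilon$ terms $\int(D^1\varepsilon)(\varepsilon a)_y$ the commutator estimates \eqref{m.1}--\eqref{m.2} apply, and since the first and second derivatives of $a$ and $g$ carry negative powers of $|s|$ (strong localization), these are all $\mathcal O(|s|^{-2})$. The genuinely new contribution, from the $-\Lambda Q\varphi_y\chi_2$ piece of $g_y$ paired with the $\varepsilon$-part of $V$, is
\begin{multline*}
\Big|2\int\varepsilon\,\Lambda Q\,\varphi_y\chi_2\Big|\le2\Big(\int\varepsilon^2\varphi_y\Big)^{\frac12}\Big(\int(\Lambda Q)^2\varphi_y\chi_2^2\Big)^{\frac12}\\
\lesssim|s|^{-2\theta}\Big(\int\varepsilon^2\varphi_y\Big)^{\frac12}\lesssim|s|^{-\frac{3\theta}2}\Big(\int\varepsilon^2\varphi_y\Big)^{\frac12},
\end{multline*}
where $\int(\Lambda Q)^2\varphi_y\chi_2^2\lesssim|s|^{-4\theta}$ by splitting at $y=-B|s|^{\theta}$ (near the edge $(\Lambda Q)^2\lesssim|s|^{-4\theta}$ while $\int\varphi_y\lesssim1$, and for $y<-B|s|^{\theta}$ one has $(\Lambda Q)^2\varphi_y\lesssim|y|^{-6}$). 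Likewise, $\partial_s\varphi$, which by \eqref{varphi_s} and $\partial_y\varphi=\frac1B\phi'(\frac yB+|s|^{\theta})/\phi(|s|^{\theta})$ equals $\mathcal O(|s|^{-1+\theta}\varphi_y)+\mathcal O(|s|^{-1-\theta})$, contributes $\lesssim|s|^{-1-\theta}(\int\varepsilon^2\varphi_y)^{1/2}+\mathcal O(|s|^{-2})\lesssim|s|^{-\frac{3\theta}2}(\int\varepsilon^2\varphi_y)^{1/2}+\mathcal O(|s|^{-2})$ (using $\theta\le1$).

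\emph{The delicate term.} The one place where real work is needed is the nonlocal term $2\int(D^1\varepsilon)\Lambda Q\varphi_y\chi_2$ coming from the $D^1\varepsilon$-part of $V$ against $g_y$. Transfer $D^1$ onto the weight: $\int(D^1\varepsilon)w=\int\varepsilon\,\mathcal H(w')$ with $w=\Lambda Q\varphi_y\chi_2$. Since $w$ decays at $\pm\infty$, $\int w'=0$ and $\int tw'=-\int w$ with $|\int w|\lesssim|s|^{-2\theta}$, so $|\mathcal H(w')(y)|\lesssim|s|^{-2\theta}(y+B|s|^{\theta})^{-2}$ away from the bump of $w'$ near $y\sim-B|s|^{\theta}$. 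One then splits $\int\varepsilon\,\mathcal H(w')$ into three regions: on $\{y>-\frac B4|s|^{\theta}\}$ (outside $\operatorname{supp}w$) use $\int_{y>-\frac B4|s|^{\theta}}\varepsilon^2\lesssim\mathcal N(\varepsilon)^2$ and $\|\mathcal H(w')\|_{L^2}\lesssim|s|^{-7\theta/2}$ there; on the core $\{-2B|s|^{\theta}<y<-\frac B4|s|^{\theta}\}$, where $\varphi_y\gtrsim(y+B|s|^{\theta})^{-2}$ up to constants, write $\varepsilon\,\mathcal H(w')=(\varepsilon\sqrt{\varphi_y})\cdot\mathcal H(w')/\sqrt{\varphi_y}$ and use $\int_{\mathrm{core}}(\mathcal H(w'))^2/\varphi_y\lesssim|s|^{-4\theta}$; on the far tail $\{y<-2B|s|^{\theta}\}$ use $\|\varepsilon\|_{L^2}\lesssim|s|^{-1/2}$ and again $\|\mathcal H(w')\|_{L^2}\lesssim|s|^{-7\theta/2}$ there. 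The three contributions are $\mathcal O(|s|^{-2})$, $\mathcal O(|s|^{-2\theta}(\int\varepsilon^2\varphi_y)^{1/2})$, and $\mathcal O(|s|^{-2})$ respectively, using $\theta>\frac35$. The other $D^1\varepsilon$-terms against $g_y$ (against $(\Lambda Q)'(1-\varphi)\chi_2$ and $\Lambda Q(1-\varphi)\chi_2'$) are handled the same way and are $\mathcal O(|s|^{-1/2-5\theta/2})=\mathcal O(|s|^{-2})$.

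\emph{Remaining terms and the obstacle.} Everything else is $\mathcal O(|s|^{-2})$ for $|S_0|$ large (depending on $C^\star$): the lower-order pieces of $\varepsilon_s$ --- namely $\frac{\lambda_s}\lambda\Lambda\varepsilon$, $(\frac{\lambda_s}\lambda+b)\Lambda Q_b$, $(\frac{x_s}\lambda-1)(Q_b+\varepsilon)_y$, $(b_s+b^2)\frac{\partial Q_b}{\partial b}$, $\Psi_b$ --- are bounded using \eqref{BS.1}--\eqref{BS.5}, $\|\Psi_b\|_{L^2}\lesssim|b|^{3/2}$, $Q,\Lambda Q\in\mathcal Y_2$, $P\in\mathcal Z$, and, crucially, the support localization of $a$ and $g$, which each time supplies the extra negative powers of $|s|$; the quadratic and cubic terms of $V$ are smaller still by \eqref{gnp} and \eqref{BS.1}; and $\partial_s\chi_1,\partial_s\chi_2$, supported on the thin annuli $\{-2|s|^{2/3}<y<-|s|^{2/3}\}$ and $\{-\frac B2|s|^{\theta}<y<-\frac B4|s|^{\theta}\}$ with $|\partial_s\chi_i|\lesssim|s|^{-1}$, give $\mathcal O(|s|^{-2})$. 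Collecting all of this yields \eqref{eq:dZ}. The hardest part will be exactly the term $\int(D^1\varepsilon)\Lambda Q\varphi_y\chi_2$: the commutator identities of Lemma \ref{le.commutateur}, tailored to expressions quadratic in $\varepsilon$, are unavailable here, and since the bootstrap controls only $\|\varepsilon\|_{L^2}$, $\mathcal N(\varepsilon)$ and $\int\varepsilon^2\varphi_y$ (not $\|D^1\varepsilon\|_{L^2}$), one must carry out the region-by-region splitting above, playing off the slow decay of $\mathcal H(w')$ against the localization of $\varepsilon$; this is also where the narrow window $\frac35<\theta<\frac23$ is used.
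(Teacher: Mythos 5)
Your proposal is correct and follows essentially the same route as the paper's proof: the same splitting $Z=Z_1+Z_2$ from \eqref{defZ}, the same static bounds ($|Z_1|\lesssim |s|^{-2/3}\|\varepsilon\|_{L^2}^2\lesssim|s|^{-5/3}$ and $|Z_2|\lesssim\|\varepsilon\|_{L^2}\|\Lambda Q\chi_2\|_{L^2}\lesssim|s|^{-1/2-3\theta/2}$, the latter being exactly where $\theta>3/5$ enters), and the same differentiation scheme: substitute \eqref{modulation.4}, integrate by parts, exploit the strong localization of $y\varphi_y\chi_1$ and of $\Lambda Q\chi_2$, use \eqref{m.1}--\eqref{m.2} for the quadratic $D^1\varepsilon$ terms in $dZ_1/ds$, and pay the price $|s|^{-3\theta/2}(\int\varepsilon^2\varphi_y)^{1/2}$ precisely on the terms carrying a factor $\varphi_y$ on the support of $\chi_2$. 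The one substantive point of divergence is the nonlocal term $\int(D^1\varepsilon)\,\Lambda Q\chi_2\varphi_y$: the paper transfers the operator onto the weight and then separates $\mathcal H(\Lambda Q\chi_2\,\varphi_{yy})$ into $\Lambda Q\chi_2\,\mathcal H(\varphi_{yy})$ plus a Calder\'on commutator \eqref{Calderon}, using the pointwise bound $|\mathcal H\varphi_{yy}|\lesssim\varphi_y$ coming from the explicit Lorentzian structure of $\varphi$ (cf.\ Lemma \ref{HY}); you instead estimate $\mathcal H\big((\Lambda Q\varphi_y\chi_2)_y\big)$ directly by a moment expansion and a three-region splitting. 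Both yield $\mathcal O(|s|^{-2})+\mathcal O(|s|^{-3\theta/2}(\int\varepsilon^2\varphi_y)^{1/2})$; the paper's version is more modular, yours more hands-on but workable. One small correction to your bookkeeping: it is not quite true that "everything else is $\mathcal O(|s|^{-2})$". For instance, after integrating by parts in $\frac{\lambda_s}{\lambda}\int\Lambda\varepsilon\,\Lambda Q\chi_2(\varphi-1)$ one is left with $\frac{\lambda_s}{\lambda}\int\varepsilon\, y\,\Lambda Q\chi_2\varphi_y$, which using only $|\lambda_s/\lambda|\lesssim C^\star|s|^{-1+\theta/2}$ and $\|\varepsilon\|_{L^2}\lesssim|s|^{-1/2}$ is merely $\mathcal O(|s|^{-3/2-\theta/2})$, not $\mathcal O(|s|^{-2})$; this term too must be placed in the $|s|^{-3\theta/2}(\int\varepsilon^2\varphi_y)^{1/2}$ slot (as the paper does for $z_{22}$), which is consistent with \eqref{eq:dZ} since $3\theta/2<1$. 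This is an accounting slip, not a gap in the method.
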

\begin{proof}
Recall that $Z=Z_{1}+Z_{2}$, where
\[Z_1=\int \varepsilon^2 y \varphi_y \chi_1,\quad
Z_2   = - 2 \int  \Lambda Q(\varphi-1)\varepsilon \chi_2.\]

\noindent \textit{Proof of \eqref{eq:Z}}.
Indeed, 
\[
|Z_{1}|\lesssim \int_{y<-|s|^{\frac 23}} |\varepsilon|^2 |y|^{-1} \lesssim |s|^{-1-\frac 23}\lesssim |s|^{-2+\theta},
\]
and, since $\theta>\frac 35$,
\[
|Z_{2}|\lesssim \int_{y<-\frac B 4 |s|^{\theta}} |\varepsilon| |y|^{-2} \lesssim
\|\varepsilon\|_{L^2}  \left(\int_{y<-\frac B 4 |s|^{\theta}} |y|^{-4} \right)^{\frac 12}
\lesssim |s|^{-\frac 12 -\frac 32 \theta}\lesssim |s|^{-2+\theta}.
\]

\medskip
Now, we prove
\begin{equation}\label{eq:dZ1}  
\left|\frac {d Z_1}{ds}\right|\lesssim |s|^{-2}.
\end{equation}
and
\begin{equation}\label{eq:dZ2}  
\left|\frac {d Z_2}{ds}\right|\lesssim |s|^{-2}+|s|^{-\frac{3\theta}2}\left(\int \varepsilon^2 \varphi_{y} \right)^{\frac12}.
\end{equation}
which prove \eqref{eq:dZ}.

\medskip

\noindent \textit{Proof of \eqref{eq:dZ1}.} Observe from the definition of $Z_1$ that 
\[ \frac{dZ_1}{ds}=2\int \varepsilon \varepsilon_s y \varphi_y \chi_1+\int \varepsilon^2\partial_s(\varphi_y)\chi_1+\int \varepsilon^2\varphi_y\partial_s(\chi_1)\]
which implies by using \eqref{modulation.4} that 
\[  \frac{dZ_1}{ds}=z_{11}+z_{12}+z_{13}+z_{14}+z_{15}+z_{16}+z_{17} ,
\]
where
\begin{align*}
z_{11}&=2\int V_y \varepsilon y \varphi_y \chi_1, &
z_{12}&=2\frac{\lambda_s}{\lambda}\int \varepsilon \Lambda \varepsilon  y \varphi_y \chi_1, \\ 
z_{13}&=2\big(\frac{\lambda_s}{\lambda}+b\big)\int  \Lambda Q_b \varepsilon  y \varphi_y \chi_1, & 
z_{14}&=2\big(\frac{x_s}{\lambda}-1\big)\int  \big( Q_b+\varepsilon\big)_y \varepsilon y \varphi_y \chi_1,\\
z_{15}&=-2\big(b_s+b^2\big)\int  \frac{\partial Q_b}{\partial b} \varepsilon y \varphi_y \chi_1,&
z_{16}&=2\int  \Psi_b \varepsilon y \varphi_y \chi_1,\\
z_{17}&=\int \varepsilon^2\partial_s(\varphi_y)\chi_1+\int \varepsilon^2\varphi_y\partial_s(\chi_1)\, .
\end{align*}

First, we claim the following estimates
\begin{equation} \label{eq:dZ1.1} 
\|y\chi_1 \varphi_y\|_{L^{\infty}} \lesssim |s|^{-\frac23}, \quad \|(y\chi_1 \varphi_y)_y\|_{L^{\infty}} \lesssim |s|^{-\frac43}, \quad \|(y\chi_1 \varphi_y)_{yy}\|_{L^{\infty}} \lesssim |s|^{-2} , 
\end{equation}
and 
\begin{equation} \label{eq:dZ1.2} 
\|y\chi_1 \varphi_y\|_{L^2} \lesssim|s|^{-\frac13}, \quad \|(y\chi_1 \varphi_y)_y\|_{L^2} \lesssim  |s|^{-1},  \quad \|(y\chi_1 \varphi_y)_{yy}\|_{L^2} \lesssim  |s|^{-\frac53} \, ,
\end{equation}	
which follow directly from the definition of $\varphi$ in \eqref{def.varphi} and the definition of $\chi_1$ in \eqref{chi1chi2}.
	
\medskip 
	
\noindent \emph{Estimate for $z_{11}$.} By using the definition of $V$ in \eqref{defV}, we rewrite $z_{11}$ as 
\begin{align*}
z_{11} &= 2\int (D^1\varepsilon)_y \varepsilon y  \chi_1 \varphi_y+2\int \big((1-3Q^2)\varepsilon\big)_y \varepsilon y  \chi_1 \varphi_y
-6\int \big((Q_b^2-Q^2)\varepsilon\big)_y \varepsilon y  \chi_1 \varphi_y\\
&\quad -6\int\big(Q_b\varepsilon^2\big)_y \varepsilon y  \chi_1 \varphi_y
-2\int(\varepsilon^3)_y \varepsilon y  \chi_1 \varphi_y \\
&=z_{111}+z_{112}+z_{113}+z_{114}+z_{115} \, .
\end{align*}
We estimate each term separately.
	
First, we observe after integration by parts that 
\[
z_{111} =-  2\int (D^1\varepsilon)  \varepsilon_y (y  \chi_1 \varphi_y)
- 2\int (D^1\varepsilon) \varepsilon (y  \chi_1 \varphi_y)_y \, .
\]
Applying \eqref{m.1} with $a=y  \chi_1 \varphi_y$, from \eqref{eq:dZ1.1} and \eqref{BS.1}, one obtains
\[
\left| \int (D^1\varepsilon)  \varepsilon_y (y  \chi_1 \varphi_y)\right|\lesssim 
\| \varepsilon\|_{L^2}^2 \| (y  \chi_1 \varphi_y)_{yy}\|_{L^\infty} \lesssim |s|^{-3}.
\]
Applying \eqref{m.2} with $a=y  \chi_1 \varphi_y$, from \eqref{eq:dZ1.1}-\eqref{eq:dZ1.2} and \eqref{bootstrap.4}, \eqref{BS.1}, one obtains
\begin{align*}
\left| \int (D^1\varepsilon)  \varepsilon  (y  \chi_1 \varphi_y)_y\right| & 
\lesssim  \int |D^{\frac 12} \varepsilon|^2 |(y  \chi_1 \varphi_y)_y| + 
\|D^{\frac 12} \varepsilon\|_{L^2}^{\frac 32} \| \varepsilon\|_{L^2}^{\frac 12}
\|(y  \chi_1 \varphi_y)_{yy}\|_{L^2}^{\frac 34} \|(y  \chi_1 \varphi_y)_{y}\|_{L^2}^{\frac 14}
\\
& \lesssim (C^\star)^2 |s|^{-\frac {10}3 + \theta} + (C^\star)^{-\frac 32} |s|^{-3+\frac 34 \theta}
\lesssim |s|^{-2}.\end{align*}
Hence,
\begin{equation} \label{eq:dZ111}
|z_{111}|  
\lesssim |s|^{-2} \,.
\end{equation}

Second, we get after integration by parts that 
\begin{displaymath}
z_{112}= -6\int QQ'\varepsilon^2 y\chi_1 \varphi_y 
-\int (1-3Q^2) \varepsilon^2 (y \chi_1 \varphi_y)_y
\end{displaymath}
Thus it follows from \eqref{eq:dZ1.1} and \eqref{BS.1} that
\begin{equation} \label{eq:dZ112} 
|z_{112}| \lesssim \left( \|QQ'y\chi_1 \varphi_y \|_{L^{\infty}}
+\|(y\chi_1 \varphi_y)_y \|_{L^{\infty}}\right) \|\varepsilon\|_{L^2}^2
\lesssim |s|^{-\frac73} \, .
\end{equation}

Also integrating by parts, we have 
\begin{displaymath}
\begin{split}
z_{113}&=-3b\int (2QP_b+bP_b^2)_y \varepsilon^2  y \chi_1 \varphi_y
+3b\int (2QP_b+bP_b^2) \varepsilon^2 (y \chi_1 \varphi_y)_y 
\end{split}
\end{displaymath}
so that 
\begin{equation} \label{eq:dZ113}
|z_{113}| \lesssim |b|\big( \|(2QP_b+bP_b^2)_yy\chi_1 \varphi_y \|_{L^{\infty}}
+\|(2QP_b+bP_b^2)(y\chi_1 \varphi_y)_y \|_{L^{\infty}}\big) \|\varepsilon\|_{L^2}^2
\lesssim |s|^{-\frac{8}3} \, ,
\end{equation}
thanks to \eqref{BS.1} and \eqref{eq:dZ1.1}. 

Similarly, 
\begin{displaymath} 
z_{114}=-4 \int (Q+bP_b)_y \varepsilon^3 y \chi_1 \varphi_y
+2\int (Q+bP_b) \varepsilon^3 (y \chi_1 \varphi_y)_y \, .
\end{displaymath}
Hence, it follows from \eqref{bootstrap.4}, \eqref{BS.1} and \eqref{gnp} (with $p=3$) that
\begin{equation} \label{eq:dZ114}\begin{aligned}
|z_{114}| &\lesssim \big( \|(Q+bP_b)_yy\chi_1 \varphi_y \|_{L^{\infty}}
+\|(Q+bP_b)(y\chi_1 \varphi_y)_y \|_{L^{\infty}}\big) \int |\varepsilon|^3
 \\
 & \lesssim C^\star |s|^{-\frac 23} \|\varepsilon\|_{L^2}^2 \mathcal N(\varepsilon) \lesssim C^{\star}|s|^{-\frac83+\frac{\theta}2}  \lesssim |s|^{-2} .
\end{aligned}\end{equation}
Finally, integration by parts and \eqref{gnp} (with $p=4$) yield 
\begin{equation} \label{eq:dZ115}
|z_{115}| =\left|\frac12\int \varepsilon^4 (y\chi_1 \varphi_y)_y \right| 
\lesssim \|(y\chi_1 \varphi_y)_y \|_{L^{\infty}} 
\|D^{\frac12}\varepsilon\|_{L^2}^2\|\varepsilon\|_{L^2}^2   
\lesssim (C^{\star})^2|s|^{-\frac{13}3+\theta} \lesssim |s|^{-2} .
\end{equation}

Therefore, we deduce combining \eqref{eq:dZ111}-\eqref{eq:dZ115} that 
\begin{equation} \label{eq:dZ11}
|z_{11}| \lesssim |s|^{-2} \, .
\end{equation}

\medskip 
	
\noindent \emph{Estimate for $z_{12}$.} We have 
$\Lambda \varepsilon = \frac \varepsilon 2 + y \varepsilon_y$ and integrating by parts,
\begin{displaymath}
z_{12}=\frac{\lambda_s}{\lambda} \int \varepsilon^2 y\chi_1\varphi_y
-\frac{\lambda_s}{\lambda} \int \varepsilon^2 (y^2\chi_1\varphi_y)_y \, .
\end{displaymath}
Moreover, by \eqref{BS.1} and \eqref{BS.2}, $|\frac{\lambda_s}{\lambda}|\lesssim |\frac{\lambda_s}{\lambda}+b|+|b|\lesssim C^\star |s|^{-1+\frac \theta 2}$, and thus, by
 \eqref{eq:dZ1.1},
\begin{equation} \label{eq:dZ12}
|z_{12}| \lesssim   C^\star |s|^{-1+\frac \theta 2} \big( \|y\chi_1 \varphi_y \|_{L^{\infty}}
+\|(y^2\chi_1 \varphi_y)_y \|_{L^{\infty}}\big)\|\varepsilon\|_{L^2}^2   
\lesssim C^{\star}|s|^{-\frac{8}3+\frac{\theta}2} \lesssim |s|^{-2},
\end{equation}
since $\theta < \frac43$.
\medskip 
	
\noindent \emph{Estimate for $z_{13}$.} 
Since 
\begin{displaymath}
z_{13}=2\big(\frac{\lambda_s}{\lambda}+b\big)\int  \big(\Lambda Q+b\Lambda P_b\big) \varepsilon  y \varphi_y \chi_1 \, ,
\end{displaymath}
we deduce from \eqref{BS.1}, \eqref{BS.2} and \eqref{eq:dZ1.2} that
\begin{equation} \label{eq:dZ13}
\begin{split}
|z_{13}| &\lesssim  C^\star |s|^{-1+\frac \theta 2}\big( \|\Lambda Qy\chi_1 \varphi_y \|_{L^2}
+|b|\|\Lambda P_by\chi_1 \varphi_y \|_{L^2}\big)\|\varepsilon\|_{L^2}   
\\ & \lesssim C^{\star}|s|^{-1+\frac{\theta}{2}}
(|s|^{-\frac53}+|s|^{-\frac43})|s|^{-\frac12}
\lesssim C^{\star}|s|^{-\frac{17}6+\frac{\theta}2}  \lesssim |s|^{-2}\, .
\end{split}
\end{equation}	
\medskip 
	
\noindent \emph{Estimate for $z_{14}$.}
Integrating by parts, we have 
\begin{displaymath}
z_{14}=\big(\frac{x_s}{\lambda}-1\big)\left(2\int  \big( Q+bP_b\big)_y \varepsilon y \chi_1\varphi_y 
-\int  \varepsilon^2 ( y  \chi_1\varphi_y )_y\right),
\end{displaymath}
Moreover, observe from \eqref{BS.1} and $Q\in \mathcal Y_2$ that
\begin{displaymath} 
\left|\int  \big( Q+bP_b\big)_y \varepsilon y  \chi_1\varphi_y \right|
\lesssim \big\|\big( Q+bP_b\big)_y  y \chi_1\varphi_y  \big\|_{L^2} \|\varepsilon\|_{L^2}
\lesssim |s|^{-\frac73-\frac12} 
\end{displaymath}	
and from \eqref{BS.1} and \eqref{eq:dZ1.1} that
\begin{displaymath} 
\left|\int  \varepsilon^2 ( y  \chi_1\varphi_y )_y \right|
\lesssim \|( y  \chi_1\varphi_y )_y\|_{L^{\infty}} \|\varepsilon\|_{L^2}^2
\lesssim |s|^{-\frac73} \, .
\end{displaymath}
Hence, we deduce from  \eqref{BS.2} that
\begin{equation} \label{eq:dZ14}
|z_{14}| \lesssim 
 C^{\star}|s|^{-1+\frac{\theta}2}|s|^{-\frac73} \lesssim |s|^{-2} .
\end{equation}
\medskip 
	
\noindent \emph{Estimate for $z_{15}$.} Recalling \eqref{def.DQb}, we have 
\begin{displaymath}
z_{1,5}=-2\big(b_s+b^2\big)\int  \big( P_b+yP\chi_b'\big) \varepsilon y \chi_1\varphi_y \, .
\end{displaymath}
Moreover, 
\begin{displaymath}
\left|\int P_b  \varepsilon y \chi_1\varphi_y \right| \lesssim \|y \chi_1\varphi_y \|_{L^2}\|\varepsilon\|_{L^2} \lesssim |s|^{-\frac56} \, ,
\end{displaymath}
thanks to \eqref{eq:dZ1.2}, and	
\begin{displaymath}
\left|\int yP\chi_b'  \varepsilon y \chi_1\varphi_y \right| \lesssim 
\|\chi_b'\|_{L^2}\|\varepsilon\|_{L^2} \lesssim |s|^{-1} \, .
\end{displaymath}
Then, we deduce from \eqref{BS.3} that 
\begin{equation} \label{eq:dZ15}
|z_{15}| \lesssim 
 C^{\star}|s|^{-\frac {17}6+\frac{\theta}2} \lesssim |s|^{-2} .
\end{equation}

\medskip 
	
\noindent \emph{Estimate for $z_{16}$.} By using \eqref{lprofile.2}, \eqref{BS.1} and \eqref{eq:dZ1.1}, we get that
\begin{equation} \label{eq:dZ16}
|z_{16}| \lesssim \|y \chi_1\varphi_y \|_{L^{\infty}}\|\Psi_b\|_{L^2}\|\varepsilon\|_{L^2} 
\lesssim |s|^{-\frac83} \, .
\end{equation}
\medskip 
	
\noindent \emph{Estimate for $z_{17}$.} 
First, we compute $\partial_s(\chi_1)(s,y)=\frac23|s|^{-\frac53}y\chi'(y|s|^{-\frac23})$. Then, 
\begin{displaymath}
\left| \int \varepsilon^2\varphi_y\partial_s(\chi_1) \right| \lesssim |s|^{-\frac53} \|\varepsilon\|_{L^2}^2 \lesssim |s|^{-\frac83} \, .
\end{displaymath}
Second, arguing as in \eqref{varphi_s}, we get that	
\begin{displaymath}
\left| \int \varepsilon^2\partial_s(\varphi_y)\chi_1 \right| \lesssim |s|^{-1+\theta} 
\|\frac1{|y|^2}\chi_1\|_{L^{\infty}}\|\varepsilon\|_{L^2}^2 
+|s|^{-1-\theta}\|\frac1{|y|}\chi_1\|_{L^{\infty}}\|\varepsilon\|_{L^2}^2
\lesssim |s|^{-\frac{10}3+\theta}  \, .
\end{displaymath}	
Thus, we deduce that 
\begin{equation} \label{eq:dZ17}
|z_{17}| \lesssim |s|^{-\frac83} \, ,
\end{equation}		
for $|s|$ large enough since $\theta<\frac23$.	

\smallskip 
Therefore, we conclude the proof of \eqref{eq:dZ1} gathering \eqref{eq:dZ11}-\eqref{eq:dZ17}.
	
\medskip
	
\noindent \textit{Proof of \eqref{eq:dZ2}.} Observe from the definition of $Z_2$ that 
\[-\frac12 \frac{dZ_2}{ds}=\int \varepsilon_s \Lambda Q \chi_2 (\varphi-1)
+\int \varepsilon\Lambda Q \chi_2 \varphi_s+\int \varepsilon \Lambda Q (\chi_2)_s (\varphi-1) \, ,\]	
which implies by using \eqref{modulation.4} that 
\[ -\frac12 \frac{dZ_2}{ds}=z_{21}+z_{22}+z_{23}+z_{24}+z_{25}+z_{26}+z_{27}
\]
where
\begin{align*}
z_{21}&=\int V_y \Lambda Q \chi_2 (\varphi-1), &
z_{22}&=\frac{\lambda_s}{\lambda}\int \Lambda \varepsilon  \Lambda Q \chi_2 (\varphi-1), \\ 
z_{23}&=\big(\frac{\lambda_s}{\lambda}+b\big)\int  \Lambda Q_b \Lambda Q \chi_2 (\varphi-1), & 
z_{24}&=\big(\frac{x_s}{\lambda}-1\big)\int  \big( Q_b+\varepsilon\big)_y \Lambda Q \chi_2 (\varphi-1),\\
z_{25}&=-\big(b_s+b^2\big)\int  \frac{\partial Q_b}{\partial b} \Lambda Q \chi_2 (\varphi-1),&
z_{26}&=\int  \Psi_b \Lambda Q \chi_2 (\varphi-1),\\
z_{27}&=\int \varepsilon\Lambda Q \chi_2 \partial_s\varphi+\int \varepsilon \Lambda Q \partial_s(\chi_2) (\varphi-1)\, .
\end{align*}

First, we claim the following estimates
\begin{equation} \label{eq:dZ2.1} 
\|\Lambda Q \chi_2\|_{L^2} \lesssim|s|^{-\frac{3\theta}2}, \quad \|(\Lambda Q \chi_2)_y\|_{L^2} \lesssim|s|^{-\frac{5\theta}2}, \quad \text{and} \quad \|(\Lambda Q \chi_2)_{yy}\|_{L^2} \lesssim  |s|^{-\frac{7\theta}2}  ,
\end{equation}	
which follow directly from the fact that $\Lambda Q \in \mathcal{Y}_2$ and from the definition of  $\chi_2$ in \eqref{chi1chi2}. Recall that $B$ is a fixed universal constant chosen in \eqref{theta.def}.

\medskip 
	
\noindent \emph{Estimate for $z_{21}$.} 
By using the definition of $V$ in \eqref{defV}, we rewrite $z_{21}$ as 
\begin{align*}
z_{21} &= \int (D^1\varepsilon)_y \Lambda Q \chi_2 (\varphi-1)
+\int \big((1-3Q^2)\varepsilon\big)_y \Lambda Q \chi_2 (\varphi-1)
-3\int \big((Q_b^2-Q^2)\varepsilon\big)_y \Lambda Q \chi_2 (\varphi-1)\\
&\quad -3\int\big(Q_b\varepsilon^2\big)_y \Lambda Q \chi_2 (\varphi-1)
-\int(\varepsilon^3)_y \Lambda Q \chi_2 (\varphi-1) \\
&=z_{211}+z_{212}+z_{213}+z_{214}+z_{215} \, .
\end{align*}
We estimate each term separately.

\smallskip
First, we see integrating by parts that 
\begin{equation} \label{eq:Z21.1}
z_{211}=-\int (D^1 \varepsilon) (\Lambda Q \chi_2 )_y (\varphi-1)
-\int (D^1 \varepsilon) \Lambda Q \chi_2  \varphi_y \, .
\end{equation}
We deduce from \eqref{bootstrap.4} and \eqref{eq:dZ2.1} that
\begin{displaymath}
\begin{split}
\left|\int D^1 \varepsilon (\Lambda Q \chi_2 )_y (\varphi-1)\right|
&\lesssim \|D^{\frac12}\varepsilon\|_{L^2}\|D^{\frac12}\big((\Lambda Q \chi_2 )_y(\varphi-1)\big) \|_{L^2}\\ &
\lesssim \mathcal{N}(\varepsilon)\|(\Lambda Q \chi_2 )_y \|_{L^2}^{\frac12}
\|\big((\Lambda Q \chi_2 )_{y}(\varphi-1)\big)_y\|_{L^2}^{\frac12}
\\ & \lesssim C^{\star}|s|^{-1-2\theta} \, .
\end{split}
\end{displaymath}
In order to deal with the second term on the right-hand side of \eqref{eq:Z21.1}, we see integrating by parts again that 
\begin{displaymath} 
-\int D^1 \varepsilon \Lambda Q \chi_2  \varphi_y=
\int \mathcal{H} \varepsilon (\Lambda Q \chi_2)_y  \varphi_y
+\int \mathcal{H} \varepsilon \Lambda Q \chi_2  \varphi_{yy} \, .
\end{displaymath}
From \eqref{BS.1} and \eqref{eq:dZ2.1}, we get
\begin{displaymath}
\left| \int \mathcal{H} \varepsilon (\Lambda Q \chi_2)_y  \varphi_y\right|
\lesssim \|\varepsilon\|_{L^2}\|(\Lambda Q \chi_2)_y\|_{L^2} \lesssim |s|^{-\frac12-\frac{5\theta}2} \lesssim |s|^{-2} \, ,
\end{displaymath}
since $\theta>\frac35$. Moreover, 
\begin{displaymath} 
-\int \mathcal{H} \varepsilon \Lambda Q \chi_2  \varphi_{yy} =
\int \varepsilon \Lambda Q \chi_2  \mathcal{H}\varphi_{yy}
+\int \varepsilon [ \mathcal{H} , \Lambda Q \chi_2] \varphi_{yy} \, .
\end{displaymath}
From the definition of $\varphi$ and Lemma \ref{HY}, we have 
$| \mathcal{H}\varphi_{yy}|\lesssim \varphi_y$. Thus, thanks to \eqref{eq:dZ2.1},
\begin{displaymath}
\left| \int \varepsilon \Lambda Q \chi_2  \mathcal{H}\varphi_{yy}\right|
\lesssim \|\Lambda Q \chi_2\|_{L^2} 
\left( \int \varepsilon^2 \varphi_y \right)^{\frac12} \lesssim 
|s|^{-\frac{3\theta}2}\left( \int \varepsilon^2 \varphi_y \right)^{\frac12}  .
\end{displaymath}
We deduce from the Calder\'on commutator estimate (see \eqref{Calderon} with $l=0$ and $m=1$) that
\begin{displaymath}
\left| \int \varepsilon [ \mathcal{H} , \Lambda Q \chi_2] \varphi_{yy}\right|
\lesssim \|\varepsilon\|_{L^2} \big\|[ \mathcal{H} , \Lambda Q \chi_2] \varphi_{yy}  \big\|_{L^2}  
\lesssim \|\varepsilon\|_{L^2}\|(\Lambda Q \chi_2)_{y}\|_{L^{\infty}}\|\varphi_y\|_{L^2}\lesssim |s|^{-\frac12-3\theta} \, .
\end{displaymath}
Therefore, we conclude gathering those estimates that 
\begin{equation} \label{eq:dZ211}
|z_{211}| \lesssim |s|^{-\frac12-\frac{5\theta}2}+|s|^{-\frac{3\theta}2}\left( \int \varepsilon^2 \varphi_y \right)^{\frac12}  .
\end{equation}

Next, we see integrating by parts that 
\begin{displaymath}
z_{212}=-6\int QQ'\varepsilon \Lambda Q \chi_2 (\varphi-1)
-\int (1-3Q^2) \varepsilon (\Lambda Q \chi_2)_y (\varphi-1)
-\int (1-3Q^2) \varepsilon \Lambda Q \chi_2 \varphi_y  .
\end{displaymath}
Moreover, it follows from \eqref{BS.1} and \eqref{eq:dZ2.1} that 
\begin{displaymath}
\left| \int QQ'\varepsilon \Lambda Q \chi_2 (\varphi-1) \right|
\lesssim \|QQ'\Lambda Q \chi_2 \|_{L^2}\|\varepsilon\|_{L^2} \lesssim |s|^{-\frac12-\frac{13\theta}2} \, ,
\end{displaymath}
\begin{displaymath}
\left| \int (1-3Q^2) \varepsilon (\Lambda Q \chi_2)_y (\varphi-1)\right|
\lesssim \|(\Lambda Q \chi_2)_y \|_{L^2}\|\varepsilon\|_{L^2} \lesssim |s|^{-\frac12-\frac{5\theta}2} \, ,
\end{displaymath}
and
\begin{displaymath}
\left| \int (1-3Q^2) \varepsilon \Lambda Q \chi_2 \varphi_y\right|
\lesssim \|\Lambda Q \chi_2 \|_{L^2} \left( \int \varepsilon^2 \varphi_y \right)^{\frac12}\lesssim |s|^{-\frac{3\theta}2}\left( \int \varepsilon^2 \varphi_y \right)^{\frac12} \, .
\end{displaymath}
Hence, we deduce that 
\begin{equation} \label{eq:dZ212}
|z_{212}| \lesssim |s|^{-\frac12-\frac{5\theta}2}+|s|^{-\frac{3\theta}2}\left( \int \varepsilon^2 \varphi_y \right)^{\frac12}  ,
\end{equation}
for $|s|$ large enough.

Integrating by parts again, we get 
\begin{displaymath}
z_{213}=3b \int (2QP_b+bP_b^2)\varepsilon \big(\Lambda Q \chi_2 (\varphi-1)\big)_y \, ,
\end{displaymath}
so that 
\begin{equation} \label{eq:dZ213}
|z_{213}| \lesssim |b|\|\varepsilon\|_{L^2} \big\|(\frac1{|y|^2}+|b|)\big(\Lambda Q \chi_2 (\varphi-1)\big)_y  \big\|_{L^2} \lesssim |s|^{-\frac52-\frac{3\theta}2}  \, .
\end{equation}

Similarly, 
\begin{equation} \label{eq:dZ214}
\begin{split}
|z_{214}| &=3\left|\int (Q+bP_b)\varepsilon^2 \big(\Lambda Q \chi_2 (\varphi-1)\big)_y \right|  \\ & \lesssim \|\varepsilon\|_{L^2}^2 \big\|(\frac1{|y|^2}+|b|) \big(\Lambda Q \chi_2 (\varphi-1)\big)_y  \big\|_{L^{\infty}} \\
& \lesssim |s|^{-2-2\theta} 
\end{split}
\end{equation}
and 
\begin{displaymath}  
\begin{split}
|z_{215}| &=\left|\int \varepsilon^3 \big(\Lambda Q \chi_2 (\varphi-1)\big)_y \right|  \lesssim \|\varepsilon\|_{L^3}^3 \big\| \big(\Lambda Q \chi_2 (\varphi-1)\big)_y  \big\|_{L^{\infty}} \lesssim |s|^{-2\theta}\|\varepsilon\|_{L^2}^2\|D^{\frac12}\varepsilon\|_{L^2}
\end{split}
\end{displaymath}
thanks to \eqref{gnp} (with $p=3$), so that 
\begin{equation} \label{eq:dZ215}
|z_{215}|  \lesssim C^{\star}|s|^{-2-\frac{3\theta}2} \, .
\end{equation}

Therefore, we conclude gathering \eqref{eq:dZ211}-\eqref{eq:dZ215} that 
\begin{equation} \label{eq:dZ21}
|z_{21}| \lesssim |s|^{-\frac12-\frac{5\theta}2}+|s|^{-\frac{3\theta}2}\left( \int \varepsilon^2 \varphi_y \right)^{\frac12} .
\end{equation}

\medskip 
	
\noindent \emph{Estimate for $z_{22}$.} We see integrating by parts that 
\begin{displaymath} 
z_{22}=-\frac12\frac{\lambda_s}{\lambda}\int \varepsilon  \Lambda Q \chi_2 (\varphi-1)
-\frac{\lambda_s}{\lambda}\int  \varepsilon  y(\Lambda Q \chi_2)_y (\varphi-1)
-\frac{\lambda_s}{\lambda}\int  \varepsilon  y\Lambda Q \chi_2 \varphi_y
 \, .
\end{displaymath}
Hence, it follows from \eqref{BS.1}, \eqref{BS.2} and \eqref{eq:dZ2.1}  that
\begin{equation} \label{eq:dZ22}
\begin{split}
|z_{22}|& \lesssim C^{\star}|s|^{-1+\frac \theta 2}\|\left[\varepsilon\|_{L^2} 
\big(\|\Lambda Q \chi_2\|_{L^2}+\|y(\Lambda Q )_y\chi_2\|_{L^2}  \big)
+  \left( \int \varepsilon^2 \varphi_y \right)^{\frac12}\|y\Lambda Q \chi_2\|_{L^2}\right]  \\ &
\lesssim C^{\star}|s|^{-\frac32-\theta}+C^{\star}|s|^{-1}\left( \int \varepsilon^2 \varphi_y \right)^{\frac12}
\lesssim |s|^{-2} + |s|^{-\frac {3\theta}2} \left( \int \varepsilon^2 \varphi_y \right)^{\frac12}.
\end{split}
\end{equation}
\medskip 
	
\noindent \emph{Estimate for $z_{23}$.} Note that from the definition of $Q_b$, 
\begin{displaymath}
z_{23}=\big(\frac{\lambda_s}{\lambda}+b\big)\int  (\Lambda Q)^2 \chi_2 (\varphi-1)
+\big(\frac{\lambda_s}{\lambda}+b\big)b\int  \Lambda P_b \Lambda Q \chi_2 (\varphi-1)\, .
\end{displaymath}
Moreover, it follows from \eqref{BS.1} and \eqref{BS.2} that
\begin{displaymath}
\left|\big(\frac{\lambda_s}{\lambda}+b\big)\int  (\Lambda Q)^2 \chi_2 (\varphi-1)  \right|
\lesssim C^{\star}|s|^{-1+\frac \theta 2} \int_{y<-\frac{B}4|s|^{\theta}} \frac1{|y|^4} \lesssim 
C^{\star}|s|^{-1-\frac{5\theta}2}
\end{displaymath}
and 
\begin{displaymath}
\begin{split}
\left|\big(\frac{\lambda_s}{\lambda}+b\big)b\int  \Lambda P_b \Lambda Q \chi_2 (\varphi-1)  \right| &\lesssim |b|C^{\star}|s|^{-1+\frac \theta 2}
\left( \int_{y<-\frac{B}4|s|^{\theta}} \frac1{|y|^2} 
+|b|\int_{-2|b|^{-1}<y<-|b|^{-1}}\frac1{|y|}\right) \\ & 
\lesssim C^{\star}|s|^{-2-\frac{\theta}2} \, .
\end{split}
\end{displaymath}
Then, we deduce that 
\begin{equation} \label{eq:dZ23}
|z_{23}| \lesssim C^{\star}|s|^{-2-\frac{\theta}2} \, ,
\end{equation}
for $|s|$ large enough. 
\medskip 
	
\noindent \emph{Estimate for $z_{24}$.} We have after integrating by parts that 
\begin{displaymath} 
z_{24}=\big(\frac{x_s}{\lambda}-1\big)\left(\int  \big( Q+bP_b\big)_y \Lambda Q \chi_2 (\varphi-1)
-\int  \varepsilon (\Lambda Q \chi_2)_y (\varphi-1)-\int  \varepsilon \Lambda Q \chi_2 \varphi_y\right) \, .
\end{displaymath}
Moreover, we get from \eqref{BS.1} and \eqref{eq:dZ2.1} that
\begin{displaymath} 
\left|\int  \big( Q+bP_b\big)_y \Lambda Q \chi_2 (\varphi-1) \right|\lesssim 
  \int_{y<-\frac{B}4|s|^{\theta}} \frac1{|y|^5}+|b| \int_{y<-\frac{B}4|s|^{\theta}} \frac1{|y|^4}+|b|^2
  \int\frac{\chi'(|b|y)}{|y|^2} \lesssim |s|^{-4\theta} \, ,
\end{displaymath}
\begin{displaymath} 
\left|\int  \varepsilon (\Lambda Q \chi_2)_y (\varphi-1) \right|\lesssim 
  \|\varepsilon\|_{L^2} \|(\Lambda Q \chi_2)_y\|_{L^2} \lesssim |s|^{-\frac12-\frac{5\theta}2} 
\end{displaymath}
and 
\begin{displaymath} 
\left|\int  \varepsilon \Lambda Q \chi_2 \varphi_y\right| \lesssim \|\Lambda Q \chi_2\|_{L^2}
\left( \int \varepsilon^2 \varphi_y\right)^{\frac12} \lesssim |s|^{-\frac{3\theta}2}\left( \int \varepsilon^2 \varphi_y\right)^{\frac12} .
\end{displaymath}
Thus we conclude from \eqref{BS.2} that 
\begin{equation} \label{eq:dZ24}
|z_{24}| \lesssim C^{\star}|s|^{-\frac32-2\theta}+C^{\star}|s|^{-1-\theta} \left( \int \varepsilon^2 \varphi_y\right)^{\frac12}\lesssim |s|^{-2} .
\end{equation}
\medskip 
	
\noindent \emph{Estimate for $z_{25}$.} We recall from the definition of $\frac{\partial Q_b}{\partial b}$ in \eqref{def.DQb} that 
\begin{displaymath}
z_{25}=-\big(b_s+b^2\big)\int \big(P_b+yP\chi_b' \big) \Lambda Q \chi_2 (\varphi-1) \, .
\end{displaymath}
Moreover, 
\begin{displaymath}
\left| \int P_b \Lambda Q \chi_2 (\varphi-1) \right| \lesssim \int_{y<-\frac{B}4|s|^{\theta}}\frac1{|y|^2} \lesssim |s|^{-\theta} 
\end{displaymath}
and 
\begin{displaymath}
\left|\int yP\chi_b'  \Lambda Q \chi_2 (\varphi-1)  \right| \lesssim |b|\int \frac{\chi'(|b|y)}{|y|} \lesssim |s|^{-1} \, .
\end{displaymath}
Then, it follows from \eqref{BS.3} that 
\begin{equation} \label{eq:dZ25}
|z_{25}| \lesssim C^{\star}|s|^{-2-\frac{\theta}2}\lesssim |s|^{-2} \, .
\end{equation}
\medskip 
	
\noindent \emph{Estimate for $z_{26}$.} We deduce from \eqref{lprofile.2} and \eqref{eq:dZ2.1} that
\begin{equation} \label{eq:dZ26}
|z_{26}| \lesssim \|\Psi_b\|_{L^2} \|\Lambda Q \chi_2\|_{L^2} \lesssim |s|^{-\frac32-\frac{3\theta}2} \, .
\end{equation}
\medskip 
	
\noindent \emph{Estimate for $z_{27}$.}    First, we compute $\partial_s(\chi_2)(s,y)=\frac4B \theta|s|^{-\theta-1}y\chi'(\frac4By|s|^{-\theta})$. Then, 
\begin{displaymath}
\left| \int \varepsilon \Lambda Q \partial_s(\chi_2) (\varphi-1) \right| \lesssim |s|^{-1-\theta} \|\varepsilon\|_{L^2} \left(\int_{y<-\frac{B}4|s|^{\theta}} \frac1{|y|^2} \right)^{\frac12} \lesssim |s|^{-\frac32-\frac{3\theta}2}\, .
\end{displaymath}
Second, using \eqref{varphi_s} and \eqref{eq:dZ2.1}, we obtain
\begin{displaymath}
\begin{split}
\left| \int \varepsilon\Lambda Q \chi_2 \partial_s\varphi \right|
&\lesssim |s|^{-1+\theta}\|\Lambda Q\chi_2\|_{L^2} \left(\int \varepsilon^2\varphi_y\right)^{\frac12}+|s|^{-1-\theta} \|\Lambda Q\chi_2\|_{L^2}\|\varepsilon\|_{L^2} \\ &
\lesssim |s|^{-1-\frac{\theta}{2}}\left(\int \varepsilon^2\varphi_y\right)^{\frac12}
+|s|^{-\frac32-\frac{5\theta}{2}} \, .
\end{split}
\end{displaymath}
Hence, we deduce that 
\begin{equation} \label{eq:dZ27}
|z_{27}| \lesssim |s|^{-\frac32-\frac{3\theta}2}+|s|^{-1-\frac{\theta}{2}}\left(\int \varepsilon^2\varphi_y\right)^{\frac12} \, .
\end{equation}

\smallskip 

Therefore, we conclude the proof of \eqref{eq:dZ2} gathering \eqref{eq:dZ21}-\eqref{eq:dZ27}.
\end{proof}

\subsection{Coercivity lemma}
We state and prove two consequences of \eqref{coercivity} and \eqref{BS.4}.
\begin{lemma}\label{coer1}
There exits $\kappa>0$ such that 
\begin{equation}\label{coer2}
 \int \left[\big|D^{\frac 12} (\varepsilon \rho)\big|^{2}
 +   \varepsilon^2   \left(\frac {\rho^2+\varphi}2\right)
-3  Q^2 (\varepsilon  \rho)^2 \right]
 \geq \kappa  \int \left[\big|D^{\frac 12} (\varepsilon \rho)\big|^{2} + \varepsilon^2\rho^2\right]
+\mathcal O(|s|^{-2 +\theta})  ,
\end{equation}
where $\rho$ is defined in \eqref{def.rho}, and
\begin{equation}\label{eq:cF}
F  \geq \kappa  \mathcal N(\varepsilon)^2 + \mathcal O(|s|^{-2+\theta}).
\end{equation}
\end{lemma}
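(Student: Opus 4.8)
\textbf{Proof proposal for Lemma \ref{coer1}.}

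The plan is to deduce both coercivity estimates from the spectral coercivity \eqref{coercivity} of $\mathcal L$, after controlling the three bad directions $Q$, $\Lambda Q$, $Q'$ using the bootstrap information \eqref{BS.4} and the almost-orthogonality relations. I would treat \eqref{coer2} first. Setting $w=\varepsilon\rho$, the left-hand side of \eqref{coer2} is $(\mathcal L w, w) + \tfrac12\int(\varphi-\rho^2)\varepsilon^2 + \tfrac12\int\varepsilon^2(1-\rho^2) + \tfrac12\int w^2$; actually it is cleaner to write the quadratic form as $\int|D^{\frac12}w|^2 - 3\int Q^2 w^2 + \tfrac12\int\varepsilon^2\rho^2 + \tfrac12\int\varepsilon^2\varphi$, which equals $(\mathcal L w,w) - \tfrac12\int w^2 + \tfrac12\int\varepsilon^2\varphi$. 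Since $\varphi\gtrsim 1$ on the support of $\rho$ (because $\theta>\tfrac35$ and $\rho$ is supported in $|y|\lesssim|s|^{\frac35}$ while $\varphi$ decays only near $y\sim -|s|^\theta$), we have $\int\varepsilon^2\varphi \gtrsim \int w^2$ up to a controlled error, so it suffices to bound $(\mathcal L w,w)$ from below. Apply \eqref{coercivity} to $w\in H^{\frac12}$:
\[
(\mathcal L w,w)\geq \kappa_0\|w\|_{H^{\frac12}}^2 - \frac1{\kappa_0}\big((w,Q)^2+(w,\Lambda Q)^2+(w,Q')^2\big).
\]
The three scalar products must be shown to be $\mathcal O(|s|^{-1+\frac\theta2})$ so that their squares are $\mathcal O(|s|^{-2+\theta})$. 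Write $(w,Q)=(\varepsilon\rho,Q)=(\varepsilon,Q)+(\varepsilon,(\rho-1)Q)$; the first term is $\mathcal O(|s|^{-1+\frac\theta2})$ by \eqref{BS.4}, and the second is bounded by $\|\varepsilon\|_{L^2}\|(\rho-1)Q\|_{L^2}\lesssim|s|^{-\frac12}\cdot|s|^{-1}$ using \eqref{e.g15} (i.e. $|\rho-1|\lesssim y^2|s|^{-1}$ and $Q\in\mathcal Y_2$, so $\|(\rho-1)Q\|_{L^2}\lesssim|s|^{-1}$). The same splitting handles $(w,\Lambda Q)$ and $(w,Q')$, again using \eqref{BS.4} and the fast decay of $\Lambda Q, Q'$. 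This gives \eqref{coer2} with $\|w\|_{H^{\frac12}}^2 \gtrsim \int(|D^{\frac12}w|^2 + \varepsilon^2\rho^2)$ up to the same error, after reabsorbing the $-\tfrac12\int w^2$ term into $\tfrac12\int\varepsilon^2\varphi$ and choosing $\kappa$ small.

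For \eqref{eq:cF}, I would expand $F$ from \eqref{defF}. The quartic terms satisfy, by \eqref{gnp} and \eqref{bd.PbRb}, $\int\varepsilon^4\lesssim\|\varepsilon\|_{H^{\frac12}}^4\lesssim|s|^{-1}\mathcal N(\varepsilon)^2$ and $\int|Q_b||\varepsilon|^3\lesssim|s|^{-\frac12}\mathcal N(\varepsilon)^2$, both absorbable, while $\int|Q_b^2-Q^2|\varepsilon^2\lesssim|b|\int\varepsilon^2\lesssim|s|^{-2}$. Hence
\[
F = \int|D^{\frac12}\varepsilon|^2 + \varepsilon^2\varphi - 3\int Q^2\varepsilon^2 + o(1)\mathcal N(\varepsilon)^2 + \mathcal O(|s|^{-2})
= (\mathcal L_\varphi\varepsilon,\varepsilon) + o(1)\mathcal N(\varepsilon)^2 + \mathcal O(|s|^{-2}),
\]
with $\mathcal L_\varphi$ as in \eqref{defK}. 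Then I write $(\mathcal L_\varphi\varepsilon,\varepsilon) = (\mathcal L\varepsilon,\varepsilon) + \int(\varphi-1)\varepsilon^2$. The term $\int(\varphi-1)\varepsilon^2$ is nonnegative only where $\varphi<1$; more precisely $0\leq 1-\varphi\leq 1$ and it is supported where $y\lesssim -|s|^\theta$ roughly, so $\int(1-\varphi)\varepsilon^2 \leq \int\varepsilon^2(1-\varphi)$ and this must be combined with the $L^2$ part. The cleanest route: since $\mathcal N(\varepsilon)^2 = \int|D^{\frac12}\varepsilon|^2 + \varepsilon^2\varphi$, I want $F\geq\kappa\mathcal N(\varepsilon)^2$. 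Write $(\mathcal L_\varphi\varepsilon,\varepsilon) = \int|D^{\frac12}\varepsilon|^2 + \int\varepsilon^2\varphi - 3\int Q^2\varepsilon^2$. Apply \eqref{coercivity} to $\varepsilon$: $\int|D^{\frac12}\varepsilon|^2 - 3\int Q^2\varepsilon^2 = (\mathcal L\varepsilon,\varepsilon) - \int\varepsilon^2 + \int|D^{\frac12}\varepsilon|^2 - \int|D^{\frac12}\varepsilon|^2$... more directly, use that $(\mathcal L\varepsilon,\varepsilon)\geq\kappa_0\|\varepsilon\|_{H^{\frac12}}^2 - \frac1{\kappa_0}\sum(\varepsilon,\cdot)^2$ with the three scalar products $\mathcal O(|s|^{-1+\frac\theta2})$ by \eqref{BS.4}, giving $(\mathcal L\varepsilon,\varepsilon)\geq\kappa_0\|\varepsilon\|_{H^{\frac12}}^2 + \mathcal O(|s|^{-2+\theta})$, hence $\int|D^{\frac12}\varepsilon|^2 - 3\int Q^2\varepsilon^2 \geq \kappa_0\int|D^{\frac12}\varepsilon|^2 + (\kappa_0-1)\int\varepsilon^2 + \mathcal O(|s|^{-2+\theta})$. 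Adding $\int\varepsilon^2\varphi$ and using $\int\varepsilon^2\geq\int\varepsilon^2\varphi$ to absorb the negative $(\kappa_0-1)\int\varepsilon^2\leq(\kappa_0-1)\int\varepsilon^2\varphi$ contribution: $(\mathcal L_\varphi\varepsilon,\varepsilon)\geq\kappa_0\int|D^{\frac12}\varepsilon|^2 + \kappa_0\int\varepsilon^2\varphi + \mathcal O(|s|^{-2+\theta}) = \kappa_0\mathcal N(\varepsilon)^2 + \mathcal O(|s|^{-2+\theta})$. Finally reabsorb the $o(1)\mathcal N(\varepsilon)^2$ from the cubic/quartic terms by taking $|S_0|$ large and $\kappa<\kappa_0$, which yields \eqref{eq:cF}.

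The main obstacle is bookkeeping the localization factors $\rho$ and $\varphi$ consistently: one must verify that $\varphi\gtrsim 1$ on $\mathrm{supp}\,\rho$ (so that $\int\varepsilon^2\rho^2$ and $\int\varepsilon^2\varphi$ are comparable on the relevant region) and that the commutator error $\int(|D^{\frac12}\varepsilon|^2\rho^2 - |D^{\frac12}(\varepsilon\rho)|^2)$ — needed to pass from $(\mathcal L w,w)$ with $w=\varepsilon\rho$ back to quantities involving $D^{\frac12}\varepsilon$ — is $\mathcal O(|s|^{-2+\theta}) + |s|^{-\frac1{20}}\|D^{\frac12}(\varepsilon\rho)\|_{L^2}^2$, exactly as in \eqref{e:g003}; the last term is then reabsorbed into the left-hand side for $|S_0|$ large. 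Everything else is a routine application of \eqref{coercivity}, \eqref{BS.4}, \eqref{gnp} and the pointwise bounds on $\rho,\varphi$ from Lemma \ref{le:psi}.
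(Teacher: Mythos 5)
Your proof of \eqref{coer2} is essentially the paper's argument: apply \eqref{coercivity} to $w=\varepsilon\rho$, control the three scalar products by splitting off $(\varepsilon,(\rho-1)Q)$ and using \eqref{BS.4}, and use $\varphi\geq\rho^2-|s|^{-\frac35}$ to compare the two localized mass terms. (Minor point: $\|(\rho-1)Q\|_{L^2}\lesssim|s|^{-\frac12}$, not $|s|^{-1}$; the conclusion $(w,Q)^2=\mathcal O(|s|^{-2})$ is unaffected.)

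Your proof of \eqref{eq:cF}, however, has a genuine gap. After reducing $F$ to $(\mathcal L_\varphi\varepsilon,\varepsilon)$ you apply \eqref{coercivity} to the \emph{global} $\varepsilon$ and must then pass from the mass term $\int\varepsilon^2$ present in $(\mathcal L\varepsilon,\varepsilon)$ to the localized mass term $\int\varepsilon^2\varphi$ present in $F$. Your absorption step invokes $(\kappa_0-1)\int\varepsilon^2\leq(\kappa_0-1)\int\varepsilon^2\varphi$; this inequality is true but points the wrong way, since you need a \emph{lower} bound on $(\kappa_0-1)\int\varepsilon^2+\int\varepsilon^2\varphi$, whereas what you have proved is the upper bound $(\kappa_0-1)\int\varepsilon^2+\int\varepsilon^2\varphi\leq\kappa_0\int\varepsilon^2\varphi$. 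Written correctly, your chain yields
\[
(\mathcal L_\varphi\varepsilon,\varepsilon)\geq\kappa_0\int|D^{\frac12}\varepsilon|^2+\kappa_0\int\varepsilon^2\varphi-(1-\kappa_0)\int\varepsilon^2(1-\varphi)+\mathcal O(|s|^{-2+\theta}),
\]
and the loss $-(1-\kappa_0)\int\varepsilon^2(1-\varphi)$ is not an admissible error. As stressed in Remark~\ref{rk:BS}, the full $L^2$ norm of $\varepsilon$ is genuinely of size $|s|^{-\frac12}$ (the component $-bP_b$ of $\varepsilon$ carries mass $\sim|b|^{\frac12}$ spread over $-2|b|^{-1}<y<0$, where $1-\varphi\sim1$), so $\int\varepsilon^2(1-\varphi)$ can be of order $|s|^{-1}\gg|s|^{-2+\theta}\gtrsim\mathcal N(\varepsilon)^2$, which destroys the bound. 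The repair is exactly the mechanism you used for \eqref{coer2} and allude to in your closing paragraph: do not apply \eqref{coercivity} to $\varepsilon$ but to $\varepsilon\rho$. Concretely, use \eqref{e:g003} to replace $\int|D^{\frac12}\varepsilon|^2$ by $\int|D^{\frac12}(\varepsilon\rho)|^2$ up to $\mathcal O(|s|^{-2+\theta})+\mathcal O(|s|^{-\frac1{20}}\|D^{\frac12}(\varepsilon\rho)\|_{L^2}^2)$, use \eqref{bof} and \eqref{bof2} to replace $\int\varepsilon^2\varphi$ and $\int Q^2\varepsilon^2$ by $\int(\varepsilon\rho)^2$ and $\int Q^2(\varepsilon\rho)^2$, keep a small fraction $\frac\kappa4\mathcal N(\varepsilon)^2$ aside, and conclude by \eqref{coer2}. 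In the localized version the mass term of the quadratic form is $\int(\varepsilon\rho)^2$, which matches $\int\varepsilon^2\varphi$ up to $\mathcal O(|s|^{-\frac85})$, so no order-$|s|^{-1}$ discrepancy appears.
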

\begin{proof}
\emph{Proof of \eqref{coer1}}.
It is clear from the definitions of $\varphi$ and $\rho$ (see~\eqref{def.varphi} and~\eqref{def.rho}) 
and $\theta>\frac 35$ that $\rho^2\lesssim \varphi$.
More precisely, for $y>-2|s|^{\frac 35} $, we have
\[
1- \varphi(s,y) \leq 1- \varphi(s,-2|s|^{\frac 35} ) 
=\frac {\phi(|s|^\theta)-\phi(-\frac 2B |s|^{\frac 35} +|s|^\theta)}{\phi(|s|^\theta)}
\leq |s|^{-\frac 35},
\]
and thus 
\begin{equation}\label{bof}
\varphi(s)\geq  \rho^2 - |s|^{-\frac 35}.
\end{equation}
From the   definition of $\rho$, we have
\begin{align*}
\|(1-\rho) Q\|_{L^2}^2 & \lesssim \|(1-\rho^2) Q\|_{L^2}^2\lesssim
\|Q\|_{L^2(y<-|s|^{\frac 35})}^2 + 
\int \frac 1{(1+y^2)^2 }\frac {y^2 |s|^{-1}}{(1+y^2 |s|^{-1})}\\
& \lesssim |s|^{-\frac 95} + |s|^{-1}\int \frac 1{(1+y^2) }\frac {1}{(1+y^2 |s|^{-1})}\lesssim |s|^{-1}.
\end{align*}
Thus, by \eqref{BS.1}, \eqref{BS.4},
\begin{align}
|(\varepsilon \rho, Q)| 
&= |(\varepsilon , Q) - (\varepsilon,(1-\rho) Q)|
\lesssim |(\varepsilon,Q)|+  \|(1-\rho) Q\|_{L^2}\|\varepsilon\|_{L^2} \nonumber \\
& \lesssim (C^\star)^2 |s|^{-2+\theta} + |s|^{-\frac 12} \|\varepsilon\|_{L^2}
  \lesssim |s|^{-1}. \label{coer.100}
\end{align}
By a similar argument, using \eqref{BS.4} we obtain
\begin{equation} \label{coer.101}
|(\varepsilon \rho, Q')|+|(\varepsilon \rho, \Lambda Q)|\lesssim |s|^{-1} \, .
\end{equation}

Therefore, applying \eqref{coercivity} to $ \varepsilon	 \rho $, we find
\begin{align*}
 \int \left[\big|D^{\frac 12} (\varepsilon \rho)\big|^{2}
 +  \right. & \left. \varepsilon^2   \left(\frac {\rho^2+\varphi}2\right)
-3  Q^2 (\varepsilon  \rho)^2 \right] \\
&\geq  \int \left[\big|D^{\frac 12} (\varepsilon \rho)\big|^{2}
 +    \varepsilon^2 \rho^2-3  Q^2 (\varepsilon  \rho)^2 \right]
  +\mathcal O\left(|s|^{-\frac 35} \| \varepsilon\|_{L^2}^2\right)\\
 & \geq \ \kappa   \int\left[\big|D^{\frac 12} (\varepsilon \rho)\big|^{2} 
 +   \varepsilon^2  \rho^2 \right]   
+\mathcal O(|s|^{-\frac 85})  ,
\end{align*}
and the result follows since $\frac 85 >  2 -\theta$.

\medskip

\noindent \emph{Proof of \eqref{eq:cF}.} First, we observe that
\[\left| \left(\left( Q_{b}+\varepsilon\right)^4-Q_{b}^4- 4 Q_{b}^3 \varepsilon\right) -  6 Q^2 \varepsilon^2\right| \\
 \lesssim |s|^{-1} | \varepsilon|^2  + | \varepsilon|^3 + | \varepsilon|^4,
\]
and thus, using \eqref{gnp} with $p=4$ (which implies $\| \varepsilon\|_{L^4}^2\lesssim \| \varepsilon\|_{L^2}\| \varepsilon\|_{\dot H^{\frac 12}}\lesssim C^\star|s|^{-\frac 32 + \frac \theta 2}$),
\begin{align}
\left|\int\left(\left(Q_{b}+\varepsilon\right)^4-Q_{b}^4-4Q_{b}^3\varepsilon\right)-6Q^2\varepsilon^2\right| 
&\lesssim|s|^{-1}\|\varepsilon\|_{L^2}^2 + \|\varepsilon\|_{L^2}\|\varepsilon\|_{L^4}^2 + \|\varepsilon\|_{L^4}^4
\nonumber 
\\ &\lesssim|s|^{-2}+C^\star |s|^{-2+\frac \theta 2} +(C^\star)^2 |s|^{-3 + \theta }\lesssim |s|^{-2+ \theta}.
\label{paraF}
\end{align}
Next, using \eqref{e:g003}, we have
\[
\int |D^{\frac 12} \varepsilon|^2 \geq \int |D^{\frac 12} \varepsilon|^2 \rho^2 
\geq \int |D^{\frac 12} ( \varepsilon \rho)|^2 + \mathcal O(|s|^{-2 + \theta})
+\mathcal{O}(|s|^{-\frac1{20}}\|D^{\frac12}(\varepsilon\rho)\|_{L^2}^2).
\]
Thus, using also \eqref{bof}, \eqref{bof2} and \eqref{paraF}, we obtain
\begin{align*}
F & \geq \frac \kappa 4 \mathcal N(\varepsilon)^2 
+  \int \left[\left(1-\frac \kappa 4\right) |D^{\frac 12} ( \varepsilon \rho)|^2
+  \left(1-\frac \kappa 4\right) (\varepsilon \rho)^2 - 3 Q^2  (\varepsilon \rho)^2 \right]
\\ & \quad +\mathcal O(|s|^{-2 + \theta})+\mathcal{O}(|s|^{-\frac1{20}}\|D^{\frac12}(\varepsilon\rho)\|_{L^2}^2)\\
& \geq \frac \kappa 8 \mathcal N(\varepsilon)^2 + \mathcal O(|s|^{-2 + \theta}),
\end{align*}
applying \eqref{coercivity} on $ \varepsilon \rho$ as before.

For future reference, we claim the following bound
\begin{equation}\label{eq:bF}
|F|\lesssim   \mathcal N(\varepsilon)^2 +  |s|^{-2+\theta}.
\end{equation}
Indeed, \eqref{eq:bF} is a direct consequence of \eqref{paraF} and the estimate $\int Q^2\varepsilon^2 \lesssim \int \varepsilon^2 \varphi \lesssim \mathcal{N}(\varepsilon)^2$.
\end{proof}

\subsection{Closing  estimates on $\varepsilon$}
Let
\begin{equation}\label{defH}
H =\left(1-\frac {K}{p_{0}}\right) \frac F\lambda + G + \frac 1{2p_{0}}   K^2  -\frac 1{p_{0}}  \frac {KZ}\lambda.
\end{equation}

\begin{proposition}\label{FJH} 
 For $|S_0|$ large enough, possibly depending on $C^{\star}$ and for all $s\in \mathcal I^\star$, the following hold
\begin{itemize}
\item[(i)] Bound.
\begin{equation}\label{eq:bH}
|H(s)| \lesssim  \frac {\mathcal N(\varepsilon)^2} {\lambda} +|s|^{-1+\theta}.
\end{equation}
\item[(ii)] Coercivity. There exists $\kappa>0$ such that
\begin{equation}\label{eq:cH}
H(s) \geq \kappa \frac {\mathcal N(\varepsilon)^2} {\lambda} +\mathcal O(|s|^{-1+\theta }).
\end{equation}
\item[(iii)] Estimate of the time derivatives.
\begin{equation}\label{eq:H}
\frac {dH}{ds} \lesssim C^\star |s|^{-2+\theta }.
\end{equation}
\end{itemize}
\end{proposition}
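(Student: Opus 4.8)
\textbf{Proof plan for Proposition \ref{FJH}.} The strategy is to assemble the three functionals $F/\lambda$, $G$ and $K$, whose individual variations are controlled in Lemmas \ref{le:FF}, \ref{le:GG} and \ref{le:KK}, into the single combination $H$ defined in \eqref{defH} so that the dangerous terms cancel. First I would establish the bound \eqref{eq:bH}: by \eqref{eq:bF}, $|F/\lambda|\lesssim \mathcal N(\varepsilon)^2/\lambda + |s|^{-1}\cdot|s|^{-2+\theta}\cdot|s|\lesssim \mathcal N(\varepsilon)^2/\lambda + |s|^{-2+\theta}$; by \eqref{bg:G}, $|G|\lesssim |s|^{-\frac 12}$; by \eqref{bg:K}, $|K|\lesssim C^\star|s|^{-1+\theta}$ so $|K^2|\lesssim (C^\star)^2|s|^{-2+2\theta}$; and by \eqref{eq:Z} and $\lambda\sim|s|^{-1}$, $|KZ/\lambda|\lesssim C^\star |s|^{-1+\theta}\cdot|s|^{-2+\theta}\cdot|s|\lesssim C^\star|s|^{-2+2\theta}$. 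Since $\theta<1$ these error terms are all $\mathcal O(|s|^{-1+\theta})$, giving \eqref{eq:bH}. For the coercivity \eqref{eq:cH}, the key is that $|K/p_0|\lesssim C^\star|s|^{-1+\theta}=o(1)$ for $|S_0|$ large (possibly depending on $C^\star$), so $(1-K/p_0)\geq \frac 12$; then $H\geq \frac 12 F/\lambda + G + \mathcal O(|s|^{-1+\theta})$, and using \eqref{eq:cF} to bound $F/\lambda\gtrsim \mathcal N(\varepsilon)^2/\lambda$ together with $|G|\lesssim|s|^{-\frac 12}=\mathcal O(|s|^{-1+\theta})$ (valid since $\theta>\frac 35>\frac 12$) yields \eqref{eq:cH}.

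The heart of the proof is the time-derivative estimate \eqref{eq:H}. I would compute
\[
\frac{dH}{ds} = \left(1-\frac{K}{p_0}\right)\frac{d}{ds}\!\left(\frac F\lambda\right) - \frac 1{p_0}\frac{dK}{ds}\frac F\lambda + \frac{dG}{ds} + \frac 1{p_0}K\frac{dK}{ds} - \frac 1{p_0}\frac{d}{ds}\!\left(\frac{KZ}{\lambda}\right).
\]
Plugging in \eqref{eq:F}, \eqref{eq:G}, \eqref{eq:K} and expanding, the plan is to track the two structurally dangerous families of terms. The first is the term $-A$ in \eqref{eq:F} together with the block $-6\int Q\Lambda Q\varepsilon^2 + 2(\frac{\lambda_s}\lambda+b)\int\varepsilon y\Lambda Q + 2(\frac{x_s}\lambda-1)\int\varepsilon yQ'$ in \eqref{eq:G} plus the $+6\int Q^2(\varepsilon\rho)^2 - 2\int|D^{\frac 12}(\varepsilon\rho)|^2 - \int(\varepsilon\rho)^2$ contribution: from the definition \eqref{defA} of $A$ one sees that $-A + (\text{Virial block from } \frac{dG}{ds})$ collapses, up to $\mathcal O(|s|^{-2+\theta})$, to exactly $-2\int|D^{\frac 12}(\varepsilon\rho)|^2 - \int\varepsilon^2\varphi + 2\cdot 6\int Q^2(\varepsilon\rho)^2 \cdot(\ldots)$ — more precisely, the combination is designed so that what survives is a quadratic form to which the coercivity Lemma \ref{coer1}, estimate \eqref{coer2}, applies, making it $\leq -\kappa(\ldots) + \mathcal O(|s|^{-2+\theta})$, hence harmless. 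The second dangerous family is the term $-(\frac{\lambda_s}\lambda+b)(F/\lambda - K + Z/\lambda)$ in \eqref{eq:F}: this is precisely cancelled by the contribution coming from $-\frac 1{p_0}\frac{dK}{ds}\cdot\frac F\lambda + \frac 1{p_0}K\frac{dK}{ds} - \frac 1{p_0}\frac{d}{ds}(KZ/\lambda)$ once one substitutes $\frac{dK}{ds} = -(\frac{\lambda_s}\lambda+b)p_0 + \mathcal O((\int\varepsilon^2\varphi_y)^{\frac 12}) + \mathcal O((C^\star)^2|s|^{-2+\frac{3\theta}2})$ from \eqref{eq:K}. Indeed $-\frac 1{p_0}(-(\frac{\lambda_s}\lambda+b)p_0)\frac F\lambda = (\frac{\lambda_s}\lambda+b)\frac F\lambda$ kills the $\frac F\lambda$ piece, $\frac 1{p_0}K(-(\frac{\lambda_s}\lambda+b)p_0) = -(\frac{\lambda_s}\lambda+b)K$ kills the $-K$ piece (with sign bookkeeping), and the $Z/\lambda$ piece is handled by $-\frac 1{p_0}\frac{d}{ds}(KZ/\lambda)$ using $\frac{dZ}{ds}$ from \eqref{eq:dZ} and the bound \eqref{eq:Z}.

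After these two cancellations what remains to estimate are: the explicit good term $-\frac 1{4\lambda}\int\varepsilon^2\varphi_y$ in \eqref{eq:F}, which I would keep and use to absorb the cross terms of the form $(\int\varepsilon^2\varphi_y)^{\frac 12}\cdot(\text{small})$ generated by \eqref{eq:K}, \eqref{eq:dZ} via Cauchy–Schwarz ($ab\leq \frac 1{8\lambda}a^2 + 2\lambda b^2$, and $\lambda\sim|s|^{-1}$ so $2\lambda b^2$ with $b\lesssim C^\star|s|^{-1+\frac{3\theta}2}$-type quantities is $\mathcal O((C^\star)^2|s|^{-3+3\theta})\lesssim C^\star|s|^{-2+\theta}$ since $\theta<\frac 23$); the term ${C_0}C^\star|s|^{-2+\theta}$ already present in \eqref{eq:F}; the leftover $\mathcal O(|s|^{-\frac 1{20}}\|D^{\frac 12}(\varepsilon\rho)\|_{L^2}^2)$ from \eqref{eq:G}, absorbed by a small fraction of the coercive term produced by \eqref{coer2}; and the terms coming from the factor $K/p_0$ multiplying $\frac d{ds}(F/\lambda)$ and from $-\frac 1{p_0}\frac{dK}{ds}\cdot\frac F\lambda$ with the \emph{error} part of $\frac{dK}{ds}$ — these are products of the form $|K|\cdot|\frac d{ds}(F/\lambda)|$ and are bounded using $|K|\lesssim C^\star|s|^{-1+\theta}$ together with the crude bound $|\frac d{ds}(F/\lambda)|\lesssim C^\star|s|^{-1+\theta}$ that follows by reading \eqref{eq:F} backwards (all terms on the right are $\mathcal O(C^\star|s|^{-1+\theta})$ after multiplying through, since $\frac 1\lambda\int\varepsilon^2\varphi_y\lesssim|s|\mathcal N(\varepsilon)^2\lesssim(C^\star)^2|s|^{-1+\theta}$ and $A,Z$ are controlled), giving a contribution $\mathcal O((C^\star)^2|s|^{-2+2\theta})\lesssim C^\star|s|^{-2+\theta}$ for $|S_0|$ large. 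The main obstacle, and the step requiring the most care, is the bookkeeping of the algebraic cancellation of the $-(\frac{\lambda_s}\lambda+b)(F/\lambda - K + Z/\lambda)$ term: one must verify that the derivative $\frac d{ds}(KZ/\lambda) = \frac{dK}{ds}\frac Z\lambda + K\frac d{ds}(Z/\lambda)$ does not reintroduce a term of size $|s|^{-1+\theta}$ — here one uses that $\frac d{ds}(Z/\lambda)$, by \eqref{eq:dZ} and $\lambda\sim|s|^{-1}$ with $|\lambda_s/\lambda|\lesssim C^\star|s|^{-1+\frac\theta2}$, is $\mathcal O(|s|^{-1} + |s|^{-\frac 32}(\int\varepsilon^2\varphi_y)^{\frac 12})$, and that $K\cdot|s|^{-1}\lesssim C^\star|s|^{-2+\theta}$, while $\frac{dK}{ds}\cdot\frac Z\lambda$ uses \eqref{eq:K} and \eqref{eq:Z} to give $(\frac{\lambda_s}\lambda+b)p_0\cdot|s|\cdot|s|^{-2+\theta} + \ldots\lesssim C^\star|s|^{-2+\theta}$ plus a $(\int\varepsilon^2\varphi_y)^{\frac 12}$ cross term absorbed as above. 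Collecting every piece and using $\frac 35<\theta<\frac 23$ throughout to make each residual $\lesssim C^\star|s|^{-2+\theta}$ completes the proof of \eqref{eq:H}.
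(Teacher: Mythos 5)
Your architecture is exactly the paper's: the same splitting of $H$, the same cancellation of $-A$ against the Virial block of $dG/ds$ followed by the coercivity estimate \eqref{coer2}, and the same use of $-\frac1{p_0}\frac{d}{ds}(KF/\lambda)$, $\frac1{2p_0}\frac{d}{ds}(K^2)$ and $-\frac1{p_0}\frac{d}{ds}(KZ/\lambda)$ to kill the three pieces of $-(\frac{\lambda_s}{\lambda}+b)(F/\lambda-K+Z/\lambda)$; parts (i) and (ii) are fine. There is, however, one genuine gap in (iii): your treatment of the cross term $|K|\cdot|\frac{d}{ds}(F/\lambda)|$. You propose the crude pointwise bound $\frac1\lambda\int\varepsilon^2\varphi_y\lesssim |s|\,\mathcal N(\varepsilon)^2\lesssim (C^\star)^2|s|^{-1+\theta}$, hence $|K|\cdot|\frac{d}{ds}(F/\lambda)|\lesssim (C^\star)^3|s|^{-2+2\theta}$, and you claim this is $\lesssim C^\star|s|^{-2+\theta}$. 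It is not: the ratio is $(C^\star)^2|s|^{\theta}\to\infty$. A term of size $|s|^{-2+2\theta}$ in $\frac{dH}{ds}$ integrates to $|s|^{-1+2\theta}$ and, through the coercivity of $H$, only yields $\mathcal N(\varepsilon)\lesssim |s|^{-1+\theta}$, which does not close the bootstrap $\mathcal N(\varepsilon)\le C^\star|s|^{-1+\theta/2}$. The fix is the one the paper uses (see \eqref{eq:FFF}--\eqref{eq:KF}): do \emph{not} estimate $\int\varepsilon^2\varphi_y$ by its a priori bound inside this product; keep $|\frac{d}{ds}(F/\lambda)|\lesssim \frac1\lambda\int\varepsilon^2\varphi_y+(C^\star)^2|s|^{-2+\frac{3\theta}2}$ and absorb $|K|\cdot\frac1\lambda\int\varepsilon^2\varphi_y$ into the dissipative term $-\frac1{4\lambda}\int\varepsilon^2\varphi_y$ using only the smallness $|K|\lesssim C^\star|s|^{-1+\theta}=o(1)$. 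You apply exactly this absorption mechanism to the $(\int\varepsilon^2\varphi_y)^{1/2}$ cross terms, so the omission here is an inconsistency rather than a missing idea, but as written the step fails.

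A secondary remark on the same theme: your sample Young-inequality computation "$2\lambda b^2$ with $b\lesssim C^\star|s|^{-1+\frac{3\theta}2}$ gives $(C^\star)^2|s|^{-3+3\theta}\lesssim C^\star|s|^{-2+\theta}$ since $\theta<\frac23$" is also numerically false ($-3+3\theta\le -2+\theta$ requires $\theta\le\frac12$, whereas $\theta>\frac35$). The terms that actually arise from \eqref{eq:K} and \eqref{eq:dZ} have better exponents (e.g. the \eqref{eq:K} error is $(C^\star)^2|s|^{-2+\frac{3\theta}2}$, whose square times $\lambda$ is $|s|^{-5+3\theta}$, comfortably admissible), so the mechanism survives, but each exponent must be checked against the window $\frac35<\theta<\frac23$ rather than asserted.
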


\begin{proof}
\emph{Proof of \eqref{eq:bH}.}
Recall from \eqref{bg:G}, \eqref{bg:K} and \eqref{eq:Z} that
\begin{equation}\label{autres}  
|G| + K^2 + \frac {|KZ|}{\lambda}  \lesssim |s|^{-\frac 12} +|s|^{-2+2\theta} \lesssim |s|^{-\frac 12}.
\end{equation}
By \eqref{autres} and \eqref{eq:bF}, we obtain \eqref{eq:bH}.

\medskip

\noindent \emph{Proof of \eqref{eq:cH}.}
By \eqref{autres} and \eqref{eq:cF}, we observe that
\[
H(s) \geq \kappa \frac {\mathcal N(\varepsilon)^2} {\lambda} +\mathcal O(|s|^{-1+\theta}) +\mathcal O(|s|^{-\frac 12}),
\]
which implies \eqref{eq:cH}.

\medskip

\noindent \emph{Proof of \eqref{eq:H}.} 
First, from \eqref{eq:F} and~\eqref{eq:G}, we have
\begin{equation}\label{eq:F+G}\begin{aligned}
\frac {d}{ds}\left( \frac F\lambda + G\right)+\frac 1{4\lambda} \int \varepsilon^2 \varphi_y  &\le -2 \int \big|D^{\frac 12} (\varepsilon \rho)\big|^{2} -\int (\varepsilon  \rho)^2 -\int \varepsilon^2 \varphi
+ 6\int Q^2 (\varepsilon  \rho)^2  \\
&\quad -\left(\frac{\lambda_s}{\lambda} + b\right)  \left(\frac{F}\lambda {  -}  K + \frac Z\lambda\right)
 \\ &\quad  + \mathcal O(C^\star|s|^{-2+\theta})
 +  \mathcal O(|s|^{-\frac 1{20}} \|D^{\frac 12}(\varepsilon \rho)\|_{L^2}^2)\,.
 \end{aligned}
\end{equation}

Second,  from \eqref{eq:F} and  \eqref{BS.2}, \eqref{bg:K}, \eqref{eq:Z}, we have
\begin{equation}\label{eq:FFF}
\left| \frac {d}{ds}\left( \frac F\lambda\right) \right| 
\lesssim \frac 1{\lambda} \int \varepsilon^2 \varphi_y +  (C^\star)^2|s|^{-2+\frac {3\theta}2}\,.
\end{equation}
From~\eqref{eq:K} and then \eqref{bg:K}, \eqref{eq:FFF},  \eqref{eq:bF},
\begin{align}
 \left| \frac {d}{ds}\right. & \left.\left( \frac {KF}\lambda\right) + p_{0}\left(\frac{\lambda_s}{\lambda} + b\right)  \frac{F}\lambda\right| 
 \\ & \lesssim \left| K \frac {d}{ds}\left( \frac {F}\lambda\right)\right|
+ \left(\left( \int \varepsilon^2 \varphi_y \right)^{\frac12} + |s|^{  -2+\frac{3\theta}2}\right) \frac F\lambda  \nonumber \\
&\lesssim C^\star |s|^{-1+\theta}  \frac 1{\lambda} \int \varepsilon^2 \varphi_y + 
(C^\star)^3|s|^{-3+\frac {5\theta}2}
+ \left(\left( \int \varepsilon^2 \varphi_y \right)^{\frac12} +  (C^\star)^2|s|^{  -2+\frac{3\theta}2}\right) (C^\star)^2 |s|^{-1+\theta}\nonumber \\
& \leq \frac {p_{0}}{20 \lambda} \int \varepsilon^2 \varphi_y + \mathcal O(|s|^{-2+\theta}).
\label{eq:KF}
\end{align}

Therefore, combining \eqref{eq:F+G} and \eqref{eq:KF},
\begin{align}
\frac {d}{ds}\left( \Big(1-\frac {K}{p_{0}}\Big) \frac F\lambda + G\right)+\frac 1{5\lambda} \int \varepsilon^2 \varphi_y  &\le -2 \int \big|D^{\frac 12} (\varepsilon \rho)\big|^{2} -\int (\varepsilon  \rho)^2 -\int \varepsilon^2 \varphi
 \nonumber  \\
& \quad -\left(\frac{\lambda_s}{\lambda} + b\right)  \left( -  K + \frac Z\lambda\right)
\nonumber \\ & \quad  + \mathcal O(C^\star|s|^{-2+\theta})
 +  \mathcal O(|s|^{-\frac 1{20}} \|D^{\frac 12}(\varepsilon \rho)\|_{L^2}^2)\,. \label{eq:F+KF+G}
\end{align}

From~\eqref{eq:K} and then \eqref{bg:K},
\begin{equation}\label{K2}\begin{aligned}
  \frac 1{2p_{0}} \frac d{ds}  ( K^2 )  & 
 =   \frac {KK_{s}}{p_{0}} =- \left(\frac{\lambda_s}{\lambda} + b\right)  K
 +\mathcal O\left(K \left( \int \varepsilon^2 \varphi_y \right)^{\frac12}  \right)+\mathcal O\left(K(C^\star)^2 |s|^{  -2+\frac{3\theta}2} \right)  \\
 &  \leq- \left(\frac{\lambda_s}{\lambda} + b\right)  K + \frac 1{20\lambda}  \int \varepsilon^2 \varphi_y
 + \mathcal O\left(\lambda K^2 \right)+\mathcal O\left(K(C^\star)^2 |s|^{  -2+\frac{3\theta}2} \right) \\
 & \leq- \left(\frac{\lambda_s}{\lambda} + b\right)  K + \frac 1{20\lambda}  \int \varepsilon^2 \varphi_y
 +\mathcal O\left(|s|^{-2+\theta} \right).
 \end{aligned}\end{equation}

Moreover, from~\eqref{eq:K} and~\eqref{eq:dZ1},~\eqref{eq:dZ2},
\begin{equation}\label{KZ}\begin{aligned}
  -\frac 1{p_{0}}\frac d{ds} \left(  \frac {KZ}\lambda \right)  
& =  - \frac 1{p_{0}}\frac {K_{s}Z}\lambda - \frac 1{p_{0}}\frac {KZ_{s}}\lambda
 + \frac 1{p_{0}} \frac{\lambda_{s}}{\lambda} \frac {KZ}{\lambda}
 \\ & = \left(\frac{\lambda_s}{\lambda} + b\right)  \frac  Z\lambda  
   + \mathcal O\left(  \left( \int \varepsilon^2 \varphi_y \right)^{\frac12}\frac {Z}{\lambda}  \right)
   + \mathcal O\left( (C^\star)^2|s|^{-2+\frac {3\theta} 2} \frac {Z}{\lambda}\right)
\\& + \mathcal O\left(\frac {K}{\lambda} |s|^{-2}\right) + \mathcal O\left(\frac {K}{\lambda} |s|^{-\frac {3\theta}2}\left( \int \varepsilon^2 \varphi_y \right)^{\frac12}  \right)+ \mathcal O\left(\frac{\lambda_s}{\lambda} \frac {KZ}{\lambda}\right).
 \end{aligned}\end{equation}
Thus, using \eqref{BS.2}, and then \eqref{bg:K}, \eqref{eq:Z}, 
\begin{equation}\label{KZbis}\begin{aligned}
  -\frac 1{p_{0}}\frac d{ds} \left(  \frac {KZ}\lambda \right)  
  & \leq \left(\frac{\lambda_s}{\lambda} + b\right)  \frac  Z\lambda 
  + \frac 1{20\lambda}  \int \varepsilon^2 \varphi_y  
  + \mathcal O\left(  \frac {Z^2}{\lambda} \right) + \mathcal O\left(|s|^{-3\theta}\frac {K^2}{\lambda} \right)
\\&   \quad+ \mathcal O\left( (C^\star)^2|s|^{-2+\frac {3\theta} 2} \frac {Z}{\lambda} \right)
  + \mathcal O\left(\frac {K}{\lambda} |s|^{-2}\right)  + \mathcal O\left(C^\star|s|^{-1+\frac \theta 2} \frac {KZ}{\lambda}\right)\\
   & \leq \left(\frac{\lambda_s}{\lambda} + b\right)  \frac  Z\lambda 
  + \frac 1{20\lambda}  \int \varepsilon^2 \varphi_y  + \mathcal O(|s|^{-2+\theta }).
 \end{aligned}\end{equation}
 
 Thus, setting $H = \left(1-\frac {K}{p_{0}}\right) \frac F\lambda + G + \frac 1{2p_{0}}   K^2  -\frac 1{p_{0}}  \frac {KZ}\lambda$,
we see that  
 \begin{equation}\label{eq:F+G2}\begin{aligned}
\frac {d}{ds}H+\frac 1{10\lambda} \int \varepsilon^2 \varphi_y 
 &\le -2 \int \big|D^{\frac 12} (\varepsilon \rho)\big|^{2} -\int (\varepsilon  \rho)^2 -\int \varepsilon^2 \varphi
+ 6\int Q^2 (\varepsilon  \rho)^2   
\\ &\quad   
   + \mathcal O(C^\star|s|^{-2+\theta})
 +  \mathcal O(|s|^{-\frac 1{20}} \|D^{\frac 12}(\varepsilon \rho)\|_{L^2}^2)\,.
 \end{aligned}
 \end{equation}
Using \eqref{coer2}, we obtain
\begin{equation}\label{pourH}
\frac {d}{ds}H+\frac 1{10\lambda} \int \varepsilon^2 \varphi_y +
\frac \kappa  2 \int \left[\big(D^{\frac 12} (\varepsilon \rho)\big)^{2} + \varepsilon^2\rho^2\right]
\lesssim C^\star|s|^{-2 +\theta},
\end{equation}
which  implies \eqref{eq:H}.
\end{proof}

Observe by \eqref{epsilton.initial}, \eqref{bootstrap.2} and \eqref{eq:bH} that
\begin{equation}\label{bHn}
|H(S_{n})|\lesssim |S_{n}|^{-1+\theta}.
\end{equation}
Let $s\in \mathcal I^\star$.
Integrating \eqref{eq:H} on $[S_{n},s]$, we obtain
\[
H(s) - H(S_{n}) \lesssim C^\star |s|^{-1+\theta}.
\]
Therefore, by \eqref{bHn}, \eqref{eq:cH} and $\lambda(s) \sim |s|^{-1}$, we obtain
\[
\mathcal N(\varepsilon)^2 \leq C_{2}  C^\star |s|^{-2+\theta},
\]
where the constant $C_{2}$ above does not depend on $C^\star$ nor on $n$.
 Now, we fix the positive constant $C^{\star}$ so that $C^\star =4C_{2}$. Then, we deduce
\begin{equation}\label{closing.0}
\mathcal N(\varepsilon) \le \frac{C^{\star}}2|s|^{-1+\frac \theta 2} \, , 
\end{equation}
which strictly improves \eqref{bootstrap.4}.

\section{Parameters estimates}\label{S5}
In this section, we finish the proof of Proposition~\ref{bootstrapprop} by closing the estimates for the parameters $\mu_n(s)$, $\lambda_n(s),b_n(s)$ and $x_n(s)$. Indeed, we strictly improve estimates \eqref{bootstrap.1}, \eqref{bootstrap.2} and \eqref{bootstrap.3}. 

We consider an arbitrary $n \ge n_{0}$. As in Sect.~\ref{S4}, for the sake of simplicity, we will omit the subscript $n$ in this section and write $\mu$, $\lambda$, $b$, $x$ and $\varepsilon$ for $\mu_n$, $\lambda_n$, $b_n$, $x_n$ and $\varepsilon_n$. 
 Recall that the constant $C^\star$ has been fixed in Sect.~\ref{S4}, thus from now on, we omit to mention dependency in $C^\star$.

\subsection{Refined scaling control}
Recall that we set
\begin{equation}\label{defJ}
\rho(y)=\int_{-\infty}^y \Lambda Q(y') dy', \quad
J(s)= \int \varepsilon(s,y) \rho(y) \chi(-y|s|^{-\frac 23}) dy,
\end{equation}
and
\begin{equation}\label{defl0}
\mu(s) =|1-J(s)|^{\frac 1{p_{0}}} \lambda(s).
\end{equation}
Such a functional $J(s)$ was introduced in \cite{MaMejmpa,MaMe} in similar context (see also \cite{BoSoSt}).
It corresponds to the fact that the orthogonality $( \varepsilon, \Lambda Q)$ is not especially interesting for the estimate of $\lambda_{s}$. Indeed, from \eqref{BS.1} and \eqref{BS.2}, the best information one can  get is 
$|\frac{\lambda_s}{\lambda}|\lesssim C^\star |s|^{-1+\frac \theta 2}$. Thus, one needs to introduce a functional related to the cancellation $(\Lambda Q,Q)=0$.
Unlike for NLS-type equation, where such cancellation can be used easily (see e.g. \cite{RaSz}), the fact that $\rho\not \in L^2$ creates serious difficuty and imposes the use of cut-off term in the definition of $J$.
 We claim the following result.

\begin{lemma}
For $|S_0|$ large enough and for all $s\in \mathcal I^\star$,
\begin{equation}\label{bJ}
|J(s)|\lesssim  |s|^{-\frac 23 + \frac \theta2},
\end{equation}
\begin{equation}\label{dsJ}
\left|\frac{dJ}{ds}  -  p_{0}\left(\frac{\lambda_s}{\lambda}+b\right) \right| 
 \lesssim  |s|^{-\frac 43 + \frac \theta 2},
\end{equation}
\begin{equation}\label{dsl0}
\left|\frac{\mu_s}{\mu}+b\right| 
 \lesssim |s|^{-\frac 43 + \frac \theta 2}.
\end{equation}
\end{lemma}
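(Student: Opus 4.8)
The three estimates \eqref{bJ}, \eqref{dsJ}, \eqref{dsl0} are proved in this order, and the last follows almost immediately from the first two. For \eqref{bJ}, I simply use the definition of $J$ in \eqref{defJ}, the bound $|\rho(y)\chi(-y|s|^{-\frac 23})|\lesssim (1+y_+)^{-1}\cdot\mathbf 1_{y<2|s|^{\frac23}}$ (since $\Lambda Q\in\mathcal Y_2$ gives $\rho$ bounded on $(-\infty,0]$ with $\rho(y)\to 0$ as $y\to+\infty$ at rate $|y|^{-1}$), and the coercivity-type bound $\int\varepsilon^2\varphi\lesssim \mathcal N(\varepsilon)^2$. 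Splitting the $y$-integration into the region $y>-2|s|^{\frac35}$ (where $\varphi\gtrsim 1$, so $\int_{y>-2|s|^{\frac35}}\varepsilon^2\lesssim\mathcal N(\varepsilon)^2$) and the region $-2|s|^{\frac23}<y<-2|s|^{\frac35}$ (where $\rho$ is bounded but one pays a factor $|s|^{\frac13}$ in the Cauchy--Schwarz estimate $\|\varepsilon\|_{L^2}|s|^{\frac13}$ against the Lebesgue measure of the region, or more carefully against $\left(\int_{-2|s|^{\frac23}<y}(1+|y|)^{-1}\right)^{\frac12}$), one gets $|J|\lesssim |s|^{\frac13}\mathcal N(\varepsilon)+\ldots\lesssim |s|^{\frac13}|s|^{-1+\frac\theta2}=|s|^{-\frac23+\frac\theta2}$, as claimed.

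\textbf{Differentiating $J$.} For \eqref{dsJ}, I differentiate $J(s)=\int\varepsilon\,\rho\,\chi(-y|s|^{-\frac23})$ in $s$, producing one term with $\partial_s\varepsilon$ and one with $\partial_s$ of the cut-off. For the cut-off term, $\partial_s\chi(-y|s|^{-\frac23})=\frac23 |s|^{-\frac53} y\chi'(-y|s|^{-\frac23})$ is supported in $|s|^{\frac23}<-y<2|s|^{\frac23}$ and is $\mathcal O(|s|^{-1})$ there, so this term is $\mathcal O(|s|^{-1}\|\varepsilon\|_{L^2})=\mathcal O(|s|^{-\frac32})$, which is far smaller than the target. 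For the main term I substitute the equation \eqref{modulation.4} (or equivalently \eqref{modulation.0}) for $\varepsilon_s$ and integrate by parts so that the operator lands on $\rho\chi(-y|s|^{-\frac23})$. The key algebraic point is the choice of $\rho$: since $(\mathcal L P)'=\Lambda Q$ from \eqref{nlprofile.1} and $\rho'=\Lambda Q$, one has that the contribution of the linear part $\big(D^1\varepsilon+\varepsilon-3Q^2\varepsilon\big)_y=(\mathcal L\varepsilon)_y$ paired against $\rho$ produces $-\int\mathcal L\varepsilon\,\Lambda Q\,\chi+\cdots=-\int\varepsilon\,\mathcal L(\Lambda Q)\chi+\cdots=\int\varepsilon\,Q\,\chi+\cdots$, which is $\mathcal O(|s|^{-\frac32}+\mathcal N(\varepsilon)^2)$ by \eqref{BS.4}; meanwhile the term $\big(\tfrac{\lambda_s}\lambda+b\big)\Lambda Q_b$ paired against $\rho$ gives $-\big(\tfrac{\lambda_s}\lambda+b\big)\int\Lambda Q_b\,\Lambda Q\,\chi$, and using $\Lambda Q_b=\Lambda Q+b\Lambda P_b$, $(\Lambda Q,Q)=0$, and $p_0=(P,Q)=\tfrac12(\int\Lambda Q)^2$ together with $\int\rho\,\Lambda Q=\tfrac12(\int\Lambda Q)^2\cdot(\text{after integration by parts})$... more precisely $\int\Lambda Q_b\rho\chi = \int (\Lambda Q)\rho + \mathcal O(\ldots)$, and $\int(\Lambda Q)\rho = \rho(+\infty)^2/2$ type computation — actually $\int\rho'\rho = \tfrac12\rho(+\infty)^2-\tfrac12\rho(-\infty)^2$, so one needs to identify $\int\Lambda Q\,\rho\,\chi$ with $p_0$ up to lower order, using that $\rho(+\infty)=\int\Lambda Q=-\tfrac12\int Q$ so $\tfrac12\rho(+\infty)^2=\tfrac18(\int Q)^2=p_0$. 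The scaling, translation, $b_s+b^2$ and $\Psi_b$ terms are all estimated to be $\mathcal O(|s|^{-\frac43+\frac\theta2})$ using \eqref{BS.2}, \eqref{BS.3}, \eqref{lprofile.2}, the decay of $\rho\chi$, and \eqref{bootstrap.4}; the cubic-in-$\varepsilon$ terms and the $Q_b^2-Q^2$ corrections are handled by Cauchy--Schwarz, \eqref{gnp}, and \eqref{BS.1}. Collecting everything yields $\frac{dJ}{ds}=p_0\big(\tfrac{\lambda_s}\lambda+b\big)+\mathcal O(|s|^{-\frac43+\frac\theta2})$.

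\textbf{From $J$ to $\mu$, and the main obstacle.} For \eqref{dsl0}, I compute $\frac{\mu_s}{\mu}=\frac1{p_0}\frac{d}{ds}\ln|1-J|+\frac{\lambda_s}{\lambda}=-\frac1{p_0}\frac{J_s}{1-J}+\frac{\lambda_s}{\lambda}$; by \eqref{bJ}, $|1-J|$ is bounded away from $0$ for $|S_0|$ large and $\frac1{1-J}=1+\mathcal O(|s|^{-\frac23+\frac\theta2})$, so $-\frac1{p_0}\frac{J_s}{1-J}=-\frac1{p_0}J_s+\mathcal O\big(|s|^{-\frac23+\frac\theta2}|J_s|\big)$. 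Plugging in \eqref{dsJ}, the main term $-\frac1{p_0}J_s$ cancels $\frac{\lambda_s}{\lambda}+b$ against $\frac{\lambda_s}{\lambda}$, leaving $\frac{\mu_s}{\mu}+b=\mathcal O(|s|^{-\frac43+\frac\theta2})+\mathcal O\big(|s|^{-\frac23+\frac\theta2}\cdot |s|^{-1+\frac\theta2}\big)$ (using $|J_s|\lesssim|s|^{-1+\frac\theta2}$ from \eqref{BS.2} and \eqref{dsJ}), and since $\theta<\frac23$ the second error is $\mathcal O(|s|^{-\frac53+\theta})=\mathcal O(|s|^{-\frac43+\frac\theta2})$, giving \eqref{dsl0}. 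The main obstacle is the algebra in \eqref{dsJ}: one must verify carefully that the cut-off $\chi(-y|s|^{-\frac23})$ does not destroy the cancellation $(\Lambda Q, Q)=0$ — indeed replacing $\rho$ by $\rho\chi$ introduces error terms localized near $y\sim -|s|^{\frac23}$ where $\rho$ is essentially constant equal to its limit, and these must be shown to be $\mathcal O(|s|^{-\frac43+\frac\theta2})$, not larger; and one must track that the identification $\int \rho\,\Lambda Q = p_0$ holds up to acceptable error, which relies on \eqref{nlprofile.2} and the slow ($|y|^{-1}$) decay of $\rho$ at $+\infty$ being integrated only against the fast-decaying $\Lambda Q\in\mathcal Y_2$. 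The delicate balance of exponents ($\frac23$ in the cut-off scale versus $\theta$ in $\mathcal N(\varepsilon)$) is exactly what forces the constraint $\frac35<\theta<\frac23$.
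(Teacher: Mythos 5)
Your overall strategy coincides with the paper's: same splitting of $\frac{dJ}{ds}$ along the terms of the $\varepsilon$-equation, same two key cancellations for the main terms (namely $\mathcal L\Lambda Q=-Q$ combined with the smallness of $(\varepsilon,Q)$ for the linear part, and $\int\rho\,\Lambda Q=\tfrac12\rho(+\infty)^2=\tfrac18(\int Q)^2=p_0$ for the $\Lambda Q_b$ part), and the same elementary passage from $J$ to $\mu$. However, there are two concrete problems.

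First, your description of $\rho$ and of the cut-off is reversed, and the estimate of $|J|$ as you write it does not close. Since $\rho(y)=\int_{-\infty}^y\Lambda Q$ with $\Lambda Q\in\mathcal Y_2$, one has $\rho(y)=\mathcal O((1+|y|)^{-1})$ as $y\to-\infty$ and $\rho(y)\to\int\Lambda Q=-\tfrac12\int Q\neq0$ as $y\to+\infty$; moreover $\chi(-y|s|^{-\frac23})$ equals $1$ on all of $\{y<|s|^{\frac23}\}$ (in particular on the whole negative half-line) and vanishes only for $y>2|s|^{\frac23}$. You assert the opposite on both counts: the bound $(1+y_+)^{-1}$ and a truncation at $y=-2|s|^{\frac23}$. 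The decay of $\rho$ at $-\infty$ is exactly what puts $\rho$ in $L^2(y<0)$ and controls $\int_{y<0}|\varepsilon|\,|\rho|$ by $\|\varepsilon\|_{L^2}\lesssim|s|^{-\frac12}$; with your stated weight $1$ on $y<0$ and no truncation there, that contribution is not controlled at all, and even restricting (incorrectly) to $-2|s|^{\frac23}<y<0$ Cauchy--Schwarz only gives $|s|^{\frac13}\|\varepsilon\|_{L^2}\sim|s|^{-\frac16}$, which is larger than the target $|s|^{-\frac23+\frac\theta2}$. The cut-off and the $|s|^{\frac13}\mathcal N(\varepsilon)$ loss live on the \emph{positive} side, where $\rho$ is merely bounded and $\varphi\gtrsim1$. (Note also that your later, correct, use of $\rho(+\infty)=\int\Lambda Q\neq0$ in the computation of $p_0$ contradicts your earlier claim that $\rho\to0$ at $+\infty$.)

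Second, in $\frac{dJ}{ds}$ you dispose of the translation term $\big(\tfrac{x_s}{\lambda}-1\big)\int(Q_b+\varepsilon)_y\,\rho\,\chi$ by citing only the bootstrap bound and ``the decay of $\rho\chi$''. That is not enough: since $\rho$ is bounded but not small, the naive estimate gives $|\tfrac{x_s}{\lambda}-1|\cdot\mathcal O(1)\lesssim C^\star|s|^{-1+\frac\theta2}$, which exceeds the target $|s|^{-\frac43+\frac\theta2}$ by a factor $|s|^{\frac13}$. One must invoke the additional cancellation $\int Q'\rho=-\int Q\,\rho'=-(Q,\Lambda Q)=0$, which reduces $\int Q_b'\,\rho\,\chi$ to the tail $\int Q'\rho(\chi-1)+\mathcal O(|b|\ln|s|)=\mathcal O(|s|^{-\frac23})$ and makes the term acceptable. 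With these two points repaired, your argument matches the paper's proof; the remaining error terms ($\Lambda\varepsilon$, $b_s+b^2$, $\Psi_b$, the time derivative of the cut-off, and the cubic terms) are indeed handled exactly as you indicate, and the deduction of the bound on $\tfrac{\mu_s}{\mu}+b$ from the first two estimates is correct.
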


\begin{remark} Since $\theta<\frac 23$, we see that from \eqref{dsl0}, $\left|\frac{\mu_s}{\mu}+b\right| \ll |s|^{-1}$, while
$|b|\sim |s|^{-1}$, thus this estimate is much more precise than $|\frac{\lambda_s}{\lambda} |\lesssim C^\star |s|^{-1+\frac \theta 2}$.
\end{remark}

\begin{proof}
First, we see by using the decay of $\rho$ and \eqref{bootstrap.4} that 
\begin{displaymath}
|J(s)|\lesssim \int_{y<0} \frac{|\varepsilon(s,y)|}{1+|y|} dy + \int_{0<y<2 |s|^{\frac 23}} |\varepsilon(s,y)| dy 
\lesssim \|\varepsilon(s)\|_{L^2}+ |s|^{\frac 13} \mathcal N(\varepsilon)\lesssim |s|^{-\frac 23 + \frac \theta2} \, ,
\end{displaymath}
which proves \eqref{bJ}.

\medskip

Now, we use    \eqref{modulation.4} to compute $\frac{dJ}{ds}$,
\begin{align*}
 \frac{dJ}{ds} & = \int \varepsilon_s \rho   \chi(-y|s|^{-\frac 23}) 
- \frac 23|s|^{-1} \int  y  |s|^{-\frac 23}  \chi'(-y|s|^{-\frac 23}) \rho \varepsilon  \\
 & = j_1 + j_2+ j_3+j_4+j_5+j_6+j_7\end{align*}
where
\begin{equation*} 
\begin{aligned}
& j_1=\int V_y \rho \chi(-y|s|^{-\frac 23}) ,
& j_2& =   \frac{\lambda_s}{\lambda} \int\Lambda \varepsilon \rho \chi(-y|s|^{-\frac 23}),\\
& j_3 =  (\frac{\lambda_s}{\lambda}+b) \int  \Lambda Q_b  \rho \chi(-y|s|^{-\frac 23}),
& j_4 &= 
   (\frac{x_s}{\lambda}-1)\int (Q_b+\varepsilon)_y  \rho \chi(-y|s|^{-\frac 23}) ,\\
& j_5 = 
     -(b_s+b^2)\int \frac{\partial Q_b}{\partial b}  \rho \chi(-y|s|^{-\frac 23}),
 &  j_6 &= \int  \Psi_b  \rho \chi(-y|s|^{-\frac 23}),\\
 & j_7 = -\frac 23|s|^{-1} \int   y|s|^{-\frac 23}  \chi'(-y|s|^{-\frac 23}) \varepsilon \rho  .
\end{aligned}
\end{equation*}

\medskip

\noindent \emph{Estimate for $j_{1}$.} 
\[
\begin{split}
j_1&=-\int \mathcal{L}\varepsilon \big(\rho \chi(-y|s|^{-\frac 23}) \big)_y
+3\int (Q_b^2-Q^2) \varepsilon \big(\rho \chi(-y|s|^{-\frac 23}) \big)_y
+\int \big(3Q_b\varepsilon^2+\varepsilon^3\big)\big(\rho \chi(-y|s|^{-\frac 23}) \big)_y \\ 
&=j_{1,1}+j_{1,2}+j_{1,3} \, .
\end{split}\]
First, using $\rho'=\Lambda Q$ and $\mathcal L\Lambda Q=-Q$,
\begin{align*}
 j_{1,1}& =
 - \int \varepsilon  \mathcal{L} (\Lambda Q  \chi(-y|s|^{-\frac 23})) + |s|^{-\frac23}\int \varepsilon \mathcal{L}(\rho  \chi'(-y|s|^{-\frac 23}))\\
 & =  \int \varepsilon Q+\int \varepsilon   \mathcal{L} (\Lambda Q  (1-\chi(-y|s|^{-\frac 23}))) + |s|^{-\frac 23} \int \varepsilon  \mathcal{L} (\rho  \chi'(-y|s|^{-\frac 23})) \, .
\end{align*}
Note from \eqref{BS.1} that
\[
\left| \int \varepsilon    D^1 (\Lambda Q  (1-\chi(-y|s|^{-\frac 23})) \right| 
\lesssim \|\varepsilon\|_{L^2} \|(\Lambda Q  (1-\chi(-y|s|^{-\frac 23}))_y\|_{L^2}
\lesssim   |s|^{-\frac {13}6} \, ,
\]
\begin{align*}
\left| \int \varepsilon    \Lambda Q (1-3Q^2) (1-\chi(-y|s|^{-\frac 23})  \right| 
&\lesssim \|\varepsilon\|_{L^2} \|  \Lambda Q (1-3Q^2)(1-\chi(-y|s|^{-\frac 23}))\|_{L^2}\\
&\lesssim \|\varepsilon\|_{L^2}  |s|^{-1}\lesssim |s|^{-\frac 32} \, ,
\end{align*}
\begin{align*}
\left|\int \varepsilon  D^1(\rho  \chi'(-y|s|^{-\frac 23}))\right| &
\lesssim \|	\varepsilon \|_{L^2} \|(\rho  \chi'(-y|s|^{-\frac 23}))_{y}\|_{L^2}
\lesssim \|	\varepsilon \|_{L^2}  |s|^{-\frac 13}\lesssim |s|^{-\frac 56} \, ,
\end{align*}
and by \eqref{bootstrap.4}
\begin{align*}
\left|\int \varepsilon   (1-3Q^2)(\rho  \chi'(-y|s|^{-\frac 23}))\right| &
\lesssim \mathcal N(	\varepsilon)\|\chi'(-y|s|^{-\frac 23}) \|_{L^2}
\lesssim \mathcal N(	\varepsilon)  |s|^{\frac 13}\lesssim |s|^{-\frac 23 + \frac \theta 2} \, .
\end{align*}
Thus, using also   $|\int \varepsilon Q|\lesssim |s|^{-2+\theta}
\lesssim |s|^{-\frac 43 + \frac \theta 2}$ (see \eqref{BS.4}), we deduce that

\[
|j_{1,1}|\lesssim |s|^{-\frac 43+ \frac \theta 2} \, .
\]
Next,
\begin{align*}
|j_{1,2}|& \lesssim |b| \|\varepsilon\|_{L^2}\|\big(\rho \chi(-y|s|^{-\frac 23}) \big)_y\|_{L^2}
\lesssim |s|^{-\frac 32} \, ,
\end{align*}
and using \eqref{gnp}
\begin{equation*}
|j_{1,3}|\lesssim \big(\mathcal N(\varepsilon)^2 + |b| \|\varepsilon\|_{L^2}^2+ \| \varepsilon\|_{L^2}^2  \|D^{\frac12} \varepsilon\|_{L^2}\big) \|(\rho \chi(-y|s|^{-\frac 23}))_{y} \big\|_{L^\infty}
\lesssim |s|^{-2+\theta} \lesssim |s|^{-\frac 43} \, .
\end{equation*}

Hence, we deduce combining those estimates that 
\begin{equation} \label{eq:j1}
|j_{1}| \lesssim |s|^{-\frac43+\frac{\theta}2} \, .
\end{equation}

 \medskip

\noindent \emph{Estimate for $j_{2}$.} 
\[
j_2 =  -\frac{\lambda_s}{\lambda} \int  \varepsilon   \Lambda (\rho \chi(-y|s|^{-\frac 23}))
=- \frac 12 \frac{\lambda_s}{\lambda}  J
 - \frac{\lambda_s}{\lambda} \int  \varepsilon y (\rho \chi(-y|s|^{-\frac 23}))_y.
\]
Note that  
\begin{align*}
\left| \int  \varepsilon y (\rho \chi(-y|s|^{-\frac 23}))_y\right| 
&\lesssim \left| \int  \varepsilon y \Lambda Q \chi(-y|s|^{-\frac 23})\right| 
+|s|^{-\frac 23} \left| \int  \varepsilon y \rho \chi'(-y|s|^{-\frac 23}) \right| \\
& \lesssim \|\varepsilon\|_{L^2}+|s|^{\frac 13} \mathcal N( \varepsilon)\lesssim |s|^{-\frac 23 + \frac \theta 2}.
\end{align*}
Thus, by   \eqref{bJ} and then \eqref{BS.2},
\begin{equation} \label{eq:j2}
\left|j_2 \right|\lesssim\left(\left|\frac{\lambda_s}{\lambda}+ b\right| +|b|\right)  |s|^{-\frac 23 + \frac \theta 2}  \lesssim |s|^{-\frac 53 + \theta}\lesssim |s|^{-\frac 43+ \frac \theta 2} \, .
\end{equation}

\medskip

\noindent \emph{Estimate for $j_{3}$.} 
For the term $j_3$, we first note from \eqref{nlprofile.2} that
\[ \int \rho \Lambda Q  =\frac 12 \Big(\lim_{y \to +\infty} \rho(y)\Big)^2 = \frac 18\left( \int Q\right)^2=p_{0} \, .
\]
By the decay properties of $Q$, $P$ and $\rho$,
\begin{equation}\label{avr}
\left|\int (\Lambda P_b ) \rho \chi(-y |s|^{-\frac 23}) \right|
\lesssim \int_{-2 |b|^{-1}<y<|s|^{\frac 23}} \frac {dy}{1+|y|} \lesssim |\ln|b||+|\ln|s|| \lesssim |\ln|s||\,.
\end{equation}
Thus,
\begin{align*}
\int (\Lambda Q_b) \rho \chi(-y|s|^{-\frac 23}) 
& = \int \rho  (\Lambda Q)  \chi(-y|s|^{-\frac 23})  
+ b \int (\Lambda P_b) \rho \chi(-y|s|^{-\frac 23}) 
\\&
= \int \rho \Lambda Q +\int \rho  \Lambda Q \big(\chi(-y|s|^{-\frac 23})-1\big) + \mathcal O(|s|^{-1}|\ln |s||)
\\&
= p_{0} + \mathcal O(|s|^{-\frac 23}) \, .
\end{align*}
Therefore, by~\eqref{BS.2},
\begin{equation} \label{eq:j3}
\left| j_{3} - p_{0}\left(\frac{\lambda_s}{\lambda}+ b\right) \right| \lesssim |s|^{-\frac 53 + \frac \theta2}
\lesssim |s|^{-\frac 43+ \frac \theta 2}.
\end{equation}

\medskip

\noindent \emph{Estimate for $j_{4}$.} 
Similarly, since $\int \rho Q'=-\int \rho' Q = -\int Q \Lambda Q =0$, we have
\[
\int Q_b' \rho \chi(-y|s|^{-\frac 23}) 
=   \mathcal O(|s|^{-\frac 23}).
\]
Moreover, 
\[
\int \varepsilon_y \rho \chi(-y|s|^{-\frac 23}) = -\int \varepsilon \rho' \chi(-y|s|^{-\frac 23})
+|s|^{-\frac 23}\int \varepsilon \rho \chi'(-y|s|^{-\frac 23}),
\]
and so
\[
\left| \int \varepsilon_y \rho \chi(-y|s|^{-\frac 23}) \right| \lesssim \| \varepsilon\|_{L^2}\lesssim |s|^{-\frac 12}.
\]
From \eqref{BS.2}, we obtain
\begin{equation} \label{eq:j4}
|j_{4}|\lesssim |s|^{-1+\frac \theta 2} |s|^{-\frac 12} \lesssim |s|^{-\frac 32 + \frac\theta 2} \, .
\end{equation}

\medskip

\noindent \emph{Estimate for $j_{5}$.} 
By \eqref{def.DQb} and the decay properties of $P$ and $\rho$ (as in \eqref{avr}), one has
\[
\left| \int \frac{\partial Q_b}{\partial b}  \rho \chi(-y|s|^{-\frac 23})\right|
\lesssim \int |P_{b}||\rho|\chi(-y|s|^{-\frac 23})
+\int_{-2|b|^{-1}<y<-|b|^{-1}} |\rho|
\lesssim \ln |s| \, .
\]
Thus, using \eqref{BS.5},
\begin{equation} \label{eq:j5}
|j_{5}|\lesssim |s|^{-2+\frac \theta 2} \ln|s|\lesssim |s|^{-\frac 43 + \frac \theta 2} \, .
\end{equation}
 
\medskip

\noindent \emph{Estimate for $j_{6}$.}  
Next, by \eqref{lprofile.2} and the properties of $\rho$,
\begin{equation} \label{eq:j6}
|j_{6}|=\left|  \int  \Psi_b \rho \chi(-y|s|^{-\frac 23})\right|
\lesssim \|\Psi_{b} \|_{L^2} \|\rho  \chi(-y|s|^{-\frac 23})\|_{L^2}
 \lesssim  \|\Psi_b\|_{L^2} |s|^{\frac 13} \lesssim |s|^{-\frac 76}\lesssim |s|^{-\frac 43 + \frac \theta 2} \, .
\end{equation}
 
\medskip

\noindent \emph{Estimate for $j_{7}$.} Note   by the definition of $\chi$,
\begin{equation} \label{eq:j7}
| j_{7}| \lesssim |s|^{-1}   \int_{|s|^{\frac 23}<y<2|s|^{\frac 23}} |\varepsilon|  
\lesssim |s|^{-\frac 23}\|\varepsilon\|_{{L^2}}\lesssim |s|^{-\frac 76} \lesssim |s|^{-\frac 43 + \frac \theta 2} \, .
\end{equation}

Therefore, combining estimates \eqref{eq:j1}-\eqref{eq:j7}, we obtain \eqref{dsJ}.

\medskip

Now, we prove \eqref{dsl0}.
Since $\mu =(1-J)^{\frac 1{p_{0}}} \lambda$, by direct computations 
and then \eqref{dsJ}, \eqref{BS.2} and \eqref{bJ}, we have
\begin{align*}
\left|p_{0}(1-J) \left(\frac {\mu_{s}}  {\mu} +b\right)\right|
& =\left|p_{0}(1-J)\left(\frac {\lambda_{s}}{\lambda}+b\right) -   {J_{s}}\right|\\
&\le \left| p_{0}\left(\frac {\lambda_{s}}{\lambda}+b\right) - {J_{s}}\right|
+ p_{0} \left|J \left(\frac {\lambda_{s}}{\lambda}+b\right)\right|\\
& \lesssim |s|^{-\frac 43 + \frac\theta 2} + |s|^{-\frac 53 + \theta}
\lesssim  |s|^{-\frac 43 +\frac \theta 2} \, ,
\end{align*}
which proves \eqref{dsl0}, since $\theta <\frac 23$.
 \end{proof}

\subsection{Closing parameter estimates} 
Let $S_{0}<0$ and $n_0\in \mathbb N$, with  $|S_0|$ and $n_{0}$ to be fixed large enough.
Set
\[
\alpha = \frac 16 - \frac \theta 4\in \left(0,\frac 1{60}\right) \quad \hbox{from   the condition \eqref{theta.def} on $\theta$.}
\]
Let $s \in  \mathcal I^\star$. From~\eqref{BS.2},
\begin{equation} \label{closing.1}
\Big| \frac{x_s}{\lambda}-1\Big|  \lesssim  |s|^{-1+\frac \theta 2} \,.
\end{equation}
Moreover, from \eqref{bJ}, \eqref{dsl0} and $b=-\lambda$,
\begin{equation}\label{closing.2b}
\left| \frac {\mu}{\lambda} -1 \right| \lesssim |J|\lesssim |s|^{-\frac 23 + \frac \theta2}\quad \text{and} \quad
\Big|  {\mu_s}-{\mu^2}\Big| \le |\mu|\left|\frac{\mu_s}{\mu}+b\right|+|\mu\lambda|\left|\frac{\mu}{\lambda}-1\right| \lesssim |s|^{-2-2\alpha} \,. 
\end{equation}

First, we observe from \eqref{bootstrap.1}, \eqref{BS.1} and \eqref{closing.2b} that  
\begin{equation} \label{closing.200}
\left| \lambda(s)-\frac1{|s|} \right| \le \lambda(s)\left|1-\frac{\mu(s)}{\lambda(s)}\right|+\left| \mu(s)-\frac1{|s|}\right|
\lesssim |s|^{-1-\alpha} < \frac 12 |s|^{-1-\frac{\alpha}{2}}\, ,
\end{equation}
 for $|S_0|$ large enough. Since $b=-\lambda$, this
strictly improves estimates \eqref{bootstrap.2} on $b(s)$ and $\lambda(s)$.

Second, we improve the estimate on $|x(s)+\ln(|s|)|$ in \eqref{bootstrap.3}. 
From \eqref{closing.1}, \eqref{BS.1} and \eqref{closing.200}, we have
\[
\Big|x_s+\frac 1{s}\Big|\leq   |x_s - \lambda (s)|+\left|\lambda(s)-\frac 1{|s|}\right| 
\lesssim   |s|^{-1-\alpha} \, .
\]
Integrating on $[S_n,s]$, using $x^{in}=-\ln(|S_n|)$, we obtain
\begin{equation}\label{closing.3}
|x(s) +\ln(|s|)|\lesssim   |s|^{-\alpha} 
\, .
\end{equation}
As before, this   strictly improves \eqref{bootstrap.3} on $x(s)$ for $|S_0|$ large enough.

\medskip

The last step of the proof is to strictly improve \eqref{bootstrap.1} on $\mu$ by adjusting the initial value of $\lambda$, i.e. $\lambda^{in}$ using a contradiction argument. See    \cite{CoMaMe} for a similar argument.
Note that  such an indirect argument is needed because the estimate \eqref{closing.2b} is relatively tight, which prevents us from choosing explicitly the value of $\lambda^{in}$.
Let 
\begin{equation}\label{corresp}
\mu^{in} = \mu(S_n)=(1-J(S_{n}))^{\frac 1{p_{0}}} \lambda^{in}.
\end{equation}
First, we prove that there exists at least a choice of $\mu^{in}\in [|S_n|^{-1}-|S_n|^{-1-\alpha},|S_n|^{-1}+|S_n|^{-1-\alpha}]$ that allows to strictly improve the bootstrap bound \eqref{bootstrap.1} on $\mathcal I^\star$ using \eqref{closing.2b}.
Second, since the dependency of $\mu^{in}$ on $\lambda^{in}$ in  \eqref{corresp} is implicit (recall from \eqref{decomposition.2} that $\varepsilon(S_n)$ and thus $J(S_n)$ depends on $\lambda^{in}$), we need to check that the the image of the map $$:\lambda^{in}\in [|S_n|^{-1}-|S_n|^{-1-\frac \alpha2},|S_n|^{-1}+|S_n|^{-1-\frac \alpha2}]\mapsto \mu^{in} $$
contains the interval $ [|S_n|^{-1}-|S_n|^{-1-\alpha},|S_n|^{-1}+|S_n|^{-1-\alpha}]$.

\medskip

Assume for the sake of contradiction that for any  $\mu^\sharp\in [-1,1]$, the choice $\mu^{in}=|S_n|^{-1} + \mu^\sharp |S_n|^{-1-\alpha}$ leads to $S^\star_n(\mu^\sharp) = S^\star_n<S_{0}$.
By \eqref{closing.0}, \eqref{closing.200} and \eqref{closing.3}, we have strictly improved   \eqref{bootstrap.2}, \eqref{bootstrap.3} and \eqref{bootstrap.4}. Thus, at $S_n^\star$, \eqref{bootstrap.1} is saturated, which means that  \begin{equation}\label{e:sat}
\Big| \mu(S^\star_n)- |S^\star_n|^{-1}\Big| = |S_n^{\star}|^{-1-\alpha}.
\end{equation}
Define the function $\Phi$ by 
\[
\Phi \, : \, \mu^\sharp\in [-1,1] \mapsto \Big( \mu(S^\star_n)- |S^\star_n|^{-1}\Big) |S_n^{\star}|^{1+\alpha}
\in \{-1,1\}.
\]
Set
\[
f(s) = \Big(\mu(s)+s^{-1}\Big)^2 (-s)^{2+2\alpha}.
\]
Then, using \eqref{closing.2b} and \eqref{bootstrap.1},
\begin{align*}
f'(s) & = 2 \Big(\mu(s)+s^{-1}\Big) \Big(\mu_s(s)-s^{-2}\Big)(-s)^{2+2\alpha}
- 2(\alpha+1)\Big(\mu(s)+s^{-1}\Big)^2 (-s)^{1+2\alpha}\\
& = 2 \Big(\mu(s)+s^{-1}\Big) \left( \mu^2(s) - s^{-2} + \mathcal O(|s|^{-2-2\alpha})\right)(-s)^{2+2\alpha}
\\ &\quad - 2(1+\alpha)\Big(\mu(s)+s^{-1}\Big)^2 (-s)^{1+2\alpha}\\
& = 2 \Big(\mu(s)+s^{-1}\Big) \left( \mu^2(s) - s^{-2} \right)(-s)^{2+2\alpha}
+\mathcal O(|s|^{-1-\alpha})
\\ & \quad - 2(1+\alpha)\Big(\mu(s)+s^{-1}\Big)^2 (-s)^{1+2\alpha}\\
& = 2 \Big(\mu(s)+s^{-1}\Big)^2 \left( \mu(s)  + (1+2\alpha) s^{-1}  \right)(-s)^{2+2\alpha} + \mathcal O(|s|^{-1- \alpha}).
\end{align*}
In particular,  by \eqref{e:sat},
\begin{equation}\label{closing.6}
f(S^\star_n) =1 \quad \text{and} \quad
f'(S^\star_n) = 4\alpha |S^\star_n|^{-1} + \mathcal O(|S^\star_n|^{-1-\alpha})
> 3\alpha |S^\star_n|^{-1},
\end{equation}
for $|S^\star_n|>|S_0|$ large enough.
It follows from the transversality property \eqref{closing.6} that  the map $\mu^\sharp\mapsto S^\star_n$ is continuous.
Indeed,  first, let $\mu^\sharp\in (-1,1)$ so that $S_n<S_n^\star$ and  let $0<\epsilon<S_n^\star-S_n$ small. By \eqref{closing.6}, there exists $\delta>0$ such that 
$f(S_n^\star+\epsilon)>1+\delta$ and, for all $s\in [S_n,S_n^\star-\epsilon]$, 
$f(s)<1-\delta$. By continuity of the flow for the mBO equation, there exists $\eta>0$ such for all $\tilde \mu^\sharp\in (-1,1)$ with $|\tilde\mu^\sharp-\mu^\sharp|<\eta$, the corresponding $\tilde f$ satisfies $|\tilde f(s)-f(s)|<\delta/2$
on $[S_n,S_n^\star+\epsilon]$. This has two consequences : first, for all $s\in [S_n,S_n^\star-\epsilon]$, $\tilde f(s)<1-\frac \delta 2$ and thus
$\tilde S_n^\star> S_n^\star-\epsilon$; second, $\tilde f(S_n^\star+\epsilon)>1+\frac \delta 2$ and thus
$\tilde S_n^\star\leq S_n^\star+\epsilon$, which proves continuity of $\mu^\sharp\mapsto S_n^\star$ on $(-1,1)$. 

Moreover, we see that 
for $\mu^\sharp=-1$ and $\mu^\sharp=1$, $f(S_n)=1$ and $f'(S_n)>0$ (see \eqref{closing.6}), and thus in this case $S_n^\star=S_n$. By similar arguments as before, $\mu^\sharp\mapsto S_n^\star$ is continuous on $[-1,1]$.

Therefore, the function $\Phi$ is also continuous from $[-1,1]$ to $\{1,-1\}$, but this is a contradiction with $\Phi(-1)=-1$ and $\Phi(1)=1$.

It follows that there exists at least a value of $\mu^\sharp\in (-1,1)$ such that 
$\mu^{in}=|S_n|^{-1} + \mu^\sharp |S_n|^{-1-\alpha}$ leads to 
$S_{n}^\star=S_{0}$. 

\medskip

As announced, now we check that this value of $\mu^{in}$ indeed    corresponds to a choice of $\lambda^{in}$
satisfying \eqref{Snstar}. This will finish the  proof of Proposition \ref{bootstrapprop}.
For this,
we set $\Omega_n=[n^{-1}-n^{-1-\frac \alpha2},n^{-1}+n^{-1-\frac \alpha2}]$ and
 we study  the map:
\[
\lambda^{in}  \in \Omega_n  \mapsto \mu^{in}=(1-J(S_{n}))^{\frac 1{p_{0}}} \lambda^{in}
=\left( 1- \int \varepsilon^{in}(y) \rho(y) \chi(-y n^{-\frac 23}) dy\right)^{\frac 1{p_{0}}} \lambda^{in},
\]
where from \eqref{decomposition.1} and \eqref{Pb}, $\varepsilon^{in} = -a^{in}Q + \lambda^{in}  \chi(y \lambda^{in}) P$. From the definition of $a^{in}$ in \eqref{ain} and \eqref{Ea}, it is clear that the map $:\lambda^{in} \mapsto a^{in}$
is independent of $n$,  smooth and bijective in a neighborhood of $0$ and $\frac{d a^{in}}{d\lambda^{in}}|_{\lambda^{in}=0 }=   {p_0}/{\int Q^2}>0$. Since
\begin{multline*}
\frac{d }{d\lambda^{in}}\int \varepsilon^{in} \rho \chi(-y n^{-\frac 23}) 
 \\= -\frac{d a^{in}} {d\lambda^{in}} \int Q \rho  \chi(-y n^{-\frac 23}) 
+   \int \chi(y\lambda^{in}) P \rho \chi(-y n^{-\frac 23})
  + \lambda^{in} \int y  \chi'(y\lambda^{in}) P \rho   ,
\end{multline*}
where $|y\rho(y)|\lesssim 1$ for $y<-1$,
we see that the map $:\lambda^{in}    \mapsto \mu^{in}(\lambda^{in})$ is $C^1$ on $\Omega_n$  and
that $\frac{d \mu^{in}}{d\lambda^{in}}|_{\lambda^{in}\in \Omega_n}\in [\frac 12,\frac 32]$ for all $n$ large enough.
Moreover, by the properties of $P$ and $\rho$, 
\[\mu^{in}|_{\lambda^{in}=n^{-1}}=n^{-1} (1+\mathcal O(n^{-1}\log n)),\]
\[\mu^{in}|_{\lambda^{in}=n^{-1}+n^{-1-\frac \alpha 2}}
\geq \mu^{in}|_{\lambda^{in}=n^{-1} }+ \frac 12 n^{-1-\frac \alpha 2} 
\geq n^{-1} + 2 n^{-1-\alpha},\]
for $n\geq n_0$, $n_0$ large enough.
Therefore, for $n$ large enough, the map $\mu^{in}$ is one-to-one from $\Omega_n$ to $\mu^{in}(\Omega_n)$ and 
 $\mu^{in}(\Omega_n)$ contains the interval $[n^{-1}-n^{-1-  \alpha  },n^{-1}+n^{-1-  \alpha  }]$.

\section{Compactness arguments}\label{S6}
Going back to the original variables $(t,x)$, we claim from Proposition~\ref{bootstrapprop} that there exist $n_{0}>0$ large  and $t_{0}>0$ small such that for any $n\geq n_{0}$, the solutions $\{u_{n}\}$ defined in 
Sect.~\ref{s:BS} satisfy, for all $t\in [T_{n},t_{0}]$,
\begin{equation}\label{CC.1}
\begin{aligned}
 & \| \varepsilon_n(t)\|_{L^2} \lesssim t^{\frac 12},  &&\| \varepsilon_n (t)\|_{\dot H^{\frac 12}} + \| \varepsilon_n(t)\|_{L^2(y>-t^{-\frac 35})}\lesssim t^{\frac 23 + 2\alpha}.	\\
 & |\lambda_{n}(t)-t|+ |b_{n}(t)+t| \lesssim t^{1+\frac \alpha 2},
 && \left|x_{n}(t)+|\ln t|\right| \lesssim t^{\frac \alpha 2}.
 \end{aligned}
 \end{equation}
(Recall that $\alpha = \frac 16 -\frac \theta 4 \in (0,\frac 1{60})$.)
Indeed, from \eqref{bootstrap.2}, we have $\lambda_{n}(s)=|s|^{-1} + \mathcal O(|s|^{-1-\frac \alpha 2})$ and so \eqref{sn} rewrites as
\[
t-T_{n} = \int_{S_{n}}^s \lambda_{n}^2(s') ds' = \left( |s|^{-1} - |S_{n}|^{-1}\right) + \mathcal O(|s|^{-1-\frac \alpha 2}). 
\]
Since $|S_{n}|^{-1}=T_{n}$, it follows that $t=|s|^{-1} + \mathcal O(|s|^{-1-\frac \alpha 2})$ as $|s|\to +\infty$ and, equivalently,
\begin{equation}\label{s.to.t}
|s|^{-1} = t+\mathcal O(t^{1+\frac \alpha 2}),\quad 
|s| = t^{-1} + \mathcal O( t^{-1+\frac \alpha 2}) \quad \hbox{as $t\downarrow 0$}.
\end{equation}
Thus, the estimates on $\lambda_{n}(t)$, $b_{n}(t)$ and $x_{n}(t)$ in \eqref{CC.1} follow directly from \eqref{bootstrap.2}, \eqref{bootstrap.3} and \eqref{s.to.t}. The estimates on $\| \varepsilon_{n}(t)\|_{L^2}$ and $\| \varepsilon_{n} (t)\|_{\dot H^{\frac 12}}$ follow from \eqref{bootstrap.4} and \eqref{BS.1}. Finally, from the definition of $\varphi$ in \eqref{def.varphi}, it follows that for $y>-|s|^{\theta}$, $\varphi(s,y)\gtrsim 1$. Thus, from \eqref{bootstrap.4}, we have
\[
\int_{y>-|s|^{\theta}} \varepsilon_{n}^2(s,y) dy \lesssim |s|^{-2+\theta}.
\]
Since $t^{-\frac 35} < |s|^{\theta}$ choosing $t_{0}$ small enough, we also obtain
\begin{equation}\label{CC.2}
\int_{y>-t^{-\frac 35}} \varepsilon_{n}^2(t,y) dy \lesssim t^{2-\theta}
=t^{\frac 43 + 4 \alpha}.
\end{equation}
This completes the proof of \eqref{CC.1}.

These estimates imply that the sequences $\{\lambda_{n}(t_{0})\}$, $\{x_{n}(t_{0}\}$ are bounded, and the sequence
$\{v_{n}\}$ defined by
\[
v_{n} = \lambda_{n}^{\frac 12}(t_{0}) u_{n}(t_{0},\lambda_{n}(t_{0})\cdot +x_{n}(t_{0}))
=Q_{b_{n}(t_{0})} + \varepsilon_{n}(t_{0})
\]
is bounded in $H^{\frac 12}$. Therefore, there exist  subsequences $\{v_{n_{k}}\}$, $\{\lambda_{n_{k}}(t_{0})\}$, $\{x_{n_{k}}(t_{0})\}$ and $v_\infty\in H^{\frac 12}$,
$\lambda_\infty>0$, $x_{\infty}$,
such that 
\begin{equation} \label{CC.2b}
v_{n_{k}}\underset{k \to \infty}{\rightharpoonup} v_\infty \ \text{weakly in} \ H^{\frac 12}, \quad \lambda_{n_{k}}(t_{0})\underset{k\to +\infty}{ \to} \lambda_\infty \quad \text{and}    \quad x_{n_{k}}(t_{0})\underset{k \to \infty}{\to} x_\infty \, .
\end{equation}

Let $v_{k}$ be the maximal solution of \eqref{mBO} such that $v_{k}(0)=v_{n_{k}}$ and let
$(\lambda_{k},x_{k},b_{k},\varepsilon_{k})$ be its decomposition as given by Lemma~\ref{modulation}. Then, by the scaling invariance \eqref{ulambda} and the uniqueness of the Cauchy problem,
\[ v_k(t,\cdot)=\lambda_{n_k}^{\frac12}(t_0)u_{n_k}\big(t_0+\lambda_{n_k}^2(t_0)t,\lambda_{n_k}(t_0)\cdot+x_{n_k}(t_0)\big), \quad \forall \, t \in \big[-\frac{t_0-T_{n_k}}{\lambda_{n_k}^2(t_0)},0\big] \, .\]
Hence, it follows from the uniqueness of the decomposition in Lemma \ref{modulation} that\begin{equation}\label{CC.3}\begin{aligned}
 & \lambda_{k}(t)=\frac {\lambda_{n_{k}}(t_{0}+\lambda_{n_{k}}^2(t_{0}) t)}{\lambda_{n_{k}}(t_{0})} \, ,
& x_{k}(t)&=\frac {x_{n_{k}}(t_{0}+\lambda_{n_{k}}^2(t_{0}) t)-x_{n_{k}}(t_{0})}{\lambda_{n_{k}}(t_{0})} \, ,\\
 & b_{k}(t)=b_{n_{k}}(t_{0}+\lambda_{n_{k}}^2(t_{0}) t) \, ,
& \varepsilon_{k}(t)&= \varepsilon_{n_{k}}(t_{0}+\lambda_{n_{k}}^2(t_{0}) t) \, .
 \end{aligned}\end{equation}
Now let $T_1>0$ be such that $T_1<\frac{t_0}{\lambda_\infty^2}$,
so that we may apply Lemma~\ref{le:weak} to $v_k(t)$ on $[-T_1,0]$,
since the conditions~\eqref{hypoweak} are fulfilled for $k$ large enough on such an interval.
We obtain that the solution $v(t)$ of~\eqref{mBO} such that $v(0)=v_\infty$
exists on $\left(-\frac{t_0}{\lambda_\infty^2},0\right]$,
and its decomposition $(\lambda_v,x_v,b_v,\varepsilon_v)$ satisfies, for all $t \in \left(-\frac{t_0}{\lambda_\infty^2},0\right] $,
\begin{equation} \label{CC.4}
\begin{aligned}
\lambda_k(t) \underset{k \to +\infty}{\to} \lambda_v(t), 
\ x_k(t) \underset{k \to +\infty}{\to} x_v(t), \ b_k(t) \underset{k \to +\infty}{\to} b_v(t), \  \varepsilon_k(t) \underset{k \to +\infty}{\rightharpoonup} \varepsilon_v(t) \ \text{weak in} \ H^{\frac12}.
\end{aligned}
\end{equation}

Then we define the solution $S(t)$ of~\eqref{mBO}, for all $t\in (0,t_0]$, by
\[
S(t,x) = \frac{1}{\lambda_\infty^{\frac12}}\, v\left(\frac{t-t_0}{\lambda_\infty^2},\frac{x-x_\infty}{\lambda_\infty}\right),
\]
and denote $( \lambda, x, b, \varepsilon)$ its decomposition. Once again, the uniqueness of the decomposition in Lemma \ref{modulation} yields
\begin{equation}\label{CC.5}
\begin{aligned}
 & \lambda(t)=\lambda_{\infty} \lambda_v\big(\frac{t-t_0}{\lambda_{\infty}^2}\big)\, ,
& x(t)&=x_{\infty}+\lambda_{\infty} x_v\big(\frac{t-t_0}{\lambda_{\infty}^2}\big) \, ,\\
 & b(t)=b_v\big(\frac{t-t_0}{\lambda_{\infty}^2}\big)  \, ,
& \varepsilon(t)&= \varepsilon_v\big(\frac{t-t_0}{\lambda_{\infty}^2}\big) \, .
 \end{aligned}
 \end{equation}

Therefore, we obtain gathering \eqref{CC.2b}-\eqref{CC.5}, for all  $t\in (0,t_0]$,
\begin{displaymath} 
\begin{aligned}
\lambda_{n_k}(t) \underset{k \to +\infty}{\to} \lambda(t), 
\ x_{n_k}(t) \underset{k \to +\infty}{\to} x(t), \ b_{n_k}(t) \underset{k \to +\infty}{\to} b(t), \  \varepsilon_{n_k}(t) \underset{k \to +\infty}{\rightharpoonup} \varepsilon(t) \ \text{weak in} \ H^{\frac12} ,
\end{aligned}
\end{displaymath}
which implies together with \eqref{CC.1} that
\begin{equation}\label{gbetanlim}\begin{aligned}
&\| \varepsilon(t)\|_{L^2} \lesssim t^{\frac 12},
&&\|\varepsilon(t)\|_{\dot H^{\frac 12}}+ \| \varepsilon(t)\|_{L^2(y>-t^{-\frac 35})}\lesssim t^{\frac 23+4\alpha},  \\
&| \lambda(t) - t| +| b(t)+t|\lesssim t^{1+\frac \alpha 2},
&& \left| x(t)+|\ln t|\right|\lesssim t^{\frac \alpha 2}. 
\end{aligned}\end{equation}
Since $\lambda(t)\to 0$ as $t\downarrow 0$, the solution $S(t)$ blows up at time $0$.
Moreover, by weak convergence and \eqref{inittrois}, we have
\[
\| S(t)\|_{L^2} = \|v_\infty\|_{L^2} \leq \liminf_{k\to +\infty} \|v_{n_k}\|_{L^2} = \liminf_{k\to +\infty} \|u_{n_k}(t_0)\|_{L^2}
= \liminf_{k\to +\infty} \|u_{n_k}(T_{n_k})\|_{L^2}  = \|Q\|_{L^2},
\]
and thus $\| S(t)\|_{L^2}=\|Q\|_{L^2}$
(recall that solutions with $\|u_0\|_{L^2}<\|Q\|_{L^2}$ are global and bounded in $H^{\frac 12}$).

To finish the proof of Theorem~\ref{th1}, we recall that
\begin{equation}\label{def.S}
S(t,x) =  \frac1{\lambda^{\frac12} (t)} \left(Q+b(t)P_{b(t)}+\varepsilon\right)
\left(t,\frac{x-x(t)}{\lambda(t)}\right) \, ,
\end{equation}
and so
\begin{equation}\label{def.DS}\begin{aligned}
D^{\frac 12} S(t,x) 
&=   \frac1{\lambda^{\frac12}(t)}D^{\frac 12}Q\left(t,\frac{x-x(t)}{\lambda(t)}\right)
+\frac {b(t)}{\lambda^{\frac12}(t)} D^{\frac 12} P_{b(t)}\left(t,\frac{x-x(t)}{\lambda(t)}\right) \\
&+\frac1{\lambda^{\frac12}(t)}D^{\frac 12} \varepsilon\left(t,\frac{x-x(t)}{\lambda(t)}\right)  \, ,
\end{aligned}\end{equation}
By \eqref{bd.PbRb} and \eqref{gbetanlim}, we have $\|b(t)P_{b(t)}\|_{L^2}\lesssim |b|^{\frac 12}\lesssim t^{\frac 12}$
and $\| \varepsilon\|_{L^2}\lesssim t^{\frac 12}$. Thus,
\begin{equation}\label{unun}
\left\| S(t) -  \frac1{\lambda^{\frac12} (t)}  Q 
\left(\frac{.-x(t)}{\lambda(t)}\right)\right\|_{L^2}\lesssim t^{\frac 12}.
\end{equation}
Similarly, it follows from  \eqref{bd.PbRb} and \eqref{gbetanlim} that
$\lambda^{-\frac 12} \|b D^{\frac 12}P_{b}\|_{L^2}\lesssim \lambda^{-\frac 12} |b| |\ln |b||^{\frac 12}\lesssim t^{\frac 12} |\ln t|^{\frac 12}$ and $\lambda^{-\frac 12} \|D^{\frac 12} \varepsilon\|_{L^2}\lesssim t^{\frac 16 + 4\alpha}$, and
thus
\begin{equation}\label{deuxdeux}
\left\| D^{\frac 12}\left[S(t) -  \frac1{\lambda^{1/2} (t)}  Q 
\left(\frac{.-x(t)}{\lambda(t)}\right)\right] \right\|_{L^2}\lesssim t^{\frac 16}.
\end{equation}
 Note to finish that \eqref{unun} and \eqref{deuxdeux} imply \eqref{d:S}.

\begin{remark}\label{rk.precise}
Note that by \eqref{gbetanlim}, we  get 
the estimate $|\frac{\lambda(t)}{t}-1|\lesssim t^{\frac \alpha 2}$, and thus
\[
\left\| S(t) -  \frac1{t^{\frac12} }  Q 
\left(\frac{.+|\ln t|)}{t}\right)\right\|_{L^2}\lesssim t^{\frac \alpha 2},
\]
but such an estimate cannot be established in $H^{\frac 12}$ because of the singularity.
This reflects a certain lack of precision of the ansatz $Q_{b}$. As observed in Remark~\ref{rk:profile}, it does not seem clear how to improve the ansatz without creating serious additional technical difficulties.
\end{remark}

\end{document}